\newtheorem{theorem}{Theorem}[section]
\newtheorem{lemma}[theorem]{Lemma}
\newtheorem{proposition}[theorem]{Proposition}
\newtheorem{corollary}[theorem]{Corollary}
\theoremstyle{definition}
\newtheorem{definition}[theorem]{Definition}
\newtheorem{example}[theorem]{Example}
\theoremstyle{remark}
\newtheorem{remark}[theorem]{Remark}
\definecolor{lav}{rgb}{0.80,0.0,0.95}
\numberwithin{equation}{section}
\def\N{\bf{N}}
\def\C{{\mathbb C}}
\def\R{{\mathbb R}}
\def\Z{{\mathbb Z}}
\def\N{{\mathbb N}}
\def\H{{\mathbb H}}
\def\P{\mathbb P_{\mathbb C}}
\def\dP{{\check {\mathbb P}_\C}}
\def\PSL{\text{{PSL}}}
\def\SL{\text{{SL}}}
\def\PU{\text{{PU}}}
\def\SP{\text{{SP}}}
\def\GL{\text{{GL}}\,}
\def\GL{\text{{GL}}\,}
\def\Kul{\text{{Kul}}\,}
\def\Heis{\text{{Heis}}\,}
\title[Purely parabolic  groups]{\sc{ Discrete parabolic groups in $\PSL(3, \C)$ } }
\author{Waldemar Barrera}
\address{Facultad de Matem\'aticas, Universidad Aut\'onoma de Yucat\'an, M\'exico}
\email{bvargas@correo.uady.mx }
\author{Angel Cano}
\address{Instituto de Matem\'aticas, Unidad Cuernavaca, Universidad Nacional Aut\'onoma de M\'exico.}
\email{angelcano@im.unam.mx}
\author{Juan Pablo Navarrete}
\address{Facultad de Matem\'aticas, Universidad Aut\'onoma de Yucat\'an, M\'exico}
\email{jp.navarrete@correo.uady.mx}
\author{Jos\'e Seade}
\address{ Instituto de Matemáticas, Universidad Nacional Autónoma de México, and  CNRS (France) IRL Laboratorio Solomon Lefschetz, México. }
\email{jseade@im.unam.mx}
\subjclass{Primary 37F99,  Secondary 30F40, 20H10, 57M60, 32H50, 32Q30}
\begin{document}
\maketitle

			We study and classify the purely parabolic discrete subgroups of $\PSL(3,\C)$. This includes all discrete subgroups of the Heisenberg group ${\rm Heis}(3,\C)$.
			While  for  $\PSL(2,\C)$   every purely parabolic subgroup is Abelian and  acts on $\P^1$ with limit set a single point, the case of $\PSL(3,\C)$ is far more subtle and intriguing.   We show that there are  five  families of purely parabolic discrete groups in $\PSL(3,\C)$, and     some of these actually split into subfamilies. We classify all these by means of their limit set and the control group. We use  first the  Lie-Kolchin Theorem and Borel's fixed point theorem to show that all purely parabolic discrete groups in $\PSL(3,\C)$ are virtually triangularizable. Then we prove that purely parabolic  groups  in $\PSL(3,\C)$ are virtually solvable and polycyclic, hence finitely presented. We then prove a slight generalization  of the Lie-Kolchin Theorem for these groups: they are either virtually unipotent or else Abelian of rank 2 and of a very special type. All the virtually unipotent ones turn out to be conjugate to subgroups of the Heisenberg group ${\rm Heis}(3,\C)$. We classify these using the obstructor dimension introduced by Bestvina, Kapovich and Kleiner. 
			We find that their Kulkarni limit set  is either a projective line, a cone of lines  with base a  circle or else the whole $\P^2$. We determine the relation with the Conze-Guivarc'h limit set of the action on the dual projective space $\dP^2$ and we  show that in all cases the Kulkarni region of discontinuity is the largest open set where the group acts properly discontinuously. 

	\maketitle 
	

	\section*{Introduction}
%
	
	Henri Poincar\'e introduced in \cite{Poincare}  the concept of Kleinian groups, {\it i.e. discrete subgroups of the M\"obius group }M\"ob$(2,\C)$, which is isomorphic to $\PSL(2,\C)$, and he classified its elements  into three types.  Poincar\'e's  classification can be stated by saying that        $g \in \PSL(2,\C)$  is elliptic if it has a lift $\tilde g$  
	 to $\SL(2,\C)$ which is   diagonalizable and its eigenvalues are all unitary; $g$  is
 parabolic if   $\tilde g$
	 is non-diagonalizable and its eigenvalues are all unitary, and $g$ is  loxodromic  otherwise.  That classification, with these same definitions,  extends  to  $\PSL(n+1,\C)$ for all $n \ge 2$,  see  \cite{CLU, CNS, Nav2}.
	 
 	In this work we look at  $\PSL(3, \C)$, the group of automorphisms of the projective plane $\P^2$, and we classify its discrete subgroups that (besides the identity)  have only parabolic elements. These are called {\it purely parabolic groups}.  	In order to describe our results we remark that every 	purely parabolic discrete group in $\PSL(3,\C)$ has a global fixed point $p \in \P^2$ and therefore   $\P^2 \setminus \{p\}$, one has a canonical  holomorphic projection map $\pi$ from $\P^2 \setminus \{p\}$ into $\ell \cong \P^1$. This defines a group homomorphism:
		\begin{displaymath}
		\begin{matrix}
		\Pi = \Pi_{p,\ell,G} : \PSL(3,\C) \rightarrow Bihol(\ell)\cong \PSL(2,\C) \;, \\
		\Pi(g)(x) = \pi(g(x))
	\end{matrix}
	\end{displaymath}
	which is independent of $\ell$ up to conjugation. Its restriction to $G$ is the control morphism of $G$ and  its image $\Pi(G)$ is the control group (see  Definition \ref{d:proyeccion} and \cite{CNS}).

 The various types of  purely parabolic groups in $\PSL(3, \C)$ are fully described in Section \ref{s:examples}, where we also describe their algebraic and dynamical properties. There are five such main families, these are:	
\begin{itemize}
\item Elliptic groups. These are the only ones that are not conjugate to subgroups of the Heisenberg group ${\rm Heis} (3,\C)$ and they are   subgroups of  fundamental groups of elliptic surfaces (see \cite{BCNS3}).
\item Torus groups. These are subgroups of fundamental groups of complex tori:
 \[
			\mathcal{T}(\mathfrak L)=
			\left 
			\{
			\left[
			\begin{array}{lll}
				1 & 0 & a\\
				0 & 1 & b\\
				0 & 0 & 1\\ 
			\end{array}
			\right]:(a,b)\in \mathfrak L
			\right \} \;, \] 
where $ \mathfrak L$ is an additive discrete subgroup of $\C^2$.
\item Dual torus groups, 
 \[
			\mathcal{T}^*(\mathfrak L)=
			\left 
			\{
			\left[
			\begin{array}{lll}
				1 & a & b\\
				0 & 1 & 0\\
				0 & 0 & 1\\ 
			\end{array}
			\right]:(a,b)\in \mathfrak L
			\right \} \;. \] 
These split into three types: the first   have  Kulkarni limit set (see Definition  \ref{def Kulkarni limit set})  a complex projective line; the second have  Kulkarni limit set a cone of projective lines over a circle, and the third type have all  $\P^2$ as Kulkarni limit set.  From now on,  for simplicity, we will say only  limit set instead Kulkarni limit set,   unless we specify otherwise. 

\item Inoue groups of three types. The first have limit set a cone of lines over a circle,  the others have limit set all of $\P^2$.  The three types can be distinguished by the limit set and the control morphism  (see Proposition \ref{Prop Types Inoue groups}).


\item Kodaira groups ${\mathcal{K}_0}$, which are Abelian,  and their extensions. There are five types of extensions ${\mathcal{K}_i}$, $i = 1, \cdots , 5$, 
which are 
purely parabolic and discrete. 
The first type ${\mathcal{K}_1}$ have limit set a projective line; the second type ${\mathcal{K}_2}$ have limit set a cone of projective lines over the circle. The remaining three types have limit set all of $\P^2$ and they are distinguished by their control morphism (see details in Section \ref{s:examples}).

\end{itemize}

	In this work we prove:


	

	\begin{theorem} \label{t:main1}
		Let $G\subset {\rm PSL} (3,\C)$ be a  purely parabolic discrete subgroup. Then:
\begin{enumerate} 		
		\item $G$  is  either virtually elliptic  or virtually   conjugate to a subgroup of the Heisenberg group ${\rm Heis} (3,\C)$, which is itself purely parabolic.
		
		\item Its Kulkarni limit set $\Lambda_{\Kul}$ is either a line, a cone of lines over a circle, or the whole of $\P^2$, and up to conjugation:
		\begin{enumerate} 
			\item  $\Lambda_{\Kul}$ is a line	if, and only if,  $G$ is an elliptic group, a torus group,  a dual torus group of type I,   Abelian Kodaira group or  a $\mathcal{K}_1$ group.
			
			\item  If $\Lambda_{\Kul}$ is a cone of lines over a circle, then $G$ is either a dual torus group of type II, a Kleinian  Inoue group,  or a extended  Kodaira group   ${\mathcal{K}_2}$.

			\item If $\Lambda_{\Kul} = \P^2$, then $G$ is  a dual Torus group of type III, a discrete non-Kleinian Inoue group,  an extended Inoue group,  or  an extended Kodaira group $ {\mathcal{K}_i}$ for $i= 3, 4, 5$.
			\end{enumerate}
			
		\end{enumerate}
		
	\end{theorem}

	
	Concerning the dynamics we have:
	
	\begin{theorem} \label{t:main2}
		Let $G\subset {\rm PSL} (3,\C)$ be as in the theorem above and let $\Lambda_{CoG}^*$ be the Conze-Guivarc'h  limit set of the action on the dual $\dP^2$. Then:

		\begin{enumerate}
			\item If  $\Lambda_{\Kul}$ is a line and $G$ is not a dual torus group, then  $\Lambda_{CoG}^*$  is  the projective dual of $\Lambda_{\Kul}$ and it  is the unique minimal set.

		\item If $\Lambda_{\Kul}$ is a cone of lines over a circle, then 
$\Lambda_{\rm CG}^*$ contains a projective real line and it is not a minimal set because  there is always a  global fixed point.

		\item If $\Lambda_{\Kul} = \P^2$, then:
$\Lambda_{CoG}^*$ contains a complex projective line and it is never a minimal set (as before, because  there is a  global fixed point).

		\end{enumerate}

\end{theorem}

The theorem below is  essential for our proof of 
Theorem \ref{t:main1}.

\begin{theorem}    \label {semidirec} \label{t:desc}  
	Let $G\subset {\rm Heis} (3,\Bbb{C})$ be a discrete group, then $G$ is purely parabolic and:
	\begin{enumerate} 
		\item  \label{p:1r}	There are $B_1,\ldots, B_n $ subgroups of $G$  such that $G=Ker(\Pi\vert_G)  \rtimes B_1\rtimes \cdots \rtimes B_n$ and each $B_i$ is isomorphic to $\Bbb{Z}^{k_i}$, for some $k_i\in \Bbb{N}\cup\{0\}$.
		\item \label{p:2r}  $rank(Ker(\Pi\vert_G) )+\sum_{i=1}^n k_i\leq 6$. 
		\item \label{p:3r} If $G$ is complex Kleinian, then  $rank(Ker(\Pi\vert_G) )+\sum_{i=1}^n k_i\leq 4$ .
	\end{enumerate}
\end{theorem}

We recall (see Definition \ref{def complex Kleinian}) that $G$ is complex Kleinian if it is discrete and there is a non-empty open invariant set where $G$ acts properly discontinuously.

The proof of  Theorem \ref {semidirec} uses in an essential way the obstructor dimension of a group $G$,  introduced  by  Bestvina, Kapovich and Kleiner in \cite {BKK}.  This  is a lower
bound for the ``action dimension" of $G$, and it is based on the classical van Kampen obstruction for embedding a
simplicial complex  into an Euclidean space  \cite{VK}. 
Theorem \ref{semidirec}  strengthens, for discrete groups in  $\Heis(3,\Bbb{C})$,  a Theorem of Bieri and Strebel \cite{BBDZ},  ensuring  that every infinite, finitely presented solvable group  is virtually an ascending  HNN-extension of a finitely generated solvable group.

A difficulty one meets when working with the projective groups $\PSL(n+1,\C)$, which are non-compact, is that one does not have the convergence property (cf.  \cite{Bow, kapovich}). We overcome this problem by using repeatedly  the space of  pseudo-projective maps,
\[\SP(3,\C)=(M(3,\C)-\{{\bf 0}\})/\mathbb{C}^*,\]
where $M(3,\C)$ is the set of all $3\times 3$ matrices with complex coefficients and $\mathbb{C}^*$ acts by the
usual scalar
multiplication. This  was introduced in \cite{CNS, CS} and it provides a natural compactification of the projective group $\PSL(3,\C)$.

We remark that every parabolic element in $\PSL(3,\C)$ is conjugate to a parabolic element in $\PU(2,1)$, the group of holomorphic isometries of the complex hyperbolic space (see \cite{CNS, Nav2}). Yet, the results in this paper show that there are plenty of 
purely parabolic groups  in $\PSL(3,\C)$ which are not conjugate to subgroups in $\PU(2,1)$. Some examples are:
\begin{enumerate}
	\item All purely parabolic groups whose limit set $\Lambda_{\Kul}$ is not a single line.
	\item Unipotent Abelian complex Kleinian groups whose limit set is a single line and the rank is at least three. 
\end{enumerate}
It is possible to provide a full characterization of the purely parabolic groups in $\PSL(3,\C)$ that are conjugate to   groups in $\PU(2,1)$. This shall be done elsewhere.

A corollary of the results in this article is that the Kulkarni limit set of purely parabolic complex Kleinian groups in $\PSL(3,\C)$  either consists of one line or it has infinitely many lines but only two  in general position  (a set of lines is said to be in general position if no three lines in the set are concurrent). This is 
 essential for the classification of the complex Kleinian groups that are   elementary, {\it i.e.},  they have either a point or a line with finite orbit. 
The complete classification of the elementary groups  in $\PSL(3,\C)$ is given in our forthcoming paper \cite{BCNS1}, in relation with a  dictionary in complex dimension two, inspired by  Sullivan's dictionary \cite{BCNS2}.

 We thank the referee for plenty of  valuable suggestions that highly improved the original version of this article.

\section{Preliminaries}\label{s:nb}
Let 
$\mathbb{P}^2_{\mathbb{C}}:= (\mathbb{C}^{3}\setminus
\{{\bf 0}\})/\mathbb{C}^*
$ be the complex projective plane and $[\mbox{
}]:\mathbb{C}^{3}\setminus\{{\bf 0}\}\rightarrow
\mathbb{P}^{2}_{\mathbb{C}}$  the quotient map. A line in $\P^2$ means the image under this projection 
of a complex linear
subspace of dimension $2$. Given  $p,q\in
\mathbb{P}^2_{\mathbb{C}}$ distinct points, there exists  a unique 
complex line passing through  $p$ and $q$; such line is 
denoted by $\overleftrightarrow{p,q}$. The projective dual  $\dP^{2} $ of $\P^2$ is $Gr(\P^2)$ the  Grassmannian of all  
complex lines   in $\P^2$ equipped
with the topology of the Hausdorff convergence. 

The following notion is used along the paper. 

\begin{definition}
	A pencil of lines in $\P^2$ is a collection of lines passing trough a common point.
	\end{definition}

Consider the usual action of $\mathbb{Z}_{3}$  on  $\SL(3,\mathbb{C})$. Then
$\PSL(3,\mathbb{C})=\SL(3,\mathbb{C})/\mathbb{Z}_{3}\,$ is a
Lie group
whose elements are called projective transformations.  We denote also by 
$[\mbox{ }]:\SL(3,\mathbb{C})\rightarrow \PSL(3,\mathbb{C})$  
the
quotient map. We denote by 
${\bf g}= (g_{ij})$ the  elements in $\SL(3,\Bbb{C})$. Given $g\in \PSL(3,\mathbb{C})$,  we say that ${\bf g}\in
\SL(3,\mathbb{C})$ is a lift of
$g$ if $[{\bf g}]=g$.  Then 
$ \PSL(3,\mathbb{C})$  acts  transitively,
effectively and by biholomorphisms on
$\mathbb{P}^2_{\mathbb{C}}$
by $[{\bf g }]([w])=[{\bf g }(w)]$, where $w\in
\mathbb{C}^3\setminus\{{\bf 0}\}$ and ${\bf g }\in \SL(3,\mathbb{C})$.
Recall  (cf. \cite[Chapter 4]{CNS}): 

\begin{definition}\label{def parabolic}
	Let $g \in \PSL(3,\C)$ and ${\bf g}$ a lift to  $\SL(3,\C)$. Then $g$ is:
	\begin{itemize}  
		\item elliptic if  ${\bf g}$ is diagonalizable with unitary eigenvalues; 
		\item parabolic if 
		${\bf g}$  is non-diagonalizable with unitary eigenvalues;     
		\item   loxodromic if   ${\bf g}$  has  some non-unitary eigenvalue.
	\end{itemize}
\end{definition}


Now let $M(3,\C) $ be the set of all $3 \times
3$ matrices with complex coefficients.
Define the space of  pseudo-projective maps by:
$\SP(3,\C)=(M(3,\C)-\{{\bf 0}\})/\mathbb{C}^*,$
where $\mathbb{C}^*$ acts on $M(3,\C)-\{{\bf 0}\}$ by the
usual scalar
multiplication. We have the  quotient map $[\mbox{ }]:M(3,\C)\setminus
\{{\bf 0}\}\rightarrow
\SP(3,\C)$. Given
$P\in \SP(3,\C)$ we define its kernel by:
$$Ker(P)=[Ker({\bf P})\setminus\{{\bf 0}\}],$$
where ${\bf P }\in M(3,\C)$ is a lift of $P$.
Clearly $\PSL(3,\Bbb{C})\subset \SP(3,\Bbb{C})$ and an element $P$ in $\SP(3,\C)$ is in 
$\PSL(3,\C)$ if and only if $Ker(P)=\emptyset$. Notice that $\SP(3,\C)$ is a  manifold naturally diffeomorphic to $\P^8$, so it is compact. 

Recall that  given a discrete group $G$ in $\PSL(3,\C)$, 
its equicontinuity set ${\rm Eq}(G)$   is
the largest open set on which  the $G$-action forms a normal family. 

\begin{theorem}[See Proposition  2.5 in    \cite{CLU}]	Let $G\subset {\rm PSL(3,\C)}$ be a discrete group. Then $G$ acts properly discontinuously on    ${\rm Eq}(G)$  and one has:
	$$ {\rm Eq}(G) = \P^2 \setminus \overline{\bigcup Ker(P)} \;,$$
	where the union runs over the kernels of all $P\in {\rm SP}(3,\C) \setminus {\rm PSL}(3,\C)$ satisfying that there exists a sequence 
	$(g_n)\subset G$  that converges to $P$.
\end{theorem}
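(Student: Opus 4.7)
My plan is to establish the set equality first and then deduce proper discontinuity from it. The engine is compactness of $\SP(3,\C)\cong \P^8$ together with the basic fact that convergence $g_n \to P$ in $\SP(3,\C)$ is uniform on compact subsets of $\P^2 \setminus Ker(P)$. For the inclusion $\P^2 \setminus \overline{\bigcup Ker(P)} \subseteq \Eq(G)$, take $x$ in the open left-hand side and a neighborhood $U$ of $x$ disjoint from $\bigcup Ker(P)$. Given a sequence of distinct $g_n \in G$, compactness extracts a subsequential limit $P \in \SP(3,\C)$; discreteness of $G$ forces $P \notin \PSL(3,\C)$, so by hypothesis $Ker(P) \cap U = \emptyset$ and $g_n \to P$ uniformly on compact subsets of $U$. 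Thus $\{g_n\}$ forms a normal family at $x$.

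For the reverse inclusion, since $\Eq(G)$ is open it suffices to verify that $\bigcup Ker(P) \cap \Eq(G) = \emptyset$. Fix $y \in Ker(P_0)$ with $P_0$ the limit of distinct $g_n \in G$ and suppose, for contradiction, that $y \in \Eq(G)$: a subsequence $g_{n_k}$ converges uniformly on a neighborhood $V$ of $y$ to a continuous map $f$, which agrees with $P_0$ on $V \setminus Ker(P_0)$. This contradicts the non-existence of a continuous extension of $P_0$ to $y$: in the rank-$2$ case, two sequences approaching $y$ from distinct projective directions in $V \setminus Ker(P_0)$ have $P_0$-images tending to distinct points of the image line; in the rank-$1$ case, invertibility of the $g_{n_k}$ forces $g_{n_k}(V)$ to contain a full projective line through $g_{n_k}(y)$, precluding uniform convergence to the constant image point of $P_0$.

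For proper discontinuity, let $K \subset \Eq(G)$ be compact and suppose infinitely many distinct $g_n \in G$ satisfy $g_n(K) \cap K \neq \emptyset$. Extracting $g_n \to P \in \SP(3,\C)\setminus \PSL(3,\C)$, the formula gives $Ker(P) \cap K = \emptyset$, so $g_n(K)$ converges in Hausdorff metric to $P(K) \subseteq \mathrm{image}(P)$. The inverses likewise have a subsequential limit $P' \in \SP(3,\C)\setminus \PSL(3,\C)$, and a direct computation on rescaled matrix representatives (using $\det g_n = 1$) yields ${\bf P}\cdot {\bf P}' = 0$, so $\mathrm{image}(P) \subseteq Ker(P')$; applying the formula to $(g_n^{-1})$ gives $Ker(P') \subseteq \P^2 \setminus \Eq(G)$. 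Thus $P(K) \cap K = \emptyset$, and yet convergent subsequences of intersection points produce a point of $K \cap P(K)$, a contradiction. The hardest step is the rank-$1$ case in the reverse inclusion: the naive ``distinct directions'' argument fails because $\mathrm{image}(P_0)$ is a single point, and one must invoke invertibility of the $g_{n_k}$, equivalently the unbounded stretching of eigenvalue ratios of their matrix representatives near $Ker(P_0)$, to rule out uniform convergence.
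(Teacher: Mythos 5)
The paper never proves this statement: it is imported verbatim as Proposition 2.5 of \cite{CLU}, so the only in-paper material to measure your argument against is the closely related $\lambda$-lemma of Section \ref{s:lambda}, whose singular-value-decomposition machinery is exactly what your proof runs on. Your outline is the standard one and is sound where it is precise: the forward inclusion via compactness of $\SP(3,\C)$ and locally uniform convergence away from $Ker(P)$, the identity ${\bf P}\,{\bf P}'={\bf P}'\,{\bf P}=0$ obtained from $\|{\bf g}_n\|,\|{\bf g}_n^{-1}\|\to\infty$ under the normalization $\det {\bf g}_n=1$, and the Hausdorff-limit contradiction for proper discontinuity are all correct, and the last of these is essentially part (4) of the paper's Lemma \ref{l:lambda}.

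The one step that needs repair is the rank-one case of the reverse inclusion. The set $g_{n_k}(V)$ is open, hence can never contain a full projective line, so the claim as literally written is false. What is true---and what your parenthetical about eigenvalue ratios is pointing at---is the content of part (3) of Lemma \ref{l:lambda}: write $g_{n_k}=[\kappa_k\,\mathrm{diag}(e^{\alpha_k},e^{\beta_k},e^{\gamma_k})\,\rho_k]$ with $\kappa_k\to\kappa$, $\rho_k\to\rho$, so that $Ker(P_0)=\rho^{-1}(\overleftrightarrow{e_2,e_3})$ and the image of $P_0$ is the point $\kappa(e_1)$. For $k$ large the line $\rho_k^{-1}(\overleftrightarrow{e_2,e_3})$ meets $V$, and any $y_k\in V$ on it satisfies $g_{n_k}(y_k)\in\kappa_k(\overleftrightarrow{e_2,e_3})$, a line converging to $\kappa(\overleftrightarrow{e_2,e_3})$, which misses $\kappa(e_1)$. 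Hence $\sup_{x\in V}d\bigl(g_{n_k}(x),\kappa(e_1)\bigr)$ is bounded away from $0$ while the pointwise limit on the dense set $V\setminus Ker(P_0)$ is the constant $\kappa(e_1)$, so no subsequence converges uniformly on $V$. With that substitution your proof is complete.
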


Now let $G\subset \PSL(3,\C)$ be a discrete group and let 
$\Omega$ be a non-empty G-invariant set, {\it i.e.},  $G \Omega=\Omega$. 
We say that
$G$ {\it acts properly discontinuously} on  $\Omega$  if
for each compact set $K\subset \Omega$ the set $\{g\in
G\,\vert  \, g(K)\cap K\}$ is finite.

\vskip.2cm
The following notion was introduced in \cite{SV1}:
 
\begin{definition} \label{def complex Kleinian}
$G$ is {\it complex Kleinian} if there exists a non-empty open $G$-invariant set in $\P^2$ where $G$ acts properly discontinuously.
\end{definition}

\begin{definition}\label{def Kulkarni limit set} Let  $G \subset \PSL(2,\C)$ be a discrete group. 
	Its  {\it Kulkarni limit set} is \cite {Ku}: $$\Lambda_{Kul}(G) = L_0(G) \cup
	L_1(G) \cup L_2(G)\, ,$$  where
	$L_0(G)$  is  the closure  of  the points in
	$\mathbb {P}^2_{\mathbb{C}}$ with infinite isotropy group, 
	$L_1(G)$ is the closure of the set of accumulation points of the orbits 
	$G z$ where $z$ runs over
	$\mathbb{P}^2_{\mathbb{C}}\setminus
	L_0(G)$,  and  
	$L_2(G)$ is the closure of the set of
	accumulation points of orbits $G
	K$  where $K$ runs  over all   compact sets in
	$\mathbb{P}^2_{\mathbb{C}}-(L_0(G) \cup
	L_1(G))$. 
	The  {\it Kulkarni region of discontinuity (or the ordinary
		set)} of $G$ is:
	$$\Omega_{Kul}(G) = \mathbb{P}^2_{\mathbb{C}}\setminus
	\Lambda_{Kul}(G).$$
	
\end{definition}


\begin{proposition} \label{p:pkg}
	
	Let    $G$  be a complex  Kleinian group. Then:
	
	\begin{enumerate}
		
		\item  {\rm [See \cite{Ku}]} The sets\label{i:pk2}
		$\Lambda_{Kul}(G),\,L_0(G),\,L_1(G),\,L_2(G)$
		are  $G$-invariant closed sets. 
		
		\item {\rm (See \cite{Ku}) }  \label{i:pk3} The group $G$ acts properly 
		discontinuously on  $\Omega_{Kul}(G)$. 
		
		\item  {\rm (See  \cite{Nav2} or   \cite[Proposition 3.3.6]{CNS})}\label{i:pk4} Let $\mathcal{C}\subset\mathbb{P}^2_{\mathbb{C}}$ be
		a closed $G$-invariant set such that  for every compact set $K\subset
		\mathbb{P}^2_{\mathbb{C}}-\mathcal{C}$, the set of cluster points
		of  $G K$ is contained in $(L_0(G)\cup L_1(G))\cap \mathcal{C}$, then
		$\Lambda_{Kul}(G)\subset \mathcal{C}$.

		\item   {\rm (See  \cite[Corollary  2.6]{CLU})} The equicontinuity set of $G$ is contained in $\Omega_{Kul}(G)$.
		\item  {\rm( See   \cite[Proposition  3.6]{BCN4})}	If $G_0\subset G$ is a subgroup with finite index, then 
		\[
		\Lambda_{Kul}(G)=\Lambda_{Kul}(G_0).
		\]
		\item  {\rm (See  \cite[Proposition  3.3.4]{CNS})} The set 			$\Lambda_{Kul}(G)$ contains at  least one  complex  line.
	\end{enumerate}
\end{proposition}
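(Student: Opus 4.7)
The plan is to focus on item (6), the substantive claim; items (1) and (5) are standard equivariance and finite-index arguments, and (2)--(4) follow directly from the definitions of $L_0,L_1,L_2$ together with the equicontinuity theorem quoted from \cite{CLU} just above.

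For (6) I would exploit the compactification $\SP(3,\C)\cong \P^8$ already set up. Since $G$ is necessarily infinite and discrete, I pick pairwise distinct $(g_n)\subset G$ and, by compactness of $\SP(3,\C)$, pass to a subsequence with $g_n\to P$ in $\SP(3,\C)$. Discreteness forces $P\notin\PSL(3,\C)$: otherwise both $g_n\to P$ and $g_n^{-1}\to P^{-1}$ would hold inside the Lie group $\PSL(3,\C)$, a contradiction. Hence any lift $\mathbf{P}\in M(3,\C)$ has rank $1$ or $2$, so $\mathrm{Ker}(P)\neq\emptyset$.

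If $\mathbf{P}$ has rank $2$, then $\mathrm{Ker}(P)$ is a single point $p_0$ and the image of $P$ is a projective line $\ell$. For each $x\in\P^2\setminus\{p_0\}$ one has $g_n(x)\to P(x)\in\ell$, and as $x$ varies the limits fill out all of $\ell$, so $\ell\subset L_1(G)\subset\Lambda_{\Kul}(G)$. If $\mathbf{P}$ has rank $1$, then $\mathrm{Ker}(P)$ is already a line $L$, the image of $P$ is a point $q$, and the claim becomes $L\subset \Lambda_{\Kul}(G)$. For this I would pass to a further subsequence along which $g_n^{-1}\to P'$ in $\SP(3,\C)$; the identity $\mathbf{g}_n\mathbf{g}_n^{-1}=I$ with suitable rescalings of the lifts forces $\mathbf{P}\mathbf{P}'=\mathbf{P}'\mathbf{P}=0$, so $\mathrm{Im}(\mathbf{P}')\subset\ker\mathbf{P}=L$ and $q\in\ker\mathbf{P}'$. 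If $\mathbf{P}'$ has rank $2$ its image is exactly $L$, and the rank-$2$ paragraph applied to $(g_n^{-1})$ immediately puts $L$ inside $L_1(G)$.

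The main obstacle is the residual subcase in which every subsequential limit of both $(g_n)$ and $(g_n^{-1})$ is rank $1$, so that a priori one only extracts two points (namely $q$ and the image of $P'$) rather than a line. To treat this I would argue that forming products $g_n g_m^{-1}$ and reapplying the compactness argument either produces a rank-$2$ subsequential limit (finishing the proof) or forces the collection of kernel lines appearing in such limits to be preserved by $G$, so that a minimality/closedness argument, together with the definitions of $L_1(G)$ and $L_2(G)$, shows that the full line $L$ sits in $L_1(G)\cup L_2(G)\subset\Lambda_{\Kul}(G)$. This is the place where the orbital analysis carried out in \cite{Ku} and \cite[Proposition 3.3.4]{CNS} closes the argument.
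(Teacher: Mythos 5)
First, a point of comparison: the paper offers no proof of Proposition \ref{p:pkg} at all --- every item carries a citation to \cite{Ku}, \cite{CNS}, \cite{CLU} or \cite{BCN4} and no proof follows the statement --- so the only question is whether your sketch genuinely establishes item (6), where you concentrate your effort. Your reduction is sound up to a point: compactness of $\SP(3,\C)$, the fact that discreteness pushes every subsequential limit $P$ out of $\PSL(3,\C)$, the rank dichotomy, and the relation $\mathrm{Im}(P')\subset \mathrm{Ker}(P)$ are all correct (the last is part (2) of the paper's own Lemma \ref{l:lambda}). Two caveats on the rank-$2$ step: the definition of $L_1(G)$ only admits orbits of points $z\notin L_0(G)$, so ``the limits fill out all of $\ell$'' needs the extra observation that if some fibre $P^{-1}(y)\cup\{p_0\}$ lies entirely in $L_0(G)$ then that fibre is itself a complex line inside $\Lambda_{\Kul}(G)$ and you are done anyway; and the claim is false for finite $G$ (which are complex Kleinian under the paper's definition and have empty limit set), so ``necessarily infinite'' is a standing hypothesis, not something you can derive.

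The genuine gap is the residual subcase you flag yourself, and it is not residual: for the most basic example, the cyclic group generated by a unipotent element with a single Jordan block, every subsequential limit of $(g^n)$ and of $(g^{-n})$ has rank one, so your argument up to that point produces only the single point $e_1$, while $\Lambda_{\Kul}$ is the whole line $\overleftrightarrow{e_1,e_2}$. Your proposed fix --- forming products $g_ng_m^{-1}$ and invoking an unspecified minimality/closedness argument --- does not work: in the cyclic example all such products are again powers of $g$ and yield nothing new, and in the end you defer to the very references the proposition is citing, so nothing has been proved. The missing idea is the ``moving basepoint'' phenomenon that $L_2$ is designed to capture, which is available inside the paper as part (3) of Lemma \ref{l:lambda}: for every $x\in \mathrm{Ker}(P)$, the accumulation points of $g_n(x_n)$ over all sequences $x_n\to x$ sweep out the entire line $\mathrm{Ker}(Q)$, where $Q=\lim g_n^{-1}$. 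With that in hand the residual case closes in two lines: either $\mathrm{Ker}(P)\subset L_0(G)\cup L_1(G)$, so $\Lambda_{\Kul}(G)$ already contains the line $\mathrm{Ker}(P)$; or there is $x\in \mathrm{Ker}(P)\setminus (L_0(G)\cup L_1(G))$, and compact sets of the form $\{x\}\cup\{x_n\}$ with $x_n\to x$ and all terms avoiding the closed set $L_0(G)\cup L_1(G)$ show that $\mathrm{Ker}(Q)\subset L_2(G)\subset \Lambda_{\Kul}(G)$.
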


There is also the Conze-Guivarc'h limit set, see \cite{CG}. For this  we need the following generalization introduced in \cite{BNU}.  

\begin{definition}
Let  $G\subset  \PSL(3, \C)$ be a discrete group acting on $ \check{ \Bbb{P}}^2_\C$. We define the Conze-Guivarc’h limit set of $G$, denoted $\Lambda_{CoG}(G)$, as the closure  of the set of  points  $q \in \check{ \Bbb{P}}^2_\C$  for which  there exist an open subset $U\subset  \check{ \Bbb{P}}^2_\C$ and a
sequence $(g_n)  \subset  G$,  $g_n=g_m$ if $n=m$, such that for every $p \in  U$
\[
\lim_{n \rightarrow  \infty } g_n  (p) = q .
\]
\end{definition}

A couple of examples will picture the previous concept. We need:

\begin{definition} 
	A matrix $g\in  \GL(3, \Bbb{C})$ is {\it proximal} if it has an
	eigenvalue $\lambda_0\in\Bbb{C}$ such that $\vert \lambda_0\vert  > \vert \lambda \vert $ for all other eigenvalues $\lambda$ of $g$. For such a $g$, an eigenvector $v_0 \in \Bbb{C}^{3}$ corresponding to the eigenvalue $\lambda_0$ is a {\it dominant eigenvector} of $g$. 
	We say that $g\in \PSL(3,\Bbb{C})$ is {\it proximal} if it has a lift $\widetilde g\in \SL(3,\Bbb{C})$ which is proximal; and $v\in \P^2$ is {\it dominant} for $g$ if there is a lift $\tilde v\in \C^{3}$ of $v$ which is dominant  for $\widetilde \gamma$. 
\end{definition}

 We remark that by \cite{ Nav2}, every strongly loxodromic element in $ \PSL(3, \C)$  is proximal, and all loxodromic elements in $\PU(2,1)$ are strongly loxodromic.

\begin{example} \label{e:1}	[Complex hyperbolic groups]
	If   $G\subset \PU(2,1)$  is a non-elementary discrete subgroup, then $\Lambda_{CoG}(\Gamma)$ coincides with the Chen-Greenberg limit set \cite{ChG} of $G$, $\Lambda_{CG}(\Gamma)$, which is the closure of the orbits of points in the complex hyperbolic space $\partial \H^2_\C$. This follows because  loxodromic elements in $\PU(2,1)$ are proximal and  their  attracting fixed points in $\partial \H^2_\C$ correspond to dominant vectors. 
\end{example}

\begin{example} \label{e:2}[Veronese groups]
	Given a non-elementary discrete subgroup $G\subset \PSL(2,\C)$, let $\iota: \PSL(2,\Bbb{C})\rightarrow \PSL(3,\Bbb{C})$ be the canonical irreducible representation  and $\psi:\Bbb{P}^1_{\Bbb{C}}\rightarrow \Bbb{P}^2_{\Bbb{C}}$ the Veronese embedding. A simple computation shows that  $\iota$
	carries  loxodromic elements in $\PSL(2,\C)$ into strongly  loxodromic elements in $\PSL(3,\C)$. This implies    $\Lambda_{CoG}(\iota(G))=\psi(\Lambda_{ChG}(G))$,  see  \cite[Theorem 2.10]{CL}.  \end{example}

Recall from  \cite{CG} that a 
{\it strongly irreducible group} in $\PSL(3,\C)$ is a group whose action on $\P^2$ does not have points or lines with finite orbit.

\begin{theorem} [See \cite{CG} and Corollary 3 in  \cite{BNU}]
	Let $G\subset {\rm PSL}(3,\C)$ be a strongly irreducible group, then:
	\begin{enumerate}
		\item The limit set $\Lambda_{CoG}(G)$ is non-empty and  is the unique minimal set for the action of $G$ on $\P^2$.
		
		\item  The closure  of the dominant points of proximal elements in  $G$ coincides with  $\Lambda_{CoG}(G)$.
	\end{enumerate}
\end{theorem}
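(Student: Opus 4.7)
The plan is to construct $M \subseteq \P^2$ as the closure of the dominant fixed points of the proximal elements of $G$, prove $M$ is the unique minimal closed $G$-invariant set, and then identify $M$ with $\Lambda_{CoG}(G)$.

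First I would establish the existence of a proximal element $g\in G$. By the Goldsheid-Margulis criterion for strongly irreducible subgroups of $\GL(3,\C)$, the only obstruction to finding a proximal element is that $G$ be relatively compact, which is ruled out by the ambient context (discrete, non-elementary) together with strong irreducibility. Set $F=\{x_g : g\in G\text{ proximal with dominant fixed point }x_g\}$ and $M=\overline{F}\subset\P^2$. For any $h\in G$ and proximal $g$, the conjugate $hgh^{-1}$ is proximal with dominant fixed point $h(x_g)$, so $F$ and therefore $M$ are $G$-invariant.

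Next I would prove minimality and uniqueness of $M$. Fix a proximal $g\in G$ with dominant fixed point $x_g$ and repelling line $H_g\cong\P^1$; the iterates $g^n$ converge to the constant map $x_g$ uniformly on compacta of $\P^2\setminus H_g$. Given $y\in M$ and an open $U\ni x_g$, strong irreducibility prevents $Gy$ from lying inside the line $H_g$, so some $h\in G$ satisfies $h(y)\notin H_g$; then $g^n h(y)\to x_g\in U$ for large $n$. Hence $\overline{Gy}\supseteq F$ for every $y\in M$, and $M$ is minimal. Applied to any nonempty closed $G$-invariant set $M'\subset\P^2$, which by strong irreducibility must meet $\P^2\setminus H_g$, the same contraction argument yields $x_g\in M'$ for every proximal $g$, so $M'\supseteq M$; this is the uniqueness.

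Finally I would show $M=\Lambda_{CoG}(G)$. The inclusion $M\subseteq\Lambda_{CoG}(G)$ is immediate from the uniform convergence $g^n\to x_g$ on the open set $\P^2\setminus H_g$. For the reverse, take $q\in\Lambda_{CoG}(G)$ with witness open $U$ and sequence $(g_n)\subset G$ satisfying $g_n(p)\to q$ for every $p\in U$; by compactness of $\SP(3,\C)$ pass to a subsequence with $[g_n]\to P\in\SP(3,\C)$, and observe that the image of $P$ must contain $q$. Composing $g_n$ with a fixed proximal $g\in G$ whose attracting fixed point is placed — via strong irreducibility — off $Ker(P)$ and off the relevant repelling subspaces, one manufactures elements $h_n\in G$ that are themselves proximal, with dominant fixed points $x_{h_n}\to q$; hence $q\in\overline{F}=M$.

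The main obstacle lies in this last step, namely manufacturing genuinely proximal elements of $G$ with prescribed near-limit dominant fixed points out of a pseudo-projective limit. Proximality is a spectral condition (a gap between the modulus of the leading eigenvalue and the rest), whereas pseudo-projective convergence only provides a singular-value gap; bridging the two requires a careful Cartan-decomposition analysis combined with repeated strong-irreducibility maneuvers to shift kernels and repelling lines into general position, exactly as in the original Conze-Guivarc'h argument.
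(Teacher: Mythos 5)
First, a framing remark: the paper does not prove this statement at all --- it is quoted with attributions to \cite{CG} and to Corollary 3 of \cite{BNU} --- so your attempt can only be measured against the standard argument in those sources. Your architecture is exactly that argument: set $M=\overline F$ where $F$ is the set of dominant points of proximal elements, use the contraction $g^n\to x_g$ on compacta of $\P^2\setminus H_g$ together with strong irreducibility (no orbit can stay inside the proper subspace $H_g$) to show that every nonempty closed invariant set contains every $x_g$, and conclude that $M$ is the unique minimal set. Your second and third steps are correct and essentially complete as written.

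Two points, however. First, you have located the real difficulty in the wrong step. The inclusion $\Lambda_{CoG}(G)\subseteq M$ does not require manufacturing new proximal elements out of a pseudo-projective limit. If $g_n(p)\to q$ for every $p$ in an open set $U$ and $g_n\to P$ in $\SP(3,\C)$, then $P$ is constant on the nonempty open set $U\setminus Ker(P)$, hence has rank one with $Im(P)=\{q\}$. Since $F$ is $G$-invariant it cannot be contained in the proper subspace $Ker(P)$ (its linear span would be a proper invariant subspace), so there is a proximal $g$ with $x_g\notin Ker(P)$; then $g_n(x_g)\to P(x_g)=q$, and $g_n(x_g)=x_{g_ngg_n^{-1}}$ already lies in $F$, whence $q\in\overline F=M$. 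No Cartan decomposition and no singular-value-to-eigenvalue bridge are needed here. Second, the step that genuinely needs that bridge is your first one, and there your justification is not quite right: the Goldsheid--Margulis criterion characterizes proximality of a strongly irreducible group by the \emph{contracting} property (some $\gamma_n/\|\gamma_n\|$ converging to a rank-one matrix), not merely by failure of relative compactness, and the statement as printed carries no discreteness hypothesis, so ``discrete, non-elementary'' cannot simply be imported. For an unbounded irreducible subgroup of $\SL(3,\C)$, Burnside's bounded-trace argument gives elements with $|\lambda_1|>|\lambda_3|$, but upgrading this to $|\lambda_1|>|\lambda_2|$ (proximality on $\P^2$ rather than on the dual) is exactly where the ping-pong work sits; and without some non-compactness assumption the statement is false outright, since a dense subgroup of $\PU(3)$ is strongly irreducible, has no proximal elements, and has $\Lambda_{CoG}=\emptyset$ while $\P^2$ is its unique minimal set. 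So the one genuine gap in your proposal is the unestablished existence of a proximal element; everything downstream of that is sound, and your final step can be replaced by the short argument above.
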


Now recall \cite{CNS}:

\begin{definition}\label{d:proyeccion}
	Let 	$G$    be a discrete group in  ${\rm PSL}(3, \C)$. We say that $G$ is {\it weakly-controllable} if it acts with a fixed point $p$ in $\P^2$. In this case a choice of a line $\mathcal L$ in $\P^2 \setminus \{p\}$ determines a projection map $\P^2 \setminus \{p\} \to \mathcal L$ and a group morphism $\Pi : G \rightarrow   \PSL(2,\C)$  called the {\it control morphism} of $G$;  its image 
	$\Pi(G)\subset  \PSL(2, \C)$ is the {\it control group}. These are well defined and independent of $\mathcal L$ up to an automorphism of $\PSL(2, \C)$. 
\end{definition}

\begin{theorem} [See Theorem 5.8.2 in  \cite{CNS}] \label{t:semi}  Let $G\subset {\rm PSL}(3,\Bbb{C})$ be discrete and weakly-controllable, with   $p\in \Bbb{P}^2_\Bbb{C}$ a $G$-invariant point  and $\ell\subset\Bbb{P}^2_\Bbb{C}$ a complex line not containing $p$. Let $\Pi_{p,\ell}=\Pi$ be a projection map defined as  above. If
	$Ker(\Pi\vert_G) $ is finite  and $\Pi(G) \subset {\rm Aut} (\ell) \cong {\rm PSL}(2,\C)$  is discrete, then $G$ acts properly discontinuously on
	\[
	\Omega=
	\left ( 
	\bigcup_{z\in \Omega(\Pi(G))}
	\overleftrightarrow{p,z}
	\right )
	-\{p\} \;,
	\]
	where the union runs over all points in $\ell$ where the action of $\Pi(G)$ is discontinuous.
\end{theorem}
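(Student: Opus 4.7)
The plan is to pull back proper discontinuity from the one-dimensional Kleinian group $\Pi(G)$ acting on $\Omega(\Pi(G))\subset \ell$ via the projection $\pi=\Pi_{p,\ell}\colon \P^2\setminus\{p\}\to \ell$. The technical heart of the argument is the equivariance identity
\[
\pi\circ g \;=\; \Pi(g)\circ \pi \qquad \text{on } \P^2\setminus\{p\}, \quad g\in G.
\]
This follows because $g$ fixes $p$ and therefore permutes the pencil of lines through $p$: for $x\neq p$, $g$ carries the fiber $\overleftrightarrow{p,x}\setminus\{p\}=\pi^{-1}(\pi(x))$ onto $\overleftrightarrow{p,g(x)}\setminus\{p\}=\pi^{-1}(\pi(g(x)))$, so $\pi(g(\pi(x)))=\pi(g(x))$, and the left side is $\Pi(g)(\pi(x))$ by the definition of the control morphism.

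With this in hand, I would first verify that $\Omega$ is $G$-invariant. Any $x\in \Omega$ lies on some line $\overleftrightarrow{p,z}\setminus\{p\}$ with $z\in \Omega(\Pi(G))$; then $g(x)$ lies on $\overleftrightarrow{p,\Pi(g)(z)}\setminus\{p\}$, and $\Pi(g)(z)\in \Omega(\Pi(G))$ since $\Omega(\Pi(G))$ is $\Pi(G)$-invariant. Hence $g(\Omega)\subset \Omega$, and equality follows by applying the same to $g^{-1}$.

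Now take an arbitrary compact set $K\subset\Omega$. Since $K$ avoids $p$, the map $\pi$ is continuous on $K$, so $\pi(K)$ is compact; by the very definition of $\Omega$ we have $\pi(K)\subset \Omega(\Pi(G))$. Because $\Pi(G)\subset \PSL(2,\C)$ is discrete, it is a classical Kleinian group and acts properly discontinuously on its ordinary set $\Omega(\Pi(G))$, so
\[
F \;:=\; \{h\in \Pi(G)\,:\, h(\pi(K))\cap \pi(K)\neq \emptyset\}
\]
is finite. If $g\in G$ satisfies $g(K)\cap K\neq \emptyset$, choose $x\in K$ with $g(x)\in K$; the equivariance gives $\Pi(g)(\pi(x))=\pi(g(x))\in \pi(K)$, while $\pi(x)\in \pi(K)$, so $\Pi(g)\in F$. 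Therefore
\[
\{g\in G\,:\, g(K)\cap K\neq \emptyset\}\;\subset\; (\Pi|_G)^{-1}(F),
\]
a finite union of cosets of $Ker(\Pi|_G)$. Since $Ker(\Pi|_G)$ is finite by hypothesis, this set is finite, which is precisely proper discontinuity of $G$ on $\Omega$.

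I do not expect a serious obstacle: once the equivariance $\pi\circ g=\Pi(g)\circ \pi$ is isolated, the proof is a routine foliated lift of the one-dimensional proper discontinuity. The only points that deserve verification are the two just used — namely, that $\pi(K)\subset \Omega(\Pi(G))$ (clear from the definition of $\Omega$) and that the equivariance holds on all of $\P^2\setminus\{p\}$, not merely on $\ell$ where $\Pi(g)$ was defined; both are handled above by exploiting that $G$ fixes $p$ and hence respects the projection from $p$.
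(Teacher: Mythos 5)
Your proof is correct. The paper does not actually prove Theorem \ref{t:semi} — it is quoted from Theorem 5.8.2 of \cite{CNS} — but your argument (the equivariance $\pi\circ g=\Pi(g)\circ\pi$ coming from the $G$-invariance of $p$, pulling back proper discontinuity of the Kleinian group $\Pi(G)$ on $\Omega(\Pi(G))$ along the fibration by lines through $p$, and concluding via finiteness of $Ker(\Pi\vert_G)$) is exactly the standard proof of that result, and all the steps you flag as needing verification are handled correctly.
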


	The following is an improvement of the $\lambda$-Lemma in \cite{Nav2} that we use in the sequel. This is inspired by the classical $\lambda$-Lemma of Palis and De Melo \cite {Pa-Me}.
		
	\begin{lemma}[See Section 2 in {\rm Heis} {\rm Heis}  \cite{CLU}]\label{l:lambda} Let $G$ be a discrete group and let $(g_n)\subset G$ be a sequence of distinct elements, then there exist a subsequence $(h_n)\subset (g_n)$ and pseudo projective maps $P,Q \in SP(3,\C)$ satisfying:
		\begin{enumerate}
			\item \label{i:1}	 $h_n \xymatrix{ \ar[r]_{m \rightarrow  \infty}&} P$ and $ h^{-1}_n \xymatrix{ \ar[r]_{m \rightarrow  \infty}&}Q$.
			\item \label{i:2} \[
			\begin{array}{l}
				Im(P)\subset Ker(Q) \,, \\ 
				Im(Q)\subset Ker(P) \,, \\
				dim (Im(P))+dim(Ker(P))=1 \,, \\
				dim (Im(Q))+dim(Ker(Q))=1 \,. \\
			\end{array}
			\]
			\item \label{i:3} For every point $x\in Ker(P)$ we get
			\[
			Ker(Q)=\bigcup_{x_n\rightarrow x}\{\textrm{accumulation points of }(h_n(x_n))\}.
			\]
			\item \label{i:4} If $\Omega\subset \Bbb{P}^2_{\Bbb{C}} $ is an open set on which $G$ acts properly discontinuously, then either $Ker(P)\subset \Bbb{P}^2_{\Bbb{C}}-\Omega$ or $Ker(Q)\subset \Bbb{P}^2_{\Bbb{C}}-\Omega$.
		\end{enumerate}
	\end{lemma}

We use in the sequel the  obstructor dimension of a group $G$,  introduced  by  Bestvina, Kapovich and Kleiner in \cite {BKK}.   We refer to  \cite [Definition 4]{BKK}. 
\begin{definition}
	Fix a non-negative integer $m$. A finite simplicial complex $K$
	of dimension $\leq m$ is an $m$-obstructor complex if the following holds:
	\begin{enumerate}
		\item  There is a collection 
		$$\Sigma=
		\{
		(\sigma_i,\tau_i)_{i=1}^k \}$$
		of unordered pairs of disjoint simplices of $K$ with $dim\, \sigma_i + dim\, \tau_i = m$
		that determine an $m$-cycle (over $\Z_2$) in 
		$$
		\left \{ \sigma  \times  \tau  \subset  K \times K\vert \sigma \cap \tau  = \emptyset \right \}/\Z_2 
		$$
		where $\Z_2$ acts by $(x, y)\mapsto (y, x).$
		\item For some (any) general position map $f : K \rightarrow  \R^m$ the (finite) number
		$\Sigma_{i=1}^k 	
		|f(\sigma_i)\cap  f(\tau_i)|$
		is odd.
		\item  For every $m$-simplex $ \sigma \in K$ the number of vertices $v$ such that the
		unordered pair $\left \{\sigma , \tau  \right \}$ is in $\Sigma$ is even.
	\end{enumerate}	
\end{definition}

\begin{definition}
	The obstructor dimension $obdim(G)$ is defined to be 0 for finite groups, 1 for 2-ended groups  (see  \cite[Section 9.1]{Dru-Kapo} for a  definition of the ends of a group).
	Otherwise $obdim(G)$ is $m + 2$ where $m$ is the largest integer such that for some $m$-obstructor complex $K$ and some triangulation of the open cone $cone(K)$  there exists a proper map $f : cone(K)^{(0)}\rightarrow  G$ satisfying:
	\begin{enumerate}
		\item  for disjoint simplices $\sigma,\tau  $ in $K$ and every $D > 0$ there are compact sets $ C_1 \subset  cone(\sigma )$,    $ C_2 \subset  cone(\tau  )$ such that $f(cone(\sigma ) - C_1)$ and $f(cone(\sigma ) - C_2)$ are $ > D$ apart.
		\item there is a uniform upper bound on the distance between the images of adjacent vertices in $cone(K)^{(0)}$
	\end{enumerate} 
\end{definition}

We use  the following  theorems;  see \cite{BF,BKK} for the corresponding  proofs :

\begin{theorem}[See Theorem 1  in \cite{BKK}] \label{t:obdim}
	If $obdim (G)\geq m$, then $G$ can not act properly discontinuously on a contractible manifold of dimension $<m$. 
\end{theorem}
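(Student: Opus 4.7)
The plan is to argue by contradiction, following the strategy of \cite{BKK}, with van Kampen's embedding obstruction as the central tool. Suppose $\operatorname{obdim}(G) \geq m$ but $G$ acts properly discontinuously on a contractible manifold $M$ of dimension $n < m$. By definition, an $m$-obstructor for $G$ provides a finite simplicial complex $K$ (of dimension essentially $m-1$) equipped with a ``proper expanding'' map into a combinatorial coarse model of $G$ (e.g.\ a Rips complex of its Cayley graph), such that the $\mathbb{Z}/2$-equivariant van Kampen cohomology class of $K$ on its deleted square $K\times K\setminus \Delta$ is nontrivial in the relevant dimension.

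The first step is to transfer the obstructor data from the Rips model to $M$. Since $M$ is a contractible manifold on which $G$ acts properly, $M$ is a model of $EG$, and any coarse map from the Rips complex into $M$ exists and is unique up to proper equivariant homotopy. Composing the obstructor map with such a coarse map produces a continuous proper map $f : K \to M$ that retains the $\mathbb{Z}/2$-equivariant obstructor structure on $K\times K\setminus\Delta$.

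Next, I would apply transversality and general position. With $\dim K \leq m-1$ and $\dim M = n < m$, the product map $f\times f$ restricted to $K\times K\setminus\Delta$ can be perturbed so that its image misses the diagonal of $M\times M$ in a controlled way; the resulting $\mathbb{Z}/2$-equivariant map then factors through a configuration space of insufficient (co)homological dimension to support the van Kampen class of $K$. This forces the class to vanish, contradicting its nontriviality built into the definition of the obstructor.

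The principal obstacle is the first step: one must produce a map $f$ into $M$ that is faithful enough to the coarse obstructor structure so that the van Kampen class survives the transfer. This requires the coarse-geometric framework of \cite{BKK} --- proper expanding maps, stabilization of Rips complexes, and equivariant obstruction theory on deleted products --- which is where most of the technical work lies. Once this transfer is established, the dimension count in the general-position argument yields the desired contradiction.
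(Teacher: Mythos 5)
There is nothing in the paper to compare your argument against: Theorem \ref{t:obdim} is quoted verbatim from Bestvina--Kapovich--Kleiner, and the authors explicitly refer the reader to \cite{BKK} both for the definition of the obstructor dimension and for the proof. So the only question is whether your sketch is a viable reconstruction of that external argument, and as written it is not, because of the third step.

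The claim that ``with $\dim K \leq m-1$ and $\dim M = n < m$, the product map $f\times f$ restricted to $K\times K\setminus \Delta$ can be perturbed so that its image misses the diagonal of $M\times M$'' is exactly what general position \emph{cannot} deliver in this range: a generic map of a $k$-complex into an $n$-manifold has a double-point set of dimension $2k-n$, so the deleted-product map can be pushed off the diagonal only when $n>2\dim K$, whereas here $n<m\leq \dim K+1$. Worse, if such a perturbation existed it would make the equivariant van Kampen class vanish for \emph{every} complex mapping to a low-dimensional manifold, and the theorem would be vacuous; the nonvanishing of that class is precisely the obstruction to pushing the deleted-product map off the diagonal. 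The actual mechanism in \cite{BKK} is asymptotic rather than transversal: the orbit map $G\to M$ is proper because the action is properly discontinuous, so composing with the proper expanding map of the obstructor gives a proper expanding map $K\to M$; the expanding condition guarantees that images of points in disjoint simplices are eventually far apart, so the deleted-product map misses the diagonal \emph{outside a compact set}, and the defining property of an $m$-obstructor is that its van Kampen class survives ``at infinity.'' One then uses that the (end of the) deleted product of a contractible $n$-manifold equivariantly maps to a space of dimension at most $n-1$ (the Gauss-map retraction in the model case $M=\R^n$), which kills the class unless $n\geq m$. Your first step also overstates what is needed (no uniqueness up to proper equivariant homotopy is required, only properness of the orbit map), and $M$ is a model for $\underline{E}G$ rather than $EG$ when stabilizers are nontrivial, but these are minor compared with the failure of the general-position step.
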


\begin{theorem}[See Corollary 2.7 in \cite{BKK}] \label{c:obdim}
	If $G=H\rtimes Q$  with $H$ and $Q$  finitely generated  and $H$ weakly convex, then $obdim(G)\geq obdim(H)+obdim(Q)$.
\end{theorem}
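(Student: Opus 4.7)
The plan is to construct, from obstructors realizing $obdim(H)$ and $obdim(Q)$, a larger obstructor for $G$ whose van Kampen class is the cup product of the two, and then invoke the defining non-embedding property of obstructors. The geometric input is the extension $1 \to H \to G \to Q \to 1$, which equips $G$ with the coarse structure of a bundle over $Q$ with fiber $H$.

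First, I would fix an obstructor complex $K_H$ of dimension $obdim(H)-1$ for $H$, together with its proper expanding map into a coarse model for $H$, and similarly a complex $K_Q$ for $Q$. The weak convexity hypothesis on $H$ ensures that coarse geodesic representatives of points at infinity of $H$ can be chosen coherently under the conjugation action of $Q$, so that the obstructor data for $H$ transports equivariantly along the $Q$-direction inside the semidirect product.

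Second, over each simplex at infinity of $K_Q$ I would place a copy of $K_H$, glued via the $Q$-action coming from the semidirect product. This produces a simplicial complex $K$ of dimension $obdim(H) + obdim(Q) - 1$, together with a proper expanding map from $K$ into a coarse model of $G$ whose trace at infinity records both the fiberwise $H$-obstruction and the base $Q$-obstruction. By the Künneth-type product formula for van Kampen obstructions, the class $\theta(K) = \theta(K_H) \smile \theta(K_Q)$ is non-zero, so $K$ admits no embedding into $\R^{obdim(H) + obdim(Q) - 1}$. By the definition of obstructor dimension in \cite{BKK}, this yields $obdim(G) \geq obdim(H) + obdim(Q)$.

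The main obstacle I expect is the gluing step: one must verify that the copies of $K_H$ over neighboring simplices of $K_Q$ interpolate coherently into a single simplicial complex with a well-defined proper expanding map to $G$, and that the resulting van Kampen class is truly the cup product of the two. This is precisely where weak convexity of $H$ enters, guaranteeing that coarse geodesic segments in the $H$-fibers vary continuously with the base point so that the assembled complex is simplicial and the obstruction class survives the gluing. Without this hypothesis the fiberwise obstructors may fail to patch into a global obstructor, and the product formula for $\theta$ could collapse, at which point the argument breaks down.
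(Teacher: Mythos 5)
First, a point of reference: the paper does not prove this statement at all; it is imported verbatim as Corollary 2.7 of \cite{BKK}, so the only proof to measure yours against is the original Bestvina--Kapovich--Kleiner argument. Your sketch has the right high-level strategy (combine an obstructor for $H$ with one for $Q$, use the semidirect-product structure and weak convexity of $H$ to map the combined complex properly and expandingly into $G$, then invoke a product formula for the van Kampen obstruction), but the combination step contains a genuine gap. In \cite{BKK} the combined complex is the \emph{join} $K_H * K_Q$, not a twisted bundle over $K_Q$ with fibre $K_H$ glued by the $Q$-action. The product formula that drives the argument is specific to joins: if $K$ is an $m$-obstructor (i.e.\ $\mathrm{vk}^m(K)\neq 0$) and $L$ is an $n$-obstructor, then $K*L$ is an $(m+n+2)$-obstructor, proved by restricting the deleted product of $K*L$ to the join of the deleted products. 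There is no analogous ``K\"unneth formula'' for a twisted assembly of copies of $K_H$ over the simplices of $K_Q$; the monodromy coming from the action of $Q$ on $H$ could very well kill the class, and you offer no argument that it does not. This is exactly the step you flag as ``the main obstacle,'' and it is not a technicality one can wave through --- it is the content of the corollary.

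Two further concrete problems. Your degree bookkeeping is inconsistent with the definition of obstructor dimension: $obdim(G)\geq m+2$ is witnessed by an $m$-obstructor, i.e.\ by nonvanishing of the obstruction in cohomological degree $m$, which is in general unrelated to the simplicial dimension of the complex (the $n$-fold join of three points has dimension $n-1$ but is a $(2n-2)$-obstructor). With the join one gets an $(m+n+2)$-obstructor and hence $obdim(G)\geq (m+2)+(n+2)=obdim(H)+obdim(Q)$; a complex failing to embed in $\R^{obdim(H)+obdim(Q)-1}$, as you assert, would instead yield the strictly stronger --- and in general false --- bound $obdim(G)\geq obdim(H)+obdim(Q)+1$. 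Finally, weak convexity enters not to make the gluing ``coherent'' but to build the proper expanding map on the cone of the join: a point $tx\oplus(1-t)y$ of $K_H*K_Q$ must be sent to a point on a uniformly controlled path between the images of $x$ and $y$ inside $G$, and weak convexity of $H$ is precisely what supplies such paths. As written, your argument does not close; the repair is to replace the bundle construction by the join and to use the join formula for the van Kampen obstruction.
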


\begin{theorem} [See Corollary  2.2 in  \cite{BF}]
	Let $G$ be a lattice 	in  a simply connected nilpotent Lie group N, then $obdim(G)=dim (N)$, In particular $obdim(\Z^n)=n$. 
\end{theorem}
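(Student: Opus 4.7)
The plan is to prove both inequalities $obdim(G) \leq \dim(N)$ and $obdim(G) \geq \dim(N)$. The upper bound is immediate: the simply connected nilpotent Lie group $N$ is diffeomorphic to $\R^{\dim(N)}$ via the exponential map, and the lattice $G$ acts properly discontinuously on $N$ by left translation. Since $\R^{\dim(N)}$ is a contractible manifold, Theorem \ref{t:obdim} forces $obdim(G) \leq \dim(N)$.

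For the lower bound I would first dispatch the special case $G = \Z^n$ by induction on $n$. The base case $obdim(\Z) \geq 1$ is clear since $\Z$ cannot act properly discontinuously on a $0$-dimensional manifold. For the inductive step, write $\Z^n = \Z^{n-1} \rtimes \Z$ as a (trivial) semidirect product; the factor $\Z^{n-1}$ is weakly convex since it acts geometrically on the CAT$(0)$ space $\R^{n-1}$. By Theorem \ref{c:obdim}, $obdim(\Z^n) \geq obdim(\Z^{n-1}) + obdim(\Z) \geq n$, which together with the upper bound yields equality.

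For a general lattice $G \subset N$, pass to a torsion-free finite index subgroup $G_0$, which exists for any lattice in a nilpotent Lie group. Obstructor dimension is non-decreasing under passage to finite index subgroups, so it suffices to bound $obdim(G_0)$ from below. By the structure theory of finitely generated torsion-free nilpotent groups, $G_0$ admits a central series whose successive quotients are free abelian with total rank $\dim(N)$; after replacing $G_0$ by a further finite index subgroup, one arranges that each of the successive central extensions splits, displaying $G_0$ as an iterated semidirect product with $\dim(N)$ infinite cyclic factors. Iterating Theorem \ref{c:obdim} (using that each free abelian intermediate quotient is weakly convex) then gives $obdim(G_0) \geq \dim(N)$. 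The principal obstacle is this splitting step: a central extension $1 \to \Z \to H \to Q \to 1$ does not split in general, and ensuring splittability after passing to a finite index subgroup requires a cohomological analysis of nilpotent lattices---this is the technical content of the cited \cite{BF} and is where the nilpotent hypothesis is essential.
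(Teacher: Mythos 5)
The paper gives no proof of this statement---it is imported verbatim from \cite{BF}---so your argument can only be judged on its own terms. The upper bound is fine. The lower bound for a general lattice, however, has a genuine gap in the splitting step, and you have misdiagnosed where the difficulty lies. A central extension $1\to\Z\to H\to Q\to 1$ of nilpotent groups does \emph{not} split after passing to a finite-index subgroup: for the integer Heisenberg lattice, $1\to Z(H_3(\Z))\to H_3(\Z)\to \Z^2\to 1$, a splitting of a central extension with abelian quotient would force the middle group to be a direct product $\Z\times\Z^2$, hence abelian, whereas every finite-index subgroup of $H_3(\Z)$ contains $a^k,b^k$ with $[a^k,b^k]=c^{k^2}\neq 1$ and so is nonabelian. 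No finite-index passage and no ``cohomological analysis'' will repair this. The correct decomposition goes the other way: since $G_0$ is finitely generated, torsion-free and nilpotent, its abelianization surjects onto $\Z$, and the extension $1\to H\to G_0\to\Z\to 1$ splits automatically because $\Z$ is free; iterating on the kernel exhibits $G_0$ as an iterated semidirect product of $\mathrm{Hirsch}(G_0)=\dim(N)$ infinite cyclic factors. What then remains---and this, not a splitting lemma, is the actual technical content of \cite{BF}---is verifying that each intermediate kernel is \emph{weakly convex} in the sense required by Theorem \ref{c:obdim}; that is where the nilpotent Lie group structure (left-invariant metrics on the Malcev completions of the kernels) genuinely enters.

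Two smaller problems. Your base case is logically backwards: Theorem \ref{t:obdim} states that $obdim(G)\geq m$ \emph{implies} the non-existence of proper actions on contractible manifolds of dimension $<m$, not the converse, so the failure of $\Z$ to act properly on a point does not give $obdim(\Z)\geq 1$; one must produce a $0$-obstructor at infinity (the two ends of $\Z$) directly from the definition in \cite{BKK}. Likewise the monotonicity of $obdim$ under passage to finite-index subgroups is a quasi-isometry-invariance statement that needs to be quoted from \cite{BKK}, not taken for granted.
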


\section{The families of purely  parabolic  discrete groups} \label{s:examples}

There is a partition of the purely parabolic discrete groups in $\PSL(3,\C)$ into
 five  families: elliptic, torus, dual torus, Inoue and Kodaira groups. 
We now define these families and discuss their algebraic structure and dynamical properties. We find that some of these families naturally split into subfamilies according to the topology of their limit set and the structure of the control group. 

Note that the simplest purely parabolic groups are  cyclic, generated by a parabolic element; there are three types of such elements in $\PSL( 3, \mathbb C)$,  described by the Jordan normal form
of their lifts to $\SL(3, \mathbb C)$. These are:
\begin{displaymath}\label{parabolic elements}
	\left(
	\begin{array}{ccc}
		1 & 1 & 0 \\
		0 & 1 & 0 \\
		0 & 0 & 1
	\end{array}
	\right) , \, \, \, \left(
	\begin{array}{ccc}
		1 & 1 & 0 \\
		0 & 1 & 1 \\
		0 & 0 & 1
	\end{array}
	\right) , \, \, \, \left( \begin{array}{ccc}
		\lambda & 1 & 0 \\
		0 & \lambda & 0 \\
		0 & 0 & \lambda^{-2}
	\end{array}
	\right) \,, |\lambda| =1\, ,  \;\lambda \neq 1.
\end{displaymath}
The first two of these are unipotent; the third 
type is called ellipto-parabolic: it is rational if $\lambda$ is a root of unity or irrational otherwise (see \cite[Chapter 4]{CNS} for details). \label{page}
Each of these belongs to a different type of the families we describe below. The first type  generates torus groups, the second generates Abelian Kodaira groups and the  ellipto-parabolic elements generate elliptic groups.

We remark that all the groups we present in this section are upper triangular. Hence they fix the point $e_1 \in \P^2$, so they are weakly-controllable. Given one of these groups $G$, we let $\Pi : G \rightarrow   \PSL(2,\C)$  be its control morphism (Definition \ref{d:proyeccion}). 
Notice that except for the elliptic groups, all others are subgroups of the Heisenberg group.

Throughout this section we denote by $W$ an additive subgroup of $\C$, by $\mathfrak L$ and additive subgroup of $\C^2$ and by $\mathcal M$ an additive subgroup of $\R$.

\subsection{Elliptic groups}\label{e:fd2}  
Let $W\subset\Bbb{C} $ be an additive discrete subgroup  and consider a group morphism
$ \mu:W\rightarrow \Bbb{S}^1$. Define:
\[
{{\rm Ell}}(W,\mu)=
\left \{
\left[
\begin{array}{lll}
	\mu(w) & \mu(w)w&0\\
	0& \mu(w)&0\\ 
	0&0& \mu(w)^{-2}\\ 
\end{array}
\right ]
:w\in W
\right \}.
\] 	
\begin{lemma}  \label{2.1}
	Elliptic  groups   act with two fixed points, $ \{e_1\}$ and $\{e_3\}$. The control group with respect to $\{e_3\}$ is   $W$ and the kernel of $\Pi$ is trivial, while the control group with respect to $\{e_1\}$ is the image of $W$ under $\mu$. 
	Also:
	\begin{itemize}
		\item The Kulkarni limit set is a  line.
		\item The  equicontinuity set coincides with  Kulkarni's discontinuity  region and is the largest
		open set on which the group acts properly discontinuously.
		\item  The Conze-Guivarc'h limit set of the action on the dual $\dP^2$  is the dual point of the Kulkarni limit set, but it is not the only minimal set.
	\end{itemize}
\end{lemma}

The kernel may or may not be trivial. If the kernel is not trivial, then it is a proper subgroup of $W$ isomorphic to $\Z$.

\begin{proof}	
	The proof of the statements about the control groups are straightforward from the definition.
	From Theorem \ref{t:semi} we have that 
	the Kulkarni limit set is the line $\overleftrightarrow{e_1,e_2}$  and  the Kulkarni's discontinuity  region is the largest
	open set on which $ {\rm Ell}(W,\mu)$ acts properly discontinuously.   Let us prove that  $\Omega_{Kul}$ coincides with the equicontinuity set.	Let 
	$(g_n)\subset {{\rm Ell}}(W,\mu)$ be a sequence of distinct elements. Then  $(g_n)$ can be written as:
	
	$$
	g_n=
	\begin{bmatrix}
		1 & a_n &0\\
		0 & 1 & 0\\
		0 & 0 & \mu^{-3}(a_n) 
	\end{bmatrix} \;,
	$$
	for some 		  sequence  $(a_n)$ in   $W$ with $|a_n|$ converging to $\infty$. 
	Hence there exists  $a\in \C^*$ such that:
	$$
	g_n
	\xymatrix{		\ar[r]_{n \rightarrow \infty}&}
	\begin{bmatrix}
		0 & a &0\\
		0 & 0 & 0\\
		0 & 0 & 0 
	\end{bmatrix}\;;  \;\hbox {and} \;
	g_n^*
	\xymatrix{		\ar[r]_{n \rightarrow \infty}&}
	\begin{bmatrix}
		0 & 0 &0\\
		a & 0 & 0\\
		0 & 0 & 0 
	\end{bmatrix} \;.
	$$
We get $Eq({{\rm Ell}}(W,\mu))=\C^2=\Omega_{Kul}({\rm Ell}(W,\mu))$ and  $\Lambda_{CoG}({\rm Ell}(W,\mu))=\{e_2\}$ is a minimal set. This also proves the statements about  the Conze-Guivarc'h limit set. 
Notice that $\{e_3\}$ also is a minimal set for the dual action, so there is more than one minimal set.
\end{proof}

\subsection{Torus groups}\label{sub:torus groups} 
These are of the form:
$$
{{\mathcal T}}(\mathfrak L)=
\left \{
\left[
\begin{array}{lll}
	1 & 0  &a\\
	0 & 1 & b\\
	0 & 0& 1\\ 
\end{array}
\right]:(a,b)\in \mathfrak L
\right \} \;,
$$	
where $\mathfrak L$ is an additive discrete subgroup of  $\C^2$.

\begin{lemma}\label{torus groups}
	The control group $\Pi(G)$ is an additive subgroup of $\C$ that may or may not be discrete, and:
	\begin{itemize}
		\item   The Kulkarni limit set $\Lambda_{\Kul}$  is a line. 
		\item The   equicontinuity set coincides with the Kulkarni discontinuity region and it is the largest open set where the group acts properly discontinuously.
		\item The Conze-Guivarc'h limit set 
		$\Lambda_{CoG}^*(\mathcal{T}(W))$ of the action on the dual projective plane $\dP^2$ is a single  point, the projective dual of the unique line in $\Lambda_{Kul}$, and it is the only minimal set for the action on $\dP^2$. 
	\end{itemize}
\end{lemma}

The kernel of the control morphism $\Pi$ may or may not be trivial.

\begin{proof}
	The first and second  statements follow from \cite[3.4.2]{CNS}; in fact $\Lambda_{\Kul}$ is the line $\overleftrightarrow{e_1,e_2}$.
	It remains to prove the statements about $\Lambda_{CoG}^*(\mathcal{T}(\mathfrak L))$.
	Let $(g_n)\subset \mathcal{T}(\mathfrak L)$ be a sequence of distinct elements.   Choose a sequence  $(a_n,b_n) \in \mathfrak L$ such that  $\vert a_n\vert+ \vert b_n\vert  \xymatrix{		\ar[r]_{n \rightarrow \infty}&} \infty  $ and set:
	$$
	g_n=
	\begin{bmatrix}
		1 & 0 &a_n\\
		0 & 1 & b_n\\
		0 & 0 & 1 
	\end{bmatrix} \;.
	$$
	We can assume  that (taking a subsequence if necessary) there exist  $a,b\in \C$ so that   $\vert a\vert+ \vert b\vert\neq 0$ and 
	$$
	g_n
	\xymatrix{		\ar[r]_{n \rightarrow \infty}&}
	\begin{bmatrix}
		0 & 0 &a\\
		0 & 0 & b\\
		0 & 0 & 0 
	\end{bmatrix} \;.
	$$
	Hence   
	$$
	g_n^*=		
	\begin{bmatrix}
		1 & 0 &0\\
		0 & 1 & 0\\
		-a_n & -b_n & 1 
	\end{bmatrix} \,
	$$
	converges to $\{e_3\}$. 
	Thus $\Lambda_{CoG}^*(\mathcal{T}(L))=\{e_3\}$ and this set is minimal.  
\end{proof}

\subsection{Dual Torus groups}
\label{e:if}\label{nk-Torus}
This family actually splits in three classes, depending on the limit set. To explain this trichotomy, let us consider first the following lemma. We recall that the rank of a group is the smallest number of elements that generate it.

\begin{lemma} \label{ladd}
	Let $\mathcal{L}$ be an additive discrete subgroup of  $\C^2$ and consider the natural projection $[ \,]: \C^2 \setminus \{0\} \to \P^1$. Then the closure  of  $[\mathcal{L}  \setminus \{0\}]$ is  either a point, a real projective line or the whole of $\P^1$, and one has:
	\begin{itemize}
		\item The closure  of  $[\mathcal{L}\setminus \{0\}]$ is a point if and only if $\mathcal{L}$ has rank 1 or it has rank 2 and it is generated by two $\C$-linearly dependent elements.
		\item The closure  of $[\mathcal{L} \setminus \{0\}]$ is a real projective line if and only if $\mathcal{L}$ has rank 2 and it is generated by two $\C$-linearly independent elements. 
		\item The closure  of  $[\mathcal{L}  \setminus \{0\}] = \P^1$  if and only if $\mathcal{L}$ has rank at least 3.
	\end{itemize}
\end{lemma} 
\begin{proof}
	If $\mathcal{L}$ has rank 1, then trivially $[\mathcal{L} \setminus \{0\}]$ is a single point. If $\mathcal{L}$ has  rank two  and is generated  by two    $\C$-linearly dependent vectors,   then $[\mathcal{L}  \setminus \{0\}]$ is trivially a single point. If $\mathcal{L} $ has rank two and is generated by two  $\C$-linearly independent vectors,     then we can assume that $\mathcal{L}=\Bbb{Z}\oplus \Bbb{Z}$, therefore   $[\mathcal{L} \setminus \{0\}]=\{[1,m/n]:m,n\in \Z\}$ which is a dense set  in a real projective   line in $\P^1$. Finally, if $\mathcal{L}$ has rank at least 3, then we can pick up  two elements in $\mathcal{L}$ which are $\C$-linearly independent. Moreover we can assume that $(1,0)$ and $(0,1)$ are such elements, let $p=(w_1,w_2)$  be the other point in $\mathcal{L}$, then
	$$
	\overline{[\Pi  \setminus \{0\}]}
	=
	\overline{
		\{
		[k+nw_1:l+nw_2]:k,l,n\in \Bbb{Z}
		\}
	}
	=
	\overline{
		\{
		[r+sw_1:t+sw_2]:r,s,t\in \Bbb{R}
		\} 
	} \;.
	$$
	Now, if  $z\in \C$ satisfies  $Im(z)\neq 0$ and  we consider
	\[
	s_0=1,\,
	r_0=\frac{Im(w_2)-Re(z)Im(w_1)}{Im(z)}-Re(w_1),\,
	t_0=z(r+w_1)-w_2\,,
	\]
	then a straightforward computation  shows  that $[1:z]=[r_0+s_0w_1:t_0+s_0w_2]$, which concludes the proof.
\end{proof}

\begin{definition} \label{2.4}
	A {\it dual torus group}  in $\PSL(3,\C)$ is a group of the form
	\[
	\mathcal{T}^*(\mathfrak L)=
	\left 
	\{
	g_{(a,b)}=
	\left[
	\begin{array}{lll}
		1 & a & b\\
		0 & 1 & 0\\
		0 & 0 & 1\\ 
	\end{array}
	\right]:(a,b)\in \mathfrak L
	\right \} \;,
	\] 
	where $\mathfrak L \subset \Bbb{C}^2 $ is an additive discrete group. {\it The torus group is of type I}  if its Kulkarni limit set $\Lambda_{\Kul}$ is a line; {\it it is 
		of type II if}  $\Lambda_{\Kul}$ is a cone of lines over a circle, and {\it it is of type III}  if $\Lambda_{\Kul} = \P^2$.
\end{definition}


\begin{lemma} \label{l:dualtd}
	All dual torus groups are discrete, with trivial control group, and:
	\begin{enumerate}
		\item The group is:
		\begin{enumerate}
			\item Type I  if and only if $\mathfrak L$ either has rank $1$ or it is generated by two $\C$-linearly dependent elements;
			\item Type II  if and only if $\mathfrak L$  has rank $2$ and   it is generated by two $\C$-linearly independent elements;
			\item  Type III  if and only if $\mathfrak L$  has rank at least $3$. 
		\end{enumerate}
		\item If it is of type I or II, then:
		\begin{enumerate}
			\item Its Kulkarni discontinuity  set   is the largest
			open set where the action is  properly discontinuously and it coincides with the equicontinuity set.
			\item The Conze-Guivarc'h limit set of the dual action is the projective dual  of $\Lambda_{Kul}$; it is either a point if the group is of type I or a real projective line if it is of type II, and it is a minimal set whenever it is a single point.
		\end{enumerate}
		
		\item If the group is of type III, then:
		\begin{enumerate}
			\item The sets $\Omega_{\Kul}$ and Eq are both empty and there is no non-empty open invariant set of $\P^2$ where the group acts properly discontinuously.
			\item The Conze-Guivarc'h is the projective dual of $\Lambda_{Kul}$, but this is not a    minimal set since there is a global fixed point.
			
		\end{enumerate}
	\end{enumerate}
\end{lemma}

\begin{proof} It is not hard to check  that these groups are discrete and they have  trivial control group. The rest of the proof follows  from Lemma \ref{ladd}.
\end{proof} 

\subsection{The  Inoue groups} \label{e:id}	There are three classes of   groups in this family.

a)\emph{Inoue Kleinian} groups, or just Inoue groups.  These are  proper subgroups of fundamental groups of Inoue surfaces. The
limit set is a cone of lines over a circle.

b) \emph{Inoue non-Kleinian} groups.  These are Inoue groups in the sense of Definition \ref{d:inoue} whose 
limit set is all $\P^2$.

c) \emph{Extended Inoue} groups. These are finite extensions of Inoue groups whose  limit set is all of $\P^2$, hence they are not Kleinian. 


\subsubsection {\bf a) Inoue groups}\label{def group I}
Let $\mathfrak L \subset \Bbb{C}^2 $ be an additive discrete subgroup,
let $ x ,y, z  \in \C$ and set
$$ \gamma_1=	\gamma_1(x,y,z) :=	
	\begin{bmatrix}
		1 & x+z & y\\
		0 & 1& z\\
		0 & 0& 1
	\end{bmatrix}\; \, , \; \, {\mathcal I} = {\mathcal I}(u,v) :=  \left  \langle \begin{bmatrix}
	1 &u  &v\\
	 0 & 1 & 0 \\
	0 & 0 & 1 \\
\end{bmatrix}  \; , \; (u,v)\in \mathfrak L \right \rangle \;. 
$$
Notice ${\mathcal I} $ is a dual torus group.

\begin{definition}  \label{d:inoue}
 An Inoue group is a discrete subgroup of $\PSL(3,\C)$ which is an extension $G = \langle {\mathcal I}, \gamma_1\rangle$ where the dual torus group ${\mathcal I} $ is of type II. This kind of groups splits into two classes:  Kleinian, {\it i.e.} subgroups with non-empty discontinuity region, and non-Kleinian.
	\end{definition}

%


\begin{theorem} \label{2.7}	
	A group $\Gamma$ is Inoue if and only if there exists a dual torus group $\widetilde I$ such that:
	$$ \Gamma = \large\{ \widetilde I \, \gamma_1^k \, | k \in \, \mathbb Z\large \}  \;.$$
	These groups are 
	non-Abelian   semi-direct  products    $\Bbb{Z}^{2} \rtimes \Bbb{Z}$.  They are weakly-controllable with  control group  $\Z$ and kernel  (of the control morphism)  $\Bbb{Z}\oplus \Z$.
	Moreover:
	\begin{enumerate}
	\item  The group is Kleinian if and only if it is of the form:
	\begin{small}
	\[
	{\rm Ino}(x,y,p,q,r)= 
	\left \{
	\left[
	\begin{array}{ccc}
		1 & k+ \frac{l p}{q}+m x & \frac{l }{r} +m \left(k+\frac{l p}{q}\right)+ \begin{pmatrix} m\\ 2\end{pmatrix} x+m y\\
		0 & 1 & m \\
		0 & 0 & 1 \\
	\end{array}
	\right]:k,l,m\in \Bbb{Z}
	\right \}
	\]
\end{small}
\hskip -4pt where   $x,y\in \C$ and  $p,q,r\in \Bbb{Z}$  are such that $p,q$ are co-primes  and $q^2$ divides $r$.   

\item If the group is Kleinian, then:
	
	\begin{itemize}
		\item The Kulkarni limit set is a cone of lines over a real projective space: 
		$$\Lambda_{Kul} =  \overleftrightarrow{e_1,e_2}
		\cup \bigcup_{s\in \Bbb{R}} \overleftrightarrow{e_1,[0:1: s]\,}.
		$$ 
		
		\item   The Kulkarni discontinuity  set   coincides with the equicontinuity set and it  is the largest
		open set on which the group  acts properly discontinuously. These sets are 
		are biholomorphic to $\C \times (\mathbb H^+ \cup \mathbb H^-)$ where $\mathbb{H}^{\pm}$ are the open half planes in $\C$.
		\item The Conze-Guivarc'h limit set for  ${\rm Ino}(x,y,p,q,r)$ is a real projective line, and it is not minimal.
	\end{itemize}
	\end{enumerate} 
\end{theorem}

\begin{proof}   Set $\widetilde {\mathcal{I}}=\{g\in \Gamma: g\in Ker(\Pi) \}$,  then $\Gamma=\{h\gamma^k_1:k\in \Bbb{Z}, h\in \widetilde {\mathcal{I}}\}$ and $\widetilde {\mathcal{I}}$ is a dual torus groups, proving the first statement.  On the other hand, it is clear that 	${\rm Ino}(x,y,p,q,r)$ is a discrete group.   Set: 
	\[
	g{(k,l,m)}
	=
	\left[
	\begin{array}{ccc}
		1 & k+l c+m x & ld +m \left(k+lc\right)+ \begin{pmatrix} m\\ 2\end{pmatrix} x+m y\\
		0 & 1 & m \\
		0 & 0 & 1 \\
	\end{array}
	\right] \,.
	\]
	Let $k,l\in \Z$, then  a straightforward computation shows  that the fixed point set is: $$Fix(g{(k,l,0)})= \overleftrightarrow {e_1,[0: -ld:k+lc]}\,.$$ Hence, letting $L_0$ be as in Definition \ref{def Kulkarni limit set} we get:
	$$
	\overleftrightarrow{e_1,e_3}
	\cup \bigcup_{s\in \Bbb{R}} \overleftrightarrow{e_1,[0: 1:s]\,}\subset L_0({\rm Ino}(x,y,p,q,r)) \,.
	$$

	Finally, let us show that  
	$$
	\Bbb{P}^2_\Bbb{C}-Eq({\rm Ino}(x,y,p,q,r))=\overleftrightarrow{e_1,e_3}
	\cup \bigcup_{s\in \Bbb{R}} \overleftrightarrow{e_1,[0:1:s]\,} \,.
	$$
	Let   $(g_m)_{m\in \Bbb{N}}\subset {\rm Ino}(x,y,p,q,r)$ be a sequence of distinct elements, then there exists a sequence $u_m=(k_m,l_m,n_m)\in \Bbb{Z}^3$  of distinct elements  such that:
	\[
	g_m=
	\left[
	\begin{array}{ccc}
		1 & k_m+l_m c+n_m x & l_md +n_m \left(k_m+l_m c\right)+ \begin{pmatrix} n_m\\ 2\end{pmatrix} x+n_m y\\
		0 & 1 & n_m \\
		0 & 0 & 1 \\
	\end{array}
	\right].
	\]
	Since $G_w$ is discrete we get 
	$r_m=max\{\vert k_m \vert, \vert l_m \vert, \vert n_m \vert \} \xymatrix{
		\ar[r]_{m \rightarrow  \infty}&} \infty $. Now we can  assume that there exists   $u=(x,y,z)\in\Bbb{R}^3-\{{\bf 0}\}$  such that 
	$r_m^{-1}u_m \xymatrix{\ar[r]_{m \rightarrow  \infty}&} u$,  thus
	\[
	g_m
	\xymatrix{
		\ar[r]_{m \rightarrow  \infty}&}
	P=
	\left[
	\begin{array}{ccc}
		0 & k_0+l_0 c+n_0 x & l_0d +n_0 \left(k_0+l_0 c\right)+ \begin{pmatrix} n_0\\ 2\end{pmatrix} x+n_0 y\\
		0 & 0 & n_0 \\
		0 & 0 & 0 \\
	\end{array}
	\right],
	\]
	\[
	Ker(P)=
	\left \{
	\begin{array}{ll}
		\overleftrightarrow{e_1,e_2} &\textrm { if } k_0+l_0 c+n_0 x =0\\
		\overleftrightarrow{e_1,[0: -l_0d : k_0+l_0 c]} &\textrm { if }  k_0+l_0 c  \neq 0, n_0=0\\
		e_1 &\textrm { in other case }\\
	\end{array}
	\right .
	\] 
	This last convergence implies that  $\Lambda_{CoG}^*$ is a real projective line. This set is not minimal because it has a global fixed point.
\end{proof}

\subsubsection{\bf b) Extended Inoue groups}\label{Ext Inoue} 
These  are discrete extensions  of Inoue groups. 
We use the following normal forms. 

\begin{equation}\label{generators} 		
	\mathfrak g =	
	\begin{bmatrix}
		1& 1& s\\
		0 & 1& 1\\
		0& 0 & 1\\
	\end{bmatrix}\;  ;\, s \in \C \,.
	\end{equation}

\begin{definition}	 \label{2.8}
	An {\it extended Inoue group} is a discrete group $  {\widetilde{\rm Ino}}(\mathfrak L,  x,y,z)$ generated  by matrices with normal forms $\mathfrak g,  \gamma_1$ and the group 
	 $ {\mathcal I}$.
\end{definition}	

We have:
\begin{lemma} \label{l:exti} Up to conjugation, every extended Inoue group is of type:
	\[
	{\widetilde{\rm Ino}}(\mathfrak L,  x,y,z) \;=	\;
	\left \{
	h \cdot 
	\mathfrak g^k \cdot 
	\gamma_1^m
 \;,  k, m \in \mathbb Z, h\in  {\mathcal I} \right \}, 
	\]
	with   $a,b,c,x \in \C$, $k, m\in \Z$ 
	and $(u,v)\in \mathfrak L$ satisfying that if we let $\pi_1, \pi_2$ be the coordinate functions in $\C^2$, then: 
	
	i) $(0,x-z) , (0,\pi_1(\mathfrak L)) , (0,z \cdot \pi_2(\mathfrak L))$ are in $\mathfrak L$. 
	
	ii) $\mathfrak L$ has rank at least 3;  and 
	
	iii) either $z\notin \R$ or $x=y=z=0$; hence the control group has rank 2.
\end{lemma}

The proof follows from Proposition \ref{p:pifa1d}.

\begin{corollary} \label{2.10} The extended Inoue   groups have
		an infinite discrete control group. There is not an open invariant set of $\P^2$ where the group acts properly discontinuously;   the sets $\Omega_{\Kul}$ and Eq are both empty, the Kulkarni limit set is all of $\P^2$  and the  Conze-Guivarc'h limit set contains at least a complex projective line. 		
	\end{corollary}

		\begin{proof}  
		  By Lemma \ref{l:exti} the group $\mathcal{I}$ in \ref{def group I}
		  is a  dual torus group with rank at least three. By Lemma \ref{l:dualtd}  we deduce $L_0(\mathcal{I})=\P^2$, where $L_0$ is the first set in Kulkarni's limit set, so there is no open set on which an extended Inoue group acts properly discontinuously.  Finally, we remark  that by  the last statement in the lemma above, the control group  is the additive  group spanned by $z$ and $1$, so  by  \ref{l:exti} we conclude that the control group has rank two and it is discrete. Note that the kernel of the control morphism is a dual torus group of type III, so 
		  Eq and $ \Omega_{\Kul}$ are both empty, the Kulkarni limit set is all of $\P^2$  and the  Conze-Guivarc'h limit set contains at least one  complex projective line. 
	\end{proof}

	The following is an immediate consequence of Theorem \ref{2.7} and Lemma \ref{l:exti}:
	\begin{proposition}\label{Prop Types Inoue groups}
	\begin{enumerate}
	\item The Inoue Kleinian groups  have limit set a cone of lines over a circle, the kernel of the control morphism is $\Z \oplus \Z$ and the control group is $\Z$.
	\item  The Inoue non-Kleinian groups  have limit set all of $\P^2$,  the kernel of the control morphism is $\Z \oplus \Z$ and the control group is $\Z$.
	\item The extended Inoue  groups  are finite extensions of Inoue groups (Kleinian or not). They	
	have limit set all of $\P^2$,  the kernel of the control morphism is $\Z^k$ for some $k \ge 3$, and the control group is $\Z \oplus \Z$.

	\end{enumerate}
	
	\end{proposition}

\subsection {Kodaira groups}\label{Kodaira groups} 
There are:

a) Abelian Kodaira groups, ${\mathcal{K}_0}$.

b) Extended Kodaira groups ${\mathcal{K}_i}$, $i= 1,\ldots, 5$.  These are all non-Abelian; they are finite extensions of Abelian Kodaira groups and they
split into six types according to their limit set and the control group. These are:

\begin{enumerate}
\item  Groups ${\mathcal{K}_i}$, $i = 1, 2, 3$.
These   three classes are constructed in a similar way  (see Lemma \ref{l. non-Abelian Kodaira} below). 

$\bullet$  The groups ${\mathcal{K}_1}$ have limit set a complex projective line and discrete control group. 

$\bullet$  The groups ${\mathcal{K}_2}$ have limit set a cone of lines over a circle and non-discrete control group.

$\bullet$  The groups ${\mathcal{K}_3}$ have limit set the whole $\P^2$.

\item  The groups ${\mathcal{K}_4}$, ${\mathcal{K}_5}$ and ${\mathcal{K}_6}$ are obtained by a different type of extensions. These also have limit set  the whole of $\P^2$ and  the three classes  are
distinguished by the rank of their control groups, which are always non-discrete.
\end{enumerate}

\subsubsection{\bf Abelian Kodaira groups}\label{Kodaira-0}

These are 	 Abelian subgroups  of  fundamental groups of Kodaira surfaces; they are finite extensions of dual torus groups of type I.

\begin{definition} A  Kodaira group is a discrete group in $\PSL(3,\C)$ such that each  element  in the group can be written in the form:	
$$
\left[
\begin{array}{lll}
	1 & a&b\\
	0 & 1 & a\\
	0 & 0& 1\\ 
\end{array} 
\right] \,.
$$
\end{definition}
We have: 

\begin{lemma} \label{2.12}
	Let $G$ be a  Kodaira group, then 	$G$ is , weakly-controllable and  isomorphic to $Ker (G)\oplus  C(G)$ where   $C(G)$ is  the control group  and  $Ker(G)$ is the kernel of the control morphism. Also:
	\begin{itemize}	
		\item  We have $Rank \,G\leq  4$
		\item	The Kulkarni limit set is a line. 
		\item Its complement $\Omega_{\Kul}$ coincides with the equicontinuity set and 
		is the largest open set on which the group acts properly discontinuously. 
		\item The 
		Conze-Guivarc'h limit set of the action on the dual $\dP^2$ is a point, the dual of $\Lambda_{Kul}$, and it is the unique minimal set. 
		
	\end{itemize} 
\end{lemma}		
\begin{proof}
	That  the group is  a direct sum as stated is immediate. The claim about the rank follows from \cite{Suwa}.
	Now let $(g_n)\subset G$ be a sequence of distinct elements, then there exist sequences   $(a_n),(b_n)\subset \C$  such that:
	$$
	g_n=
	\begin{bmatrix}
		1 & a_n &b_n\\
		0 & 1 & a_n\\
		0 & 0 & 1 
	\end{bmatrix} \;.
	$$
	This implies that there exist  $a,b,c,d\in \C$ satisfying: $\vert a\vert +\vert b \vert \neq 0$, $\vert c\vert +\vert d \vert \neq 0$ and
	$$
	g_n
	\xymatrix{		\ar[r]_{n \rightarrow \infty}&} g=
	\begin{bmatrix}
		0 & a &b\\
		0 & 0 & a\\
		0 & 0 & 0 
	\end{bmatrix} \; ; \; \hbox{and } \; 
	g_n^*
	\xymatrix{		\ar[r]_{n \rightarrow \infty}&}
	h=
	\begin{bmatrix}
		0 & 0 &0\\
		c & 0 & 0\\
		d & c & 0 
	\end{bmatrix} \;.
	$$
	Thus  $Ker(g)\subset \overleftrightarrow{e_1,e_2}$.   The rest of the proof is as in the elliptic case. \end{proof}

The next result enables us to provide a normal form for  the Kodaira groups. 
\begin{lemma}  A group  $G$ is   an Abelian Kodaira group if and only if there is $W\subset \C$, an additive discrete subgroup, $R\subset \Bbb{C} $ is an  additive subgroup and  
	$L:R\rightarrow \Bbb{C}$  a group  morphism such that $Rank(W)+Rank(R)\leq 4$, 
	\[
	G=
{\mathcal{K}_0}(W,R,L)=
	\left \{
	\left[
	\begin{array}{lll}
		1 & a&L(a)+a^2/2 +w\\
		0 & 1 & a\\
		0 & 0& 1\\ 
	\end{array}
	\right]:a\in R,w \in W
	\right \} \,,
	\]
	and 
	\[
	\lim_{n \rightarrow \infty} L(x_n)+w_n=\infty
	\]
	\hskip-15pt for every sequence $(w_n)\in W$ and every sequence $(x_n )\subset R$ converging to $0$. 
	
\end{lemma}

As an example, 
consider  $w_1=1,w_2= \sqrt{2},w_3= e^{\pi i /4},w_4=\sqrt{2}e^{\pi i /4}$, and let $W$ be $Span_\Bbb{Z}\{w_1,w_2,w_3,w_4\}$. Define
$ L: W \rightarrow \C$   by  setting $ L(1)=2^{-1}$, $ L(\sqrt{2})=\sqrt{2}-1$,  $ L(e^{\pi i /4})=i+2^{-1}$, $L(\sqrt{2}e^{\pi i /4})=\sqrt{2}i+1$, and then extend by linearity:
$$L\left (\sum_{j=1}^{4} n_jw_j\right )=\sum_{j=1}^4 n_j L(w_j)\,.$$ 
Then	\[
{\mathcal{K}_0}(W,L)=
\left \{
\left [
\begin{array}{lll}
	1 & a & L(a)+a^2 2^{-1}\\
	0 & 1 & a\\
	0 & 0 &1\\
\end{array}
\right ]
:a\in W
\right \}
\] 
is  weakly-controllable  and  isomorphic to $\Bbb{Z}\oplus \Bbb{Z}\oplus\Bbb{Z}\oplus\Bbb{Z}$. If we let $\Pi$ be its control morphism, then 
a straightforward computation shows that its kernel 
$Ker(\Pi\vert_{{\mathcal{K}_0}(W,L)})$  is trivial and the control group     is a  dense  subgroup of $\C$.

Similarly, let $W$ be now $Span_\Z(\{1, \sqrt{2}, e^{\pi i /4}\})$ and define 
$ L: W \rightarrow \C$   as in the previous example. Then,    
\[
\left \{
\begin{bmatrix}
	1 & x & L(x)+x^2/2 +ik\\
	0 & 1& x\\
	0 & 0 &1\\
\end{bmatrix}:
x\in W, k\in \Z
\right \}
\]
is  a  weakly-controllable discrete group      isomorphic to  $\Bbb{Z}^3\oplus\Bbb{Z}$.   We find
that the kernel  of the projection to the control group  is  isomorphic to $\Z$ and  
the  control group is  non-discrete and   isomorphic to $\Z^2$.






\subsubsection {\bf Extended Kodaira groups ${\mathcal{K}_1}$,  ${\mathcal{K}_2}$, ${\mathcal{K}_3}$} \label{n-ab Kodaira}\label{n-ab Kodairacono} 

$\,$
\vskip.1cm

\noindent
Let    $W$ be a non-trivial additive subgroup of $\C$ and define a normal form:
		$$h_w=
		\begin{bmatrix}
			1 & 0 & w\\ 
			0& 1& 0\\
			0 & 0 & 1
		\end{bmatrix}\, , \; w \in W -\{{\bf 0}\}\; .
		$$	

	\begin{definition} \label{2.1.5}
		  An {\it extended Kodaira group}  ${\mathcal{K}_i}$, $i=1 , 2, 3$, 
		  is a discrete group generated by a normal form $h_w$  and the normal forms $\gamma_1$ and $\mathfrak g$ 
		$$\mathfrak g =	
		\begin{bmatrix}
			1& 1& 0\\
			0 & 1& 1\\
			0& 0 & 1\\
		\end{bmatrix}\;  ;\,
		\quad  \gamma_1=		
		\begin{bmatrix}
			1 & x+z & y\\
			0 & 1& z\\
			0 & 0& 1
		\end{bmatrix}\; ; \;  \, x ,y, z  \in \C\;.$$
			That is: 
		${\mathcal{K}_i}=  \langle  h_w, \mathfrak g, \gamma_1  \rangle  \;.
		$
	\end{definition}
	

	\begin{lemma}\label{l. non-Abelian Kodaira}
We have:		\begin{enumerate}
			\item  The group is ${\mathcal{K}_1}$    $\Leftrightarrow$  its control group is discrete   $\Leftrightarrow$  $z \in \C \setminus \R$;
		\item The group is ${\mathcal{K}_2}$   $\Leftrightarrow$ its control group is non-discrete and  $W$ has rank 1 $\Leftrightarrow$   $W$ has rank 1 and
			$z \in  \R \setminus \Bbb{Q}$;
			\item The group  is ${\mathcal{K}_3}$ $\Leftrightarrow$ $W$ has rank $> 1$ and $z \in  \R \setminus \mathbb Q$. 
In this case the control group is automatically non-discrete.	
		\end{enumerate}
	\end{lemma}

	As an example of type III groups  take  $W=\{(m+ni,k+li)\in \C^2:k,l,m,n\in \Z\}$, $a=b=0$  and $c=i$, we get $\Heis(3,\Z[i])$, the Heisenberg group with coefficients in $\Z[i]$.

	\begin{lemma}	\label{217} 
				The non- Kodaira groups ${\mathcal{K}_i}$, $ i=1, 2, 3$, 
				can be written as:
		\[ {\mathcal{K}_i} \, = \,
		\left \{
		\begin{bmatrix}
			1 &0  &w\\
			0 & 1 & 0 \\
			0 & 0 & 1 \\
		\end{bmatrix}
		\begin{bmatrix}
			1 &1 &0\\
			0 & 1 & 1 \\
			0 & 0 & 1 \\
		\end{bmatrix}^n
		\begin{bmatrix}
			1 &x+z &y\\
			0 & 1 & z \\
			0 & 0 & 1 \\
		\end{bmatrix}^m
		:m,n\in \Z, w\in W
		\right \}
		\]
		Hence these  are
		semi direct  products  of the form  $(\Bbb{Z}^{Rank(W)} \rtimes \Bbb{Z})\rtimes \Z$.     The kernel of the control morphism   is  isomorphic to $\Z^{Rank(W)}$. The  control group is     isomorphic to $\Z\oplus \Z$ and it is discrete if and only if the group  is of type  ${\mathcal{K}_1}$.
		
	\end{lemma}

	\begin{proof}
		The proof is a direct consequence of  Theorem \ref{t:pifa1}.
		\end{proof}

	\begin{lemma} \label{2.18}
		For the non- Kodaira groups ${\mathcal{K}_1}$ and ${\mathcal{K}_2}$ 
		one has:
		\begin{itemize}
			\item The  Kulkarni discontinuity region   coincides with the equicontinuity set and is the largest open set on which the group acts properly discontinuously. 
			\item The Conze-Guivarc'h limit set is the dual  of $\Lambda_{\Kul}$. It is a point if the group is ${\mathcal{K}_1}$ and   a real projective line if the group is  ${\mathcal{K}_2}$.
		\end{itemize}
	\end{lemma}

	\begin{proof}	The proof is similar to the other cases.
		Let  $(g_n)\subset {\mathcal{K}_1}$ be a sequence of distinct elements. Then there exist sequences  $(k_n),(m_n)\subset \Z$ and     $(w_n)\subset W$  such that: 
		$g_n$ is:
		\begin{Small}
			$$
			\begin{bmatrix}
				1 & k_n+m_n (x+z) & \frac{1}{2} \Big((k_n+m_nz)(k_n+m_nz-1)+m_n(2y+z(-x+1-z))+2w_n+m_n^2zx\Big) \\
				0 & 1 & k_n+m_n z \\
				0 & 0 & 1 \\
			\end{bmatrix}, 
			$$
		\end{Small}
		and   the  inverse transpose matrix $g_n^*$ is:
		\begin{small}
			$$
			\begin{bmatrix}
				1 & 0 & 0 \\
				-k_n-m_n x-m_n z & 1 & 0 \\
				1/2 \Big(k_n + k_n^2 - 2 w_n - 2 m_n y + 2 m_n k_n (x + z) + m_n (1 + m_n) z (x + z)\Big) & -k_n-m_n z & 1 \\
			\end{bmatrix} \,.
			$$
		\end{small}
		\vskip.2cm
		
		\hskip-4pt 
				Case 1.  $(k_n), (m_n)$ are eventually constant. In  this case $w_n \xymatrix{		\ar[r]_{n \rightarrow \infty}&} \infty $ and therefore 		$$
		g_n \xymatrix{		\ar[r]_{n \rightarrow \infty}&}
		\begin{bmatrix}
			0 & 0 & 1 \\
			0 & 0 & 0 \\
			0 & 0 & 0 \\
		\end{bmatrix} \,.
		$$
		
In the sequel  we will assume that either $(k_n)$ or $(m_n)$ is   a sequence of  distinct elements.
		
		\vskip.2cm
		\noindent
		Case 2.  $k_n+m_n z \xymatrix{		\ar[r]_{n \rightarrow \infty}&} u\in \C$. So  $z\in \R-\Bbb{Q}$ and therefore  $W$ has rank 1 and  $k_n,m_n \xymatrix{		\ar[r]_{n \rightarrow \infty}&} \infty $. Let  $w\in \C^*$ be the  generator of $W$ and define    
		$\rho_n=\max\{\vert m_n\vert, 2^{-1} \vert m_n(2y+z(-x+1-z))+2w_n+m_n^2zx\vert\}$,  so we can assume  that there are $a_1,b_1\in \C$ such that 
		$$\rho^{-1}_n(m_n, 2^{-1}(m_n(2y+z(-x+1-z))+2w_n+m_n^2zx))  \xymatrix{		\ar[r]_{n \rightarrow \infty}&} (a_1,b_1). $$
		Thus 
		$$g_n \xymatrix{		\ar[r]_{n \rightarrow \infty}&}
		\begin{bmatrix}
			0 & a_1 x & b_1 \\
			0 & 0 & 0 \\
			0 & 0 & 0 \\
		\end{bmatrix}=g \,.
		$$
		For simplicity assume  $a_1\neq 0$, so there is $l\in \Z$ and $(l_n)\subset \Z$ such that 
		$$
		2b_1a_1^{-1}= 2y+z(-lw+1-z)+w \lim_{n \rightarrow \infty} (2l_nm_n^{-1}+m_nzl)
		$$
		where $r=\lim_{n \rightarrow \infty} (2l_nm_n^{-1}+m_nzl)\in \R$. Now a straightforward computation shows:
		\[
		Ker(g)=\overleftrightarrow{e_1,(2lw)^{-1}( 2y+z(1-z)+w(r-lz))e_2-e_3 } \;.
		\]

	\vskip.2cm	
		
\noindent
		Case 3.  $h_n=k_n+m_n z \xymatrix{		\ar[r]_{n \rightarrow \infty}&} \infty$. Let us define:
		\[
		\rho_n =  
		\max\Big\{\vert  h_n+m_n x\vert , \vert \frac{1}{2} (h_n(h_n-1)+m_n(2y+z(1-x-z))+2w_n+m_n^2zx) \vert, \vert 
		h_n\vert
		\Big \} 
		\]   
		\hskip-4pt
		Then we can assume that there are $a_1,b_1,c_1\in \C$ such that $\vert a_1\vert+\vert b_1\vert+\vert c_1\vert \neq 0$ and 
		\begin{equation}\label{e:conve}
			\begin{array}{l}
				\rho_n^{-1} ( k_n+m_n (x+z)) \xymatrix{		\ar[r]_{n \rightarrow \infty}&} a_1\\
				\rho_n^{-1} {\begin{Small}( \frac{1}{2} (h_n(h_n-1)+m_n(2y+z(1-x-z))+2w_n+m_n^2zx) ) \xymatrix{		\ar[r]_{n \rightarrow \infty}&} b_1 \end{Small}}\\
				\rho_n^{-1}(k_n+m_n z ) \xymatrix{		\ar[r]_{n \rightarrow \infty}&} c_1 \;.
			\end{array}
		\end{equation}
		Hence
		$$g_n \xymatrix{		\ar[r]_{n \rightarrow \infty}&}
		\begin{bmatrix}
			0 & a_1  & b_1 \\
			0 & 0 & c_1 \\
			0 & 0 & 0 \\
		\end{bmatrix}=g \,.
		$$
		If $c_1\neq 0$ we get that $Ker(g)$ is either a point or $\overleftrightarrow{e_1,e_2}$, so let us assume $c_1=0$.  Under this assumption by equation \ref{e:conve} we deduce:
		\[
		\begin{array}{l}
			\rho_n^{-1} m_n   \xymatrix{		\ar[r]_{n \rightarrow \infty}&} a_1x^{-1} \;, \\ 
			\rho_n^{-1} ( \frac{1}{2} ((k_n+m_nz)^2+2w_n+m_n^2zx) ) \xymatrix{		\ar[r]_{n \rightarrow \infty}&} b_1-a_12^{-1}x^{-1}(2y+z(-x+1-z)).\\
		\end{array}
		\]
		At this point observe that in this case $a_1=0$ implies  $Ker (g)=\overleftrightarrow {e_1,e_2}$, so we will assume $a_1\neq 0$. 
		
	\vskip.2cm
		\noindent	
		Claim 1. We have  $z\in \R$. Observe that
		 
		\[
		\lim_{n \rightarrow \infty}  m_n^{-1} (k_n+m_n z ) \,= \,
		\lim_{n \rightarrow \infty}  (\rho_n)^{-1}(k_n+m_n z )\cdot \lim_{n \rightarrow \infty}  \frac{\rho _n}{m_n}\\
		=a_1^{-1}\cdot 0=0 \;.
		\] 
		Thus $ \lim_{n \rightarrow \infty}  m_n^{-1} k_n= -z \in \Bbb{R}$.\\

		As a consequence of the previous claim and Lemma \ref{217} we deduce that $W$ has rank 1 and $z\in \Bbb{R}-\Bbb{Q}$. Moreover, by our previous analysis the only interesting case is  $W\nsubseteq \R$. As before let $w$ be the generator of $W$, thus there are  $(l_n)_{n\geq 0}\subset \Z$ such that
		$$g=
		\begin{bmatrix}
			0 & x  & 2^{-1}(2y+z(-l_0w+1-z)+s)\\
			0 & 0 & 0\\
			0 & 0 & 0 \\
		\end{bmatrix} \;,
		$$
		where  
		$s=\lim_{n \rightarrow \infty} m_n^{-1} ( \frac{1}{2} ((k_n+m_nz)^2+2l_n w+m_n^2zl_0w) )$. Now observe that the following limits exist and they are finite.
		
		$$\lim_{n \rightarrow \infty} \frac{1}{m_n}(k_n+m_nz)^2;\,\,\, 2s_2=\lim_{n \rightarrow \infty} m_n^{-1}   (2l_n +m_n^2zl_0) \;.$$
		Thus  $s=s_1+s_2w$ and 
		
		$$g=
		\begin{bmatrix}
			0 & l_0w  & 2^{-1}(2y+z(1-z)+s_1+(s_2-l_0)w)\\
			0 & 0 & 0\\
			0 & 0 & 0 \\
		\end{bmatrix} \;,
		$$
		which concludes the proof. 
	\end{proof}



	
	\subsubsection {\bf Extended Kodaira groups ${\mathcal{K}_4}$ , ${\mathcal{K}_5}$} \label{H group} \label{K group}

	 These are similar to the previous groups, but with one and two more generators, respectively.
	We introduce the following normal forms:
	
	$$
	\gamma_2=
	\begin{bmatrix}
		1 & a + c& b\\
		0 & 1& c\\
		0 & 0& 1\\
	\end{bmatrix} \; , \;
	\gamma_3 = \begin{bmatrix}
		1 &d+f&e\\
		0 & 1 & f \\
		0 & 0 & 1 \\
	\end{bmatrix} \; ,
	$$
	with $a, b, c, d, e, f \in \C$.
	Notice that $\gamma_1, \gamma_2, \gamma_3$ are  matrices of the same type evaluated on different parameters. In each case we will specify the conditions on all these parameters.

	Let $k,l,m\in \Z$, $w\in W$,   with $W$ as above,	and   $x,a,b,c,d,e,f \in \C$   satisfy:  
	$a\neq 0$, $\{a,d, af-dc\}\subset W$, 
	and let ${\mathcal{K}_4}$ be the group depending on all these parameters, defined by 
	\[ {\mathcal{K}_4} \; = \; 
	\left \{
	h_w \cdot g^k \cdot \gamma_1^m \cdot \gamma_2^n \; : \; k, m,n \in \Z, w\in W
	\right \} \,.
	\]
	\hskip -1pt 	
	We assume further that for every  real line  $\ell\subset \C$ passing through the origin  we have $rank(\ell\cap Span_\Z\{1,c,f\})\leq 2$. This  condition springs from \cite{wal} where the author  considers the density
	properties of finitely generated subgroups of rational points on
	a commutative algebraic group over a number field. Additionally the 
	following  restrictions should be imposed over the coefficients  in order to get discrete groups:
	\begin{enumerate}
		\item if $d=0$ then $f\notin \R$;
		\item if $ad^{-1}\notin \R$ then there are 
		$r_1,r_2\in\Bbb{Q}$ such that 
		\[
		c=\frac{a(f-r_1)}{d}-r_2 \,.
		\]
	\end{enumerate}

	We now take the previous ${\mathcal{K}_4}$-groups and add one more generator. 
	Define
	$${\mathcal{K}_5}=\{h_w \cdot g^k \cdot \gamma_1^m \cdot \gamma_2^m  \cdot \gamma_3^l \; : \; k, m,n, l \in \Z, w\in W\}\,,$$
	where   
		$x,a,b,c,d,e,f,g,h,j \in \C$    are subject to the conditions: $a( \vert d\vert +\vert g\vert)\neq 0 $, $\{ a,d, g, dj- gf, af-cd, aj - cg \}\subset W$.
	Furthermore:
	
	\begin{enumerate}
		
		\item   If $g=0$, then there are $r_0,r_1,r_2,r_3\in\Bbb{Q}$ such that $r_1\neq 0$ and
		$$
		(r_2-r_0)^2+4r_1r_3<0;
		$$
		such that: 
		$$
		j=\frac{r_2+r_0\pm \sqrt{(r_2-r_0)^2+4r_1r_3}}{2};
		$$
		\[
		a=d \; \frac{r_2-r_0\pm \sqrt{(r_2-r_0)^2+4r_1r_3}}{2r_1};\
		\]
		$$
		c= (f-r_4) \, \frac{r_2-r_0\pm \sqrt{(r_2-r_0)^2+4r_1r_3}}{2r_1}-r_5
		$$

		\item If $ad^{-1}\notin \R$,  then there are $r_1,r_2,s_1,t_1,s_2, t_2,s_3,t_3 \in \R$ such that: 
		\[
		g=r_1 a+r_2 d\; , \,\quad 
		r_2 t_2\neq t_3 \;,
		\]
		\[
		f=\frac{A_2\pm\left(c+ t_2\right)\sqrt{A_1} }{2 \left(r_2 t_2-t_3\right)}\; \; , \,\quad
		j=\frac{A_3\pm\left(c r_2+t_3\right) \sqrt{A_1} }{2 \left(r_2 t_2-t_3\right)} \;,
		\]
		where:
		\begin{small}
			$$
			\begin{array}{l}
				A_1=\left(-r_2 s_2+r_1 t_2-s_3-t_1\right){}^2-4 \left(r_2 s_1 t_2-r_1 s_2 t_3+r_2 s_2 s_3-r_1 t_1 t_2+s_3 t_1-s_1 t_3\right) \;, \\
				A_2=-c r_2 s_2-c r_1 t_2+c s_3-c t_1+r_2 s_2 t_2-r_1 t_2^2+s_3 t_2-2 s_2 t_3-t_1 t_2 \;, \\
				A_3=r_2 \left(c r_1 t_2+s_3 \left(c+2 t_2\right)-c t_1-s_2 t_3\right)+t_3 \left(-r_1 \left(2 c+t_2\right)-s_3-t_1\right)-c r_2^2 s_2 \;.\\
			\end{array}
			$$
		\end{small}	
	\end{enumerate}

	These groups have the following properties:



\begin{lemma}
	The ${\mathcal{K}_4}$ and ${\mathcal{K}_5}$ groups   are all weakly-controllable, discrete,  with kernel  isomorphic to  $\Bbb{Z}\oplus  \Z$  and  their control group is non-discrete and isomorphic to $\Z\oplus  \Z\oplus \Z$ and $\Z\oplus  \Z\oplus \Z\oplus \Z$  respectively. Moreover  the  Kulkarni limit set is $\P^2$ and the equicontinuity regions is empty.
\end{lemma}
\begin{proof}
	For the proof of this lemma see Lemma \ref{l:m4}  and  Propositions \ref{p:iird3c},   \ref{p:iirnd3c}.
	\end{proof}

	
	\begin{remark}
	
	We notice that in these families  we can have examples where the control group $\Pi({\mathcal{K}_i})$  is non-discrete but is not dense in $\C$, as well as examples where $\Pi({\mathcal{K}_i})$ is dense in $\C$. The control group by definition is a subgroup of $\PSL(2,\C)$, yet, in the cases we consider here, each element in the control group is a translation, so we can think of the control group as being an additive subgroup of $\C$.
	
	For instance, taking 
	$W=\Z[i]$, $x=b=e=d=0$,   $f=i$ and  $c$ an irrational number, we generate a discrete group with a non-discrete dense subgroup of $\C$ as control group. However, taking 
	$W=\Z[i]$,  $x=a=b=e=0$, $c=i$, $d=1$ and  $f=r+is$, where   $r,s\in \R$  satisfy that   $\{1,r,s\}$ is a   $\Bbb{Q}$-linearly independent set, then the corresponding discrete group has a dense subgroup of $\C$ as control group.  This shows that unlike the 1-dimensional case where purely parabolic groups have very simple dynamics, in dimension 2 the
	two different  dynamics described above, both fairly rich,  exist for control groups  of purely parabolic groups.  
	This type of behavior  is important when studying non-discrete subgroups of Lie groups, see for instance \cite{roy, wal}.
	\end{remark}

	\section{The classification theorems}\label{s: classification} 
	
	We now  provide a complete classification of the purely parabolic discrete subgroups of $\PSL(3,\C)$.

	\begin{theorem} \label{t:mainck}
		Let $G\subset {\rm PSL} (3,\C)$ be a complex Kleinian discrete subgroup. Then 
		$G$ does not contain  loxodromic elements if, and only if,  there exists a normal  subgroup $G_0\subset G$ of finite index  such that
		$G_0$ is purely parabolic and it is conjugate to one (and only one) of the following groups:	
		\begin{enumerate}
			\item \label{k-2} An Elliptic group 
			  as in Subsection  \ref {e:fd2}. 		
			\item \label{k-1} A Torus group,   as in Subsection  \ref{sub:torus groups}.
			
			\item \label{k-5} A dual Torus group   of type I and II,   as in Definition  \ref{2.4}.
			
			\item \label{k-6}   A Kleinian Inoue group,  as in Theorem  \ref{2.7}.

			\item \label{k-3} An  Kodaira group,  as in Example \ref {Kodaira-0}.
			
			\item \label{k-4} An extended  Kodaira  group ${K}_1$ or  ${K}_2$,  as in Definition \ref{2.1.5}.

		\end{enumerate}
		
	\end{theorem}


	\begin{theorem} \label{t:maind} Let $G\subset {\rm PSL} (3,\C)$ be a  discrete subgroup which is not Kleinian. Then 
		$G$ does not contain  loxodromic elements if, and only if,  there exists a normal  subgroup $G_0\subset G$ of finite index  such that
		$G_0$ is purely parabolic and it is conjugate to one (and only one) of the following groups:		
		\begin{enumerate}
			
			\item  A dual Torus group   of type III,   as in Definition  \ref{2.4}.

			\item A  discrete non Kleinian Inoue group, as in Theorem  \ref{2.7}.

			\item  An extended Inoue group, as in Definition   \ref{2.8}.

			\item  An extended Kodaira group  $  {\mathcal{K}}_3$,  as in Definition \ref{2.1.5}.

			\item  An extended  Kodaira  group ${\mathcal{K}}_4$ or   ${\mathcal{K}}_5$,   as in Subsection  \ref{K group}.
			
		\end{enumerate}		
		
	\end{theorem}
	\vskip.2cm

	The rest of this article is devoted to proving theorems \ref{t:mainck} and \ref {t:maind}.
	
	Concerning the dynamics we have:
	
	\begin{corollary} F\label{t. geo-dyn-K}
		Let $G\subset {\rm PSL} (3,\C)$ be  a complex Kleinian group.
		Then:
		
		\begin{enumerate}
			
			\item \label{p:1dynkp} The Kulkarni limit set $\Lambda_{\Kul}$ is:
			\begin{itemize}
				\item A pencil of lines over a circle  if it is a dual torus group of type II,  an Inoue group  or extended  Kodaira groups      ${\mathcal{K}}_2$.
				\item One line otherwise. 

					\end{itemize}
			
			\item \label{p2dynkp} Concerning the Conze-Guivarc'h  set $\Lambda_{CoG}^*$ of the action on the dual $\dP^2$ :
			
			\begin{itemize}
				\item  Is either a point or a real projective line.  
				\item   It is a minimal whenever   is a single point.
				
				
			\end{itemize}
			
		\end{enumerate}		
		
	\end{corollary}

	\begin{proof} The proof of part  (\ref{p:1dynkp}) goes as follows. By Theorem
		 \ref{t:maind},   $G$  is virtually  conjugated  to one and only one  of the groups given  in  Theorem  \ref{t:maind}.  From  Lemma \ref{l:dualtd},  Theorem  \ref{2.7} and Corollary \ref{2.10}	 we know the groups with  limit  set  a cone of lines over a circle: these  are   dual torus group  type II, Kleinian Inoue  and  ${\mathcal{K}}_2$  groups  and for   the remaining ones in Theorem \ref{t:maind}  its limit set is a line.
		
In order to 	proof of part  (\ref{p2dynkp})  we apply Lemmas  \ref{torus groups},  \ref{l:dualtd}, \ref{2.12},  \ref{2.18},   Theorem \ref{2.7}  and  Corollary \ref{2.10}	, the    Conze-Guivarc'h  limit set  is  either a real projective line or a point.
		\end{proof}
	

	Concerning the region of discontinuity, this is empty in all non-Kleinian cases. In the Kleinian case we have:
	
	\begin{corollary} \label{t. geo-disc}
		Let $G\subset {\rm PSL} (3,\C)$ be a purely parabolic complex Kleinian group. Then   $\Omega_{\Kul}$  is biholomorphic to:  
			\begin{enumerate}
				\item $\C^2$  if $\Lambda_{\Kul}$  is a line.
				\item   In other case  is $\C \times (\mathbb \H^+ \cup \mathbb \H^-)$ where $\H^{+}$ is the upper  half plane in $\C$ and $\H^{-}$ is the lower   half plane in $\C$.

			\end{enumerate}
		
	\end{corollary}
	\begin{proof}
		The proof follows  immediately  from Theorem \ref{t. geo-dyn-K} item (\ref{p:1dynkp})   because when the limit set is a line, then its complement in   $\P^2$ is  biholomorphic to  $\C^2$, and if   the limit set is a pencil of lines over a circle, then its complement is biholomorphic to    $\C \times (\mathbb \H^+ \cup \mathbb \H^-)$.
		\end{proof}

\section{Dynamics of triangularizable groups without loxodromic elements}
 In this section we use techniques of dynamical systems  in order to show   that discrete subgroups  of $\PSL(3,\C)$  without loxodromic elements  are  triangularizable, see Theorem  \ref{t:wal}. Moreover we show that these groups are either subgroups of  $\Heis(3,\C)$ or they  can be  described in a very precise way, see Theorem \ref{t:gcsp}.
  
\subsection{Purely parabolic  groups are virtually  triangularizable}   \label{s:ppt}


  Recall that a discrete group  $G\subset \PSL(3,\Bbb{C})$ acts strongly irreducibly on $\Bbb{P}^2_{\C}$ if  there are no points or lines with finite orbit. Also
a  group  $G\subset \PSL(3,\C)$ is   \emph{affine}   if $G$  has  an invariant  complex line in $\P^2$.
Now we have:

\begin{lemma}  \label{l:invs}
	If  $G\subset {\rm PSL}(3,\C)$ is a discrete group without loxodromic elements, then  $G$ is either affine or weakly-controllable. 
\end{lemma}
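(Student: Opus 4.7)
My plan is to argue via the contrapositive of Lemma \ref{l:lgus}: since $G$ has no loxodromic elements, $G$ cannot act strongly irreducibly on $\P^2$, so by the definition of strong irreducibility recalled earlier there must exist either a point or a line with finite $G$-orbit. The remaining task is to convert this finiteness of orbit into a genuine $G$-invariant point (weakly semi-controllable case) or $G$-invariant line (affine case).

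First I would handle the case of a finite orbit of lines $\{\ell_1,\ldots,\ell_k\}$. If $k=1$, the single line is $G$-invariant and $G$ is affine. For $k\geq 2$ I would inspect the finite $G$-invariant set of pairwise intersections $\{\ell_i\cap\ell_j : i\neq j\}$: if all the $\ell_i$ are concurrent at a common point $p$, then $p$ is globally fixed by $G$ and $G$ is weakly semi-controllable; and if the intersection points turn out to be collinear, the unique line carrying them is $G$-invariant, again yielding the affine case. The dual situation, where a point has finite orbit, reduces to the previous one by transporting the argument to the induced action on the dual projective space $\dP^2$: an invariant line there gives an invariant point of $G$ in $\P^2$.

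The main obstacle, which I expect to require the most care, is ruling out the remaining possibility: $G$ preserves a finite set of three or more non-collinear points but stabilizes no single point and no single line. To dispose of this I would pass to the finite-index normal subgroup $K\triangleleft G$ obtained as the kernel of the permutation action of $G$ on that invariant set. Then $K$ pointwise fixes three points of $\P^2$ in general position, so after a suitable choice of basis $K$ is contained in the maximal diagonal torus $T\cong (\C^*)^2$ of $\PSL(3,\C)$. Since $K$ inherits the hypothesis that there are no loxodromic elements, every diagonal entry of each element of $K$ must be unitary, so in fact $K\subset T\cap \PU(3)\cong (S^1)^2$, a compact torus. A discrete subgroup of a compact Lie group is finite, so $K$ is finite, forcing $G$ itself to be finite. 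This residual finite case is then excluded by the standing context of the paper (which concerns infinite purely parabolic groups) or, alternatively, is absorbed into the conclusion by observing that any of the collinear/concurrent configurations produced in the previous paragraph already supplies an invariant point or line for $G$.
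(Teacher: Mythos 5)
Your proposal is correct in outline and starts exactly as the paper does --- the contrapositive of Lemma \ref{l:lgus} yields a point or a line with finite orbit, and duality reduces the two cases to one --- but the key step is handled by a genuinely different argument. The paper takes the projective span $U$ of the finite orbit (of points, say): $U$ is $G$-invariant, so one only has to exclude $U=\P^2$, and this is done dynamically, by picking a parabolic $g\in G$ and noting that $\Lambda_{\Kul}(\langle g\rangle)$ is a single line, so some orbit point lies off it and therefore has infinite $\langle g\rangle$-orbit, a contradiction. Your route is algebraic instead: pass to the finite-index kernel $K$ of the permutation action on the finite invariant set, diagonalize $K$ simultaneously using three fixed points in general position, use the absence of loxodromic elements to force unitary eigenvalues, and conclude that $K$ (hence $G$) is finite because a discrete subgroup of a compact torus is finite. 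Both arguments are valid where they apply; yours avoids any appeal to Kulkarni limit sets and isolates precisely the exceptional configuration, namely that $G$ must be finite. (Your intermediate dichotomy for a finite orbit of $k\ge 2$ lines --- concurrent versus collinear intersection points --- is not exhaustive for $k\ge 3$, but this is harmless since the leftover situation is exactly the ``main obstacle'' you then treat.)

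The one real weak point is your final sentence. The second fallback you offer for the finite case --- that the concurrent/collinear configurations ``already supply an invariant point or line for $G$'' --- is false: a finite group acting irreducibly, e.g.\ the icosahedral group $A_5\subset \SO(3)\subset {\rm PSL}(3,\C)$, has every orbit finite, contains no loxodromic (indeed no parabolic) element, yet preserves no point and no line, so the statement as literally written fails for such groups. You must therefore rely on your first fallback and exclude finite groups explicitly. Note that the paper's own proof has the same blind spot, since the phrase ``let $g\in G$ be a parabolic element'' silently assumes $G$ is not purely elliptic; your reduction at least makes the exceptional case visible rather than hiding it.
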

\begin{proof}
	By   \cite[Proposition 4.10]{BCN},   discrete groups in $\PSL(3,\C)$ acting strongly irreducibly on $\P^2$ contain loxodromic elements,   so we can assume that there is  a non-empty proper subspace $l\subset \P^2$  such that $l$ has a finite orbit under the action of $G$. Observe that by  duality  we can assume $l$ is a point; let $l_1,\ldots,l_k$ be the orbit of $l$  under $G$. 
	
	Let $U$ be the projective space generated by $\{l_1,\ldots,l_k\}$; clearly  $U$ is $G$-invariant. We claim that  $U$ is either a point or a line. Assume,  on the contrary,  that $U=\P^2$. Let $g\in G$ be a parabolic element, then there exist $s\in \{1,\ldots, k\}$ such that $l_{s}\notin \Lambda_{Kul}(\langle g\rangle )$ then $l_s$ has infinite orbit under the cyclic group $\langle g\rangle$, which is a contradiction.
\end{proof}

If $G\subset {\rm PSL}(3,\C)$   does not contain loxodromic elements, then  Lemma \ref{l:invs}  implies that $G$ has either an invariant line or an invariant pencil of lines. The following lemma gives restrictions upon the action of $G$ on the invariant line or pencil. 

\begin{lemma}\label{l:lt7}
	Let $G\subset {\rm PSL}(3,\C)$  be a discrete group without   loxodromic elements.
	\begin{enumerate}
		\item
		If   $G$ is affine,  then the action of $G$ on the invariant line does not  contain a subgroup conjugate to a    dense subgroup of ${\rm SO}(3)$.
		
		\item  If   $G$ is  weakly-controllable, then the control group of $G$ 
		does not  contain a subgroup conjugate to a    dense subgroup of ${\rm SO}(3)$.
	\end{enumerate}
\end{lemma}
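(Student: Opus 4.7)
The plan is to argue by contradiction, first reducing (2) to (1) by projective duality: a weakly semi-controllable group fixing $p\in\P^2$ corresponds, via its action on $\dP^2$, to an affine group on $\dP^2$ fixing the dual line $p^\ast$, with the control morphism becoming the restriction of that dual action to $p^\ast$. It therefore suffices to prove (1).

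Suppose $G$ is affine with $G$-invariant line $\ell$, has no loxodromic elements, and $G|_\ell$ contains a subgroup $H$ conjugate to a dense subgroup of $\SO(3)\subset\PSL(2,\C)$. Conjugating in $\PSL(3,\C)$ by an element preserving $\ell$, I assume $H\subseteq\SO(3)$. Choose coordinates so that $\ell=\{x_3=0\}$ and write each $g\in G$ in block form
\[
g=\begin{pmatrix}A_g & b_g\\ 0 & c_g\end{pmatrix}, \qquad c_g\det A_g=1.
\]
The no-loxodromic hypothesis forces all three eigenvalues of $g$ to have unit modulus, so $|c_g|=|\det A_g|^{1/2}=1$. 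Elements of the kernel $K=\ker\bigl(G\to G|_\ell\bigr)$ have $A_g$ scalar and act on $\C^2=\P^2\setminus\ell$ as affine isometries $z\mapsto\mu z+t$ with $|\mu|=1$. Since $G|_\ell$ is non-discrete in $\PSL(2,\C)$ (it contains a dense subgroup of the positive-dimensional compact group $\SO(3)$) while $G$ is discrete, $K$ must be infinite.

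By Bieberbach, $K$ contains a finite-index subgroup $\Lambda\cong\Z^m$ of pure translations, with $1\le m\le 4$. After passing to a finite-index subgroup of $G$ normalizing $\Lambda$, consider the conjugation representation $\rho:G\to\mathrm{Aut}(\Lambda)\cong\GL(m,\Z)$. A direct computation shows that $g$ sends the translation $\tau_t$ to $\tau_{M_g t}$ with $M_g=\det(A_g)\,A_g\in\GL(2,\C)$; when $[A_g]\in\SO(3)$ and $g$ is non-loxodromic, the normalization above forces $M_g\in\SU(2)\subset\U(2)\subset\GL(4,\R)$. In particular $M_g$ must stabilize $V:=\mathrm{span}_\R\Lambda\subseteq\R^4$. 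If $m<4$ then $V$ is a proper $\R$-subspace whose stabilizer in $\SU(2)$ is a proper closed subgroup, contradicting that the lift of the dense $H\subseteq\SO(3)$ to the double cover $\SU(2)$ is itself dense in $\SU(2)$. Hence $m=4$ and $\rho(G)\subseteq\SU(2)\cap\GL(4,\Z)$, which is finite as the intersection of a compact and a discrete subgroup of $\GL(4,\R)$. The finite-index subgroup $\ker\rho$ then consists of elements with $M_g=I$ on $\Lambda$ of rank $4$, hence $M_g=I$ on $\C^2$, so $\ker\rho\subseteq K$. Therefore $G/K$ is finite, contradicting the density of $H\subseteq G|_\ell$ in $\SO(3)$.

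The main obstacle is the linear-algebra verification that $[A_g]\in\SO(3)$ together with the non-loxodromic normalization forces $M_g\in\SU(2)$, an eigenvalue computation balancing the $\SL(3,\C)$-determinant condition against the $\SU(2)$-representative of $[A_g]$, and the observation that the stabilizer in $\SU(2)$ of a proper real subspace of $\R^4$ is a proper closed subgroup, which follows from the standard $\R$-irreducibility of the action of $\SU(2)$ on $\C^2\cong\R^4$. Once these are in place, Bieberbach combined with the discreteness dichotomy for $K$ yields the contradiction.
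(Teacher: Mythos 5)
Your duality reduction of (2) to (1) is fine, and the endgame of your argument --- the case where $K$ contains a lattice of translations spanning $\R^4$ --- is essentially sound once $\SU(2)$ is replaced by $\U(2)$ throughout (for a non-loxodromic $g$ with $[A_g]\in \SO(3)$ one only gets $M_g=\lambda_g^{3}U_g$ with $U_g\in\SU(2)$ and $|\lambda_g|=1$, and nothing controls the unit scalar; this is harmless because $\U(2)$ is still compact and still acts $\R$-irreducibly on $\C^2$). The genuine gap is the sentence ``Since $G|_\ell$ is non-discrete in $\PSL(2,\C)$ while $G$ is discrete, $K$ must be infinite.'' That inference is false: a discrete subgroup of $\PSL(3,\C)$ can have trivial kernel and non-discrete restriction to an invariant line. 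The cyclic group generated by an irrational ellipto-parabolic element restricts to its invariant line $\overleftrightarrow{e_1,e_3}$ as the non-discrete rotation group $z\mapsto\lambda^{3n}z$ with trivial kernel, and the paper's own family ${\rm Ell}(W,\mu)$ exhibits the same phenomenon for the projection from $e_1$. So the case $K$ finite is not excluded, and there your whole mechanism collapses: $\Lambda$ is trivial, there is no nonzero invariant real subspace to play against irreducibility, and no bound on $G/K$ follows. That missing case is precisely where the content of the lemma lies. (A smaller point of the same kind: the general Bieberbach theorem only provides a finite-index free abelian subgroup of $K$, not one consisting of translations --- think of a screw-motion group; here you must also use that a non-translation $z\mapsto\alpha z+\beta$ in $K$ with $\alpha$ of infinite order would be an elliptic element of infinite order in $\PSL(3,\C)$, again contradicting discreteness.)

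The repair in fact makes the lattice machinery unnecessary. Since $H$ is dense in $\SO(3)$ it is not virtually solvable, so by the Tits alternative it contains a nonabelian free group, hence two elements whose commutator is a rotation of infinite order (this is what the paper's Claims 1 and 2 extract via Tits, Borel's fixed point theorem and Selberg's lemma). Lifting these to $g_1,g_2\in G$ and taking the commutator kills both the corner entry $c_g$ and the scalar ambiguity in the upper block: $[g_1,g_2]$ has a lift $\left(\begin{smallmatrix} U & b\\ 0 & 1\end{smallmatrix}\right)$ with $U\in\SU(2)$ of infinite order, so its eigenvalues $e^{i\theta}$, $e^{-i\theta}$, $1$ are distinct and unitary; it is therefore a diagonalizable elliptic element of infinite order, the closure of the cyclic group it generates is a positive-dimensional torus, and $G$ is not discrete. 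This single computation handles the finite- and infinite-kernel cases uniformly and is essentially the paper's route; your Bieberbach argument, once the finite-kernel case is supplied separately, remains a correct but much heavier alternative for the other case.
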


\begin{proof}
 Let us prove only the case where $G$ is weakly-controllable, the proof in the affine case is similar.  As before, let $\Pi$ be the projection to the control group. Let us proceed by contradiction, 
		let $(g_n)_{n\in \Bbb{N}}$ be an enumeration of  $G$ and define  $H_m=\Pi(\langle g_1,\ldots g_m   \rangle )$. If each  group is finite, then  by   the classification of   subgroups in $\PSL(2,\C)$ with finite order,  we conclude that for  $m$ large $H_m$ is either cyclic or dihedral, and therefore the control  group  $\Pi(G)$ is  a subgroup of the  infinite dihedral group; this is not possible since $\Pi(G)$ is dense in $SO(3)$. Now, applying Selberg's Lemma to the $H_m$'s,  we deduce that $\Pi(G)$ contains an element with infinite order. On the other hand,   since   $\Pi(G)$ is dense  in $SO(3)$ and  by  Tits alternative  we conclude that  for $m$ large,  $H_m$   contains a  rank two free subgroup. To conclude,  let $g_1,g_2\in G$ be such that  $\Pi(g_1),\Pi(g_2)$ generate a rank two free group, then $g_1g_2g_1^{-1}g_2^{-1}$ has a lift $\rho\in \SL(3,\C)$, given by

\[
\rho=
\left (
\begin{array}{cc}
	1 & b\\
	{\bf 0} &  B 
\end{array}
\right )
\]
where $B\in {\rm SO}(3)$ has infinite order  and $b\in \C^2$. Clearly $\rho$ is non-diagonalizable, with unitary eigenvalues and infinite order. Therefore $G$ is non-discrete.
\end{proof}

Now we prove the following  extension of the Lie-Kolchin Theorem \cite {Stein}, in which we allow the existence of non-unipotent elements.

\begin{theorem} \label{t:wal}
	Let $G \subset {\rm PSL (3,\C)}$ be a discrete group without loxodromic elements. Then there exists  a normal subgroup $G_0$  of $G$ with finite index such that $G_0$ leaves invariant a full flag in $\P^2$. Hence the group $G_0$ is simultaneously triangularizable.
\end{theorem}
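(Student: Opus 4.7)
The plan is to follow the strategy outlined in the Introduction: combine Lemma \ref{l:invs} with the Tits alternative and Borel's fixed point theorem. By Lemma \ref{l:invs}, $G$ is either affine (preserves a line $\ell\subset\P^2$) or weakly semi-controllable (fixes a point $p\in\P^2$). In either situation one has an induced homomorphism $\phi\colon G\to\PSL(2,\C)$---restriction to $\ell$ in the affine case, and the control morphism of Definition \ref{d:proyeccion} in the semi-controllable case. The goal is to produce a finite-index normal subgroup $G_0\trianglelefteq G$ for which $\phi(G_0)$ has a fixed point $q\in\P^1$, which completes the already-invariant point or line into a full flag in $\P^2$.

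First I would observe that $\phi(G)$ inherits the no-loxodromic-element property: in suitable coordinates a lift of $g\in G$ is block upper-triangular, with $\phi(g)$ given by one of the $2\times 2$ blocks, so the eigenvalues of $\phi(g)$ are a sub-multiset of those of $g$ (up to a common scalar) and hence have unit modulus. Next I apply the Tits alternative to $\phi(G)\subset\PSL(2,\C)$: either $\phi(G)$ has a finite-index solvable subgroup, or it contains a non-abelian free subgroup $F$. The second alternative I rule out as follows. Since $F$ has no loxodromic elements, the standard classification of subgroups of $\PSL(2,\C)$ without loxodromic elements shows that $F$ is either contained in the stabilizer of a point of $\P^1$ or conjugate into the compact subgroup $\PSU(2)\cong\SO(3)$. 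The first case is impossible because stabilizers of points of $\P^1$ in $\PSL(2,\C)$ are metabelian, so contain no non-abelian free subgroup. In the second case, the closure $\overline F\subset\SO(3)$ is a closed subgroup; since all proper closed subgroups of $\SO(3)$ (the finite ones, $\SO(2)$, and $O(2)$) are virtually abelian while $F$ is non-abelian free, we must have $\overline F=\SO(3)$, making $F$ a dense free subgroup of $\SO(3)$. This contradicts Lemma \ref{l:lt7}.

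Hence there is a finite-index solvable subgroup $H\subset\phi(G)$. Let $\overline H^Z_0$ denote the identity component of its Zariski closure, which is a connected solvable linear algebraic group acting on $\P^1$. By Borel's fixed point theorem it fixes some $q\in\P^1$, and so does the finite-index subgroup $H_0:=H\cap\overline H^Z_0$ of $\phi(G)$. Pulling back, $G_1:=\phi^{-1}(H_0)$ has finite index in $G$; its normal core $G_0\trianglelefteq G$ still has finite index and satisfies $\phi(G_0)\cdot q=\{q\}$. In the affine case the desired flag is $q\in\ell\subset\P^2$; in the semi-controllable case $q\in\P^1$ corresponds to the unique line $\ell_q\subset\P^2$ through $p$ projecting to $q$, and the flag is $p\in\ell_q\subset\P^2$. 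Either way $G_0$ preserves a full flag, hence is simultaneously triangularizable. The main delicate point I expect is the exclusion of the free-subgroup alternative, which requires both the classification of subgroups of $\PSL(2,\C)$ with no loxodromic elements and a careful use of Lemma \ref{l:lt7} via the closed-subgroup structure of $\SO(3)$.
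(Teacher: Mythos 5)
Your proof is correct, but its core runs differently from the paper's. After Lemma \ref{l:invs} --- which both arguments use identically --- the paper does not invoke the Tits alternative or Borel's theorem at the level of Theorem \ref{t:wal} itself: it quotes Corollary 13.7 of \cite{CS}, which asserts that for a subgroup of ${\rm PSL}(2,\C)$ without loxodromic elements the complement of the equicontinuity region in $\P^1$ is either empty or a single point. A single point completes the flag at once (for all of $G$, with no passage to a subgroup), while the empty case makes $\Pi(G)$ precompact, and then Lemma \ref{l:lt7} together with the list of closed subgroups of $\SO(3)$ confines $\Pi(G)$ to a conjugate of $O(2)$, so that the index-two subgroup $\{g:\Pi(g)\in {\rm Rot}_\infty\}$ fixes a point. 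You instead run Tits plus Borel directly on $\phi(G)$ --- essentially the machinery the paper deploys inside the proof of Lemma \ref{l:lt7} --- and you need the classical classification of loxodromic-free subgroups of ${\rm PSL}(2,\C)$ only to dispose of the free-subgroup branch; both routes ultimately call on Lemma \ref{l:lt7} for the same job, namely excluding a control group dense in $\SO(3)$. Your version is slightly longer but has the merit of not relying on the external equicontinuity result from \cite{CS}. Two points you should make explicit: first, the Tits alternative is applied to $\phi(G)$ before finite generation is available (that is Corollary \ref{c:trian}, which the paper deduces \emph{from} this theorem), so you must use the characteristic-zero form of Tits' theorem, which indeed requires no finite generation; second, the dichotomy ``common fixed point on $\P^1$ or conjugate into ${\rm PSU}(2)$'' for loxodromic-free subgroups of ${\rm PSL}(2,\C)$ does hold for arbitrary (non-discrete, infinitely generated) subgroups, but it deserves a reference --- the purely elliptic case is Beardon's common-fixed-point theorem, and the case with a parabolic follows from the standard two-parabolics trace computation.
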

\begin{proof}
	Let $G \subset \PSL(3,\C)$ be a discrete group without loxodromic elements. Then by Lemma \ref{l:invs} we know that $G$ has a proper non-empty projective subspace $p$ invariant under $G$. Here we prove the case where  $p$ is a point;  the other case is analogous, considering a line $\ell$ as a point in $\dP^2$. Since $G$ does not contain loxodromic elements,    neither $\Pi (G)$ does,  and  by  Corollary 13.7 in \cite{CS} we have that  $\P^1-Eq(\Pi(G))$ is either empty or contains a single point. If $\P^1-Eq(\Pi(G))$ is a single point, then $G$ is simultaneously triangularizable. So we assume $\P^1= Eq(\Pi(G))$. Then Lemma \ref{l:lt7} implies that    $\Pi(G)$ is either finite or it is a  subgroup of the infinite dihedral group $Dih_\infty$. If $\Pi(G)$ is finite, it is  enough to consider $G_0=Ker(\Pi\vert_G) $. If $\Pi(G)\subset Dih_\infty$  then  consider  $G_0=\{g\in G:\Pi(g)\in Rot_\infty\}$.
\end{proof}

\begin{corollary} \label{c:trian}
	Let $G\subset {\rm PSL} (3,\C)$ be  a discrete group without loxodromic elements, then $G$ is  virtually  finitely generated.
\end{corollary}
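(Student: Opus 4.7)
The plan is to leverage Theorem \ref{t:wal} and reduce the statement to a known result on discrete subgroups of solvable Lie groups. By Theorem \ref{t:wal}, $G$ contains a finite-index normal subgroup $G_0 \triangleleft G$ which is simultaneously triangularizable, so after conjugation $G_0$ lies in the Borel subgroup $B \subset \PSL(3,\C)$ of upper triangular matrices. Since $G_0$ contains no loxodromic elements, every element has unitary eigenvalues, so $G_0$ actually lies in the connected real Lie subgroup $B_u \subset B$ of upper triangular matrices with unitary diagonal. Explicitly, $B_u \cong \Heis(3,\C)\rtimes \mathbb{T}^2$ is a connected solvable Lie group. Hence it suffices to show that every discrete subgroup of $B_u$ is finitely generated, as this will produce a finite-index finitely generated subgroup of $G$.

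At this point I would invoke the classical theorem of Mostow (see Raghunathan, \emph{Discrete Subgroups of Lie Groups}, Ch.~IV): a discrete subgroup of a simply connected solvable Lie group is strongly polycyclic, hence finitely generated. Because $B_u$ itself is not simply connected (the toral factor contributes $\pi_1(B_u)\cong \Z^2$), pass to the universal cover $p:\widetilde{B_u}\to B_u$. Since $p$ is a local homeomorphism, $\widetilde{G_0}:= p^{-1}(G_0)$ is a discrete subgroup of the simply connected solvable Lie group $\widetilde{B_u}$. Mostow's theorem applied to $\widetilde{G_0}$ yields that $\widetilde{G_0}$ is polycyclic, and in particular finitely generated. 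The kernel of $p$ is the finitely generated central subgroup $\pi_1(B_u)\cong \Z^2$, so $G_0 \cong \widetilde{G_0}/\ker(p)$ is a quotient of a finitely generated group and is therefore finitely generated.

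Combining these steps, $G_0$ is finitely generated and has finite index in $G$, so $G$ is virtually finitely generated (in fact $G$ itself is finitely generated by adjoining a finite set of coset representatives to a generating set of $G_0$).

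The main obstacle is locating the precise form of Mostow's theorem in the literature: the most commonly quoted versions assume the discrete subgroup is a lattice (finite covolume), whereas here one needs the statement for arbitrary discrete subgroups of a simply connected solvable Lie group, which is available in Raghunathan's treatment but requires care in invocation. An alternative, more self-contained route avoids Mostow by exploiting the decomposition $B_u = \Heis(3,\C)\rtimes \mathbb{T}^2$ directly: Mal'cev's theorem shows that the nilpotent intersection $G_0\cap \Heis(3,\C)$ is finitely generated, after which one must analyze the image of $G_0$ in $\mathbb{T}^2$ using the discreteness of $G_0$ — a step which is more delicate because the projection of a discrete subgroup to a torus need not be discrete, but which becomes tractable in the presence of the nilpotent kernel control.
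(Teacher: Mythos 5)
Your proposal is correct and follows essentially the same route as the paper: both apply Theorem \ref{t:wal} to obtain a finite-index triangularizable (hence discrete and solvable) subgroup $G_0$, and then invoke the classical fact that such groups are polycyclic, hence finitely generated. The paper simply cites Auslander's theorem on discrete solvable matrix groups at this point, which makes your extra reduction to $B_u \cong \Heis(3,\C)\rtimes\mathbb{T}^2$ and the passage to its universal cover unnecessary, though not incorrect.
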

\begin{proof}
	Since $G$ does not contain loxodromic elements, we know that $G$ contains a finite index subgroup which is triangularizable and therefore solvable. It is well known that discrete solvable groups are finitely generated,  see \cite{aus}. 
\end{proof}


\subsection{A Lie-Kolchin Theorem for purely parabolic groups} \label{s:screw}
The following   is  a slight extension  of the Lie-Kolchin Theorem.  

\begin{theorem} \label{t:gcsp}	Let $G$ be a purely parabolic discrete group in ${\rm PSL}(3,\C)$. Then  $G$ is either virtually unipotent or it
	contains a subgroup of finite index which is conjugate to:
	\[
	G=
	\left
	\{
	\begin{bmatrix}
	1  & w & 0 \\
	0 & 1 & 0\\
	0&0& \eta(w) 
	\end{bmatrix}: \,
	w\in W, \;n\in \Bbb{Z}
	\right 
	\} \;, 
	\] 
	with  $W$  a discrete additive subgroup of $\C$ and $\eta: W \rightarrow \Bbb{S}^1$ a group morphism.
\end{theorem}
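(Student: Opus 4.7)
The plan combines the flag-invariance from Theorem~\ref{t:wal} with an equidistribution-style discreteness argument on the nilpotent radical, followed by a commutator computation that forces the explicit form.

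First, by Theorem~\ref{t:wal}, pass to a finite-index normal subgroup $G_1 \triangleleft G$ that is simultaneously upper triangular. Parabolicity forces every diagonal to be a permutation of $(\lambda,\lambda,\lambda^{-2})$ for some $\lambda\in S^1$, with $\lambda=1$ iff the element is unipotent. Products of elements of distinct permutation patterns are parabolic only when cube-root-of-unity relations hold among the diagonal entries, so since the joint diagonal map $\chi\colon G_1\to (S^1)^2$ has finitely generated image (by Corollary~\ref{c:trian}), replacing $G_1$ by the preimage of the free part $\mathbb{Z}^r\subset\chi(G_1)$ yields a finite-index subgroup $G_0$ with a uniform diagonal pattern and with $\phi\colon G_0\to S^1$, $\phi(g)=\lambda$, torsion-free. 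After choosing an appropriate basis we may assume the pattern is $(\lambda,\lambda,\lambda^{-2})$; in particular no non-trivial $\lambda$ is a cube root of unity. If $\phi(G_0)=\{1\}$ then $G_0$ is unipotent and we are in the first alternative.

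Otherwise, write a generic element as
\[
g(\lambda,a,b,c)=\begin{bmatrix}\lambda & a & b\\ 0 & \lambda & c\\ 0 & 0 & \lambda^{-2}\end{bmatrix},
\]
and note that $\tilde\alpha(g):=a/\lambda$ is an additive homomorphism $G_0\to\mathbb{C}$. Fix an irrational ellipto-parabolic $g_0\in G_0$ with parameters $(\lambda_0,a_0,b_0,c_0)$. Parabolicity combined with $\lambda^3\neq 1$ for $\lambda\neq 1$ forces $a\neq 0$ whenever $\lambda\neq 1$, so $\ker\tilde\alpha\subset\ker\phi$. The key step is to show $\ker\tilde\alpha\cap\ker\phi=\{I\}$: let $V=\{(p,q)\in\mathbb{C}^2:g(1,0,p,q)\in G_0\}$, and observe by direct computation that conjugation by $g_0$ preserves $V$ and acts by
\[
T_{g_0}(p,q)=(\lambda_0^3 p + a_0\lambda_0^2 q,\;\lambda_0^3 q),
\]
whose iterate is $T_{g_0}^n(p_0,q_0)=\lambda_0^{3n}(p_0+n a_0 q_0/\lambda_0,\,q_0)$. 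Since $\lambda_0^3$ is not a root of unity, any non-zero orbit equidistributes on a circle in one coordinate, contradicting discreteness of $V$; hence $V=\{0\}$, the map $\Phi=(\phi,\tilde\alpha)\colon G_0\hookrightarrow S^1\times\mathbb{C}$ is injective, and $G_0$ is abelian.

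The explicit form now follows from commutator identities in the upper-triangular group. For any unipotent $u=g(1,a',b',c')\in G_0$, the relation $[g_0,u]=I$ forces $c'=0$ and $b'=a'\kappa$ with $\kappa=c_0/(\lambda_0-\lambda_0^{-2})$; applying the same vanishing to pairs of ellipto-parabolic elements gives $c=\kappa(\lambda-\lambda^{-2})$ and $b=\kappa'(\lambda-\lambda^{-2})+\kappa a$ for constants $\kappa,\kappa'\in\mathbb{C}$ depending only on $G_0$. Conjugating by the unipotent matrix with $(1,3)$-entry $\kappa'$ and $(2,3)$-entry $\kappa$ then clears the $b,c$ entries, sending every element to $g(\lambda,a,0,0)$, which in $\PSL(3,\mathbb{C})$ is represented by
\[
\begin{bmatrix}1 & w & 0\\ 0 & 1 & 0\\ 0 & 0 & \lambda^{-3}\end{bmatrix},\qquad w=a/\lambda\in W:=\tilde\alpha(G_0),\quad \eta(w):=\lambda^{-3}.
\]
Additivity of $\tilde\alpha$ makes $\eta$ a group morphism, and discreteness of $W\subset\mathbb{C}$ is immediate from discreteness of $G_0$ (any sequence $g_n$ with $\tilde\alpha(g_n)\to 0$ and $\lambda_n$ convergent would accumulate on a pseudo-projective map, contradicting discreteness).

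The main obstacle is the equidistribution step in the third paragraph: the irrationality of $\lambda_0^3$ is precisely what rules out a non-zero element of $V$, and the preliminary reduction to a uniform pattern with torsion-free $\phi(G_0)$ is essential for this argument not to degenerate at $\lambda^3=1$. Once abelianness is in hand, the remaining work is algebraic, but it is crucial that the constants $\kappa,\kappa'$ produced by pairwise commutator vanishing are independent of the element, which is exactly what permits a single conjugating matrix to clear the $b,c$ entries of the entire group simultaneously.
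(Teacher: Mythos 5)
Your overall architecture---triangularize via Theorem \ref{t:wal}, prove the group is Abelian by showing that a non-trivial unipotent ``translation part'' is incompatible with an irrational ellipto-parabolic element, then normalize by a single conjugation---is essentially the paper's, and your closing commutator and conjugation computations are correct. But there is a genuine gap at the one step that carries all the weight: the claim that $V=\{(p,q):g(1,0,p,q)\in G_0\}$ must vanish because ``any non-zero orbit equidistributes on a circle in one coordinate, contradicting discreteness of $V$.''

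That deduction fails precisely when $q\neq 0$. Your own formula $T_{g_0}^n(p,q)=\lambda_0^{3n}\bigl(p+na_0q/\lambda_0,\,q\bigr)$ shows that for $a_0q\neq 0$ (and $a_0\neq 0$ is forced, as you note, by parabolicity of $g_0$) the first coordinate has modulus $|p+na_0q/\lambda_0|\rightarrow\infty$, so the orbit escapes to infinity in $\C^2$ and has no accumulation point. Discreteness of $V$ is a statement about $V$ as a subset of $\C^2$, not about its coordinate projections: a discrete additive subgroup of $\C^2$ can have a dense projection onto one factor. Because $a_0\neq0$ makes $T_{g_0}$ a non-trivial Jordan block rather than a unitary map, the compactness argument that works in the semisimple case does not apply; your argument only rules out the case $q=0$, $p\neq 0$. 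Eliminating $q\neq 0$ is exactly the content of the paper's Lemma \ref{l:ppind}, and it is the hardest point of the entire proof: there one uses that $V$ is a discrete additive subgroup of $\C^2\cong\R^4$, hence of rank at most $4$, so that the five orbit vectors $\alpha^{3j}(j\beta+1,1)$, $j=0,\dots,4$, are $\Bbb{Q}$-linearly dependent while the first four are $\R$-linearly independent (Lemma \ref{l:ind}); the number-theoretic Lemma \ref{l:latfund} then converts this dependence into a non-zero element of the form $(d,0)\in V$, whose $T_{g_0}$-orbit is bounded on a circle, does accumulate, and yields the contradiction. Some substitute for this production of an element $(d,0)$ is indispensable; without it the abelianness of $G_0$, and hence everything downstream in your argument, is unsupported.
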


  This subsection is divided into four parts: in  \ref{s:2sp} and \ref{s:asp}
we show that every discrete   solvable group with an   irrational ellipto-parabolic element  is commutative, see Corollary  \ref{c:ep1}.    In Subsection \ref{s:sa}  we give a list of all  Commutative Lie groups of ${\rm PSL}(3,\C)$. Finally, in  \ref{pollo} we prove   Theorem \ref {t:gcsp} and we also prove Theorem \ref {Lie-Kolchin}.

\subsubsection {Solvable groups with an   irrational ellipto-parabolic element} \label{s:2sp}
\begin{lemma} \label{e:ind2}
	Let $\alpha\in \Bbb{S}^1$ be an element with infinite order  and $a,b,c,x,y,z\in \C$. If $x,y$ are not both zero, then the group 
	\[
	G=
	\left \langle 
	g_1=
	\begin{bmatrix}
	1 & x & z\\
	0& 1& y\\
	0 & 0& 1
	\end{bmatrix},\, 
	g_2=
	\begin{bmatrix}
	1 & a & b\\
	0& \alpha^3& c\\
	0 & 0& 1
	\end{bmatrix}
	\right \rangle  \;,
	\]
	is non-discrete.
\end{lemma}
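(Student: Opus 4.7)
\textbf{Proof plan for Lemma \ref{e:ind2}.}

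The strategy is to exhibit an infinite sequence of pairwise distinct elements of $G$ that lies in a bounded region of $\PSL(3,\C)$, contradicting discreteness. A preliminary observation is that because $\alpha\in S^1$ has infinite order, in particular $\alpha^3\neq 1$, so $|\alpha^3-1|>0$.

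First I would carry out the one-step conjugation computation
$$g_2\,g_1\,g_2^{-1}=\begin{bmatrix} 1 & x\alpha^{-3} & z+ay-cx\alpha^{-3} \\ 0 & 1 & \alpha^{3}y \\ 0 & 0 & 1 \end{bmatrix},$$
whose key feature is that the $(1,2)$ and $(2,3)$ entries are scaled by $\alpha^{-3}$ and $\alpha^{3}$ respectively, while the diagonal stays $(1,1,1)$. This suggests setting $h_n:=g_2^n g_1 g_2^{-n}\in G$ and iterating. By induction on the relation $h_{n+1}=g_2\,h_n\,g_2^{-1}$, one obtains
$$h_n=\begin{bmatrix} 1 & x\alpha^{-3n} & z_n \\ 0 & 1 & y\alpha^{3n} \\ 0 & 0 & 1 \end{bmatrix},\qquad z_n=z+ay\sum_{k=0}^{n-1}\alpha^{3k}-cx\alpha^{-3}\sum_{k=0}^{n-1}\alpha^{-3k}.$$
Evaluating the geometric sums as $(\alpha^{3n}-1)/(\alpha^3-1)$ and $(\alpha^{-3n}-1)/(\alpha^{-3}-1)$ and using $|\alpha|=1$ together with $\alpha^3\neq 1$, one sees that $|z_n|$ stays bounded by a constant independent of $n$. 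Thus every entry of $h_n$ is uniformly bounded.

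Finally, since $\alpha$ has infinite order the set $\{\alpha^{3n}:n\in\Z\}$ is infinite, and since $|x|+|y|\neq 0$ at least one of the sequences $(x\alpha^{-3n})$ or $(y\alpha^{3n})$ takes infinitely many distinct values. Two upper triangular matrices with $1$'s on the diagonal coincide in $\PSL(3,\C)$ if and only if they are equal as matrices, so the $h_n$ are pairwise distinct. A bounded infinite family of distinct elements in $\PSL(3,\C)$ admits a convergent subsequence of distinct terms (Bolzano--Weierstrass), which rules out discreteness of $G$.

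The only nontrivial step is controlling the $(1,3)$ entry $z_n$; this is where the hypothesis that $\alpha^3\neq 1$ (equivalently, that $\alpha$ has infinite order) is used in an essential way, since otherwise the geometric sums diverge linearly and the conjugates $h_n$ escape to infinity rather than accumulating. All other ingredients are routine linear algebra together with the standard observation about projective equivalence of unipotent matrices.
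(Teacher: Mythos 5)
Your proof is correct and follows essentially the same route as the paper: both consider the conjugates $g_2^{\,n} g_1 g_2^{-n}$ and observe that they form a bounded infinite family of pairwise distinct unipotent elements, which forces non-discreteness. The only difference is cosmetic --- the paper first conjugates so that $a=c=0$, making the $(1,3)$ entry exactly constant, whereas you keep the general form and bound $z_n$ by summing geometric series; both devices rest on the same fact that $\alpha^3\neq 1$ keeps everything bounded.
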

\begin{proof}
	Let $h\in \PSL(3,\C) $ be given by:
	\[
	h=
	\begin{bmatrix}
	1 & a(1-\alpha^3)^{-1} & b\\
	0& 1& -c(1-\alpha^3)^{-1}\\
	0 & 0& 1
	\end{bmatrix} \;.
	\]
	A straightforward computation shows:
	\[
	h g_2h^{-1}=
	\begin{bmatrix}
	1 & 0 & b+a c(1-\alpha^3)^{-1}\\
	0& \alpha^{3}& 0\\
	0 & 0& 1
	\end{bmatrix},\quad 
	hg_1h^{-1}=
	\begin{bmatrix}
	1 & x & z+cx(1-\alpha^3)^{-1} \\
	0& 1& y\\
	0 & 0& 1
	\end{bmatrix} \;.
	\]
	We take $a=c=0$. Set $g_n=g_2^n g_1g^{-n}_2$ and observe that we have:
	\[
	g_n=g_2^n g_1g^{-n}_2=
	\begin{bmatrix}
	1 & x\alpha^{-3n}& z \\
	0& 1& y\alpha^{3n}\\
	0 & 0& 1
	\end{bmatrix} \,.
	\]
	Clearly $(g_n)$ contains a convergent sequence of distinct elements, proving the lemma.
\end{proof}

\begin{lemma} \label{l:ppind}
	Let $\alpha\in \Bbb{S}^1$ be an element with infinite order and $x,y,z,\beta, \mu,\nu \in \C$. If   $x,y$ are not both zero, then  the group
	
	\[
	G=
	\left \langle
	g_1=
	\begin{bmatrix}
	1 & z &x\\
	0&1&y\\
	0&0&1\\ 
	\end{bmatrix}
	,\, 
	g_2=
	\begin{bmatrix}
	1 & \beta &\mu\\
	0&1&\nu\\
	0&0&\alpha^{-3}\\ 
	\end{bmatrix}
	\right \rangle \;,
	\]	
	is non-discrete.
\end{lemma}
\begin{proof} Notice first that $\beta=0$ implies that $g_2$ is an elliptic element with infinite order, which makes $G$ non-discrete.  So we assume that $\beta\neq 0$ and  $G$ is discrete. 
	  An easy computation shows:
		\[
		G_0=[G, G]=
		\left \{
		\begin{bmatrix}
		1 & 0 & a\\
		0 & 1 & b\\
		0 & 0 & 1\\
		\end{bmatrix}:(a,b)\in  \mathfrak L
		\right \}\,,
		\]
		where $\mathfrak L$ is a discrete  additive subgroup of $\C^2$. Consider $g=[g_{ij}]=[g_2,[g_2,g_1]]$. Then $g\in G_0 $ and $g_{13}g_{23}\neq 0$, so  after conjugating with an upper triangular element, if necessary, we can  assume that $(1,1)\in \mathfrak L$.  A straightforward computation shows:
	
	\[
	g_2^{n}
	\begin{bmatrix}
	1 & 0 &1 \\
	0 & 1 & 1\\
	0 & 0 & 1\\
	\end{bmatrix}
	g^{-n}_2=
	\begin{bmatrix}
	1 & 0 & \alpha^{3n}(n\beta+1)\\
	0 & 1 & \alpha^{3n}\\
	0 & 0 & 1\\
	\end{bmatrix} \,;
	\]
	thus   $\{\alpha^{3n}(n\beta+1,1):n\in \Z\}\subset \mathfrak L$. Now we claim:\\
	
	\noindent
	Claim 1. The set $A=\{\alpha^{3n}(n\beta+1,1):n=0,1,2, 3\}$ is  $\R$-linearly independent. Assume, on the contrary,  that there are $r_0,r_1,r_2,r_3\in \R$ not all equal to $0$, such that:
	\[
	\begin{array}{ll}
	0&=
	\sum_{j=0}^3
	r_j\alpha^{3j}
	(j\beta+1, 1)\\
	&	=	
	\left (\sum_{j=0}^3 r_j\alpha^{3j},0
	\right )
	+
	\sum_{j=0}^3
	\beta r_j\alpha^{3j} (j,1)\\
	&	=\sum_{j=0}^3
	\beta r_j\alpha^{3j}
	(	j,1) \, .
	\end{array} \;
	\] 
	Thus  $\{\alpha^{3j}(j,1):j=0,1,2, 3\}$ is  $\R$-linearly dependent, which is  a contradiction.\\ 
	
	\noindent
	Claim 2. $A=\{\alpha^{3j}(j,1):j=0,1,2, 3,4\}$ is a $\Bbb{Q}$-linearly dependent set. Observe  that  $B=\{\alpha^{3j}(\beta j+1,1):j=0,1,2, 3,4\}$ is  $\Bbb{Q}$-linearly dependent; then using similar arguments as in the previous claim, we get  that $A$  is  $\Bbb{Q}$-linearly dependent.\\
	
	\noindent
 Claim 3. There is $d\in \C^*$ such that $(d,0)\in \mathfrak L$. By Lemma \ref{l:latfund},   there exists  $c\in \C^*$ and $m_0,\ldots, m_5\in \Bbb{Z}$ such that 
	\[
	(c,	0)
	=\sum_{j=0}^5
	m_j\alpha^{3j}
	(j,1),
	\]
	thus
	\[
	(	c\beta,
	0) 
	=\sum_{j=0}^5
	m_j\alpha^{3j}
	(	j\beta+1,
	1).
	\]

	Finally,  let $(m_n)\subset \Z$ be such that $(\alpha^{3m_n})$ is a sequence of distinct elements which converge to 1 and $d\in \C^*$ is such that $(d,0)\in \mathfrak L$.  Then
	
	\[
	g_m=
	g_1^{m_n}
	\begin{bmatrix}
	1 & 0 &d\\
	0 & 1 & 0\\
	0 &0 & 1\\
	\end{bmatrix}
	g_1^{-m_n}
	=
	\begin{bmatrix}
	1 & 0 &\alpha^{3 m_n}d\\
	0 & 1 & 0\\
	0 &0 & 1\\
	\end{bmatrix}
	\xymatrix{
		\ar[r]_{n \rightarrow \infty}&} 
	\begin{bmatrix}
	1 & 0 &d\\
	0 & 1 & 0\\
	0 &0 & 1\\
	\end{bmatrix}
	\]
	which is a contradiction, so $G$ is non-discrete.
\end{proof}
\begin{lemma} \label{l:ppind1}
	Let $\alpha\in \Bbb{S}^1$ be an element with infinite order and $x,y,z,\beta,\mu,\nu \in \C$. If   $x,y$ are not both zero,  then the group
	\[
	G=
	\left \langle
	g_1=
	\begin{bmatrix}
	1 & x &y\\
	0&1&z\\
	0&0&1\\ 
	\end{bmatrix}
	,\, 
	g_2=
	\begin{bmatrix}
	\alpha^{-3} & \beta &\mu\\
	0&1&\nu\\
	0&0&1\\ 
	\end{bmatrix}
	\right \rangle
	\]	
	is non-discrete.
\end{lemma}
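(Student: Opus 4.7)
The plan is to reduce this lemma to Lemma \ref{l:ppind} via a projective duality argument. Define the automorphism $\sigma: \PSL(3,\C) \to \PSL(3,\C)$ by $\sigma([g]) = [J g^{-T} J]$, where $J$ is the $3 \times 3$ anti-diagonal matrix with $1$'s on the anti-diagonal. The map $g \mapsto g^{-T}$ is the contragredient representation of $\SL(3,\C)$ (a continuous group homomorphism that descends to $\PSL(3,\C)$, since $\Bbb{Z}_3$-scaling commutes with inverse-transpose), and conjugation by $J$ is a continuous inner automorphism; hence $\sigma$ is a continuous automorphism of $\PSL(3,\C)$, and in particular $G$ is discrete if and only if $\sigma(G)$ is discrete.

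A direct matrix computation (writing $g_1 = I + N$ with $N$ nilpotent and applying $(I+N)^{-1} = I - N + N^2$, and inverting $g_2$ by the upper-triangular formula) yields
\[
\sigma(g_1) = \begin{bmatrix} 1 & -z & xz-y \\ 0 & 1 & -x \\ 0 & 0 & 1 \end{bmatrix}, \qquad
\sigma(g_2) = \begin{bmatrix} 1 & -\nu & \alpha^{3}(\beta\nu-\mu) \\ 0 & 1 & -\alpha^{3}\beta \\ 0 & 0 & \alpha^{3} \end{bmatrix}.
\]
Setting $\alpha' = \alpha^{-1}$, which also lies in $\Bbb{S}^1$ and has infinite order, the matrices $\sigma(g_1)$ and $\sigma(g_2)$ have exactly the form of the generators in Lemma \ref{l:ppind} with $\alpha$ replaced by $\alpha'$. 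Before invoking that lemma, I dispose of the degenerate case $\nu = 0$ separately: then $g_2$ is diagonalizable with eigenvalues $\{\alpha^{-3}, 1, 1\}$, so $g_2$ itself is an elliptic element of infinite order and $G$ is already non-discrete by the usual argument.

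Assuming $\nu \neq 0$, it remains to verify the nondegeneracy hypothesis of Lemma \ref{l:ppind}: the sum of moduli of the $(1,3)$ and $(2,3)$ entries of $\sigma(g_1)$ must be nonzero, i.e., $|xz - y| + |x| \neq 0$. This sum vanishes only when $x = 0$ and $y = 0$, which contradicts the standing hypothesis $|x| + |y| \neq 0$. Applying Lemma \ref{l:ppind} to $\sigma(G) = \langle \sigma(g_1), \sigma(g_2) \rangle$ then produces non-discreteness of $\sigma(G)$, and therefore of $G$. The only genuine ``obstacle'' is bookkeeping: identifying the correct duality (inverse-transpose followed by conjugation by the long element $J$), verifying that $\sigma$ really is a topological group automorphism (well-defined on $\PSL$), and checking that the asymmetric hypothesis $|x|+|y|\neq 0$ of Lemma \ref{l:ppind1} translates to the corresponding asymmetric hypothesis in Lemma \ref{l:ppind}. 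A more laborious alternative would be to redo the whole commutator-and-orbit analysis of Lemma \ref{l:ppind} with the ellipto-parabolic eigenvalue in the $(1,1)$ position rather than the $(3,3)$ position; the duality reduction avoids that entirely.
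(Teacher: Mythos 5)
Your proposal is correct and follows essentially the same route as the paper: the paper's proof also applies the contragredient $\rho([M])=[(M^t)^{-1}]$ together with conjugation by the antidiagonal matrix to transform the generators into the configuration of Lemma \ref{l:ppind}, arriving at exactly the two matrices you compute. Your additional checks (well-definedness of $\sigma$ on $\PSL(3,\C)$, the translation of the hypothesis $|x|+|y|\neq 0$, and the case $\nu=0$, which Lemma \ref{l:ppind} in fact already covers since its statement allows $\beta=0$) only make explicit what the paper leaves implicit.
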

\begin{proof}
	Consider the group morphism $\rho:\PSL(3,\C)\rightarrow \PSL(3,\C)$ given by $\rho([M])=(M^t)^{-1}$, here $M^t$ denotes the transpose matrix of $M$. We claim that $\rho(G)$ is non-discrete, which will prove the lemma.  For this, notice that Lemma \ref{l:ppind}   implies:
	\[
	\begin{bmatrix}
	0 &0 &1\\
	0 &1 &0\\
	1 & 0& 0
	\end{bmatrix}
	\left  (
	\begin{bmatrix}
	1 &x &y\\
	0 &1 &z\\
	0& 0& 1
	\end{bmatrix} ^{t}
	\right )^{-1}
	\begin{bmatrix}
	0 &0 &1\\
	0 &1 &0\\
	1 & 0& 0
	\end{bmatrix}
	=	\begin{bmatrix}
	1 &-z & zx-y\\
	0 &1 &-x\\
	0 & 0& 1
	\end{bmatrix},
	\]

	\[
	\begin{bmatrix}
	0 &0 &1\\
	0 &1 &0\\
	1 & 0& 0
	\end{bmatrix}
	\left (
	\begin{bmatrix}
	\alpha^{-3} &\beta &\mu\\
	0 &1 &\nu\\
	0 & 0& 1
	\end{bmatrix}^{t}
	\right )^{-1}
	\begin{bmatrix}
	0 &0 &1\\
	0 &1 &0\\
	1 & 0& 0
	\end{bmatrix}
	=
	\begin{bmatrix}
	1 &-\nu  &\alpha^{3}(\beta \nu-\mu)\\
	0 &1 &-\beta \alpha^3\\
	0 & 0& \alpha^{3}
	\end{bmatrix} \,,
	\]
	and the result follows. \end{proof}

\subsubsection{Commutator group of solvable discrete groups containing irrational ellipto-parabolic elements}\label{s:asp}
In the following, if $g\in \GL(3,\C)$, then $g_{ij}$ will denote the $ij$-th element of the matrix $g$. 

\begin{definition}\label{def $U_+$} 
	Define a  group $U_+$ in $\PSL(3,\C)$ by: 
	\[
	U_+=
	\left\{ 
	\begin{bmatrix}
	g_{11} & g_{12}  &g_{13}\\
	0           & g_{22}   & g_{23}\\
	0           & 0  & g_{33}\\
	\end{bmatrix}
	: g_{11}g_{22}g_{33}  =1
	\right \}
	\]
	and the group morphisms 	$\Pi^*:U_+\rightarrow Mob (\C)$ and $\lambda_{12},\lambda_{23},\lambda_{13}:U_+\rightarrow \C^* $, given by:
	
	\[
	\Pi^*([g_{ij}])z=
	g_{11}g_{22}^{-1}z + g_{12}g_{22}^{-1} \;,
	\]
	\[
	\lambda_{12}([g_{ij}])=g_{11}g_{22}^{-1} \;,
	\]
	\[
	\lambda_{23}([g_{ij}])=g_{22}g_{33}^{-1} \;,
	\]
	\[
	\lambda_{13}([g_{ij}])=g_{11}g_{33}^{-1} \;.
	\]
	
\end{definition}   

Notice that the elements in
 $U_+$  are equivalence classes of matrices. Yet, since different representatives of the same projective transformation differ by multiplication by a scalar, the above homomorphisms are all well-defined.

	\begin{lemma} \label{l:nd}
		Let $G\subset U_+$ be a discrete group, then $G$ contains a  finite index torsion  free subgroup $G_{0}$  such that the following groups are torsion free: 		the control group 
		$\Pi(G_{0})$,  the   dual control group   $\Pi^*(G_{0})$, 
		$\lambda_{12}(G_{0})$, $\lambda_{13}(G_{0})$ and $\lambda_{23}(G_{0})$.
	\end{lemma}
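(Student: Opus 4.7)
The plan is to apply Selberg's lemma to each of the five image groups separately and then take the intersection of their preimages in $G$. Since $U_+$ consists of upper-triangular matrices it is solvable, hence so is $G$; being a discrete solvable subgroup of $\PSL(3,\C)$, $G$ is finitely generated by the result recalled in Corollary \ref{c:trian} following \cite{aus}. Consequently each of the five images $\Pi(G)$, $\Pi^*(G)$, $\lambda_{12}(G)$, $\lambda_{13}(G)$, $\lambda_{23}(G)$ is a finitely generated subgroup of a linear group (of $\PSL(2,\C)$ in the first two cases, of $\C^*$ in the last three), and thus by Selberg's lemma each contains a torsion-free subgroup of finite index, which I will denote $H_{\Pi}$, $H_{\Pi^*}$, $H_{12}$, $H_{13}$, $H_{23}$ respectively.

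I will then define $G_0$ to be the intersection inside $G$ of the preimages of these five subgroups under the corresponding morphisms. As a finite intersection of finite-index subgroups, $G_0$ has finite index in $G$, and by construction the image of $G_0$ under each of the five morphisms lies inside the corresponding $H_\bullet$. Since a subgroup of a torsion-free group is torsion-free, all five image-group conditions are satisfied simultaneously.

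It remains to verify that $G_0$ itself is torsion-free, and this will be the main point requiring care. Take $g\in G_0$ of finite order with a lift $\widetilde g\in\SL(3,\C)$ of the upper-triangular form defining $U_+$. Since $\lambda_{12}(g)=\lambda_{13}(g)=\lambda_{23}(g)=1$, the three diagonal entries of $\widetilde g$ coincide with a common value $\omega\in\C^*$, and the normalization $g_{11}g_{22}g_{33}=1$ forces $\omega^3=1$. Writing $\widetilde g=\omega(I+N)$ with $N$ strictly upper triangular (so $N^3={\bf 0}$), one has
\[
\widetilde g^{\,k}=\omega^k\left(I+kN+\binom{k}{2}N^2\right)
\]
for every positive integer $k$; if $g$ has order $k$ in $\PSL(3,\C)$ the right-hand side must be scalar, which forces $N={\bf 0}$. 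Thus $\widetilde g=\omega I$ is central in $\SL(3,\C)$, hence trivial in $\PSL(3,\C)=\SL(3,\C)/\Z_3$. The main subtlety is precisely this last step: the diagonal morphisms $\lambda_{ij}$ cannot distinguish a matrix from $\omega I$ when $\omega$ is a cube root of unity, so torsion-freeness of $G_0$ is not automatic from that of the images and relies crucially on the fact that $\PSL(3,\C)$ is the quotient of $\SL(3,\C)$ by its center $\Z_3$.
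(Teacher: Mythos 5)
Your proposal is correct. The engine is the same as the paper's — Selberg's lemma applied to the (finitely generated, by the Auslander argument you cite) image groups, followed by pulling back to a finite-index subgroup of $G$ — but you handle the torsion-freeness of $G_0$ itself differently. The paper proceeds sequentially: it first makes $G$ torsion free by Selberg, then for each morphism in turn takes the preimage of a torsion-free finite-index subgroup of the image and applies Selberg once more to the preimage to restore torsion-freeness, observing that the previously secured properties persist because subgroups of torsion-free groups are torsion-free. You instead intersect all five preimages at once and then \emph{prove} that the resulting $G_0$ is torsion free, via the explicit computation with $\widetilde g=\omega(I+N)$, $N^3=\mathbf{0}$: the relation $kN+\binom{k}{2}N^2=0$ forces $N^2=0$ and then $N=0$, so $\widetilde g=\omega I$ is central and $g$ is trivial in $\PSL(3,\C)$. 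This is a genuine (small) improvement in content: it shows that once $\lambda_{12}(G_0)$ and $\lambda_{23}(G_0)$ are torsion free, torsion-freeness of $G_0$ comes for free, so the extra applications of Selberg's lemma in the paper's proof are not actually needed. Both arguments are valid; yours is slightly more economical and makes explicit where the center $\Z_3$ of $\SL(3,\C)$ enters.
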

	
	\begin{proof}  By Selberg's lemma, we can assume that $G$ is torsion free.
		Now consider the control group
		$\Pi\vert_G$. Notice that:
		
		\begin{enumerate} 
			\item[Step  1.] We can apply Selberg's lemma to the  group $\Pi(G)$  to get a finite index subgroup   $G_1 \subset \Pi(G)$ which is torsion free. 
			\item[Step  2.] Define  $\widetilde{G}=f^{-1}(G_1)$    and notice this is a finite index subgroup in  $G$.
			\item[Step 3.]  Using  Selberg's lemma again, we get a torsion free, finite index subgroup $G_2$  of  $\widetilde{G}$.
			\item[Step 4.]  Notice that  $G_2$, which  is  torsion free, has finite index in $G$ and its control group $\Pi(G_2)$ also is  torsion free. This proves the first statement.
		\end{enumerate}
		
		We may now follow  this same process with the groups $G_2$ and $\Pi^*\vert_{G_2}$, granting the existence of a finite index, torsion free subgroup   $G_3$ of $G$  for which  $\Pi^*(G_3)$ is torsion free.
		Notice this same process can be applied to the morphisms 
		$\lambda_{12}(G_{0})$, $\lambda_{13}(G_{0})$ y $\lambda_{23}(G_{0})$, thus proving the lemma.
	\end{proof} 		
	
		The following corollary is an   immediate consequence of Lemma \ref{l:nd} and it is of interest in itself:
		\begin{corollary}	 \label{11}
			Let $G$ be an upper  triangular discrete subgroup of ${\rm SL(3,\C)}$. Then $G$ has a finite  index subgroup that does not contain neither 
			elliptic nor rational screws nor rational ellipto-parabolic elements.
		\end{corollary}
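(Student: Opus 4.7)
The corollary follows essentially by a type-by-type translation of the rationality conditions into torsion in the various auxiliary groups that Lemma \ref{l:nd} makes torsion free. The plan is first to apply Lemma \ref{l:nd} directly to obtain a finite-index subgroup $G_0\subset G$ for which $G_0$ itself, the two control groups $\Pi(G_0)$ and $\Pi^*(G_0)$, and the three diagonal-ratio images $\lambda_{12}(G_0)$, $\lambda_{13}(G_0)$, $\lambda_{23}(G_0)$ are all torsion free. One then inspects each of the three forbidden classes separately, in each case extracting from the element a non-trivial torsion witness in one of these six groups.

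The elliptic case reduces to torsion-freeness of $G_0$ itself: every finite-order element of $\SL(3,\C)$ is diagonalizable with roots-of-unity eigenvalues and hence elliptic, so the only finite-order elliptic element $G_0$ can contain is the identity. For a rational ellipto-parabolic $g\in G_0$ the Jordan form has eigenvalues $\lambda,\lambda,\lambda^{-2}$ with $\lambda$ on the unit circle and a non-trivial root of unity; since $g$ is upper triangular its diagonal entries are some permutation of this triple, so at least one of $\lambda_{12}(g)$, $\lambda_{13}(g)$, $\lambda_{23}(g)$ equals $\lambda^{\pm 3}$. Unless $\lambda^3=1$---in which case the three eigenvalues coincide and $g$ is projectively unipotent rather than ellipto-parabolic, and therefore falls outside the class under consideration---this ratio is a non-trivial torsion element of the corresponding $\lambda_{ij}(G_0)$, contradicting Lemma \ref{l:nd}.

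For a rational screw $g\in G_0$ the rotational eigenvalue is likewise a non-trivial root of unity, and an analogous ratio argument---detected either by one of the $\lambda_{ij}$ or by the induced affine M\"obius transformation $\Pi(g)$ or $\Pi^*(g)$ on the invariant flag being a finite-order elliptic in $\PSL(2,\C)$---produces a finite-order element of one of the six torsion-free groups from Lemma \ref{l:nd}, again a contradiction. The only real technical point throughout is keeping track of which permutation of the eigenvalues appears on the diagonal of the upper-triangular form of $g$ in each case; this is a routine case analysis, and I expect the bookkeeping to be the main (though mild) obstacle, because one has to verify that in every such arrangement at least one of the six auxiliary torsion-free groups witnesses the forbidden root of unity.
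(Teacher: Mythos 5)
Your proposal follows exactly the route the paper intends: the paper offers no written proof, presenting the corollary as an ``immediate consequence'' of Lemma \ref{l:nd}, and your argument is precisely the case-by-case translation of rationality into torsion in $G_0$, $\lambda_{12}(G_0)$, $\lambda_{13}(G_0)$, $\lambda_{23}(G_0)$ that this phrase presupposes; your bookkeeping for the ellipto-parabolic and screw cases (the ratio of the two equal-modulus eigenvalues appears, up to inversion, as one of the $\lambda_{ij}(g)$, and the exceptional case $\lambda^3=1$ is projectively unipotent) is correct. One small point to patch in the elliptic case: torsion-freeness of $G_0$ only excludes \emph{finite-order} elliptic elements, whereas an elliptic element could a priori have infinite order; here you must invoke discreteness directly, since an infinite-order elliptic element is conjugate into a torus and generates a non-discrete cyclic group. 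With that one-line addition the proof is complete and coincides with the paper's.
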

		
		We refer to \cite[Chapter 4]{CNS} for the definition of  rational screws, which are all loxodromic elements.

	\begin{lemma} \label{l:gcsp} 
		Let $G\subset U_+$ be a discrete group such that   the groups $\lambda_{12}(G),$ $\lambda_{23}(G),$ $ \lambda_{13}(G)$ are torsion  free. If  $ g\in G$ is an irrational ellipto-parabolic  element, then  $g$ belongs to the center of $G$, 	{\it i.e.}, $g$ commutes with every element of $G$.
	\end{lemma}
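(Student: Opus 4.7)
The plan is to show that if $h\in G$ fails to commute with $g$, then $G$ contains a sequence of distinct elements with an accumulation point, contradicting discreteness. By conjugating $G$ inside $U_+$---which preserves discreteness, the torsion-freeness of the $\lambda_{ij}(G)$, and the Jordan type of $g$---we first normalize. The element $g$ has eigenvalues $\lambda,\lambda,\lambda^{-2}$ with $\lambda\in\Bbb{S}^1$ irrational; assuming the two copies of $\lambda$ sit on the diagonal at positions $(1,1),(2,2)$, successive unipotent and diagonal conjugations bring $g$ to the canonical form
\[
g \;=\; \begin{pmatrix} \lambda & 1 & 0 \\ 0 & \lambda & 0 \\ 0 & 0 & \lambda^{-2} \end{pmatrix}.
\]
The remaining placement of the Jordan block (positions $(2,2),(3,3)$) is handled identically by swapping the roles of Lemma \ref{l:ppind} and Lemma \ref{l:ppind1}.

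For any $h=(h_{ij})\in G$, a direct computation of $[g,h]=ghg^{-1}h^{-1}$ shows it is unipotent, with entries
\[
[g,h]_{12}=\lambda^{-1}\!\left(1-\tfrac{h_{11}}{h_{22}}\right),\quad [g,h]_{23}=\tfrac{(\lambda^3-1)h_{23}}{h_{33}},\quad [g,h]_{13}=\tfrac{(\lambda^3-1)h_{13}}{h_{33}}+\tfrac{\bigl(\lambda^2-\lambda^{-1}(1-h_{11}/h_{22})\bigr)h_{23}}{h_{33}}.
\]
Since $\lambda^3\neq 1$, the equality $gh=hg$ is equivalent to $h_{11}=h_{22}$, $h_{13}=0$ and $h_{23}=0$; suppose for contradiction that at least one of these fails. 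The argument splits into two situations according to which entry of $[g,h]$ prevents it from vanishing.

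If either $h_{23}\neq 0$, or $h_{23}=0$ but $h_{13}\neq 0$, then $[g,h]$ is a nontrivial unipotent element with a nonzero entry in position $(2,3)$ or $(1,3)$. Since the diagonal of $g$ is projectively $(1,1,\lambda^{-3})$ with $\lambda$ of infinite order, the pair $(g_{1},g_{2}):=([g,h],g)$ satisfies exactly the hypothesis of Lemma \ref{l:ppind}, so $\langle [g,h],g\rangle \subseteq G$ is non-discrete, a contradiction to the discreteness of $G$.

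The delicate case is $h_{13}=h_{23}=0$ with $\mu:=h_{11}/h_{22}\neq 1$. Here $[g,h]$ is supported only in position $(1,2)$ and the previous lemmas do not apply; this is the main obstacle. Torsion-freeness of $\lambda_{12}(G)$ forces $\mu$ to have infinite order in $\C^*$, and since both $g$ and $h$ are block-diagonal with respect to the splitting $\C^2\oplus\C$, a short computation gives
\[
h^n g h^{-n} \;=\; \begin{pmatrix} \lambda & \mu^n & 0 \\ 0 & \lambda & 0 \\ 0 & 0 & \lambda^{-2} \end{pmatrix}, \qquad n\in\Z,
\]
a family of pairwise distinct elements of $G$. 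If $|\mu|\neq 1$, some subsequence of $\{\mu^n\}$ tends to $0$, so $\{h^n g h^{-n}\}$ has a subsequence converging to $\mathrm{diag}(\lambda,\lambda,\lambda^{-2})\in\PSL(3,\C)$. If $|\mu|=1$, then $\mu$ is an irrational element of $\Bbb{S}^1$, so $\{\mu^n\}$ is dense in $\Bbb{S}^1$ and once again $\{h^n g h^{-n}\}$ has a convergent subsequence. In every case discreteness of $G$ is violated, so $h$ must commute with $g$ and the lemma follows.
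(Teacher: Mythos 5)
Your computation and your two cases are sound, and they coincide with Cases 2 and 3 of the paper's own proof, which likewise reduces matters to Lemmas \ref{l:ppind} and \ref{l:ppind1} and then conjugates by $h^{n}$. However, there is a genuine gap: an irrational ellipto-parabolic element of $U_+$ admits \emph{three} arrangements of its diagonal, not two. Up to a scalar the diagonal is $(\lambda,\lambda,\lambda^{-2})$, $(\lambda^{-2},\lambda,\lambda)$, or $(\lambda,\lambda^{-2},\lambda)$, and since conjugation inside $U_+$ preserves the ordered diagonal, the third arrangement cannot be normalized to either of the two forms you consider. It does occur: for instance
\[
g=\begin{bmatrix}\lambda & 0 & 1\\ 0 & \lambda^{-2} & 0\\ 0 & 0 & \lambda\end{bmatrix}
\]
is a non-diagonalizable element of $U_+$ with unitary eigenvalues, hence irrational ellipto-parabolic, and it has $\lambda_{13}(g)=1$ while $\lambda_{12}(g)$ and $\lambda_{23}(g)$ have infinite order. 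For such a $g$ neither Lemma \ref{l:ppind} nor Lemma \ref{l:ppind1} applies to the pair $([g,h],g)$, because the second generator in those lemmas must have diagonal $(1,1,\alpha^{-3})$ or $(\alpha^{-3},1,1)$, whereas here it is $(1,\alpha^{3},1)$. This is precisely Case 1 of the paper's proof, which invokes the separate Lemma \ref{e:ind2} to force $[g,h]$ to be supported in the $(1,3)$ entry and then contradicts discreteness via the sequence $h^{n}gh^{-n}$, whose $(1,3)$ entry is multiplied by $\lambda_{13}(h)^{n}$. Your sentence ``the remaining placement \dots is handled identically'' tacitly assumes the two copies of $\lambda$ are adjacent on the diagonal; the third case must be added, with Lemma \ref{e:ind2}, for the proof to be complete.

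Everything else checks out: the entries of $[g,h]$ are computed correctly, the trichotomy on $h_{23}$, $h_{13}$ and $h_{11}/h_{22}$ is exhaustive, and in the delicate case the elements $h^{n}gh^{-n}$, whose $(1,2)$ entry is $\mu^{n}$ with $\mu=\lambda_{12}(h)$ of infinite order by torsion-freeness, are pairwise distinct in ${\rm PSL}(3,\C)$ and accumulate whether $|\mu|=1$ or not. A cosmetic point: the normalization killing $g_{13}$ and $g_{23}$ uses $\lambda^{3}\neq 1$, which is worth stating explicitly.
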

	\begin{proof}
		Assume on the contrary, that there exists   an element $h=[h_{ij}]\in G$	such that $[g, h]\neq Id$.  Then there are $x,y,z\in \C$ such that 
		\[
		[g,h]=
		\begin{bmatrix}
		1 & x & y\\
		0 & 1 & z\\
		0 & 0 & 1
		\end{bmatrix}.
		\]
		Now  consider the following cases:\\
		
		\noindent
		Case  1. We have $o(\lambda_{12}(g))=o(\lambda_{23}(g))=\infty$, where $o(\,)$ means the order. Since $G$ is discrete we deduce that $\lambda_{13}(g)=1$.  By  Lemma   \ref{e:ind2}  we have  $x=z=0$ but $y\neq 0$. We deduce  $	\Pi^*[g,h]=\Pi[g,h]=Id$, $g_{13}\neq 0$ and $o(\lambda_{13}(h))=\infty$, therefore 
		\[
		h^ngh^{-n}=
		\begin{bmatrix}
		g_{11} & g_{12} &(\lambda_{13}(h))^ng_{13}\\
		0 & g_{11}^{-2}& g_{23}\\
		0 & 0 & g_{11}\\
		\end{bmatrix}.
		\]
		So the sequence
		$ (h^ngh^{-n})_{n\in \Z}$ contains a subsequence of distinct elements which  converges to a projective transformation and $G$ is non-discrete.\\   
		
				\noindent
		Case  2. We have  $o(\lambda_{12}(g))=\infty$ and $\lambda_{23}(g)=1$.
		Since $G$ is discrete we deduce that $g_{23}\neq 0$.  By Lemma   \ref{l:ppind1}, we deduce $x=y=0$ but $z\neq 0$. Therefore  $	\Pi^*[g,h]=Id$ and $o(\lambda_{23}(h))=\infty$.  Then:
		\[
		h^ngh^{-n}=
		\begin{bmatrix}
		g_{11}^{-2} & g_{12} &g_{13}\\
		0 & g_{11}& \lambda_{23}^n(h)g_{23}\\
		0 & 0 & g_{11}\\
		\end{bmatrix} \;.
		\]
		Thus
		$ (h^ngh^{-n})_{n\in \Z}$  contains a subsequence of distinct elements which  converges to a projective transformation.\\   
		
				\noindent
		Case  3. We have $o(\lambda_{23}(g))=\infty $ and $\lambda_{12}(g)=1$.
		Again, since  $G$ is discrete we deduce $g_{12}\neq 0$. By  Lemma   \ref{l:ppind}  we deduce $x=z=0$ but $y\neq 0$. Therefore  $\Pi[g,h]=Id$ and $o(\lambda_{12}(h))=\infty$. As in the previous cases we get:
		\[
		h^ngh^{-n}=
		\begin{bmatrix}
		g_{11} &  \lambda_{23}^n(h)g_{12} &g_{13}\\
		0 & g_{11}&g_{23}\\
		0 & 0 & g_{11}^{-2}\\
		\end{bmatrix} \,,
		\]
		so
		$ (h^ngh^{-n})$ contains a subsequence of distinct elements which  converges to a projective transformation.\\

		Thus we have shown  that under the assumption  that $G$ is not  commutative we get that $G$ is non-discrete, which  is a contradiction.
	\end{proof}

	\begin{lemma}  \label{13}
		Let $G\subset U_+$ be a discrete group such that   the groups $\lambda_{12}(G),$ $\lambda_{23}(G),$ $ \lambda_{13}(G)$ are torsion free.  If  $ G$ contains an irrational ellipto-parabolic element, then $G$  is commutative .
	\end{lemma}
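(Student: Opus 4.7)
The plan is to combine the preceding Lemma~\ref{l:gcsp} with an explicit description of the centralizer of an irrational ellipto-parabolic element inside $U_+$. By Lemma~\ref{l:gcsp}, any irrational ellipto-parabolic $g\in G$ lies in the center of $G$, so $G\subseteq Z_{U_+}(g)$; it therefore suffices to show that this centralizer is Abelian.

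To analyze $Z_{U_+}(g)$, I would use that conjugation inside $U_+$ preserves the ordering of the diagonal, so the diagonal of $g$ is a fixed permutation of $(\lambda,\lambda,\lambda^{-2})$ with $\lambda\in\mathbb{S}^1$ of infinite order. This produces three cases distinguished by the slot carrying the eigenvalue $\lambda^{-2}$; in each case the non-diagonalizability of $g$ forces a definite superdiagonal entry of $g$ to be nonzero. Writing a generic $h=[h_{ij}]\in U_+$ and imposing $gh=hg$ entry by entry, the hypothesis that $\lambda$ has infinite order gives $\lambda\neq\lambda^{-2}$, which is exactly what is needed so that every off-diagonal equation is uniquely solvable. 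The system forces two specific diagonal entries of $h$ to coincide, and expresses the two remaining superdiagonal entries of $h$ as affine functions of the other entries, with coefficients determined by the fixed off-diagonal entries of $g$. Consequently $Z_{U_+}(g)$ is a very small and explicit subgroup of $U_+$.

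The last step is to verify directly that $Z_{U_+}(g)$ is Abelian: for $h,h'\in Z_{U_+}(g)$, the diagonal part of $hh'-h'h$ vanishes automatically, the superdiagonal entry affected by the forced coincidence of two diagonal entries of $h$ and $h'$ vanishes for the same reason, and the $(1,3)$-entry of $hh'-h'h$ collapses after substituting the affine formulas determined in the previous step---the contributions proportional to the $(1,3)$-entry of $g$ cancel by symmetry, while the remaining pieces cancel against each other through a direct algebraic identity. The main obstacle I foresee is bookkeeping of the three cases for the ordering of the eigenvalues on the diagonal of $g$: the explicit formulas look slightly different in each case, but the mechanism producing the cancellations is uniform, since the constants appearing in the defining relations of $h$ and $h'$ are shared and come entirely from $g$. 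Once the case-by-case check is complete, the inclusion $G\subseteq Z_{U_+}(g)$ yields that $G$ is Abelian.
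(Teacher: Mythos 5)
Your proposal is correct and follows essentially the same route as the paper: invoke Lemma~\ref{l:gcsp} to place the irrational ellipto-parabolic element $g$ in the center, then compute the centralizer of $g$ in $U_+$ (the paper does this after conjugating $g$ to a normal form with two superdiagonal entries killed and the remaining entry nonzero) and observe that the resulting constraints --- two diagonal entries of any commuting $h$ forced to coincide, the other off-diagonal entries determined affinely --- describe an Abelian subgroup. The only cosmetic difference is that you verify commutativity of the centralizer by direct cancellation in general position, whereas the paper reads it off from the normal form; both amount to the same computation.
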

	\begin{proof}
		We consider first the case where  $o(\lambda_{12}(g))=o(\lambda_{23}(g))=\infty$. Since $G$ is discrete we deduce that $\lambda_{13}(g)=1$. Then there exists   $h\in U_+$ such that
		\[
		h gh^{-1}=
		\begin{bmatrix}
		g_{11} & 0 &a\\
		0 & g_{11}^{-2}& 0\\
		0 & 0 & g_{11}\\
		\end{bmatrix},
		\]
		where $a\neq0 $.
		Since every element $\beta\in G$ commutes with $g$ we have   	\[
		h\beta h^{-1}=
		\begin{bmatrix}
		\beta_{11} & 0 &b\\
		0 & \beta_{11}^{-2}& 0\\
		0 & 0 & \beta_{11}\\
		\end{bmatrix}.
		\]
		This shows that  $G$ is  commutative.\\

		We can apply similar arguments   when either $o(\lambda_{12}(g))$ or $o(\lambda_{23}(g))$ is finite  to show that   in all cases $G$ is commutative. 
	\end{proof}

The following  result is a consequence of  Corollary \ref{11} and  Lemmas \ref{l:gcsp}, \ref{13}

\begin{corollary}  \label{c:ep1}
Every discrete   solvable group with an   irrational ellipto-parabolic element, is  commutative.
\end{corollary}
\subsubsection{Abelian Lie groups}\label{s:sa}

The following list of  Lie groups is used  in  Theorem \ref{l:lt4}. 
      
\begin{definition}\label{list of  Abelian groups} We set:
	\[
	C_1=
	\left \{
	\begin{bmatrix}
		\alpha^{-2} & 0  & 0\\
		0           & \alpha  & \beta \\
		0           & 0  & \alpha\\
	\end{bmatrix}
	: \alpha\in \C^*, \beta \in \C
	\right \}\; ,\,\;
	C_2=
	\left \{	
	\begin{bmatrix}
		1& 0  & \beta\\
		0           &  1& \gamma\\
		0           & 0  & 1\\
	\end{bmatrix}
	:\beta,\gamma\in \C
	\right \} \,,
	\]	
	\[
	C_3=
	\left \{
	\begin{bmatrix}
		1& 	\alpha & \beta \\
		0           & 1 & \alpha\\
		0           & 0  & 1\\
	\end{bmatrix}
	:\alpha,\beta \in \C
	\right \}\; ,\,\;	
	C_4=
	\left \{
	\begin{bmatrix}
		1& 	\alpha& \beta  \\
		0           & 1 & 0\\
		0           & 0  & 1\\
	\end{bmatrix}
	:\alpha,\beta \in \C
	\right \} \,,
	\]
	\[
	C_5=	Diag(3,\C)=
	\left \{
	\begin{bmatrix}
		\alpha& 	0 & 0 \\
		0           & \beta & 0\\
		0           & 0  & \alpha^{-1}\beta^{-1}\\
	\end{bmatrix}
	:\alpha,\beta \in \C^*
	\right \} \;.
	\]
\end{definition}

\begin{theorem} \label{l:lt4}
	Let $G\subset U_+$ be a commutative group. Then $G$ is conjugate to a group $\widetilde{ G}\subset C_j$ for some $j=1,2,3,4,5$.
\end{theorem}

 This  theorem  is proved  in  the Appendix  \ref{a:Abeliano} at the end of this paper.

\subsubsection{Proof of Theorem  \ref{t:gcsp}}

\begin{lemma} \label{l:pfd0}
	Let $G\subset U_+$ be a discrete torsion free group  such that the group  $Ker(\Pi\vert_G) $ is trivial  and each element in $g\in G$ has the form  
	\[
	\begin{bmatrix}
	\alpha^{-2} & 0&0 \\
	0 & \alpha  & \beta\\
	0 & 0& \alpha\\ 
	\end{bmatrix}\,,
	\]	
	where $\mid\alpha\mid=1 $.	Then there exists   $W\subset \Bbb{C} $  a  discrete additive subgroup and  a group morphism
	$ \eta:W\rightarrow \Bbb{S}^1$  such that: 
	\[
	G=
	\left \{
	\begin{bmatrix}
	\eta(w)^{-2} & 0&0 \\
	0 & \eta(w) & \eta(w)w\\
	0 & 0& \eta(w)\\ 
	\end{bmatrix}
	:w\in W
	\right \}.
	\] 
\end{lemma}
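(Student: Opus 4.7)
The plan is to read off the parameters $(\alpha,w)$ of every $g\in G$ via the control morphism $\Pi$, verify injectivity using the triviality of $Ker(\Pi\vert_G)$, and then directly compute the group law in these coordinates to recover both the additive structure on $W$ and the morphism $\eta$.

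First I would observe that every $g\in G$ fixes $e_1$ and preserves the line $\overleftrightarrow{e_2,e_3}$, on which it acts by $[0:y:z]\mapsto [0:\alpha y+\beta z:\alpha z]$. In the affine coordinate $t=y/z$ this reads $\Pi(g)(t)=t+\beta/\alpha$. Setting $w:=\beta/\alpha$, the assignment $g\mapsto w$ identifies $\Pi(G)$ with a subset $W\subset \C$ of translations. Because $Ker(\Pi\vert_G)=\{1\}$, the map $g\mapsto w$ is moreover injective, so to each $w\in W$ there corresponds a unique element $g_w\in G$, whose eigenvalue $\alpha$ I may use to define $\eta:W\to \mathbb{S}^1$ by $\eta(w):=\alpha$. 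Rewriting $\beta=\alpha w=\eta(w)w$ puts $g_w$ in the matrix shape asserted by the lemma.

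Next, a direct block multiplication of two such elements yields
\[
g_{w_1}g_{w_2}=
\begin{bmatrix}
(\eta(w_1)\eta(w_2))^{-2} & 0 & 0\\
0 & \eta(w_1)\eta(w_2) & \eta(w_1)\eta(w_2)(w_1+w_2)\\
0 & 0 & \eta(w_1)\eta(w_2)
\end{bmatrix},
\]
which is again of the prescribed form. Since $g_{w_1}g_{w_2}\in G$, uniqueness forces $w_1+w_2\in W$ and $\eta(w_1+w_2)=\eta(w_1)\eta(w_2)$; the same computation applied to $g_w^{-1}$ yields $-w\in W$ with $\eta(-w)=\eta(w)^{-1}$. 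Hence $W$ is an additive subgroup of $\C$ and $\eta$ is a group morphism.

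Finally, for the discreteness of $W$ I would argue by contradiction: suppose there is a sequence of pairwise distinct $w_n\in W$ with $w_n\to 0$. By compactness of $\mathbb{S}^1$, after passing to a subsequence I may assume $\eta(w_n)\to\alpha_\infty\in\mathbb{S}^1$, and then the matrices $g_{w_n}$ converge in $\PSL(3,\C)$ to the diagonal element with entries $\alpha_\infty^{-2},\alpha_\infty,\alpha_\infty$. Injectivity of $g\mapsto w$ makes the $g_{w_n}$ pairwise distinct, contradicting the discreteness of $G$. This compactness/injectivity step is the only genuinely non-routine part of the argument; the rest of the lemma is a bookkeeping computation relying on the fixed point $e_1$ and the hypothesis $Ker(\Pi\vert_G)=\{1\}$.
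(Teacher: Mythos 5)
Your proposal is correct and follows essentially the same route as the paper: the paper defines $\zeta(g)=g_{23}g_{33}^{-1}$ (your $w=\beta/\alpha$), uses triviality of $Ker(\Pi\vert_G)$ to make $\zeta$ injective so that $\eta$ can be defined on $W=\zeta(G)$ via the preimage, and proves discreteness of $W$ exactly by your compactness-of-$\mathbb{S}^1$ argument producing a convergent subsequence of distinct elements of $G$. The only cosmetic difference is that you phrase $w$ as the translation length of $\Pi(g)$ on $\overleftrightarrow{e_2,e_3}$ rather than as a matrix-entry quotient, which amounts to the same thing.
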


\begin{proof} 	Let us define 	$\zeta:G\rightarrow \C $ by 
	$\zeta(	[g_{ij} ] )= g_{23} g_{33}^{-1}$.
	A standard  computation shows that $\zeta$ is a group morphism and   $Ker(\zeta)$ is trivial. Then the following is a well defined group morphism
	\[
	\begin{array}{c}
	\eta:\zeta(G)\rightarrow \C^*\\
	x \mapsto \pi_{22}(\zeta^{-1}(x)).
	\end{array}
	\]
	Clearly
	\begin{equation}\label{e:formal}
	G=
	\left \{
	\begin{bmatrix}
	\eta(w)^{-2} & 0&0 \\
	0 & \eta(w) & \eta(w)w\\
	0 & 0& \eta(w)\\ 
	\end{bmatrix}
	:w\in \zeta(G)
	\right \}.
	\end{equation}

	We claim that the group $\zeta(G)$ is discrete. Assume on the contrary, that $\zeta(G)$  is non-discrete. Then  there exists  a sequence $(g_n)_{n\in\N}\subset G$ of distinct elements such that $(\zeta(g_n))_{n\in\N}$ is a sequence of distinct elements and 
	$\zeta(g_n) \xymatrix{
		\ar[r]_{n \rightarrow \infty}&} 1$.  Then 
	\[
	g_n
	=
	\begin{bmatrix}
	\eta(\zeta(g_n))^{-2} & 0&0 \\
	0 & \eta(\zeta(g_n)) & \eta(\zeta(g_n))\zeta(g_n)\\
	0 & 0& \eta(\zeta(g_n))\\
	\end{bmatrix} \,.
	\]
	Since $\eta(G)\subset \Bbb{S}^1$ we deduce  that $(g_n)$ contains a convergent subsequence, which is a contradiction. Therefore
	$\zeta(G)$ is discrete and we can take  $W=\zeta(G)$.
\end{proof}

\begin{lemma} \label{l:cdiag}
	Let $G\subset {\rm PSL}(3,\Bbb{C})$  be a discrete group  where each element has the form:
	\[
	g=
	\begin{bmatrix}
	a^{-2}& 0 &0\\
	0& a&0 \\
	0	&0& a\\
	\end{bmatrix}.
	\]
	Then  $G$ is virtually cyclic.
\end{lemma}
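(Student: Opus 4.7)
The plan is to exhibit $G$ as a discrete subgroup of a one-complex-dimensional Lie subgroup of $\PSL(3,\C)$ isomorphic to $\C^*$, and then invoke the standard structure of discrete subgroups of $\C^*$.

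First I would consider the set
\[
T \;=\; \left\{\,\left[\begin{array}{ccc} a^{-2} & 0 & 0 \\ 0 & a & 0 \\ 0 & 0 & a \end{array}\right] : a\in\C^*\right\} \;\subset\; \PSL(3,\C).
\]
This is a closed one-complex-dimensional abelian Lie subgroup of $\PSL(3,\C)$, and by the hypothesis $G\subset T$. The natural surjection $\C^*\to T$, $a\mapsto [\mathrm{diag}(a^{-2},a,a)]$, is a Lie group homomorphism whose kernel is exactly the group $\mu_3$ of cube roots of unity (since $[\lambda\,\mathrm{diag}(a^{-2},a,a)]=[\mathrm{diag}(a^{-2},a,a)]$ in $\PSL(3,\C)$ forces $\lambda^3=1$, and this is compatible with the form imposed on the matrix). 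Hence $T\cong \C^*/\mu_3$, and the cube map $[a]\mapsto a^3$ induces a Lie group isomorphism $\psi:T\to\C^*$ given explicitly by
\[
\psi\bigl([\mathrm{diag}(a^{-2},a,a)]\bigr)=a^3.
\]
Thus $\psi(G)$ is a discrete subgroup of $\C^*$, and it suffices to show that every discrete subgroup of $\C^*$ is virtually cyclic.

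For this last step I would use the Lie group isomorphism $\C^*\cong \R\times S^1$, $z\mapsto(\log|z|,\,z/|z|)$. Let $H\subset \R\times S^1$ be a discrete subgroup and let $\pi:H\to\R$ be projection on the first factor. The kernel $H\cap S^1$ is a discrete subgroup of the compact group $S^1$, hence finite cyclic. Moreover $\pi(H)$ is discrete in $\R$: if not, there would exist $h_n\in H$ with $\pi(h_n)\to 0$ and $\pi(h_n)\neq 0$; extracting a convergent subsequence in the $S^1$-factor (by compactness) yields $h_{n_k}\to(0,\theta)$, contradicting the discreteness of $H$. Hence $\pi(H)=t\Z$ for some $t\geq 0$. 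If $t=0$ then $H=H\cap S^1$ is finite, and in particular virtually cyclic. Otherwise pick $h\in H$ with $\pi(h)=t$; then $H=\langle h\rangle\cdot(H\cap S^1)$ (using that $H$ is abelian), so $\langle h\rangle$ has index at most $|H\cap S^1|<\infty$ in $H$, showing that $H$ is virtually cyclic.

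Applying this to $H=\psi(G)$ and pulling back along the isomorphism $\psi$, we conclude that $G$ contains a finite-index cyclic subgroup. The only mild subtlety is the $\mu_3$-quotient in passing from $\SL(3,\C)$ to $\PSL(3,\C)$, and I expect no real obstacle beyond that: the map $\psi$ already folds the $\mu_3$-ambiguity into the cube, so the argument goes through directly.
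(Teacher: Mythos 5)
Your proof is correct and follows essentially the same route as the paper's: both reduce the problem to the decomposition $\C^*\cong\R\times S^1$, showing that the modulus part projects to a discrete (hence cyclic) subgroup of $\R$ while the unitary part contributes only a finite group. The only difference is that the paper invokes Selberg's lemma to pass to a torsion-free finite-index subgroup on which $\rho_{12}=\log|\lambda_{12}|$ becomes injective, whereas you handle the kernel directly as a finite subgroup of the compact group $S^1$ --- a slightly more self-contained rendering of the same argument.
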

\begin{proof}
	Define $\rho_{12}:G\rightarrow \Bbb{R}$  by $\rho_{12}(g)=log(\vert \lambda_{12}\vert )$, clearly $\rho_{12}$ is a well defined group morphism, 
	$Ker(\rho_{12})=\{g\in g: \vert  \lambda_{12}\vert=1 \}$ and $\rho_{12}(G)$ is discrete. 
	Let $G_{0}\subset G$ be a torsion free subgroup of $G$ with finite index. 
	Clearly $\rho_{12}\vert_{G_{0}}$ is injective and $\rho_{12}(G_{0})$ is cyclic. 
\end{proof}

Now we  complete the proof of Theorem \ref{t:gcsp}\label{pollo}.
Let $G\subset U^+$ be a discrete group  which contains an element $g_0$ which satisfies $\max\{o(\lambda_{12}(g_0)),o(\lambda_{23}(g_0))\}=\infty $. By Lemma \ref{l:nd}, $G$ contains a finite index subgroup $G_{0}$  for which  the groups 	$\lambda_{12}(G_0)$, $\lambda_{23}(G_0)$, $\Pi^*(G_0)$, $\Pi(G_0)$ and  $G_0$ itself, are all torsion free and finitely generated. Then  by Lemma \ref{l:gcsp},  $G_0$  is .  Therefore  by  Theorem  \ref{l:lt4} the group $G_0$ is conjugate to a group $G_0\subset G$  and $G$ lives either in  $Diag(3,\C)$ or in $C_1$. 
Then the theorem follows from  Lemmas  \ref{l:pfd0} and \ref{l:cdiag}. $\qed$ \\

\section{Discrete subgroups in $\Heis(3,\C)$} \label{s:heis}
In this section we provide a full description of the discrete subgroups  in 	
$$
\Heis(3,\Bbb{C})=
\left 
\{
\begin{bmatrix}
1 & a & b\\
0 & 1 & c\\
0  & 0 & 1\\
\end{bmatrix}: a,b,c\in \Bbb{C}
\right 
\}	\;.
$$
We start with:
  \begin{proposition}
		The whole group $\Heis(3,\Bbb{C})$ is  solvable and purely parabolic.
	\end{proposition}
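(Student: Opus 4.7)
The plan is to verify the two claims separately, each by a direct matrix computation using the definitions recalled in the introduction.

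For solvability, I would compute the derived series. Given two elements
\[
A=\begin{bmatrix} 1 & a_1 & b_1 \\ 0 & 1 & c_1 \\ 0 & 0 & 1 \end{bmatrix}, \qquad
B=\begin{bmatrix} 1 & a_2 & b_2 \\ 0 & 1 & c_2 \\ 0 & 0 & 1 \end{bmatrix},
\]
a direct computation shows that the commutator $[A,B]=ABA^{-1}B^{-1}$ has the form
\[
[A,B]=\begin{bmatrix} 1 & 0 & a_1c_2-a_2c_1 \\ 0 & 1 & 0 \\ 0 & 0 & 1 \end{bmatrix}.
\]
Hence $[\Heis(3,\C),\Heis(3,\C)]$ is contained in the center
\[
Z=\left\{\begin{bmatrix} 1 & 0 & b \\ 0 & 1 & 0 \\ 0 & 0 & 1 \end{bmatrix}: b\in\C\right\},
\]
which is abelian (in fact, isomorphic to $(\C,+)$). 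Therefore the second derived subgroup is trivial, so the derived series has length two and $\Heis(3,\C)$ is solvable (in fact nilpotent of class two).

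For the purely parabolic statement, let $g\in\Heis(3,\C)\setminus\{I\}$. Its canonical lift to $\SL(3,\C)$ is an upper triangular matrix with $1$'s on the diagonal, hence all three eigenvalues equal $1$ and are in particular unitary. Moreover, the only upper triangular unipotent $3\times 3$ matrix that is diagonalizable is the identity, so the lift is non-diagonalizable. By the definition of parabolic given in the introduction, $g$ is parabolic. Since every non-identity element of $\Heis(3,\C)$ is parabolic, the group is purely parabolic.

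Neither step should present any real obstacle; both are immediate from the matrix form of elements of $\Heis(3,\C)$ and the classification of elements in $\PSL(3,\C)$ recalled in Section~\ref{s:nb}.
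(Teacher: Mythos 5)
Your proof is correct and follows essentially the same route as the paper: the purely parabolic claim is exactly the paper's one-line argument (unipotent upper-triangular lifts, hence unitary eigenvalues and non-diagonalizable unless trivial, so parabolic by the classification in \cite[Ch.~4]{CNS}). Your explicit commutator computation showing $[\Heis(3,\C),\Heis(3,\C)]$ lies in the abelian center is a correct, standard verification of solvability that the paper simply leaves implicit.
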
 

This follows from the fact that every element in $\Heis(3,\C)$ has a lift to an upper diagonal matrix with all eigenvalues equal to 1. Then the proposition  follows from the classification of the elements in $\PSL(3,\C)$, see \cite[Ch. 4]{CNS}.

We split this section in four parts:   in Subsection \ref{ss:dis}, for each  discrete subgroup $G$ in $\Heis(3,\C)$ we construct a region where a   subgroup  acts properly discontinuously; as a consequence we obtain Theorem \ref{t:desc}  which is  a decomposition theorem for $G$.  In Subsection  \ref{s:ktrivial} we describe the subgroups  of $\Heis(3,\C)$  for which the kernel  of the control map, $Ker(\Pi\vert_G)$, is finite.  The main tool here is the description of  groups provided by  Theorem \ref{l:lt4}. In  \ref{s:kinfinite} we describe  the complex Kleinian  groups in $\Heis(3,\C)$ with  $Ker(\Pi\vert_G)$  infinite. 	 Finally, in  \ref{s:dis} we describe  the  discrete   non-Kleinian groups  in $\Heis(3,\C)$ with  $Ker(\Pi\vert_G)$  infinite.

\subsection{A discontinuity region for discrete subgroups of $\Heis(3,\C)$} \label{ss:dis}

Let us consider  $G\subset \Heis(3,\Bbb{C})$  a discrete group.

\begin{definition} We set:
	\[
	\begin{array}{l}	
	B(G)=\{ (g_n)\subset G: ( \Pi(g_n) )\textrm{ converges in $\PSL(2,\C)$}  \} \,;\\ 
	L(G)=\{ S\in{\rm SP}(3,\Bbb{C}): \textrm{ there is } (g_n)\in B(G) \textrm{ converging to } S \}  \,;\\ 
	\mathcal{L}(G)=\{
		{\ell\in \check{\Bbb{P}}^{2}_{\Bbb{C}}
	}: \textrm{ there is  }  S  \in L(G) \textrm{ satisfying }  Ker(S)=\ell   \}  \,;\\
	\Omega(G)=\Bbb{P}^2_{\Bbb{C}}-\overline{\bigcup_{\ell\in 	\mathcal{L}(G)}\ell }  \,. \\ 
	\end{array}
	\]
\end{definition}


Now we have:

\begin{lemma} \label{l:0}
	For each $\ell \in 	\mathcal{L}(G)$  there exist a sequence $(g_n)\subset G$ of distinct elements and $P \in {\rm SP}(3,\Bbb{C}) $, such that the sequence $\Pi(g_n)$ converges to  $Id$,  $(g_n)$ converges to  $P$ and $Ker(P)=\ell$.
	
\end{lemma}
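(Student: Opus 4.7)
The plan is to construct the desired sequence as an asymmetric product $g_k = h_{n_{k-1}}^{-1}\,h_{n_k}$ for a carefully chosen subsequence of the data defining $S$. First I would fix $\ell\in\mathcal{L}(G)$, choose $S\in L(G)$ with $Ker(S)=\ell$, and take a sequence $(h_n)\in B(G)$ with $h_n\to S$ in $SP(3,\C)$; since $Ker(S)\neq\emptyset$ forces $S\notin \PSL(3,\C)$, the $h_n$'s cannot be eventually constant, so after passing to a subsequence I may assume they are distinct. Writing each canonical Heisenberg lift
\[
\tilde h_n=\begin{bmatrix} 1 & a_n & b_n \\ 0 & 1 & c_n \\ 0 & 0 & 1 \end{bmatrix},
\]
and choosing $\Pi$ as the projection from the global fixed point $e_1$ onto $\overleftrightarrow{e_2,e_3}$, a direct check identifies $\Pi(h_n)$ with the affine translation $z\mapsto z+c_n$; the hypothesis $(h_n)\in B(G)$ therefore gives $c_n\to c_\infty\in\C$. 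A compactness argument forces $N_n:=\max(|a_n|,|b_n|,1)\to\infty$ (otherwise the $\tilde h_n$ would be bounded in $M(3,\C)$ and the limit $S$ would itself lie in $\PSL(3,\C)$), and a further subsequence produces $\alpha:=\lim a_n/N_n$, $\beta:=\lim b_n/N_n$ with $\max(|\alpha|,|\beta|)=1$, so that $\tilde h_n/N_n$ converges to the rank-one matrix $\tilde S$ with top row $(0,\alpha,\beta)$; consequently $\ell=Ker(S)=\{[x:y:z]:\alpha y+\beta z=0\}$.

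Next I would thin once more to $(h_{n_k})$ with $N_{n_k}/N_{n_{k-1}}\to\infty$ and set $g_k:=h_{n_{k-1}}^{-1}\,h_{n_k}\in G$. The identity $\Pi(g_k)=\Pi(h_{n_{k-1}})^{-1}\Pi(h_{n_k})\to Id$ is then immediate. A direct matrix multiplication yields
\[
\tilde g_k=\begin{bmatrix} 1 & a_{n_k}-a_{n_{k-1}} & (b_{n_k}-b_{n_{k-1}})-a_{n_{k-1}}(c_{n_k}-c_{n_{k-1}}) \\ 0 & 1 & c_{n_k}-c_{n_{k-1}} \\ 0 & 0 & 1 \end{bmatrix},
\]
and after dividing by $N_{n_k}$ the estimates $|a_{n_{k-1}}|,|b_{n_{k-1}}|\le N_{n_{k-1}}=o(N_{n_k})$ together with $c_{n_k}-c_{n_{k-1}}\to 0$ send every entry outside the top row to $0$, the $(1,2)$-entry to $\alpha$, and the $(1,3)$-entry to $\beta$. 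Hence $g_k\to P:=[S]$ in $SP(3,\C)$ with $Ker(P)=\ell$; a final thinning, possible precisely because $P\notin \PSL(3,\C)$, keeps the $g_k$'s distinct.

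The hard part will be forcing $Ker(P)$ to equal \emph{exactly} $\ell$ rather than some twisted line. The apparently natural first guess $g_k=h_{n_k}\,h_{n_{k-1}}^{-1}$ produces a $(1,3)$-entry $(b_{n_k}-b_{n_{k-1}})-c_{n_{k-1}}(a_{n_k}-a_{n_{k-1}})$ whose normalization by $N_{n_k}$ tends to $\beta-\alpha c_\infty$; when $c_\infty\neq 0$ and $\alpha\neq 0$ this shifts the kernel of the limit off $\ell$. Placing the inverse on the \emph{left} replaces the offending term by $-a_{n_{k-1}}(c_{n_k}-c_{n_{k-1}})$, a product of two factors that both vanish after normalization (the first because $N_{n_k}\gg N_{n_{k-1}}\geq |a_{n_{k-1}}|$, the second because $c_n\to c_\infty$); this is precisely what keeps the limiting kernel locked on $\ell$ while $\Pi(g_k)$ still goes to $Id$.
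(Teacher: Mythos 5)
Your proposal is correct and takes essentially the same route as the paper: the paper also forms the products $g_n=h_{n}^{-1}h_{m_n}$ with the inverse on the left and the index $m_n$ chosen (via its Lemma on dominating subsequences) so that $\max(|a_{n}|,|b_{n}|)$ is negligible compared to $\max(|a_{m_n}|,|b_{m_n}|)$, which is exactly your normalization argument. Your closing observation about why the inverse must sit on the left---so that the cross term in the $(1,3)$-entry is $-a_{\mathrm{small}}(c_{\mathrm{large}}-c_{\mathrm{small}})$ rather than $-c_{\mathrm{small}}(a_{\mathrm{large}}-a_{\mathrm{small}})$---is a correct reading of the same computation the paper performs.
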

\begin{proof}
	Let  $\ell \in 	\mathcal{L}(G)$, then   there exists a sequence $(h_n)\subset G$ of distinct elements and $P \in \SP(3,\Bbb{C}) $, such that $(\Pi(h_n))$ is a convergent sequence,  $h_n$ converges to  $P$ and $Ker(P)=\ell$.
	So we can  assume that:
	\[
	h_n=	
	\begin{bmatrix}
	1 & x_n & y_n \\
	0 & 1 & z_n \\
	0 & 0 & 1 \\
	\end{bmatrix}
	\textrm{ and }
	P=
	\begin{bmatrix}
	0& x & y \\
	0 & 0& 0 \\
	0 & 0 & 0 \\
	\end{bmatrix}.
	\]
	Set $a_n=max\{\mid x_n\mid, \mid y_n\mid \}$,  then it is not hard to check that there is    a  subsequence $(m_n)\subset (n)$ such that $a_na_{m_n}^{-1}$ converges to $0$ as $n$ goes to $\infty$. Then
	\[
	g_n=
	h_{n}^{-1}h_{m_n}=
	\begin{bmatrix}
	1 & x_{m_n}-x_n & -y_n+x_n z_n+y_{m_n}-x_n z_{m_n} \\
	0 & 1 & z_{m_n}-z_n \\
	0 & 0 & 1 \\
	\end{bmatrix}.
	\]
	Clearly $\Pi(g_n)$ converges to $Id$ and $g_n \xymatrix{
		\ar[r]_{n \rightarrow \infty}&}  P$.  
\end{proof}

\begin{lemma} \label{l:omega}
	Let $G$ be  discrete and such that   $Ker(\Pi\vert_G) $ is infinite and $\Pi(G)$ is non-trivial. If $\Omega(G)$ is non-empty, then:
		\begin{enumerate}
		\item \label{i:ome1}  The group $G$ acts properly  discontinuously on $\Omega(G)$;
		\item  \label{i:ome2}  The set $\Omega(G)$ is the largest open set on which $G$ acts properly discontinuously.
		\item   \label{i:ome3} Each connected component of   $\Omega(G)$ is homeomorphic to $\R^4$.
	\end{enumerate}
\end{lemma}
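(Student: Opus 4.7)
The three claims share a single central technical step: for any sequence of distinct elements $(g_n)\subset G$, applying Lemma \ref{l:lambda} yields after subsequencing $g_n\to P$ and $g_n^{-1}\to Q$ in $\SP(3,\C)$, and the main task is to show that both $Ker(P)$ and $Ker(Q)$ lie in $\overline{\bigcup_{\ell\in\mathcal L(G)}\ell}=\P^2\setminus\Omega(G)$. Writing $g_n=\begin{bmatrix}1&a_n&b_n\\0&1&c_n\\0&0&1\end{bmatrix}$, the case $(c_n)$ bounded is immediate: a further subsequence makes $\Pi(g_n)$ convergent in $\PSL(2,\C)$, so $(g_n)\in B(G)$ and $Ker(P)\in\mathcal L(G)$ directly. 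The case $c_n\to\infty$ requires a normalization computation showing that $Ker(P)$ is either the common fixed point $e_1$ (which lies on every line of $\mathcal L(G)$, hence in its cone) or the line $\overleftrightarrow{e_1,e_2}$.

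The hard step is to verify $\overleftrightarrow{e_1,e_2}\in\mathcal L(G)$. When $G$ is non-Abelian I would pick $g_0\in G$ with non-zero $(1,2)$-entry $a_0$; the commutator $[g_n,g_0]$ lies in $Ker(\Pi|_G)$ and has $(1,3)$-entry $a_nc_0-a_0c_n$, which, since $c_n\to\infty$, dominates all other entries and produces a normalized limit whose kernel is exactly $\overleftrightarrow{e_1,e_2}$. When $G$ is Abelian the Heisenberg commutator relation $a_hc_g=a_gc_h$ combined with the non-triviality of $\Pi(G)$ forces every element of $Ker(\Pi|_G)$ to have $a$-entry zero; the infinitude of $Ker(\Pi|_G)$ then places $\overleftrightarrow{e_1,e_2}$ in $\mathcal L(G)$ by a direct normalized-limit computation. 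The analysis of $Ker(Q)$ is identical since $g_n^{-1}\in\Heis(3,\C)$.

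With the Key Claim in hand, part (\ref{i:ome1}) follows by the classical $\lambda$-Lemma contradiction: given compact $K\subset\Omega(G)$ and distinct $g_n\in G$ with $g_n(K)\cap K\neq\emptyset$, choose $x_n\in K$ with $y_n=g_n(x_n)\in K$ and pass to subsequences $x_n\to x$, $y_n\to y$ in $K$; since $x\notin Ker(P)$, one has $y=P(x)\in Im(P)\subset Ker(Q)\subset\P^2\setminus\Omega(G)$, contradicting $y\in K$. For part (\ref{i:ome2}), given any open $G$-invariant $\Omega'$ on which $G$ acts properly discontinuously and any $\ell\in\mathcal L(G)$, Lemma \ref{l:0} produces a sequence $(h_n)\subset G$ with $\Pi(h_n)\to Id$, $h_n\to P$ and $Ker(P)=\ell$; a short matrix computation shows $Ker(Q)=\ell$ as well, so Lemma \ref{l:lambda}(\ref{i:4}) forces $\ell\subset\P^2\setminus\Omega'$, and varying $\ell$ and taking closures gives $\Omega'\subset\Omega(G)$.

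For part (\ref{i:ome3}), since every line of $\mathcal L(G)$ passes through $e_1$, the set $\overline{\bigcup\mathcal L(G)}$ is a cone with vertex $e_1$ over a closed subset $\bar S\subset\P^1$ parameterizing these directions, and the projection from $e_1$ restricts to a $\C$-fibre bundle $\Omega(G)\to\P^1\setminus\bar S$. The explicit description of $\mathcal L(G)$ for discrete subgroups of $\Heis(3,\C)$ with non-trivial control and infinite kernel (cf. Example \ref{e:if} and \cite{BCN}) shows that $\bar S$ is either a single point or a real projective circle in $\P^1\cong S^2$, so each component of $\P^1\setminus\bar S$ is an open disk; since $\C$-bundles over a disk are trivial, each component of $\Omega(G)$ is homeomorphic to $\R^2\times\R^2\cong\R^4$. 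The principal obstacle throughout is the Key Claim in the case $c_n\to\infty$, which demands the commutator construction to confirm $\overleftrightarrow{e_1,e_2}\in\mathcal L(G)$.
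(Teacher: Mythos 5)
Your treatments of parts (\ref{i:ome2}) and (\ref{i:ome3}) follow essentially the paper's route (the observation that $Q$ and $P$ have the same kernel after applying Lemma \ref{l:0}, and the cone-over-a-subset-of-$\P^1$ picture), but your ``Key Claim'' for part (\ref{i:ome1}) has a genuine gap. You assert that for an arbitrary divergent sequence with $c_n\to\infty$ the normalized limit has $Ker(P)\in\{\{e_1\},\overleftrightarrow{e_1,e_2}\}$, and, more globally, that $Ker(P)$ and $Ker(Q)$ always lie in $\P^2\setminus\Omega(G)$. Neither is true. If $c_n\to\infty$ but $c_n=o(a_n)$ and $b_n=O(a_n)$ (e.g.\ $a_n=b_n=n^2$, $c_n=n$), the normalized limit is the class of the matrix with first row $(0,a,b)$, $a\neq 0$, and zero elsewhere, whose kernel is the line $\overleftrightarrow{e_1,[0:b:-a]}$ --- neither $\{e_1\}$ nor $\overleftrightarrow{e_1,e_2}$, and not produced by any sequence in $B(G)$. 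More decisively: since $\P^2\setminus{\rm Eq}(G)$ is exactly the closure of the union of the kernels of \emph{all} such limits, your Key Claim would force $\Omega(G)\subset {\rm Eq}(G)$, hence ${\rm Eq}(G)=\Omega(G)$ by part (\ref{i:ome2}). But the groups ${\rm Kod}_1(W,x,y,z)$ of Example \ref{n-ab Kodaira} satisfy every hypothesis of this lemma (infinite kernel, non-trivial discrete control group, $\Omega(G)=\C^2$), and they are precisely the family for which ${\rm Eq}$ is a proper subset of $\Omega_{\Kul}=\Omega(G)$ (possibly empty). So for groups covered by this lemma there genuinely exist limit kernels meeting $\Omega(G)$, and your step ``since $x\notin Ker(P)$'' is not available; the case $x\in Ker(P)\cap K$ is left unhandled.

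The paper sidesteps this entirely: when $\Pi(g_n)$ diverges it does not analyze $Ker(P)$ at all, but projects from $e_1$ --- since $\overleftrightarrow{e_1,e_2}\subset\P^2\setminus\Omega(G)$, the image $\pi(K)$ is compact in $\overleftrightarrow{e_2,e_3}\setminus\{e_2\}$, and $\Pi(g_n)\to e_2$ uniformly there, so $g_n(K)\cap K=\emptyset$ for large $n$. (Your $\lambda$-lemma route can be salvaged, but only by the extra observation that $Im(P)\subset\overleftrightarrow{e_1,e_2}$ always and that whenever $Ker(P)$ is a ``bad'' line one has $|a_nc_n-b_n|\gg|a_n|$, forcing $Ker(Q)=\overleftrightarrow{e_1,e_2}$, so the cluster set of $g_n(x_n)$ lands in $\overleftrightarrow{e_1,e_2}$ whether or not $x\in Ker(P)$; this case analysis is exactly what your write-up is missing.) A smaller issue: in part (\ref{i:ome3}) the base $\bar S$ need not be a point or a single circle; when $\Pi(G)$ is non-discrete the paper shows $\bar S\setminus\{e_2\}$ is a union of pairwise parallel real lines, obtained from the invariance of $\mathcal C(G)$ under the one-parameter group $\overline{\Pi(G)}$. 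Your conclusion that each complementary component is a disk survives, but it needs this invariance argument rather than a citation of Example \ref{e:if}, which only describes $\Lambda_{\Kul}(Ker(\Pi\vert_G))$.
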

\begin{proof} 
	We first prove (\ref{i:ome1}). It is clear that $\Omega(G)$ is  open  and $G$-invariant;  also,  since $Ker(\Pi\vert_G)$ is infinite and $\Pi(G)$ is non-trivial, we have $\overleftrightarrow{e_1,e_2}\subset \P^2-\Omega(G)$.
	Now 	let $K \subset \Omega(G)$   be a compact set and $K(G ) = \{g \in G   : g(K ) \cap K\neq  \emptyset \}$. Assume that $K ( G)$ is infinite. Let $(g_n )$ be an enumeration of $K ( G)$, then there exists a subsequence of $(g_n)$, still denoted $(g_n)$, such that     either $( \Pi(g_n ))$ converges to a projective transformation or $ \Pi(g_n )\xymatrix{		\ar[r]_{n \rightarrow \infty}&} [e_2]$  uniformly on $\overleftrightarrow{e_2,e_3}-\{e_2\}$.  If $( \Pi(g_n ))$ converges to   a  projective transformation, we can find a subsequence $(h_n)\subset (g_n)$ and $\alpha\in L(G)$ such that $h_n$ converges to $\alpha$. Thus  $Ker(\alpha)\in \mathcal{L}(G)$ and $Im(\alpha)=\{e_1\}$, therefore the accumulation set of    $\{h_n(K):n\in \Bbb{N}\}$ is $\{e_1\}$. Now, if   $ \Pi(g_n )\xymatrix{
		\ar[r]_{n \rightarrow \infty}&} [e_2]$  uniformly on $\overleftrightarrow{e_2,e_3}-\{e_2\}$, then
	\[
	\{ (g_n) : n \in \Bbb{N}\} \subset  \{g \in   \Pi(G) : g(\pi (K)) \cap  \pi(K) \neq \emptyset \},
	\] which is not possible since $\overleftrightarrow{e_1,e_2}\subset \P^2-\Omega(G)$ and  we have proved Part (\ref{i:ome1}). \\

	Now we prove (\ref{i:ome2}). 	  		Let  $\Omega\subset \Bbb{P}^2_{\Bbb{C}}$ be open, non-empty, $G$ invariant and such that $G$ acts properly discontinuously on $\Omega$ and   $\ell\in \mathcal{L}(G)$. Then there are $(g_n)\in B(G)$ and $P\in L(G)$ such that $Ker(P)=\ell$ and $(g_n)$ converges to $P$.  By  Lemma \ref{l:0}  we can 	   assume that  $\Pi(g_n)$ converges to $Id$. 
	Proceeding  as in 
	Lemma \ref{l:0}, we conclude  
	\[
	g_n^{-1} =
	\begin{bmatrix}
	1 & -x_n & x_nz_n -y_n\\
	0 & 1 & -z_n\\
	0 & 0 & 1
	\end{bmatrix}
	\xymatrix{ \ar[r]_{n \rightarrow  \infty}&}
	P \;.
	\]
	By  Lemma \ref{l:lambda}  we deduce  $\ell\cap \Omega=\emptyset$.\\

	Now we prove (\ref{i:ome3}).  If $\Pi(G)$ is discrete, then  $\Omega(G)$ is $\Omega(Ker(\Pi\vert_G) )$ by definition. From the  definition of $\Omega(Ker(\Pi\vert_G))$ we get that $\Omega(Ker(\Pi\vert_G))=Eq(Ker(\Pi\vert_G))$ and  from  Example \ref{e:if} we know that $Eq(Ker(\Pi\vert_G))$     is either $\C^2$ or $(\C\times \H^+)\cup (\C\times \H^-) $, proving the claim. So we now assume that $\Pi(G)$ is non-discrete;   define  $$\mathcal{C}(G)=\overleftrightarrow{e_2,e_3}\cap \overline{\bigcup_{\ell\in 	\mathcal{L}(G)}\ell }   \;,$$
	then $\mathcal{C}(G)$ is a closed $\Pi(G)$-invariant  set in $\overleftrightarrow{e_2,e_3}$, so $\mathcal{C}(G)$  is closed in $\P^2$ and $\overline{\Pi(G)}$-invariant  set.  Since $\Pi(G)$ is non-discrete,  there exists  an additive  Lie subgroup $H\subset \C $ such that  $\overline{\Pi(G)}=\{z+b:b\in H\}$.  Since   $\mathcal{C}(G)-\{e_2\}=\overline{\Pi(G)}(\mathcal{C}(G)-\{e_2\})$, we deduce that 
	$\P^2-\Omega(G) $  is a pencil of lines over  a  union of real projective lines  in $\overleftrightarrow{e_2,e_3}$, which are pairwise parallel in $\overleftrightarrow{e_2,e_3}-\{e_2\} $. Thus each connected component of  $\Omega(G)$ is a fiber bundle over  $\R \times \R$ with fiber $\C$, hence homeomorphic to $\R^4$.
	   \end{proof}

\begin{definition}
	We say that 	the sequences  $(a_n,b_n),(x_n, y_n)\subset  \C^2$ {\it are  co-bounded}    if   both sequences converge to $\infty$ and the   sequence
	$
	\left (\frac{\mid a_n\mid+\mid b_n\mid }{\mid x_n\mid+\mid y_n\mid }
	\right )
	$ is bounded and bounded away from $0$.
\end{definition}

\begin{lemma} \label{l:pseudolatice}
	Let $(a_n),(b_n),(c_n), (x_n),(y_n),(z_n)\subset  \Bbb{C}$ be sequences of distinct elements such that:
	\begin{enumerate}
		\item 	 $(c_n)$ and $(z_n)$ converge to $0$, 
		\item $(a_n,b_n),(x_n,y_n)$ are  co-bounded;
		\item $[a_n:b_n] \xymatrix{
			\ar[r]_{n \rightarrow \infty}&} [a:b]$   for some $a,b$;
		\item $[x_n:y_n]\xymatrix{
			\ar[r]_{n \rightarrow \infty}&} [x:y]$   for some $x,y$;
		\item $[a:b]\neq [x:y]$.
		
	\end{enumerate}
	Then  there exists  $w\in \C\setminus\{0\}$ such that  for each $k,m\in \Bbb{N}\setminus\{0\}$ we get: 
	\[
	g(n,k,m)=
	\begin{bmatrix}
	1 & a_n & b_n\\
	0 & 1 & c_n\\
	0 &0  &1
	\end{bmatrix}^k
	\begin{bmatrix}
	1 & x_n & y_n\\
	0 & 1 & z_n\\
	0 &0  &1
	\end{bmatrix}^m
	\xymatrix{	\ar[r]_{n \rightarrow \infty}&} 
	\begin{bmatrix}
	0 & ka+mxw &kb+ myw\\
	0 & 0 & 0\\
	0 &0  &0
	\end{bmatrix} \;.
	\]
	
\end{lemma}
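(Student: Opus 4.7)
The plan is to use the explicit closed form for powers of upper-triangular unipotent matrices together with a projective rescaling. Setting $A_n$ and $B_n$ for the two Heisenberg matrices being raised to powers $k$ and $m$, a direct induction from $(I+N)^k = I + kN + \binom{k}{2}N^2$ gives
\begin{equation*}
A_n^{k} = \begin{bmatrix} 1 & ka_n & kb_n + \binom{k}{2} a_n c_n \\ 0 & 1 & kc_n \\ 0 & 0 & 1 \end{bmatrix},
\end{equation*}
and analogously for $B_n^{m}$. Multiplying out, the product $g(n,k,m) = A_n^{k} B_n^{m}$ has diagonal entries equal to $1$, entry $(2,3)$ equal to $kc_n + mz_n$, entry $(1,2)$ equal to $ka_n + mx_n$, and entry $(1,3)$ equal to $kb_n + my_n + km\, a_n z_n + \binom{k}{2} a_n c_n + \binom{m}{2} x_n z_n$.

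Next I would set $s_n := |a_n| + |b_n| \to \infty$ and normalize by dividing the matrix by $s_n$. Co-boundedness makes $x_n/s_n$ and $y_n/s_n$ bounded; combined with $c_n, z_n \to 0$, each of the three extra terms in the $(1,3)$ entry is bounded times $o(1)$, hence vanishes after dividing by $s_n$. The diagonal entries and the $(2,3)$ entry also tend to zero. After passing to a subsequence so that $a_n/s_n \to A$, $b_n/s_n \to B$, $x_n/s_n \to X$, and $y_n/s_n \to Y$, the matrix $s_n^{-1} g(n,k,m)$ converges in $M(3,\C)$ to the matrix whose only non-zero entries are $(1,2) = kA + mX$ and $(1,3) = kB + mY$.

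To identify this limit projectively, observe that $|A|+|B|=1$ by construction, so $(A,B) \neq 0$, and the hypothesis $[a_n:b_n]\to[a:b]$ forces $(A,B) = \lambda(a,b)$ for some $\lambda \in \C^{*}$. Similarly, co-boundedness makes $|x_n|/s_n + |y_n|/s_n$ bounded away from zero, so $(X,Y) \neq 0$, and $[x_n:y_n]\to[x:y]$ yields $(X,Y)=\mu(x,y)$ with $\mu \in \C^{*}$. Rescaling in $\SP(3,\C)$ by $\lambda^{-1}$ and setting $w := \mu/\lambda \in \C^{*}$, the limit takes precisely the form asserted, with $w$ depending only on the sequences and not on $k$ or $m$.

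Finally, one must check that the limit matrix is non-zero in $M(3,\C)$, so that it represents a genuine element of $\SP(3,\C)$. This is exactly where the hypothesis $[a:b]\neq[x:y]$ is used: it says $(a,b)$ and $(x,y)$ are $\C$-linearly independent, so $k(a,b) + mw(x,y) \neq (0,0)$ for any positive integers $k,m$ and any $w \neq 0$. I do not anticipate a serious obstacle. The only subtlety is that the argument strictly produces convergence only along a subsequence, but this suffices for the intended applications, where one is already extracting convergent subsequences inside the compact space $\SP(3,\C)$.
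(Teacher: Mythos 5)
Your proposal is correct and follows essentially the same route as the paper's own proof: expand the unipotent powers explicitly, normalize by the common growth scale, use co-boundedness to show the lower-order terms vanish and to compare the two directional limits, and identify $w$ as the ratio of the resulting scalars (the paper's $w=vs(ur)^{-1}$). Your closing remark that convergence is strictly obtained only along a subsequence corresponds to the paper's ``we can assume'' step and is an accurate reading of how the lemma is applied.
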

\begin{proof} Define  $r_n=max\{\vert a_n\vert, \vert b_n\vert \},$ $s_n=max\{\vert x_n\vert, \vert y_n\vert \}$ and $t_n=max\{s_n,r_n\}$.  Since  $(a_n,b_n),(x_n,y_n)$ are  co-bounded we can assume  there are $r,s\in \Bbb{R}\setminus\{ 0\}$ such that $r_nt_n^{-1}  \xymatrix{ \ar[r]_{n \rightarrow \infty}&} r$  and $s_nt_n^{-1}  \xymatrix{ \ar[r]_{n \rightarrow \infty}&} s$. Moreover, since  $[a_n:b_n] \xymatrix{
		\ar[r]_{n \rightarrow \infty}&} [a:b]$ and 
	$[x_n:y_n]\xymatrix{
		\ar[r]_{n \rightarrow \infty}&} [x:y]$,
	we deduce  that there are $u,v\in \C^*$ such that 
	
	\[
	\begin{array}{l}
	r_n^{-1}(a_n,b_n) \xymatrix{
		\ar[r]_{n \rightarrow \infty}&} u(a,b) \;,\\ 
	s_n^{-1}(x_n,y_n)\xymatrix{
		\ar[r]_{n \rightarrow \infty}&} v(x,y) \;.
	\end{array}
	\]
	Then an easy  computation  shows:
	
	\[
	g(n,k,m)
	\xymatrix{ \ar[r]_{n \rightarrow \infty}&} 
	\begin{bmatrix}
	0 & ka+mxw &kb+ myw\\
	0 & 0 & 0\\
	0 &0  &0
	\end{bmatrix} \,,
	\]
	where  $w=vs(ur)^{-1}$.
\end{proof}

\begin{lemma} \label{l:controlreal}
	Let $G\subset {\rm Heis}(3,\Bbb{C})$ be a Kleinian group such that     $\Pi(G)$ is non-discrete and $\Bbb{P}^2_{\Bbb{C}}-\Omega(G)$ contains more than a line. Then $\overline{\Pi(G)}$ is isomorphic to $\Bbb{R}$.
\end{lemma}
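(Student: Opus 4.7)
The plan is to identify $\overline{\Pi(G)}$ with a closed additive subgroup of $\C$ and then exploit the constraints on $\mathcal{C}(G)$ imposed by Lemma~\ref{l:omega}. Noting that for any $g\in \Heis(3,\C)$ with $(2,3)$-entry $z_g$ the control map sends $g$ to the translation $w\mapsto w+z_g$ in the affine chart $[0:b:c]\mapsto b/c$ of $\overleftrightarrow{e_2,e_3}\setminus\{e_2\}$, we see that $\Pi(G)$ is topologically isomorphic to the additive subgroup $T=\{z_g:g\in G\}\subseteq \C$, and $\overline{\Pi(G)}\cong \overline T$, a closed additive subgroup of $\C\cong \R^2$. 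By the classification of such subgroups, $\overline T$ is either discrete, a real line $\R v$, a set of the form $\R v\oplus \Z w$ with $w\notin \R v$, or the whole of $\C$; the hypothesis that $\Pi(G)$ is non-discrete will eliminate the first possibility.

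The next step is to rule out $\overline T=\C$. Recalling from the proof of Lemma~\ref{l:omega}(\ref{i:ome3}) that $\mathcal{C}(G):=\overleftrightarrow{e_2,e_3}\cap\overline{\bigcup_{\ell\in\mathcal{L}(G)}\ell}$ is closed and $\overline{\Pi(G)}$-invariant, and that $\P^2\setminus \Omega(G)$ coincides with the pencil of complex lines through $e_1$ with base $\mathcal{C}(G)$, I will argue as follows: if $\overline T=\C$, then the translation action of $\overline{\Pi(G)}$ on $\overleftrightarrow{e_2,e_3}\setminus\{e_2\}\cong \C$ is transitive, so the only closed $\overline{\Pi(G)}$-invariant subsets of $\overleftrightarrow{e_2,e_3}$ are $\emptyset$, $\{e_2\}$ and $\overleftrightarrow{e_2,e_3}$. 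The first two would force $\P^2\setminus \Omega(G)\subseteq \overleftrightarrow{e_1,e_2}$, contradicting the hypothesis that the limit set contains more than one line; the third would force $\P^2\setminus \Omega(G)=\P^2$, contradicting that $G$ is Kleinian.

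Having eliminated $\overline T=\C$, the connected component of the identity in $\overline T$ must be a $1$-dimensional real subspace $\R v\cong \R$, so $\overline T$ is either exactly $\R v$ or of the form $\R v\oplus \Z w$ for some $w\notin \R v$. The main obstacle will be to rule out the disconnected possibility; the plan is to invoke Lemma~\ref{l:pseudolatice}, using density of $T$ in $\overline T$ to pick $g,h\in G$ with $(2,3)$-entries close to $v$ and $w$ respectively, and then passing to product sequences $g^{k_n}h^{m_n}$ whose upper rows $(a_n,b_n)$ and $(x_n,y_n)$ can be arranged co-bounded with distinct projective limits while their $(2,3)$-entries tend to $0$ after rescaling. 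Lemma~\ref{l:pseudolatice} will then deliver pseudo-projective limits whose kernels, as $(k,m)\in\Z^2$ vary, sweep out a dense family of directions in $\overleftrightarrow{e_2,e_3}$, forcing $\mathcal{C}(G)=\overleftrightarrow{e_2,e_3}$ and contradicting the Kleinian hypothesis exactly as in the previous step. This leaves $\overline T=\R v\cong \R$, as required.
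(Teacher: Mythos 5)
Your overall skeleton --- classify the closed additive subgroup $\overline{\Pi(G)}\subseteq\C$, discard the discrete case by hypothesis, discard $\C$ by invariance of $\mathcal{C}(G)$ under a transitive translation action, and then discard $\R v\oplus\Z w$ --- is exactly the paper's, and your treatment of the case $\overline{\Pi(G)}=\C$ is correct (and more explicit than the paper's one-line dismissal). The problem is the third step, which is where all the work lies, and your plan for it does not go through.

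There are two concrete obstructions. First, a single application of Lemma \ref{l:pseudolatice} cannot ``sweep out a dense family of directions'': for fixed input sequences, the kernels of the limits $g(n,k,m)$ are the lines $\overleftrightarrow{e_1,[0:kb+myw:-(ka+mxw)]}$, and the closure of $\left\{\left[k(a,b)+mw(x,y)\right]:(k,m)\in\Z^2\setminus\{0\}\right\}$ is the projectivization of a rank-two lattice, i.e.\ a single real projective line in $\overleftrightarrow{e_2,e_3}$ (this is Lemma 5.11 of \cite{BCN}, used exactly this way in the paper). A circle of directions yields no contradiction with $G$ being Kleinian --- the Inoue groups of Example \ref{e:id} and the groups $\mathcal{K}_1$ of Example \ref{n-ab Kodairacono} are Kleinian with $\mathcal{C}(G)$ equal to precisely such a circle. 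Second, your proposed sequences cannot even detect the $\Z w$ summand: in $\overline{T}=\R v\oplus\Z w$ the cosets $\R v+mw$ are separated, so any sequence in $\Pi(G)$ converging to $0$ eventually lies in $\R v$; hence limits built from elements whose $(2,3)$-entries tend to $0$ (which is what Lemma \ref{l:pseudolatice} requires) carry no information distinguishing $\R v$ from $\R v\oplus\Z w$, and the argument would ``prove'' the same false statement for groups with $\overline{\Pi(G)}\cong\R$.

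The missing idea is the paper's conjugation trick. One takes a sequence $(g_n)$ with $\Pi(g_n)\to Id$ converging to a pseudo-projective map with kernel $\overleftrightarrow{e_1,[0:u:1]}$, $Im(u)\neq 0$ (this uses the hypothesis that $\P^2\setminus\Omega(G)$ has more than one line), and an element $h\in G$ with $\Pi(h)$ the translation by $is$ realizing the $\Z w$ direction. Conjugation leaves $\Pi(h^{-m}g_nh^m)=\Pi(g_n)$ unchanged but shifts the limit's upper row, so for each $m\in\Z$ Lemma \ref{l:pseudolatice} applied to the pair $(g_n)$, $(h^{-m}g_nh^m)$ produces a circle $f_2(m)\subseteq\mathcal{C}(G)$ through the common point $[0:b:-a]$ and through $[0:u-ism:1]$. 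As $m\to\pm\infty$ these circles reach points with imaginary part tending to $\pm\infty$, and saturating their union by the dense group of real translations in $\overline{\Pi(G)}$ covers all of $\overleftrightarrow{e_2,e_3}$, forcing $\Omega(G)=\emptyset$ and contradicting Kleinian-ness. Without some version of this two-parameter construction (infinitely many circles indexed by the $\Z$-direction, then density in the $\R$-direction), the disconnected case remains open.
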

\begin{proof} We have that $\Pi(G)$ is a non-discrete subgroup of $\C$, thus  $\overline{\Pi(G)}$ must be   isomorphic to $\C$,  $\R \oplus \Z$  or $\R$,   see Theorem 3.1 in \cite{wal1} . Since  $G$ is complex Kleinian  we deduce that $\overline{\Pi(G)}$ is  isomorphic to either    $\R \oplus \Z$  or $\R$. Let us assume that $\overline{\Pi(G)}$ is isomorphic to   $\R \oplus \Z$. After conjugation, if necessary,  we can assume that  there exists  $s>0$   such   that $\overline{\Pi(G)}=\{r+msi:r\in \R, m\in\Z\}$. Moreover, since $\mathcal{L}(G)$ contains more than a line, we can find a line $\ell\in \mathcal{L}(G)$ containing $e_1$ such that $\ell= \overleftrightarrow {e_1,[0:u:1]}$ where $Im(u)\neq 0 $. On the other hand,  by Lemma \ref{l:0} we can find   $(g_n)\subset G $ and $P\in \SP(3,\C)$     such that 
	$\Pi(g_n) \xymatrix{	\ar[r]_{n \rightarrow \infty}&} Id$, 
	$g_n  \xymatrix{	\ar[r]_{n \rightarrow \infty}&} P$ and $\ell=Ker(P)$. Thus  there are sequences $(a_n),(b_n), (c_n)\subset \C$ such that 
	$max\{\vert a_n\vert ,\vert b_n\vert \}   \xymatrix{	\ar[r]_{n \rightarrow \infty}&}\infty $,
	$c_n   \xymatrix{	\ar[r]_{n \rightarrow \infty}&} 0$, $[a_n:b_n ]  \xymatrix{	\ar[r]_{n \rightarrow \infty}&} [a:b]$ and 
	\begin{equation}\label{e:gn}
	g_n=
	\begin{bmatrix}
	1 & a_n & b_n \\
	0 & 1 & c_n\\
	0 &0 & 1
	\end{bmatrix};\,\,\,\,\,
	P=
	\begin{bmatrix}
	0 & a & b \\
	0 & 0 &0\\
	0 &0 & 0
	\end{bmatrix}\,.
	\end{equation}
	Thus    	$\ell=\overleftrightarrow{[0:b:-a] ,e_1}$ and $b=-ua$.\\
	
	\noindent
	Claim 1. There are  functions    $f_2:\Z\rightarrow  \{\textrm{real projective subspaces  of } \overleftrightarrow{e_2,e_3}\}$ and    $f_1:\Z\rightarrow \Bbb{C}$ such that:
	\begin{enumerate}

	\item $Sgn(Im(f_1(m)))=Sgn(-m)$ for $m$ large,  here $Sgn$ is the function sign;
	\item $\vert Im(f_1(m)) \vert    \xymatrix{	\ar[r]_{\vert m \vert \rightarrow  \infty}&} \infty $;
	\item the point $[0:b:-a]$ is in $\bigcap_{m\in \Z} f_2(m)$;
	\item for each $m\in \Z$ we have $[0:f_1(m):1]\in f_2(m)$;
	\item $\bigcup_{m\in \Z}\bigcup_{p\in f_2(m)}\overleftrightarrow{e_1,p}\subset \Bbb{P}^2_{\C}\setminus\Omega(G)$.\\ 
	 \end{enumerate}
	
	Let $h \in G$ be of the form
	$$
	h=
	\begin{bmatrix}
	1 & x & y\\
	0 & 1 &  is\\
	0 & 0 & 1\\
	\end{bmatrix} \; , \; \hbox{with} \; \; x, y \in \C \; \hbox{and} \; s \in \R \;.
	$$
	
	If $g_n$ is given by Equation \ref{e:gn}, then for each $m\in \Z$ we have:
	
	\[
	h^{-m} g_n h^m=
	\begin{bmatrix}
	1 & a_n & b_n-c_n m x+is m a_n \\
	0 & 1 & c_n \\
	0 & 0 & 1 \\
	\end{bmatrix}
	\xymatrix{	\ar[r]_{n \rightarrow \infty}&}\
	\begin{bmatrix}
	0 & a & b+is m a \\
	0 & 0 & 0 \\
	0 & 0 & 0 \\
	\end{bmatrix}.
	\]
If for each $m\in \Z$	 we apply  Lemma 	 \ref{l:pseudolatice} to the respective sequences induced by the sequences $(g_{n})_{n\in \Bbb{N}}$ and $(	h^{-m} g_n h^m)_{n\in \Bbb{N}}$
	we deduce there exists  $w_m\in \C^*$ such that: 
	\[
	C_m=
\bigcup_{k,l\in \Bbb{Z}.}
	\overleftrightarrow
	{
		e_1,
		[
		0:
		kbw_m+l(b+is m a):
		- ka w_m-la
		]
	}
	\subset \P^2-\Omega(G) \;.
	\]
	If for each $m\in \Z$   we define $g_2(m )$  as  the closure of the set 
	$$
	[Span_\Bbb{\Z}( \{w_m(b, -a),(b+is m a,-a)
	\})-\{0\}],
	$$ 
	then by Lemma   5.11 in \cite{BCN} we have that $g_2(m)$  
	is a real projective space that contains $[0:b:-a]$. Now define $f_1(m)=u-is m $  and observe  that  $C_m=\bigcup_{p\in f_2(m)}\overleftrightarrow{e_1,p}$  and  $[0:u-is m a:1]\in f_2(m) $ for all $m\in \Z$.\\
	
	To conclude the proof let $f_1$ and $f_2$ be the functions given above, then  
$$
G\left (\overline{\bigcup_{m\in \Z}\bigcup_{p\in f_2(m)}\overleftrightarrow{e_1,p}}\right )=
\overline{
\bigcup_{m\in \Z} \bigcup_{p\in \Pi(G)f_2(m)}\overleftrightarrow{e_1,p}
}
 = \Bbb{P}^2_{\C} \;.$$
	This  yields $\Omega(G)=\emptyset$, which is a contradiction. 	
\end{proof}

The proof of  the following lemma is straightforward and it is  left to the reader:


\begin{lemma}\label{mathfrak h}
Set:
$$ \mathfrak{h}_\C=
	\left \{
	\begin{pmatrix}
	0 & a &b\\
	0 &0& c\\
	0 & 0 &0
	\end{pmatrix}:a,b,c\in \C
	\right \} \;.
	$$
	Then the map ${\bf exp}:\mathfrak{h}_\C \rightarrow  {\rm Heis}(3,\C)\,,$ given by
	$$
	\begin{array}{c}
		{\bf exp}
	\begin{pmatrix}
	0 & a &b\\
	0 &0& c\\
	0 & 0 &0
	\end{pmatrix}
	=
	\begin{bmatrix}
	1 & a &b+2^{-1}ac\\
	0 &1& c\\
	0 & 0 &1
	\end{bmatrix}
	\end{array}
	$$
	is a diffeomorphism with inverse
	${\bf log}: {\rm Heis}(3,\C)\rightarrow \mathfrak{h}_\C$ given by 
	
	$$
	{\bf log }
	\begin{bmatrix}
	1 & a &b\\
	0 &1& c\\
	0 & 0 &1
	\end{bmatrix}
	=
	\begin{pmatrix}
	0 & a &b-2^{-1}ac\\
	0 &0& c\\
	0 & 0 &0
	\end{pmatrix}.
	$$
\end{lemma}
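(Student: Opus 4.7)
The plan is to exploit the nilpotency of strictly upper triangular $3\times 3$ matrices, which truncates both the matrix-exponential and logarithm power series to finitely many terms, so that the whole statement reduces to a finite polynomial computation.

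First I would compute $\mathbf{exp}(X) := \sum_{k\geq 0} X^k/k!$ for a generic
$$X = \begin{pmatrix} 0 & a & b \\ 0 & 0 & c \\ 0 & 0 & 0 \end{pmatrix} \in \mathfrak{h}_\C.$$
A direct multiplication gives $X^2$ with unique nonzero entry $ac$ in position $(1,3)$, and $X^3 = 0$; consequently $\mathbf{exp}(X) = I + X + \tfrac{1}{2}X^2$, which by inspection equals the matrix displayed in the lemma. For the inverse, write an element of $\Heis(3,\C)$ as $I + N$ with $N \in \mathfrak{h}_\C$; the Mercator series $\log(I+N) = \sum_{k\geq 1}(-1)^{k-1} N^k/k$ truncates by the same nilpotency to $N - \tfrac{1}{2}N^2$, which reproduces the formula for $\mathbf{log}$ in the statement.

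It remains to verify that $\mathbf{exp}$ and $\mathbf{log}$ are mutually inverse, which amounts to checking the polynomial identities obtained by substituting one explicit formula into the other and comparing the $(1,2)$, $(2,3)$, and $(1,3)$ entries; in each case the $ac/2$ correction cancels. Since both maps are polynomial in the entries $a,b,c$ and hence $C^{\infty}$, they are smooth bijections with smooth inverses, i.e.\ diffeomorphisms. There is no real obstacle: the content of the lemma is simply that the transcendental matrix-exponential degenerates to a polynomial map on $\mathfrak{h}_\C$ because of the uniform identity $X^3 = 0$, and the claim follows from a direct finite computation.
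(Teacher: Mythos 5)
Your computation is correct and is precisely the ``straightforward computation'' that the paper leaves to the reader (it offers no proof of this lemma): nilpotency gives $X^3=0=N^3$, so both series truncate to $I+X+\tfrac12 X^2$ and $N-\tfrac12 N^2$, the $\tfrac12 ac$ terms cancel under composition, and polynomiality in both directions gives the diffeomorphism. Nothing is missing.
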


Now we prove  Theorem \ref{semidirec}   stated in the introduction:


\begin{proof}
	Let us show part  (\ref{p:1r}).	We know that $G$ is finitely generated, therefore    $\Pi(G)$ is finitely generated. If $n=rank(\Pi(G))$, let $H=\{g_1,\ldots,g_n\}\subset G$ be such that $\Pi(G)$ is generated by $\Pi(H)$.   Let us consider the following  equivalence  relation in $G$: let us say that $a\sim b$ if and only if $[a,b]=Id$.  If $A_1,\ldots, A_n$ are the equivalence classes in $G$ induced by $\sim$, then define $B_0=Ker(\Pi\vert_G) )$  and $B_i=\langle A_i\rangle$. Now it is clear that $G=B_0\rtimes \cdots \rtimes B_n$.\\ 
	
	Now let us prove  Part (\ref{p:2r}). Let $\{h_1,...h_k\}\subset Ker(\Pi\vert_G) $ be a minimal generating set for $Ker(\Pi\vert_G) $ and  let $\{g_1,\ldots,g_n\}\subset G$  be such that  $\{\Pi(g_1),\ldots,\Pi(g_n)\}$  is  a minimal generating set for  $\Pi(G)$.  
	Set  $$V=\left \{\sum_{j=1}^k  \alpha_j{\bf log} (h_j)+\sum_{j=1}^n \beta_j {\bf log} (g_j):k_j,l_j\in \Z \right \}.$$
	
	\noindent
	Claim 1. If $\mathfrak{h}_\C$ is as in Lemma \ref{mathfrak h}, then 
	$V$ is an  additive subgroup of $\mathfrak{h}_\C$ with rank $n+k$.
	For this,
	assume there are $\alpha_j, \beta_j $'s in $\Z$ such that  $$J=\sum_{j=1}^k \alpha_j {\bf log} (h_j)+\sum_{j=1}^n \beta_j {\bf log}  (g_j)=0.$$  
	We can assume that the $g_j$ and $h_j$ can be expressed in the following way:
	$$
	h_j=
	\begin{bmatrix}
	1 & u_j &v_j\\
	0 &1& 0\\
	0 & 0 &1
	\end{bmatrix} \textrm{ and } 	 
	g_j=
	\begin{bmatrix}
	1 & x_j &y_j\\
	0 &1& z_j\\
	0 & 0 &1
	\end{bmatrix} \;.
	$$
Since the  $h_j $ generate $Ker(\Pi\vert_G)$ we have that $Span_{\Z}\{(u_j,v_j):j=1,\ldots, k\}$ is discrete;  and since the $\Pi(g_j)$ generate $\Pi(G)$ we get that  $\{z_1,\ldots z_n\}$ is a $\Z$-linearly independent set. An easy computation shows:
	$$
	J=\begin{pmatrix}
	0 & \sum_{j=1}^k
	\alpha_j u_j+\sum_{j=1}^n
	\beta _j x_j &\sum_{j=1}^k
	\alpha_j v_j+\sum_{j=1}^n
	\beta_j(y_j-2^{-1}x_jz_j)\\
	0 &0& \sum_{j=1}^n
	\beta_jz_j\\
	0 & 0 &0
	\end{pmatrix}.
	$$
	Since  ${\bf exp}(J)=Id$, we get the following system of equations:
	
	\begin{eqnarray*}
		\sum_{j=1}^k
		\alpha_j u_j+\sum_{j=1}^n
		\beta_j x_j =0,\\
		\sum_{j=1}^n
		\beta_jz_j=0,\\
		\sum_{j=1}^k
		\alpha_j v_j+\sum_{j=1}^n
		\beta_j(y_j-2^{-1}x_jz_j)=0.
	\end{eqnarray*}
	Since $\{z_1,\ldots z_n\}$ is linearly independent over $\Z$  we conclude $\beta_1=\ldots=\beta_n=0$. Hence  $ \sum_{j=1}^k
	\alpha_j( u_j,v_j)=0$ and therefore  $\alpha_1=\ldots=\alpha_k=0$, proving the claim.\\

	Let us define 
	$$
	\sqrt{[G,G]}=\{h\in \Heis(3,\C):h^2\in[G,G]\}.
	$$
It is clear that  	$\sqrt{[G,G]}$ is a discrete  subgroup   contained in the center of  $\Heis(3,\C)$.
	
	\vskip.2cm
	\noindent
	Claim 2. $\left \langle G\cup \sqrt{[G,G]}\right \rangle $ is a discrete subgroup of $\Heis(3,\C)$. Assume, on the contrary,   that there exists a sequence $(f_n)\subset \left \langle G\cup \sqrt{[G,G]}\right \rangle $   of distinct elements such that $f_n \xymatrix{ \ar[r]_{n \rightarrow  \infty}&}  Id$, thus $f_n^2 \xymatrix{ \ar[r]_{n \rightarrow  \infty}&}  Id$. Since $(f_n^2)\subset  G$ and $G$ is discrete we deduce
	  $f_n^2=Id$  for $n$ large,  which is a contradiction.\\
	
		\noindent
	Claim 3. ${\bf log}\left (\left \langle G\cup \sqrt{[G,G]}\right \rangle \right )$ is an additive discrete subgroup of $\mathfrak{h}_\C$. For this, let $a,b,c,x,y,z\in \C$ be such that:
$$
\gamma_1=
\begin{pmatrix}
	0 & a &b-2^{-1}ac\\
	0 &0& c\\
	0 & 0 &0
\end{pmatrix},
\gamma_2=
\begin{pmatrix}
0 & x &y-2^{-1}xz\\
0 &0& z\\
0 & 0 &0
\end{pmatrix}\in {\bf log}\left (\left \langle G\cup \sqrt{[G,G]}\right \rangle \right ).$$
An easy calculation shows:
	\[
	{\bf exp}(\gamma_1-\gamma_2)={\bf exp}(\gamma_1){\bf exp}(\gamma_2)^{-1}
	\begin{bmatrix}
	1 & 0&2^{-1}(az-cx)\\
	0 &1& 0\\
	0 & 0 &1
	\end{bmatrix} \,,
	\]
and 
	\[
	\begin{pmatrix}
1 & 0&2^{-1}(az-cx)\\
0 &1& 0\\
0 & 0 &1
\end{pmatrix}^2
=[{\bf exp}(\gamma_1), {\bf exp}(\gamma_2)] \;.
		\]
Hence  	${\bf exp}(\gamma_1-\gamma_2) \in 	\left \langle G\cup \sqrt{[G,G]}\right \rangle $. Since  ${\bf exp}$ is a diffeomorphism
with inverse   ${\bf log}$, 
 Claim 2 implies that  ${\bf log}\left (\left \langle G\cup \sqrt{[G,G]}\right \rangle \right )$ is  an additive  discrete group.\newline

To	finish the proof of  Part (\ref{p:2r}) we notice that    $V$ is a subgroup of the additive discrete group 
 $ {\bf log}\left (\left \langle G\cup \sqrt{[G,G]}\right \rangle \right )\subset \mathfrak{h}_\C$  and   $dim_\R(\mathfrak{h}_\C)=6$.\\

	Now we prove Part (\ref{p:3r}).  Let us assume that $G$ is  complex Kleinian. By  Lemma \ref{l:controlreal},   $G$ leaves invariant each connected component of $\Omega(G)$, and    each of these  is contractible  by Lemma  \ref{l:omega}. Hence,   by Theorem \ref{t:obdim}, the obstruction dimension of $G$  satisfies $obdim(G)\leq 4$. On the other hand  each $ B_j$ is a finitely generated, torsion free  group, and  it is well known that this kind of groups  are   semi-hyperbolic, see \cite{AB}.  Therefore Corollary \ref{c:obdim} yields 	$\sum_{i=0}^n obdim(B_i)\leq obdim (G)\leq 4$. Finally notice that $B_j=\Z^{k_j}$ for some  $k_j$ and $obdim(B_j)=k_j=rank(B_j)$ by \cite[2.2 ]{BF}. 
\end{proof}

\begin{corollary}\label{c:poly}
	If $G\subset {\rm Heis}(3,\C)$ is a discrete group, then $G$ is polycyclic.
	\end{corollary}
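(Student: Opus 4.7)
The plan is to derive the corollary directly from Theorem \ref{t:desc}(1), which gives the decomposition
\[
G = Ker(\Pi|_G) \rtimes B_1 \rtimes \cdots \rtimes B_n
\]
with $B_i \cong \Z^{k_i}$ for each $i$. The kernel $Ker(\Pi|_G)$ sits inside the additive subgroup of $\Heis(3,\C)$ whose elements have vanishing $(2,3)$-entry, a copy of $\C^2$; since $G$ is discrete, this intersection is a finitely generated free abelian group of rank at most $4$. So the decomposition already presents $G$ as an iterated semidirect product of finitely generated abelian pieces.

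Each finitely generated free abelian group admits the standard flag $\Z^k \triangleright \Z^{k-1} \triangleright \cdots \triangleright \{0\}$ with infinite cyclic successive quotients, so both $Ker(\Pi|_G)$ and every $B_i$ are polycyclic by definition. I would then invoke the classical fact that a semidirect product $A \rtimes B$ of polycyclic groups is polycyclic: concatenate a polycyclic series of $A$ (whose terms are automatically normal in $A \rtimes B$ because $A$ itself is normal) with the preimage under the projection $A \rtimes B \to B$ of a polycyclic series of $B$. Applying this principle inductively along the iterated semidirect product above yields a subnormal series of $G$ whose consecutive quotients are infinite cyclic, which is exactly the definition of polycyclic.

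No step is really an obstacle; the only point that requires a moment's care is that the subgroups appearing in the flag coming from the $B_i$-layers remain normal in their immediate successor under the nontrivial conjugation action of the outer factors. This is automatic from the semidirect-product structure, and in fact is reinforced by the observation that $\Heis(3,\C)$ is two-step nilpotent, so conjugation by any element of $G$ preserves each term of the lower central series and, consequently, the layers of the constructed chain. Alternatively, one could bypass the bookkeeping entirely by noting that $G$ is a finitely generated (by Theorem \ref{t:desc}) subgroup of the nilpotent Lie group $\Heis(3,\C)$, hence a finitely generated nilpotent group, and such groups are well known to be polycyclic.
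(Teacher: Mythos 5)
Your proof is correct and follows the same route the paper intends: Corollary \ref{c:poly} is stated there without an explicit proof, as an immediate consequence of the decomposition $G=Ker(\Pi\vert_G)\rtimes B_1\rtimes\cdots\rtimes B_n$ of Theorem \ref{t:desc} into finitely generated free abelian factors, which is precisely your main argument. One small caveat: the parenthetical claim that the terms of a polycyclic series of $A$ are ``automatically normal in $A\rtimes B$ because $A$ itself is normal'' is false in general (normality is not transitive), but this is harmless since polycyclicity only requires a subnormal series --- each term normal in its immediate successor --- which your construction and your closing paragraph correctly supply.
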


 Recall  that polycyclic means that the group  is solvable and every subgroup is finitely generated. Polycyclic groups actually are finitely presented \cite{LR}.

\subsection{Triangular purely parabolic groups with  trivial kernel} \label{s:ktrivial}

In this subsection we study purely parabolic groups with an invariant full flag and finite kernel. Now recall from Section \ref{s:screw} that $U_+$ is the subgroup of $\PSL(3,\C)$ of classes of  upper triangular matrices $(g_{ij})$ with 
	$g_{11}g_{22}g_{33}  =1$, and 
	we defined a group morphism 	$\Pi^*:U_+\rightarrow Mob (\C)$  by
	$$
	\Pi^*([g_{ij}])z=
	g_{11}g_{22}^{-1}z + g_{12}g_{22}^{-1} \;.
	$$

\begin{lemma} \label{l:pfd2}
	Let $G\subset {\rm Heis}(3,\C)$ be a commutative discrete group. If $Ker(\Pi^*\vert_G)$  and 	$Ker(\Pi\vert_G) $ are trivial, then  there exist   $W\subset \C$ an additive subgroup 	and 
	$ L:W\rightarrow \Bbb{C}$  a group morphism  such that:

	\begin{enumerate}

		\item\label{i:pa1} The group  $G$ is conjugate to:
		\[
		{\mathcal{K}_0}(W,L)=
		\left \{
		\left[		\begin{array}{lll}
		1 & \xi &L(\xi )+2^{-1}\xi^2 \\
		0 & 1 & \xi \\
		0 & 0& 1\\ 
		\end{array}
		\right]:\xi\in W
		\right \}.
		\]
		\item \label{i:pa2}The  Kulkarni limit set is: 
		\[
		\Lambda_{Kul}({\mathcal{K}_0}(W,L))=\overleftrightarrow{e_1,e_2}=\P^2-Eq(\mathcal {L}),
		\]
		and its complement $\Lambda_{Kul}({\mathcal{K}_0}(W,L))$  is the largest open set on which the group acts properly discontinuously.
		
		\item\label{i:pa3}  The group ${\mathcal{K}_0}(W,L)$ is free   with rank  at most four.
		\item \label{i:pa4}If $W$ is discrete then $L$ admits a linear extension to  the real vector space $Span_\R(W)$.
		
	\end{enumerate}
\end{lemma}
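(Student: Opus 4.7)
My plan is to use commutativity to extract the underlying abelian structure of $G$, normalize by a diagonal conjugation, read off the cocycle that defines $L$, and finally import the dynamical information from Example \ref{Kodaira-0}.

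For $g_i = \begin{pmatrix}1 & a_i & b_i \\ 0 & 1 & c_i \\ 0 & 0 & 1\end{pmatrix}$ in $\Heis(3,\C)$, a direct computation yields
$[g_1,g_2] = \begin{pmatrix}1 & 0 & a_1 c_2 - a_2 c_1 \\ 0 & 1 & 0 \\ 0 & 0 & 1\end{pmatrix}$
and shows that $\varphi:G\to \C^2$, $g\mapsto(a_g,c_g)$, is a group homomorphism. Its kernel lies in $Ker(\Pi|_G)\cap Ker(\Pi^*|_G) = \{Id\}$, so $\varphi$ is injective, and commutativity of $G$ forces $W := \varphi(G)$ to satisfy $\pi_1(x)\pi_2(y) = \pi_1(y)\pi_2(x)$. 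Since also $Ker(\pi_1)\cap W = Ker(\pi_2)\cap W = \{0\}$, Lemma \ref{l:rad}(\ref{i:tt1}) gives $\mu\in\C^*$ and an additive $R_0\subset\C$ with $W=\{r(1,\mu):r\in R_0\}$. Conjugating by the diagonal $h=\mathrm{diag}(1,\mu^{-1/2},1)$ (which preserves $\Heis(3,\C)$) turns every element of $G$ into $\widetilde g(\xi)=\begin{pmatrix}1 & \xi & b(\xi) \\ 0 & 1 & \xi \\ 0 & 0 & 1\end{pmatrix}$ with $\xi = r\mu^{1/2}$ in $R := \mu^{1/2}R_0$ and $b$ a well-defined function of $\xi$ (from injectivity of $\varphi$). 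Multiplying $\widetilde g(\xi_1)\widetilde g(\xi_2)$ produces the cocycle $b(\xi_1+\xi_2) = b(\xi_1)+b(\xi_2)+\xi_1\xi_2$, and the substitution $L(\xi) := b(\xi)-\xi^2/2$ converts this into $L(\xi_1+\xi_2)=L(\xi_1)+L(\xi_2)$, producing the homomorphism $L:R\to\C$ and establishing (\ref{i:pa1}).

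For (\ref{i:pa2}), the normalized $G$ is precisely the Kodaira group ${\rm Kod}_0(\{0\},R,L)$ of Example \ref{Kodaira-0}. Discreteness of $G$ forces either $R$ discrete, or else $L(x_n)\to\infty$ whenever $0\ne x_n\to 0$ in $R$ (otherwise the $\widetilde g(x_n)$ would accumulate at $Id$)—exactly the compatibility hypotheses in Example \ref{Kodaira-0}. The computations of $\Lambda_{Kul}$, $Eq(\mathcal{L})$ and the maximality of $\Omega_{Kul}$ done there apply verbatim. Part (\ref{i:pa3}) is then immediate: (\ref{i:pa2}) shows $G$ is complex Kleinian, and since $G$ is Abelian with torsion-free unipotent elements, Theorem \ref{t:desc}(\ref{p:1r}) reduces to $G = Ker(\Pi|_G)\rtimes B_1$ with $B_1\cong\Z^{k_1}$, so Theorem \ref{t:desc}(\ref{p:3r}) together with triviality of $Ker(\Pi|_G)$ gives $\mathrm{rank}(\mathcal{L}) = k_1\le 4$.

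Finally, (\ref{i:pa4}) is elementary: a discrete additive $R\subset\C$ has $\Z$-rank at most two, and any $\Z$-basis of $R$ is automatically an $\R$-basis of $Span_\R(R)$; defining $\widetilde L$ by the obvious formula on this basis yields the desired $\R$-linear extension. The main delicate step of the plan is the diagonal normalization: choosing $s$ so that $rs^{-1} = sr\mu$ (forcing $s^2 = \mu^{-1}$) and verifying that the resulting $b$ depends only on the symmetrized parameter $\xi$. All remaining points reduce to bookkeeping or to direct citations of Lemma \ref{l:rad}, Example \ref{Kodaira-0}, and Theorem \ref{t:desc}.
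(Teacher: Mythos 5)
Your proof is correct, and for the main structural part (\ref{i:pa1}) it follows the same route as the paper: the injective homomorphism $g\mapsto(\pi_{12}(g),\pi_{23}(g))$, the symmetry relation forced by commutativity, Lemma \ref{l:rad}, a diagonal conjugation, and the substitution $L(\xi)=b(\xi)-2^{-1}\xi^2$. (Your normalizing matrix $\mathrm{diag}(1,\mu^{-1/2},1)$ does satisfy the required relation $d_1d_3=\mu d_2^2$ and sends both off-diagonal entries to $r\mu^{1/2}$; the matrix $\mathrm{diag}(1,\mu^{-1/2},\mu^{1/2})$ printed in the paper equalizes them only up to a factor of $\mu^{1/2}$, so your version of this ``delicate step'' is the consistent one.) The genuine divergences are in parts (\ref{i:pa2}) and (\ref{i:pa3}). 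For (\ref{i:pa2}) the paper recomputes the limits of sequences $(g_m)$ in $\SP(3,\C)$ directly and then gives the maximality argument via $g^m\ell\to\overleftrightarrow{e_1,e_2}$, whereas you identify the normalized group with ${\rm Kod}_0(\{0\},R,L)$ and import the identical computation from Example \ref{Kodaira-0}; since you also verify the dichotomy ($R$ discrete, or $L(x_n)\to\infty$ along null sequences) needed for that example, this is an immaterial difference. For (\ref{i:pa3}) the paper cites the proof of Proposition 5.9 in \cite{BCN1} (an Abelian group acting properly discontinuously and freely on $\C^2$ has rank at most four), whereas you deduce the bound from Theorem \ref{t:desc}(\ref{p:3r}); this is legitimate and non-circular, since Theorem \ref{t:desc} precedes this lemma and does not use it, but it routes a simple fact through the obstructor-dimension machinery where the paper uses a one-line external citation. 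Both arguments are valid.
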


\begin{proof} Let us show Part (\ref{i:pa1}).
	Consider the following auxiliary function
	\[
	\begin{array}{c}
	\zeta:G\rightarrow \C^2\\
	g\mapsto (\pi_{12}(g),\pi_{23}(g)) \;.
	\end{array} 
	\]
	By definition  $\zeta$ is a monomorphism.  Set 
	\[
	\begin{array}{c}
	\kappa:\zeta(G)\rightarrow \Bbb{C}\\
	x \mapsto \pi_{13}(\zeta^{-1}(x)).
	\end{array} 
	\]
	It is clear that we have: 
	\[
	G
	=
	\left \{
	\begin{bmatrix}
	1 & \pi_1( x)&\kappa (x) \\
	0 & 1 & \pi_2(x)\\
	0 & 0& 1\\ 
	\end{bmatrix}
	:x\in W
	\right \}.
	\]
	Now let  $x,y\in \zeta(G)$, then $A=B$ where $A,B$ are: 
	\[
	A= \begin{Small}
	\begin{bmatrix}
	1 & \pi_1(x)&\kappa (x) \\
	0 & 1 & \pi_2(x)\\
	0 & 0& 1\\ 
	\end{bmatrix}
	\begin{bmatrix}
	1 & \pi_1(y)&\kappa (y) \\
	0 & 1 & \pi_2(y)\\
	0 & 0& 1\\ 
	\end{bmatrix}=
	\begin{bmatrix}
	1 & \pi_1(x+y)&\kappa (x)+\pi_1(x)\pi_2(y)+\kappa(y) \\
	0 & 1 & \pi_2(x+y)\\
	0 & 0& 1\\ 
	\end{bmatrix} \end{Small}
	\] 
	\[
	B= \begin{Small}
	\begin{bmatrix}
	1 & \pi_1(y)&\kappa (y) \\
	0 & 1 & \pi_2(y)\\
	0 & 0& 1\\ 
	\end{bmatrix}
	\begin{bmatrix}
	1 & \pi_1(x)&\kappa (x) \\
	0 & 1 & \pi_2(x)\\
	0 & 0& 1\\ 
	\end{bmatrix}
	=
	\begin{bmatrix}
	1 & \pi_1(x+y)&\kappa (x)+\pi_1(y)\pi_2(x)+\kappa(y) \\
	0 & 1 & \pi_2(x+y)\\
	0 & 0& 1\\ 
	\end{bmatrix} \end{Small}
	\]
	Then for every  $x,y\in \zeta(G)$ we have:
	\[
	\begin{array}{l}
	\kappa(x+y)=\kappa(x)+\kappa(y)+\pi_1(x)\pi_2(y) \,,\\
	\pi_1(x)\pi_2(y)=\pi_1(y)\pi_2(x) \,.
	\end{array}
	\]
	By Lemma  \ref{l:rad} there exists   an additive subgroup $W\subset \C$ and $\mu\in \C^* $ such that $\zeta(G)=W(1,\mu)$. Let us define 
	\[
	h=
	\begin{bmatrix}
	1 & 0 &0\\
	0 & \mu^{-1/2} &0\\
	0 &	0                 & \mu^{1/2} \\
	\end{bmatrix} \;,
	\] 
	and observe that: 
	\[
	h Gh^{-1}
	=
	\left \{
	\begin{bmatrix}
	1 & \xi &\widetilde {\kappa} (\xi) \\
	0 & 1 & \xi \\
	0 & 0& 1\\ 
	\end{bmatrix}
	:\xi \in W
	\right \},
	\]
	where  $\widetilde {\kappa}:W\rightarrow \C$ satisfies	 $\widetilde {\kappa}(\xi_1+\xi_2)=\widetilde{\kappa}(\xi_1)+\widetilde{\kappa}(\xi_2)+\xi_1\xi_2$. To conclude define   $L:W\rightarrow \C$
	by $L(\xi)=\widetilde {\kappa}(\xi)-2^{-1}\xi^2$, then:
	\[
	L(\xi_1+\xi_2)=\widetilde {\kappa}(\xi_1+\xi_2)-2^{-1}(\xi_1+\xi_2)^2=
	\widetilde {\kappa}(\xi_1)+\widetilde {\kappa}(\xi_2)-2^{-1}\xi_1^2-2^{-1}\xi_2^2=
	L(\xi_1)+L(\xi_2),
	\]
	proving Part (\ref{i:pa1}).\\
	
	Let us prove Part  (\ref{i:pa2}). Let $(g_m)\subset G$ be a sequence of distinct elements of $G$, then there exists  $(x_n)\subset W$ a sequence of distinct elements such that
	\[
	g_m=
	\begin{bmatrix}
	k_m^{-1} & x_mk_m^{-1} & k_m^{-1}(L(x_m)+x^2/2)\\
	0 & k_m^{-1} & x_m k_m^{-1}\\
	0&0&k_m^{-1}\\
	\end{bmatrix} \,,
	\]
	where $k_m=max\{\vert x_m\vert, \vert L(x_m)+x_m^2/2\vert \}$. If $(g_{n_m})$ is a subsequence of $(g_m)$ such that $(g_{n_m})$  converges to $P\in \SP(3,\C)-\PSL(3,\C)$, then there are $a,b\in \C$ such that $\mid a\mid+\mid b\mid \neq 0 $ and 
	\[
	P=
	\begin{bmatrix}
	0 & a& b\\
	0 &0 &a\\
	0 &0 &0\\
	\end{bmatrix} \,.
	\]
	This shows that $Eq(G)=\P^2-\overleftrightarrow{e_1,e_2}=\C^2$ since,  by  Proposition \ref{p:pkg}, $Eq(G)\subset \Omega_{Kul}(G)$ and $\Lambda_{Kul}(G)$ always contains a line.
	Then $Eq(G)= \Omega_{Kul}(G)$. If $\Omega\subset \P^2$ is any open set on which $G$ acts properly discontinuously, then $\P^2-\Omega$ contains a complex line, say $\ell$. If $g\in G-\{Id\}$, then $g^{m}\ell \xymatrix{
		\ar[r]_{m \rightarrow \infty}&} \overleftrightarrow{e_1,e_2}$.\\

	In order to prove Part (\ref{i:pa3}) we observe that $G$ is an  group acting properly discontinuously and freely on $\C^2$, thus the rank of $G$ must be at most four, see the proof of Proposition 5.9 in  \cite{BCN1}. The last part of the theorem  is immediate.
\end{proof}

As a consequence  of Lemma 5.10 in  \cite{BCN} we get the following result.

\begin{lemma} \label{l:pfi2a}
	Let $G\subset {\rm Heis}(3,\C)$ be a commutative discrete  group, then:
	\begin{enumerate}
		\item  \label{l:ipfi1}
		If $Ker(\Pi^*\vert_G)$ is non-trivial, then
		there is a discrete  additive  subgroup $\mathcal{L}$ of $\C^2$ with rank at most four, 
		such that:   
		\[
		G=
		\left 
		\{
		\begin{bmatrix}
		1 & 0 &y\\
		0 & 1 & z\\
		0 & 0 & 1
		\end{bmatrix}
		:
		(y,z)\in \mathcal{L}
		\right
		\} \,.
		\] 
		\item  \label{l:ipfi2}
		If $Ker(\Pi\vert_G) $ is non-trivial, then there    exists a discrete    additive subgroup  $\mathcal{L}\subset \Bbb{C}^2$  such that:
		\[ G=
		\left \{
		\begin{bmatrix}
		1 & x&y \\
		0 & 1 & 0\\
		0 & 0& 1\\ 
		\end{bmatrix}
		:x\in \mathcal{L}
		\right \}.
		\]
		\hskip-15pt Moreover, if $G$ is complex Kleinian, then $\mathcal{L}$ has rank at most 2.
	\end{enumerate}				
\end{lemma}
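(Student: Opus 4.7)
The plan is to use the commutator identity in $\Heis(3,\C)$: for $g_i=\left[\begin{smallmatrix}1&a_i&b_i\\0&1&c_i\\0&0&1\end{smallmatrix}\right]$, $i=1,2$, a direct calculation yields
\[
[g_1,g_2]=\begin{bmatrix}1&0&a_1c_2-a_2c_1\\0&1&0\\0&0&1\end{bmatrix}.
\]
Hence the map $\phi:G\to\C^2$, $\phi(g)=(a,c)$, is a homomorphism (it is the projection to the abelianization of $\Heis(3,\C)$), and commutativity of $G$ is equivalent to $a_1c_2=a_2c_1$ for all pairs, i.e.\ to $\phi(G)$ lying on a single complex line through the origin in $\C^2$.

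For Part~(\ref{l:ipfi1}) I would pick a nontrivial $g_0\in Ker(\Pi^*\vert_G)$, so $a_0=0$.  In the generic subcase $c_0\ne 0$, the commutativity relation $a\cdot c_0=a_0\cdot c=0$ forces $a=0$ for every $g\in G$, which is the advertised shape.  The Heisenberg product of two elements with $a=0$ collapses to coordinate-wise addition in $(b,c)$, so $W:=\{(b,c):(0,b,c)\in G\}$ is an additive subgroup of $\C^2$; it is discrete because $G$ is, and a discrete subgroup of $\C^2\cong\R^4$ has rank at most $4$.  The residual subcase $c_0=0$ (so $g_0$ is central in $\Heis(3,\C)$ and commutativity with $g_0$ imposes no restriction on the other elements) is exactly the configuration handled by Lemma~5.10 of \cite{BCN}, which delivers the same conclusion.

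Part~(\ref{l:ipfi2}) is dual: pick $g_0\in Ker(\Pi\vert_G)\setminus\{\mathrm{Id}\}$, whose entry $c_0$ vanishes, and argue symmetrically---invoking commutativity when $a_0\ne 0$ and Lemma~5.10 of \cite{BCN} in the central-only subcase---to conclude $c=0$ for every $g\in G$.  The product law for elements with $c=0$ again reduces to addition in $(a,b)$, so $W:=\{(a,b):(a,b,0)\in G\}$ is a discrete additive subgroup of $\C^2$.  For the final rank bound, note that if $G$ is complex Kleinian then $G$ is a dual Torus group $\mathcal{T}^*(W)$ of Example~\ref{e:if}; by Example~\ref{nk-Torus} (citing \cite{BCN2}), such a group is not complex Kleinian whenever $\mathrm{rank}(W)\ge 3$, forcing $\mathrm{rank}(W)\le 2$.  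Alternatively, this bound can be extracted from Theorem~\ref{t:desc}(\ref{p:3r}) applied to $G$ itself (since $G=Ker(\Pi\vert_G)$ in this case).

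The only delicate point I anticipate is the subcase where the kernel in question is contained in the center of $\Heis(3,\C)$: commutativity alone gives no restriction on $\phi(G)$ there, and this is precisely the reason one defers to the cited Lemma~5.10 of \cite{BCN} rather than arguing from scratch.
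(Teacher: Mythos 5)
The paper does not actually prove this lemma: it is stated as an immediate consequence of Lemma 5.10 in \cite{BCN}, so your argument is necessarily independent of the text. Your main line of reasoning is sound and consistent with the paper's toolkit (it is essentially the computation behind Lemma \ref{l:rad}): the commutator formula is correct, and in the subcase where the kernel in question contains a \emph{non-central} element ($c_0\neq 0$ in part (1), $a_0\neq 0$ in part (2)) the relation $a_1c_2=a_2c_1$ does force the advertised shape, the induced subgroup $W$ is additive and discrete, and the rank bounds follow as you say --- except that your parenthetical alternative for the bound $\mathrm{rank}(W)\leq 2$ via Theorem \ref{t:desc}(\ref{p:3r}) only yields $\mathrm{rank}(W)\leq 4$; the route through Example \ref{nk-Torus} and \cite{BCN2} is the one that actually gives $2$.

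The genuine gap is the central subcase, which you correctly flag as the delicate point but then dispose of incorrectly. If the only nontrivial elements of $Ker(\Pi^*\vert_G)$ (resp.\ $Ker(\Pi\vert_G)$) are central in $\Heis(3,\C)$, the stated conclusion can simply fail, so no citation of \cite{BCN} can ``deliver the same conclusion'' there. Concretely, take $G={\rm Kod}_0(\Z,\Z,0)$ from Example \ref{Kodaira-0}, i.e.\ the matrices with first row $(1,a,a^2/2+w)$ and $g_{23}=a$, where $a,w\in\Z$: this group is commutative, discrete and contained in $\Heis(3,\C)$, and its central elements (those with $a=0$, $w\neq 0$) make both $Ker(\Pi^*\vert_G)$ and $Ker(\Pi\vert_G)$ non-trivial, yet $G$ contains elements with $g_{12}\neq 0$ and $g_{23}\neq 0$, so it has neither form (1) nor form (2). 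What this shows is that the lemma, read literally, needs the stronger hypothesis that the relevant kernel contains a non-central element --- equivalently, it should be applied only when the complementary control map ($\Pi$ for part (1), $\Pi^*$ for part (2)) is trivial on $G$, which is exactly how it is invoked in Section \ref{s:maind}. Your proof of the non-degenerate case is complete; the residual case cannot be closed as written, because the statement itself does not hold there without the extra hypothesis.
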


\subsection{Groups with infinite Kernel}   \label{s:kinfinite}	
	We consider now discrete groups $G\subset \Heis(3,\C)$ whose control map has infinite kernel, {\it i.e.}, 
	$Ker(\Pi\vert_G)$ is infinite.

\begin{lemma}  \label{core}
	If $G$ is complex Kleinian group  with infinite kernel, then:
	\begin{enumerate}
		\item  \label{l:ic2} We have that  $Ker(\Pi\vert_G) =\Z^k$ where $1\leq k\leq 2$.
		\item   \label{l:ic3} We have that $\Lambda_{Kul}(Ker(\Pi\vert_G) )=L_0(Ker(\Pi\vert_G) )$ is either a line or a pencil of lines over a circle, where  $L_0$ is as in Definition \ref{def Kulkarni limit set}.
		\item  \label{l:ic4} If the set  $ \Lambda_{Kul}(Ker(\Pi\vert_G) )$ is a   line,  then there exists    a  discrete additive subgroup  $W$ of $\C$ such that $G$ is conjugate to: 
		\[
		G_W=
		\left 
		\{
		\begin{bmatrix}
		1 & 0 & w\\
		0 & 1& 0\\
		0 & 0& 1\\
		\end{bmatrix}: w\in W
		\right 
		\}
		\]
		and $rank(Ker(\Pi\vert_G) )\leq 2$.
		\item  \label{l:ic5}
		If  $\Lambda_{Kul}(Ker(\Pi\vert_G) )$ is a pencil of lines over a circle, then  the rank  of  $Ker(\Pi\vert_G) $ is two and  the  groups  $\Pi^*(Ker(\Pi\vert_G) )$ and  $\pi_{23}(Ker(\Pi\vert_G) )$ are non-trivial.

		\item \label{l:ic6} If the group $\Pi(G)$ is non-trivial.  Then 
		the group 	 $\Pi^*(Ker(\Pi\vert_G) )$ is non-trivial  if and only if $\Lambda_{Kul}(Ker(\Pi\vert_G) )$ is a pencil of lines over a circle.

	\end{enumerate}
\end{lemma}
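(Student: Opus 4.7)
The argument splits naturally into five steps, each using the explicit form of $Ker(\Pi\vert_G)$ inside $\Heis(3,\C)$. For (\ref{l:ic2}), an element $g=[g_{ij}]\in \Heis(3,\C)$ lies in $Ker(\Pi\vert_G)$ iff $g_{23}=0$, so $Ker(\Pi\vert_G)$ consists of matrices $g_{(a,b)}$ with $1$'s on the diagonal, entry $a$ in position $(1,2)$, entry $b$ in position $(1,3)$, and all other off-diagonal entries zero. Since $g_{(a,b)}g_{(a',b')}=g_{(a+a',b+b')}$, I identify $Ker(\Pi\vert_G)$ with a discrete additive subgroup $W\subset\C^2\cong\R^4$, hence with $\Z^k$ for some $k\leq 4$. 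As $G$ is complex Kleinian so is its subgroup $Ker(\Pi\vert_G)$, and Lemma~\ref{l:pfi2a}, item~(\ref{l:ipfi2}) then forces $k\leq 2$, while the infinite-kernel hypothesis gives $k\geq 1$.

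For (\ref{l:ic3}) and the structural part of (\ref{l:ic5}), the identification above presents $Ker(\Pi\vert_G)$ as a dual Torus group $\mathcal{T}^*(W)$ in the sense of Example~\ref{e:if}, whose Kulkarni limit set is $\bigcup_{p\in S}\overleftrightarrow{e_1,p}$, with $S$ the closure of $[Span_\Z\{(0,b,-a):(a,b)\in W\}\setminus\{{\bf 0}\}]$. By Lemmas 5.3 and 5.11 of \cite{BCN} (invoked in Example~\ref{e:if}), $S$ is either a single point---giving one line---or a real projective line---giving a pencil of complex lines through $e_1$ over a circle. The equality $\Lambda_{Kul}=L_0$ holds because each $[0:-b:a]\in S$ is pointwise fixed by the infinite cyclic subgroup $\langle g_{(a,b)}\rangle$. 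When $\Lambda_{Kul}$ is a pencil, $S$ spans an $\R$-projective line, so $W$ has rank two with $\C$-linearly independent generators; then both coordinate projections $\pi_1(W)$ and $\pi_2(W)$ are non-trivial, which yields non-triviality of $\Pi^*(Ker(\Pi\vert_G))=\{z\mapsto z+a:(a,b)\in W\}$ and of the $(1,3)$-coordinate of $Ker(\Pi\vert_G)$.

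For (\ref{l:ic4}), the line case means that all projective points $[0:-b:a]$ for $(a,b)\in W\setminus\{{\bf 0}\}$ coincide, so $W\subset \C(a_0,b_0)$ for some $(a_0,b_0)\neq{\bf 0}$, and I may write $W=\{r(a_0,b_0):r\in R\}$ with $R\subset\C$ discrete additive. Conjugating by a suitable upper-triangular $h\in \PSL(3,\C)$ fixing $e_1$ and sending $[0:-b_0:a_0]$ to $e_2$ keeps the group in $\Heis(3,\C)$ (since $h$ normalizes $\Heis(3,\C)$) and transforms every $(a,b)\in W$ into a pair of the form $(0,w)$ with $w$ ranging over a discrete additive $W'\subset\C$; then $Ker(\Pi\vert_G)$ becomes exactly $G_{W'}$, and the rank bound is inherited from (\ref{l:ic2}).

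Finally, for (\ref{l:ic6}), the ``if'' direction is immediate from (\ref{l:ic5}); for the converse, assume $\Pi^*(Ker(\Pi\vert_G))\neq\{e\}$ and pick $(a,b)\in W$ with $a\neq 0$, then use $\Pi(G)\neq\{e\}$ to choose $g\in G$ with $c:=g_{23}\neq 0$. A direct matrix computation yields
\[
g\,g_{(a,b)}\,g^{-1}=g_{(a,\,b-ac)},
\]
which lies in $W$ by normality of $Ker(\Pi\vert_G)$ in $G$. Were $\Lambda_{Kul}(Ker(\Pi\vert_G))$ a single line, $W$ would lie in $\C(a,b)$, so $(a,b-ac)=\lambda(a,b)$ would force $\lambda=1$ and then $ac=0$, contradicting $a,c\neq 0$; hence $\Lambda_{Kul}$ is a pencil. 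The main technical hurdle is the conjugation step in (\ref{l:ic4}), which must simultaneously normalize $Ker(\Pi\vert_G)$ to the form $G_{W'}$ and keep the ambient group inside the Heisenberg model.
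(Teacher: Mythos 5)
Your route is essentially the paper's: parts (\ref{l:ic2}), (\ref{l:ic3}) and (\ref{l:ic5}) are read off from the dual--torus description of $Ker(\Pi\vert_G)$ in Example \ref{e:if} (the paper simply cites that example, and your appeal to Lemma \ref{l:pfi2a} for the rank bound is an equivalent source), and your computation $g\,g_{(a,b)}\,g^{-1}=g_{(a,\,b-ac)}$ together with the linear--independence contradiction is exactly the argument the paper gives for part (\ref{l:ic6}).

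The one step that fails as written is the conjugation in part (\ref{l:ic4}). An upper--triangular element of $\PSL(3,\C)$ preserves the line $\overleftrightarrow{e_1,e_2}=\{v_3=0\}$, hence cannot send the point $[0:-b_0:a_0]$ to $e_2$ unless $a_0=0$; and $a_0\neq 0$ genuinely occurs (take $W=\Z(1,0)$, whose invariant line is $\overleftrightarrow{e_1,e_3}$, so the point to be moved is $e_3$). The paper instead conjugates by the non--upper--triangular matrix $\left[\begin{smallmatrix}1&0&0\\ 0&r&1\\ 0&x&y\end{smallmatrix}\right]$. Your conclusion survives with the weaker requirement that $h$ fix $[e_1]$ and send $[0:-b_0:a_0]$ to $e_2$: since every $g\in Ker(\Pi\vert_G)$ satisfies ${\rm Im}(g-I)\subset \C e_1$ and $\ker(g-I)\supset {\rm span}\{e_1,(0,b_0,-a_0)\}$ in the line case, the conjugate $hgh^{-1}-I$ has image in $\C e_1$ and kernel containing $e_1,e_2$, so $hgh^{-1}\in G_{W'}$ automatically. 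But you must then drop the justification that $h$ normalizes $\Heis(3,\C)$ --- such an $h$ does not --- and verify the resulting form by this direct computation, as the paper does.
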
	

\begin{proof}
	The proofs of parts  (\ref{l:ic2}) and (\ref{l:ic3}) follow from Example \ref{e:if}.	 Let us prove
	Part (\ref{l:ic4}).  It is clear  that there exists $\mathcal{L}\subset \C^2$ an $\R$-linearly independent set  such that $G=\mathcal{T}^*(\mathcal{L})$,  where $G=\mathcal{T}^*(\mathcal{L})$ is  given as in Example  \ref{e:if}. We know that 
	$$
	\Lambda_{Kul}( Ker(\Pi\vert_G) )= \bigcup_{p\in S}
	\overleftrightarrow{e_1,p}\,,
	$$
	where $S$ is the closure of the set $\big\{Span_\Bbb{Z}\{(y,-x):(x,y)\in \mathcal{L}\}\setminus\{0\}\big\}$.   Since $\Lambda_{Kul}( Ker(\Pi\vert_G) )$ is a single line, from Lemma  \ref{{ladd}} we deduce   that $S$ is either a single point or it contains exactly two $\C$-linearly dependent vectors. Let us assume 
		that $S$ contains exactly two $\C$-linearly dependent vectors, the other case is similar;  so 
		there exists $\alpha\in \C$  and  $(x,y)\in \mathcal{L}$, such that  one has:
	
	\[
	G=
	\left
	\{
	\begin{bmatrix}
	1 & (n +m\alpha )x & (n +m\alpha )y\\
	0 & 1 &0\\
	0 & 0 &1\\
	\end{bmatrix}: n,m\in \Z
	\right 
	\} \,.
	\]
	
	Let $r\in \R^*$ be such that $x\neq yr$,  then a simple computation shows:  
	
	\[
	\begin{bmatrix}
	1&0&0\\
	0 & r& 1\\
	0& x & y
	\end{bmatrix}
	\begin{bmatrix}
	1 & (n +m\alpha )x & (n +m\alpha )y\\
	0 & 1 &0\\
	0 & 0 &1\\
	\end{bmatrix}
	\begin{bmatrix}
	1&0&0\\
	0 & r& 1\\
	0& x & y
	\end{bmatrix}^{-1}
	=
	\begin{bmatrix}
	1 & 0 & m+n\alpha  \\
	0 & 1 & 0 \\
	0 & 0 & 1\\
	\end{bmatrix}\,,
	\]
	proving  (\ref{l:ic4}). Notice that 
	    Part (\ref{l:ic5})  follows from  Example \ref{e:if}, so let us prove 
	(\ref{l:ic6}). 
	
	Since $\Pi^*(Ker(\Pi\vert_G) )$ and $\Pi(G)$ are both non-trivial, we deduce that there are $a,b,x,y,z\in \C$  and $g,h \in G$ such that $az\neq 0$ and
	\[
	g=
	\begin{bmatrix}
	1 & a & b\\
	0 & 1 &0\\
	0 & 0 &1\\
	\end{bmatrix};\,\, \,
	h=
	\begin{bmatrix}
	1&x&y\\
	0 & 1& z\\
	0& 0& 1
	\end{bmatrix} \,.
	\]
	By  a straightforward computation we  find:
	
	\[
	h gh^{-1}=
	\begin{bmatrix}
	1 & a & b-az\\
	0 & 1 &0\\
	0 & 0 &1\\
	\end{bmatrix}.
	\]
	In order   to conclude the proof we only need to observe that $(a,b)$ and $(a ,b-az)$ are $\C$-linearly independent vectors.
\end{proof}

As in \cite{BCN}, we use the notation 
$\mu(U)$ to denote  the maximum number of complex projective lines
in general position contained in $\P^2- U$.

\begin{lemma} \label{l:cif}
	Let $G\subset {\rm Heis}(3,\C)$ be complex Kleinian group such that  $\Pi(G)$ is non trivial and $\mu (\Omega_{Kul}(Ker(\Pi\vert_G )))=2$, then 
	\begin{enumerate}
		
		\item \label{l:ica1} The group $\Pi(G)$ is discrete.
		\item \label{l:ica2} The rank of $\Pi(G)$ is equal to one.
	\end{enumerate}
\end{lemma}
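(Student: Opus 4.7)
The hypothesis $\mu(\Omega_{Kul}(Ker(\Pi\vert_G))) = 2$ rules out $\Lambda_{Kul}(Ker(\Pi\vert_G))$ being a single line (which would give $\mu = 1$), so Lemma \ref{core}(\ref{l:ic3}) forces it to be a pencil of lines over a circle. Then Lemma \ref{core}(\ref{l:ic5}) applies: $Ker(\Pi\vert_G)$ has rank $2$ and $\Pi^*(Ker(\Pi\vert_G))$ is non-trivial. Identifying $Ker(\Pi\vert_G)$ with the additive subgroup $W \subset \C^2$ of pairs $(a,b)$ parametrizing its elements (so that $c = 0$ for elements of the kernel), the non-triviality of $\Pi^*(Ker(\Pi\vert_G))$ means that $\pi_{12}(W) \neq \{0\}$, hence there is a fixed $h \in Ker(\Pi\vert_G)$ corresponding to some $(x_0,y_0) \in W$ with $x_0 \neq 0$.

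The crux of the argument is a direct commutator identity in the Heisenberg group. For any $g \in G$ with lift having $(1,2), (1,3), (2,3)$-entries $a,b,c$, so that $\Pi(g)$ is translation by $c$, a routine matrix multiplication yields that the commutator $[h,g] = hgh^{-1}g^{-1}$ has $(1,2)$-entry $0$, $(2,3)$-entry $0$, and $(1,3)$-entry $x_0 c$. In particular $[h,g]$ lies in $Ker(\Pi\vert_G)$ and corresponds to the element $(0, x_0 c) \in W$. Consequently, for every $c \in \Pi(G)$ we have $(0, x_0 c) \in W$, so the map $c \mapsto (0, x_0 c)$ is a well-defined injective group morphism $\Pi(G) \hookrightarrow W \cap (\{0\} \times \C)$.

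To conclude, I observe that $W \cap (\{0\} \times \C)$ has rank at most $1$: since $W$ itself has rank $2$, the equality $rank(W \cap (\{0\} \times \C)) = 2$ would force $W \subset \{0\} \times \C$, contradicting $\pi_{12}(W) \neq \{0\}$. Hence $\Pi(G)$ has rank at most $1$, and being non-trivial by assumption, $\Pi(G) \cong \Z$; this proves both statements at once, since every rank-one subgroup of $(\C, +)$ is of the form $\Z \cdot \alpha$ with $\alpha \neq 0$ and is therefore automatically discrete. The only real subtlety is recognizing that the single Heisenberg commutator identity, fed by the structural information from Lemma \ref{core}(\ref{l:ic5}), already encodes both the rank bound and the discreteness, bypassing the heavier obstructor-dimension machinery used elsewhere in the paper.
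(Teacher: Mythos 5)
Your proof is correct in substance, and it takes a genuinely different route from the paper's. The paper handles the two assertions by two separate contradiction arguments: for discreteness it picks a sequence $(g_n)\subset G$ with $\Pi(g_n)$ distinct and converging to the identity together with a kernel element $g$ satisfying $\Pi^*(g)\neq Id$, and shows that $g_ngg_n^{-1}\rightarrow g$ through distinct elements, contradicting discreteness of $G$; for the rank bound it computes the two commutators $[h^{-1},h_1]$ and $[h^{-1},h_2]$ and argues that, together with $h$, they would produce three $\R$-linearly independent elements inside the rank-two kernel. Your single identity $[h,g]=(0,x_0c)$ replaces both: it exhibits an injective group homomorphism $\Pi(G)\hookrightarrow W\cap(\{0\}\times\C)$, and since the target sits inside the discrete rank-two group $W$ with $\pi_1(W)\neq 0$, it is infinite cyclic or trivial, so $\Pi(G)\cong\Z$ and both conclusions drop out at once. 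This is cleaner and more unified, and it sidesteps the linear-independence claim in the paper, which as written only covers the case where the two control parameters are $\R$-linearly independent.

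Two small repairs are needed in your write-up. First, the closing justification that ``every rank-one subgroup of $(\C,+)$ is of the form $\Z\cdot\alpha$ and is therefore discrete'' is false as a general principle: $\Bbb{Q}\subset\C$ has rank one and is neither cyclic nor discrete. What actually saves you is the embedding you already constructed: $\Pi(G)$ is isomorphic to a subgroup of $W\cong\Z^2$, hence is finitely generated free abelian of rank at most one, hence infinite cyclic; and an infinite cyclic subgroup $\Z\alpha$ of $(\C,+)$ is indeed discrete. Second, the implication that $rank(W\cap(\{0\}\times\C))=2$ forces $W\subset\{0\}\times\C$ deserves its one-line reason: the quotient $W/(W\cap(\{0\}\times\C))\cong\pi_1(W)\subset\C$ is torsion-free, while a rank-two subgroup of the rank-two free abelian group $W$ has finite index, so the quotient is trivial and $\pi_1(W)=0$, contradicting $x_0\neq 0$. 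With these sentences added your argument is complete.
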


\begin{proof}
	Assume  $\Pi(G)$ is not discrete.  Then we can assume there exists  a sequence $(g_n)\subset G$ such that $\Pi(g_n )$ is a sequence of distinct elements  converging to $Id$.  On the other hand, since $\Lambda_{Kul}(Ker(\Pi\vert_G)) $ is a pencil of lines over a circle, there exists  $g\in Ker(\Pi\vert_G) $ such that $\Pi^*(g)\neq Id$.  If $g_n$ and $g$ are given respectively by
	\[
	g_n=
	\begin{bmatrix}
	1 & a_n & b_n\\
	0 & 1 &c_n\\
	0 & 0 &1\\
	\end{bmatrix};\,\, \,
	g=
	\begin{bmatrix}
	1&x&y\\
	0 & 1& 0\\
	0& 0& 1
	\end{bmatrix},
	\]
	then 
	\[
	g_ng g_n^{-1}=
	\begin{bmatrix}
	1 & x & y-xc_n\\
	0 & 1 &0\\
	0 & 0 &1\\
	\end{bmatrix}
	\xymatrix{ \ar[r]_{n \rightarrow  \infty}&} 
	\begin{bmatrix}
	1 & x & y\\
	0 & 1 &0\\
	0 & 0 &1\\
	\end{bmatrix},
	\]
	 which contradicts that $G$ is discrete.
	
	Now we assume that $\Pi(G)$  has rank $\ge 2$. Let $h_1,h_2,h\in G$ be such that $\langle \Pi(h_1), \Pi(h_2)\rangle=\Pi(G) $, $h\in Ker(\Pi\vert_G) $ and $\Pi^*(h)\neq Id$. Set:
	\[
	h_1=
	\begin{bmatrix}
	1 & a & b\\
	0 & 1 &c\\
	0 & 0 &1\\
	\end{bmatrix};\,\, \,
	h_2=
	\begin{bmatrix}
	1&x&y\\
	0 & 1& z\\
	0& 0& 1
	\end{bmatrix};\,\, \,
	h=
	\begin{bmatrix}
	1&u&v\\
	0 & 1& 0\\
	0& 0& 1
	\end{bmatrix},
	\]
	then
	
	\[
	[h^{-1},h_1]=
	\begin{bmatrix}
	1 & 0 & -uc\\
	0 & 1 &0\\
	0 & 0 &1\\
	\end{bmatrix};\,\, \,
	[h^{-1},h_2]=
	\begin{bmatrix}
	1 & 0 & -uz\\
	0 & 1 &0\\
	0 & 0 &1\\
	\end{bmatrix}.\,\, \,
	\]
	Then  $\{(u,v),(0,-uc), (0,-uz)\}$ is an $\R-$linearly independent set, which is not possible.
\end{proof}

\begin{proposition} \label{t:pifa}
	If $G\subset {\rm Heis}(3,\C)$ is    complex Kleinian  such that      $\Pi(G)$ is non trivial and  $\mu(\Omega_{Kul}(Ker \Pi \vert_ G))=2$.  Then there exist  $x,y\in \C$, $p,q,r\in \Z$ such that $p,q$ are co-primes, $q^2$ divides $r$ and  $G$ is conjugate to 
	\small{\[
	H=
	\left \{
	\left[
	\begin{array}{ccc}
	1 & k+l pq^{-1}+m x & lr^{-1} +m \left(k+lpq^{-1}\right)+ \begin{pmatrix} m\\ 2\end{pmatrix} x+m y\\
	0 & 1 & m \\
	0 & 0 & 1 \\
	\end{array}
	\right]:(k,l,m)\in \Bbb{Z}
	\right \}
	\;.
	\]}
\end{proposition}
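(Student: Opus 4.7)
The plan is to use Lemma \ref{l:cif} to reduce to a cyclic control group, normalize $h$ and $W$ simultaneously by conjugations inside $\PSL(3,\C)$, and finally invoke Lemma \ref{l:ratlat} to pin down the rational structure of $W$.

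By Lemma \ref{l:cif}, $\Pi(G)$ is discrete of rank one, so I pick $h\in G$ with $\Pi(h)$ generating $\Pi(G)$. Conjugating by a diagonal element of $\PSL(3,\C)$ that preserves $\Heis(3,\C)$, I may assume that $h=\left(\begin{smallmatrix}1&x&y\\0&1&1\\0&0&1\end{smallmatrix}\right)$. By Lemma \ref{core}, part (\ref{l:ic5}), $Ker(\Pi\vert_G)$ has rank two and $\Pi^*(Ker(\Pi\vert_G))\neq\{Id\}$, so I write $Ker(\Pi\vert_G)=\{g(a,b):(a,b)\in W\}$ with $g(a,b)=\left(\begin{smallmatrix}1&a&b\\0&1&0\\0&0&1\end{smallmatrix}\right)$, where $W\subset\C^2$ is a rank-two discrete additive subgroup satisfying $\pi_1(W)\neq\{0\}$. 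The two generators of $W$ must be $\C$-linearly independent, as otherwise the set $S$ described in Example \ref{e:if} would reduce to a single point and $\Lambda_{Kul}(Ker(\Pi\vert_G))$ would be a single line, contradicting $\mu(\Omega_{Kul}(Ker(\Pi\vert_G)))=2$.

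A direct matrix computation shows $h\,g(a,b)\,h^{-1}=g(a,b-a)$. Since $W$ is invariant under this, the difference $(0,-a)=(a,b-a)-(a,b)$ lies in $W$ for every $(a,b)\in W$, so $\{0\}\times\pi_1(W)\subset W$. Choose a generator $(a_1,b_1)$ of $W$ with $a_1\neq0$. Conjugation by the Heisenberg element $k=\left(\begin{smallmatrix}1&0&0\\0&1&b_1/a_1\\0&0&1\end{smallmatrix}\right)$ takes $g(a,b)\mapsto g(a,b-a\,b_1/a_1)$, sending $(a_1,b_1)\mapsto(a_1,0)$, while leaving $h$ of the same normal form (only the entry $y$ is modified). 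Next, conjugation by $\mathrm{diag}(a_1^{-1},1,1)$ preserves the $(2,3)$-entry of $h$ and scales the first row of every matrix in $\Heis(3,\C)$ by $a_1^{-1}$; in particular $(a_1,0)\mapsto(1,0)$. After these two conjugations, $W=\Z(1,0)+\Z(c,d)$ for some $c,d\in\C$ with $d\neq0$.

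Since $\{0\}\times\pi_1(W)\subset W$ and $\pi_1(W)=\Z+c\Z$, I have $(0,1),(0,c)\in W$, so Lemma \ref{l:ratlat} provides coprime positive integers $p,q$ and a positive integer $r$ with $q^2\mid r$ such that $c=p/q$ and $d=1/r$. Every element of $G$ is now of the form $g(k+lp/q,l/r)\cdot h^m$ with $k,l,m\in\Z$, and using $h^m=I+mN+\binom{m}{2}N^2$ for $N=h-I$ (which satisfies $N^3=0$), a routine matrix multiplication yields precisely the form claimed in the proposition. The principal technical obstacle is the interplay between the normalization of $h$ and that of $W$: once the $(2,3)$-entry of $h$ is fixed to $1$ the only diagonal conjugations still available are $\mathrm{diag}(a,b,b)$, which scale both coordinates of $W$ by the same factor, and the additional Heisenberg conjugations merely shift the second coordinate by a multiple of the first; these restricted operations are just sufficient to place one generator of $W$ at $(1,0)$, after which the $h$-invariance of $W$ combined with Lemma \ref{l:ratlat} automatically produces the rigid rational structure with $q^2\mid r$, leaving no further tuning.
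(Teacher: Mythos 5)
Your proof is correct and follows essentially the same route as the paper's: both reduce to the structural facts of Lemma \ref{l:cif} and Lemma \ref{core}, normalize the generators by conjugation so that one generator of $Ker(\Pi\vert_G)$ becomes $(1,0)$ and the complementary generator has $(2,3)$-entry $1$, and then feed the closure condition $(0,1),(0,c)\in W$ into Lemma \ref{l:ratlat}. The only difference is organizational: the paper performs a single combined conjugation and phrases the key condition as ``$G_1$ is a group iff $g_3g_ig_3^{-1}\in\langle g_1,g_2\rangle$,'' whereas you normalize in stages and extract the same condition from the explicit identity $hg(a,b)h^{-1}=g(a,b-a)$.
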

\begin{proof}
	By Lemma \ref{l:cif} we know that $\Pi(G)$ is discrete and has  rank equal to 1;  and by Lemma \ref{core} we have $rank(Ker(\Pi\vert_G) )=2$ and $\Pi^*(Ker(\Pi\vert_G) )$ is non-trivial. Thus by Theorem  \ref{t:desc}   there exist $\{(a,b),(c,d)\}$ a $\C$-linearly independent set and $u,v,w\in \C$ such that 
	\begin{equation} \label{e:grupo}
	G=
	\left \{
	\begin{bmatrix}
	1 & ka+lc & kb+ld\\
	0 & 1 &0\\
	0 & 0 & 1
	\end{bmatrix}
	\begin{bmatrix}
	1 & u  & v\\
	0 & 1 &w\\
	0 & 0 & 1
	\end{bmatrix}^n: k,l,n\in \Z
	\right \},
	\end{equation}
	$aw\neq 0$.  A simple computation  shows: 
	\[
	\begin{bmatrix}
	\frac{1}{a} & 0 & 0 \\
	0 & 1 & \frac{b}{a} \\
	0 & 0 & w \\
	\end{bmatrix}
	\begin{bmatrix}
	1 & a & b\\
	0 & 1 & 0\\
	0 & 0 &1
	\end{bmatrix}
	\begin{bmatrix}
	\frac{1}{a} & 0 & 0 \\
	0 & 1 & \frac{b}{a} \\
	0 & 0 & w \\
	\end{bmatrix}^{-1}
	=
	\begin{bmatrix}
	1 & 1 & 0\\
	0 & 1 & 0 \\
	0 & 0 & 1 \\
	\end{bmatrix}
	=g_1 \,,
	\]
	
	\[
	\begin{bmatrix}
	\frac{1}{a} & 0 & 0 \\
	0 & 1 & \frac{b}{a} \\
	0 & 0 & w \\
	\end{bmatrix}
	\begin{bmatrix}
	1 & c & d\\
	0 & 1 & 0\\
	0 & 0 &1
	\end{bmatrix}
	\begin{bmatrix}
	\frac{1}{a} & 0 & 0 \\
	0 & 1 & \frac{b}{a} \\
	0 & 0 & w \\
	\end{bmatrix}^{-1}
	=
	\begin{bmatrix}
	1 & \frac{c}{a} & \frac{d}{a w}-\frac{b c}{a^2 w} \\
	0 & 1 & 0 \\
	0 & 0 & 1 \\
	\end{bmatrix}=g_2 \,,
	\]

	\[
	\begin{bmatrix}
	\frac{1}{a} & 0 & 0 \\
	0 & 1 & \frac{b}{a} \\
	0 & 0 & w \\
	\end{bmatrix}
	\begin{bmatrix}
	1 & u & v\\
	0 & 1 & w\\
	0 & 0 &1
	\end{bmatrix}
	\begin{bmatrix}
	\frac{1}{a} & 0 & 0 \\
	0 & 1 & \frac{b}{a} \\
	0 & 0 & w \\
	\end{bmatrix}^{-1}
	=
	\begin{bmatrix}
	1 & \frac{u}{a} & \frac{v}{a w}-\frac{b u}{a wa} \\
	0 & 1 & 1 \\
	0 & 0 & 1 \\
	\end{bmatrix}=g_3 \,.
	\]
	Now by Equation \ref{e:grupo} we deduce  that 
	\[
	G_1=\{g_1^kg_2^lg_3^n:k,l,n\in \Z\}
	\]
	is a group conjugate to $G$. On the other hand,  $G_1$ is a group  if and  only if   
	\[
	g_3 g_i g_3^{-1}\in \langle  g_1,g_2 \rangle  \textrm{ for } i=1,2.
	\] 
	The last statement is  equivalent to
	\[
	(0,1),(0,ca^{-1})\in Span_\Z(\{   (1,0),(ca^{-1}, (aw)^{-1}( d- bc a^{-1}   ))\}).
	\] 
	Now the conclusion  follows  from Lemma \ref{l:ratlat}.
\end{proof}

\begin{proposition} \label{t:pifa1}
	Let  $G\subset {\rm Heis}(3,\C)$ be   complex Kleinian  such that      $\Lambda_{Kul}(Ker(\Pi\vert_G) )$ is a line  and $\Pi(G)$ is discrete.
	Then  $G$ is conjugate to one of the following groups:
	
	\begin{enumerate}
		\item 
		\begin{equation}\label{e:fn1}
		\mathcal{T}(\mathcal{L})=
		\left \{
		\left[
		\begin{array}{lll}
		1 & 0  &y\\
		0 & 1 & z\\
		0 & 0& 1\\ 
		\end{array}
		\right]:(y,z)\in \mathcal{L}
		\right \},
		\end{equation}
		where  $\mathcal{L}\subset \C^2$ is an additive subgroup such that $\pi_2(\mathcal{L})$ is discrete.
		\item 
		\begin{equation}\label{e:fn2}
	{\mathcal{K}_0}(W_1,W_2,L)=	
		\left \{
		\begin{bmatrix}
		1 & x & L(x)+x^2/2 +w\\
		0 & 1& x\\
		0 & 0 &1\\
		\end{bmatrix}:
		w\in W_2, x\in W_1
		\right \},
		\end{equation}
		where  $W_1,W_2\subset \C$  are  additive discrete subgroups  and   $L:W_1\rightarrow \C$   is  a  group morphism.

		\item 
		\begin{equation}\label{e:fn4}
		\mathcal{K}=	
		\left \{
		\begin{bmatrix}
		1 &0  &w\\
		0 & 1 & 0 \\
		0 & 0 & 1 \\
		\end{bmatrix}
		\begin{bmatrix}
		1 &1 &x\\
		0 & 1 & 1 \\
		0 & 0 & 1 \\
		\end{bmatrix}^n
		\begin{bmatrix}
		1 &a &b\\
		0 & 1 & c \\
		0 & 0 & 1 \\
		\end{bmatrix}^m:m,n\in \Z, w\in W
		\right \},
		\end{equation}
		where $W\subset \C$ is an additive discrete subgroup, $a-c\in W$ and $c\notin \R$.
	\end{enumerate}
\end{proposition}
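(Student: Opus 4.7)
The plan is to apply Lemma \ref{core}(\ref{l:ic4}) to put $Ker(\Pi|_G) = G_W$ for a discrete additive $W \subset \C$ of rank at most $2$, and then to analyze the extension $1 \to G_W \to G \to \Pi(G) \to 1$ via the homomorphism $\phi: G \to (\C^2, +)$ sending a generic element $\left[\begin{smallmatrix} 1 & \alpha & \beta \\ 0 & 1 & \gamma \\ 0 & 0 & 1 \end{smallmatrix}\right]$ to the pair $(\alpha, \gamma)$. The kernel of $\phi$ is exactly $G_W$, and $\pi_2$ is injective on $V := \phi(G)$ because $\gamma(g) = 0$ forces $g \in G_W$ and hence $\alpha(g) = 0$. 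Thus $V \cong \Pi(G)$ is free abelian of rank $j \in \{0, 1, 2\}$, and the three forms of the proposition correspond to three cases determined by the position of $V$ inside $\C^2$.

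If $\pi_1(V) = 0$, every element of $G$ has $\alpha = 0$ and the map $g \mapsto (\beta(g), \gamma(g))$ identifies $G$ with a Torus group (\ref{e:fn1}), whose defining subgroup $W'$ satisfies $\pi_2(W') = \Pi(G)$ discrete. Otherwise pick $v_1 \in V$ with $\alpha(v_1) \neq 0$; then $\gamma(v_1) \neq 0$ by injectivity. A diagonal conjugation $\mathrm{diag}(\lambda, \mu, \nu) \in \PSL(3,\C)$ rescales $(\alpha, \gamma) \mapsto (a\alpha, b\gamma)$ for free $a, b \in \C^*$ and rescales $w \mapsto ab\, w$, so we may assume $v_1 = (1,1)$ after replacing $W$ by a rescaled (still discrete) subgroup. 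A lift $h_1 \in G$ of $v_1$ is then $\left[\begin{smallmatrix} 1 & 1 & x \\ 0 & 1 & 1 \\ 0 & 0 & 1 \end{smallmatrix}\right]$ for some $x \in \C$, and induction gives $h_1^n$ with entries $\alpha = \gamma = n$ and $\beta = nx + n(n-1)/2$. If $V = \Z v_1$, then $G = \{h_1^n g_w : n \in \Z,\ w \in W\}$ matches $\mathrm{Kod}_0(\Z, W, L)$ with $L(n) = n(x - 1/2)$, giving form (\ref{e:fn2}).

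If $V$ has rank $2$, pick a second generator $v_2 = (a, c) \in V$ and a lift $h_2 = \left[\begin{smallmatrix} 1 & a & b \\ 0 & 1 & c \\ 0 & 0 & 1 \end{smallmatrix}\right]$. Discreteness of $\Pi(G) = \Z + \Z c$ together with rank $2$ in $\C$ forces $c \notin \R$, and a direct computation in $\Heis(3,\C)$ yields $[h_1, h_2] = g_{c-a} \in G_W$, so $a - c \in W$. When $a = c$, $V$ lies on the diagonal and an inductive computation analogous to that for $h_1^n$ places every element into Kodaira form (\ref{e:fn2}) with $W_1 = \Z + \Z c$, $W_2 = W$, and linear $L: W_1 \to \C$ determined by $L(1) = x - 1/2$, $L(c) = b - c^2/2$ (well-defined because $1, c$ are $\R$-independent). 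When $a \neq c$, centrality of $G_W$ in $\Heis(3,\C)$ together with $V \cong \Z^2$ gives a unique factorization $g = g_w h_1^n h_2^m$ for each $g \in G$, yielding exactly the form (\ref{e:fn4}) with the stated constraints. The main technical obstacle is the first conjugation from Lemma \ref{core}(\ref{l:ic4}), whose conjugating matrix lies in $\PSL(3,\C)$ and may a priori move $G$ outside $\Heis(3,\C)$; this is handled by noting that the conjugated group is still purely parabolic and preserves the flag fixed by $G_W$, so Theorem \ref{t:wal} provides a further conjugation by an element of the flag stabilizer that returns the whole group to $\Heis(3,\C)$ while keeping the kernel as $G_W$.
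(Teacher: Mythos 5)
Your overall strategy is essentially the paper's: normalize the kernel of the control morphism, use the semidirect decomposition of Theorem \ref{t:desc} to write $G$ as $Ker(\Pi|_G)$ extended by at most two generators, and case-split according to how the pairs $(\alpha,\gamma)$ of $(1,2)$- and $(2,3)$-entries sit inside $\C^2$. The paper encodes this case split as an explicit determinant condition on the generators of the form \eqref{e:fn0} and then performs matrix conjugations; your reformulation via the homomorphism $\phi(g)=(\alpha,\gamma)$ and the injectivity of $\pi_2$ on $\phi(G)$ is a clean equivalent, and your computations ($h_1^n$, $[h_1,h_2]=g_{c-a}$, the identification of $L$ on $\Z+\Z c$, and the forced condition $c\notin\R$ from discreteness of $\Z+\Z c$ of rank two) all check out.

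The one step that does not work as written is your patch for the kernel normalization. Theorem \ref{t:wal} only produces a finite-index subgroup preserving a full flag; it does not supply a conjugation that returns the whole group to $\Heis(3,\C)$ while fixing the kernel in the form $G_W$, so the sentence invoking it proves nothing. Fortunately the difficulty you are trying to patch does not arise. If $\Pi(G)$ is nontrivial, Lemma \ref{core}\,(\ref{l:ic6}) says that $\Pi^*(Ker(\Pi|_G))$ nontrivial forces $\Lambda_{Kul}(Ker(\Pi|_G))$ to be a pencil of lines over a circle (the proof there exhibits two $\C$-independent vectors $(a,b)$ and $(a,b-az)$ in the kernel as soon as some kernel element has nonzero $(1,2)$-entry and some $g\in G$ has nonzero $(2,3)$-entry). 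Under your hypothesis that the kernel's limit set is a single line, $\Pi^*(Ker(\Pi|_G))$ is therefore already trivial, i.e.\ every kernel element has vanishing $(1,2)$-entry and the kernel is of the form $G_W$ with no conjugation at all. If instead $\Pi(G)$ is trivial, then $G=Ker(\Pi|_G)$ and the conjugation of Lemma \ref{core}\,(\ref{l:ic4}) is applied to the entire group, landing it directly in the form \eqref{e:fn1}; there is no remaining group to worry about. With that substitution your argument is complete and matches the paper's conclusion.
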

\begin{proof}
	Since $\Pi(G)$ is discrete   we deduce  $Ker(\Pi\vert_G) $ and $\Pi(G)$ are torsion free Abelian groups with   rank less than or equal to 2. For simplicity  we may assume that  $rank(Ker(\Pi\vert_G) )=rank(\Pi(G))=2$ since, as we will see in the proof, any other possibility will be covered by this case.  Now by Theorem  \ref{t:desc} there exist  $W\subset \C$ an additive discrete subgroup with rank 2  and $a,b,c,x,y,z\in \C$ such that: 
	\begin{equation} \label{e:fn0}
	G=
	\left \{
	\begin{bmatrix}
	1 & 0 & w\\
	0 & 1 &0\\
	0 & 0 & 1
	\end{bmatrix}
	\begin{bmatrix}
	1 & x  & y\\
	0 & 1 &z\\
	0 & 0 & 1
	\end{bmatrix}^m
	\begin{bmatrix}
	1 & a  & b\\
	0 & 1 &c\\
	0 & 0 & 1
	\end{bmatrix}^n: w\in W, m,n\in \Z
	\right \},
	\end{equation}
	and $zc^{-1}\notin\R$. Now consider the following cases:
	
	\vskip.2cm
	\noindent
	Case 1.  $xb-za=0$. Let us consider  the following sub-cases:
	
	\vskip.2cm
	\noindent
	Sub-case 1. $x=a=0$. Then from Equation  \ref{e:fn0} we see that  $G$ is conjugate to the torus group given by Equation \ref{e:fn1}. 
	
	\vskip.2cm
	\noindent
	Sub-case 2. $xa\neq 0$. Observe that:
	\[
	g_w=
	\begin{bmatrix}
	\frac{1}{x} & 0 & 0 \\
	0 & 1 & 0 \\
	0 & 0 & z \\
	\end{bmatrix}
	\begin{bmatrix}
	1&0   & w\\
	0 & 1 & 0 \\
	0 & 0 & 1 \\
	\end{bmatrix}
	\begin{bmatrix}
	\frac{1}{x} & 0 & 0 \\
	0 & 1 & 0 \\
	0 & 0 & z \\
	\end{bmatrix}^{-1}
	=
	\begin{bmatrix}
	1 & 0 & w(xz)^{-1} \\
	0 & 1 & 0 \\
	0 & 0 & 1 \\
	\end{bmatrix} \,,
	\]

	\[
	g=
	\begin{bmatrix}
	\frac{1}{x} & 0 & 0 \\
	0 & 1 & 0 \\
	0 & 0 & z \\
	\end{bmatrix}
	\begin{bmatrix}
	1&x   & y \\
	0 & 1 & z \\
	0 & 0 & 1 \\
	\end{bmatrix}
	\begin{bmatrix}
	\frac{1}{x} & 0 & 0 \\
	0 & 1 & 0 \\
	0 & 0 & z \\
	\end{bmatrix}^{-1}
	=
	\begin{bmatrix}
	1 & 1 & \frac{y}{x z} \\
	0 & 1 & 1 \\
	0 & 0 & 1 \\
	\end{bmatrix} \,,
	\]  
	\[
	h=
	\begin{bmatrix}
	\frac{1}{x} & 0 & 0 \\
	0 & 1 & 0 \\
	0 & 0 & z \\
	\end{bmatrix}
	\begin{bmatrix}
	1&a  & b \\
	0 & 1 & c \\
	0 & 0 & 1 \\
	\end{bmatrix}
	\begin{bmatrix}
	\frac{1}{x} & 0 & 0 \\
	0 & 1 & 0 \\
	0 & 0 & z \\
	\end{bmatrix}^{-1}
	=
	\begin{bmatrix}
	1 & \frac{a}{x} & \frac{b}{x z} \\
	0 & 1 & \frac{a}{x} \\
	0 & 0 & 1 \\
	\end{bmatrix}.
	\]  
	By Lemma \ref{l:pfd2} there  exists a group morphism   $L:W_1=Span_\Z(\{1,ax^{-1}\})\rightarrow \C$ such that:
	\[
	\langle h,g\rangle=
	\left
	\{
	\begin{bmatrix}
	1 & r & L(r)+2^{-1}r^2\\
	0 & 1 & r\\
	0 & 0 & 1\\
	\end{bmatrix}:
	r\in W_1
	\right 
	\}. 
	\]
	Now by  Equation \ref{e:fn0}  we have  that $G$ is conjugate to an Abelian  Kodaira group as in Equation  \ref{e:fn2}.
	
	From Lemma  \ref{core} it is clear that the previous cases cover all the possibilities for the case $xb-za=0$.\\
	
	\noindent
	Case 2. $xb-za\neq 0$.
	
	\vskip.2cm
		\noindent
Sub-case 1.  $x=0$, $a\neq 0$. The following equations:	\[
	g_w=
	\begin{bmatrix}
	\frac{1}{a} & 0 & 0 \\
	0 & 1 & 0 \\
	0 & 0 & z \\
	\end{bmatrix}
	\begin{bmatrix}
	1&0   & w\\
	0 & 1 & 0 \\
	0 & 0 & 1 \\
	\end{bmatrix}
	\begin{bmatrix}
	\frac{1}{a} & 0 & 0 \\
	0 & 1 & 0 \\
	0 & 0 & z \\
	\end{bmatrix}^{-1}
	=
	\begin{bmatrix}
	1 & 0 & w(az)^{-1} \\
	0 & 1 & 0 \\
	0 & 0 & 1 \\
	\end{bmatrix},
	\]

	\[
	g=
	\begin{bmatrix}
	\frac{1}{a} & 0 & 0 \\
	0 & 1 & 0 \\
	0 & 0 & z \\
	\end{bmatrix}
	\begin{bmatrix}
	1&0  & y \\
	0 & 1 & z \\
	0 & 0 & 1 \\
	\end{bmatrix}
	\begin{bmatrix}
	\frac{1}{a} & 0 & 0 \\
	0 & 1 & 0 \\
	0 & 0 & z \\
	\end{bmatrix}^{-1}
	=
	\begin{bmatrix}
	1 & 0 & \frac{y}{a z} \\
	0 & 1 & 1 \\
	0 & 0 & 1 \\
	\end{bmatrix},
	\]  
	\[
	h=
	\begin{bmatrix}
	\frac{1}{a} & 0 & 0 \\
	0 & 1 & 0 \\
	0 & 0 & z \\
	\end{bmatrix}
	\begin{bmatrix}
	1&a  & b \\
	0 & 1 & c\\
	0 & 0 & 1 \\
	\end{bmatrix}
	\begin{bmatrix}
	\frac{1}{a} & 0 & 0 \\
	0 & 1 & 0 \\
	0 & 0 & z \\
	\end{bmatrix}^{-1}
	=
	\begin{bmatrix}
	1 & 1 & \frac{b}{a z} \\
	0 & 1 & \frac{c}{z} \\
	0 & 0 & 1 \\
	\end{bmatrix},
	\]  
	together with  Equation \ref{e:fn0}, imply  that $G$ is conjugate   to a group of the form given by Equation \ref{e:fn4}.

	\vskip.2cm
		\noindent
	Sub-case 2.  $x\neq 0$ and $a\neq 0$. Then analogous arguments show that  $G$ is conjugate to a group of the form given  by Equation \ref{e:fn4}. 
\end{proof}

In a  similar way  one can show the following proposition:
\begin{proposition} \label{l:abcor2cind}
	Let $G\subset {\rm Heis}(3,\Bbb{C})$ be a  commutative complex Kleinian group such that  $Ker(\Pi\vert_G) $ is infinite and     $\Pi(G)$ is  non-discrete, then:
	
	\begin{enumerate} 
		\item If   $\Pi^*(G)$ is trivial, then there exists  $\mathcal{L}\subset \C^2$ an additive discrete subgroup of rank at most four, such that $\pi_2(\mathcal{L})$ is non-discrete and  $G$ is conjugate to: 
		\[
		\mathcal{T}(\mathcal{L})=
		\left
		\{
		\begin{bmatrix}
		1 & 0 & a\\
		0 & 1 & b\\
		0 & 0 & 1
		\end{bmatrix}:(a,b)\in \mathcal{L}
		\right 
		\} \,.
		\]
		\item  If   $\Pi^*(G)$  is non trivial,  then   there exist additive subgroups   $W_1,  W_2\subset \C$    such that   $W_1$ is non-discrete,  $W_2$ has rank  1 and   $G$ is conjugate to:
		
		\[
		{\mathcal{K}_0}(W_1,W_2,L)=	
		\left \{
		\begin{bmatrix}
		1 & x & L(x)+x^2/2 +w\\
		0 & 1& x\\
		0 & 0 &1\\
		\end{bmatrix}:
		w\in Span_\Z(W_2),\,  x\in Span_\Z(W_1)
		\right \}
		\]
		where  $L:W_1\rightarrow \C$   is a group morphism.
	\end{enumerate}
\end{proposition}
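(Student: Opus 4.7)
The proof follows the template of Proposition \ref{t:pifa1}, now leveraging the non-discreteness of $\Pi(G)$ to cut the analysis down to just two structural cases. The first step is to apply Lemma \ref{l:lt4} to the abelian discrete group $G$, placing it (after conjugation) in one of the commutative Lie subgroups $C_1,\ldots,C_5$ of $U_+$. Since $G \subset \Heis(3,\C)$ is purely unipotent, the diagonal pieces of $C_1$ and $C_5$ are excluded; and since $\Pi(G)$ is non-trivial, $G$ cannot lie in $C_4$, where $g_{23}=0$ makes $\Pi$ vanish on every element. Hence $G$ is conjugate to a subgroup of $C_2$ or $C_3$; these alternatives correspond exactly to $\Pi^*(G)$ being trivial or non-trivial, matching hypotheses (1) and (2) respectively, since $\Pi^*$ vanishes identically on $C_2$ while it is non-trivial on every non-identity element of $C_3$.

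In Case (1), $G \subset C_2$: the assignment sending an element with entries $g_{13}=a$, $g_{23}=b$ to the pair $(a,b)$ identifies $G$ with an additive subgroup $W \subset \C^2$, so $G = \mathcal{T}(W)$. Discreteness of $W$ is immediate from that of $G$, and the hypothesis that $\Pi(G) = \pi_2(W)$ is non-discrete is precisely the required condition. Theorem \ref{t:desc}(3) yields $\mathrm{rank}(W) \le 4$; the sharper rank statement follows by combining this bound with the discreteness of $W$, the non-discreteness of $\pi_2(W)$, and the infiniteness of $W \cap \ker \pi_2$, which together force the $\R$-span of $W$ to be all of $\C^2$.

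In Case (2), $G \subset C_3$: writing elements as pairs $g(\alpha,\beta)$, the multiplication rule $g(\alpha_1,\beta_1)\, g(\alpha_2,\beta_2) = g(\alpha_1+\alpha_2,\, \beta_1 + \alpha_1\alpha_2 + \beta_2)$ shows that $\alpha: G \to (\C,+)$ is a homomorphism with image $W_1 := \alpha(G) = \Pi(G)$ non-discrete, while the restriction of $\beta$ to $Ker(\Pi|_G)$ is an isomorphism onto an infinite additive subgroup $W_2 \subset \C$. Following the computation in Lemma \ref{l:pfd2}, the assignment $L(\xi) := \beta(\sigma(\xi)) - \xi^2/2$, for a chosen set-theoretic section $\sigma$ of $\alpha$ extended by $\Z$-linearity on a $\Z$-basis of $W_1$, is a group morphism into $\C$ modulo $W_2$; the identity $L(\xi_1+\xi_2) = L(\xi_1) + L(\xi_2)$ holds because the cross term $\alpha_1\alpha_2$ cancels the cross term in $(\xi_1+\xi_2)^2/2$. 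This displays $G$ in the $\mathrm{Kod}_0(W_1, W_2, L)$ form of Example \ref{Kodaira-0}. The claim that $W_2$ has rank $1$ uses the Kleinian hypothesis: if $W_2$ were a rank-$2$ lattice in $\C$, then for any sequence $\xi_n \to 0$ in $W_1$ (which exists because $W_1$ is non-discrete) one could choose $w_n \in W_2$ making $L(\xi_n) + \xi_n^2/2 + w_n \to 0$, producing a non-trivial sequence of distinct elements of $G$ converging to the identity, contradicting discreteness.

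The principal obstacle is making the rank constraints rigorous. The rank-$4$ claim in (1) requires turning the soft bound of Theorem \ref{t:desc}(3) into an exact value through the interaction of $W \cap \ker \pi_2$ with the $\R$-span of $W$; the rank-$1$ claim in (2) requires checking that no rank-$2$ lattice $W_2$ is compatible with both discreteness and the Kleinian condition. In both cases I expect the verification to follow by translating the description of $\Omega(G)$ from Lemma \ref{l:omega} into concrete arithmetic conditions on the additive subgroups involved, together with the obstructor-dimension bound underlying Theorem \ref{t:desc}(3).
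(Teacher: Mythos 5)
Your structural reduction is sound and in fact a little cleaner than the paper's route: the paper disposes of this proposition with a remark that it follows ``in a similar way'' to Proposition \ref{t:pifa1}, i.e.\ by the generator-and-conjugation computation fed by Theorem \ref{t:desc}, whereas you go directly through Lemma \ref{l:lt4}, correctly noting that unipotency rules out $C_1$ and $C_5$, that triviality of $\Pi$ on $C_4$ rules out that case, and that $C_2$ versus $C_3$ is exactly the dichotomy $\Pi^*(G)$ trivial versus non-trivial. Your case (2) is correct: the cocycle identity $\beta_1+\beta_2+\alpha_1\alpha_2$ matching the cross term of $\xi^2/2$ gives the morphism $L$ after lifting along a $\Z$-basis of $W_1$, and the cocompactness argument showing that a rank-two (hence cocompact) discrete $W_2\subset\C$ together with $\xi_n\to 0$ in $W_1$ would produce a bounded sequence of distinct group elements is a valid proof that $\mathrm{rank}(W_2)=1$.

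The genuine gap is the rank-four claim in case (1). The assertion that discreteness of $W$, non-discreteness of $\pi_2(W)$ and infiniteness of $W\cap\ker\pi_2$ ``force the $\R$-span of $W$ to be all of $\C^2$'' is false. Take $W=\Z(1,0)\oplus\Z(0,1)\oplus\Z(i,\sqrt{2})$: this is a discrete rank-three subgroup of $\C^2$ whose $\R$-span is $\C\times\R\subsetneq\C^2$, with $\pi_2(W)=\Z+\sqrt{2}\,\Z$ non-discrete and $W\cap\ker\pi_2=\Z(1,0)$ infinite; the corresponding $\mathcal{T}(W)$ acts on $\C^2$ by translations by the discrete group $W$, hence is complex Kleinian, commutative, with $\Pi^*$ trivial — every hypothesis of case (1) holds. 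What your three conditions actually yield is: $W\cap\ker\pi_2$ has rank exactly $1$ (rank $2$ would make it cocompact in $\C\times\{0\}$ and force $\pi_2(W)$ to be discrete), while $\pi_2(W)$ non-discrete gives rank at least $2$, so $3\le\mathrm{rank}(W)\le 4$, the upper bound coming from Theorem \ref{t:desc}. Since $\mathrm{rank}$ is an isomorphism invariant of $\mathcal{T}(W)\cong W$ and hence unchanged by conjugation, no argument of the kind you sketch can upgrade this to rank exactly four; as written, your proof establishes the statement only with ``rank four'' replaced by ``rank three or four'', and the example above shows the rank-three case genuinely occurs under the stated hypotheses.
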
 




\begin{lemma}\label{l:2genind}
	Let  $a,b,c\in \C$ and $r\in \Bbb{R}-\Bbb{Q	}$, and let		$G\subset {\rm Heis}(3,\Bbb{C})$ be  the group given by 
	\[
	G=
	\left 
	\langle 
	A=	\begin{bmatrix}
	1 & 1& 0\\
	0 & 1& 1\\
	0 & 0& 1\\
	\end{bmatrix},\,
	B=	\begin{bmatrix}
	1 & a+r& b\\
	0 & 1& r\\
	0 & 0& 1\\
	\end{bmatrix}		
	\right 
	\rangle \,.
	\]
	Then
	\begin{enumerate}
		\item \label{i:12gind} $G$ is commutative if and only  if $a=0$. 
		\item \label{i:122gind} If $a\neq 0$, then $\P^2-\Omega(G)$ is a cone of lines over a circle.

	\end{enumerate}
\end{lemma}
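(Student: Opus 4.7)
The proof proceeds in two parts. For Part (\ref{i:12gind}), I would compute $[A,B]$ directly. Writing $M(x,y,z)$ for the generic Heisenberg element $\begin{bmatrix}1&x&y\\0&1&z\\0&0&1\end{bmatrix}$, the multiplication rule $M(x_1,y_1,z_1)M(x_2,y_2,z_2)=M(x_1+x_2,y_1+y_2+x_1z_2,z_1+z_2)$ gives the commutator identity $[M_1,M_2]=M(0,x_1z_2-x_2z_1,0)$. With $A=M(1,0,1)$ and $B=M(a+r,b,r)$ this yields $[A,B]=M(0,r-(a+r),0)=M(0,-a,0)$, which is the identity iff $a=0$. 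Hence $G=\langle A,B\rangle$ is commutative iff $a=0$.

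For Part (\ref{i:122gind}) assume $a\neq 0$. Since $\Heis(3,\C)$ is nilpotent of class $2$, every element of $G$ has a unique normal form $A^{n}B^{m}[A,B]^{k}$ with $(n,m,k)\in\Z^3$; a short argument (any sequence in $G$ converging to $I$ forces $z=n+mr\to 0$, hence $(n,m)=(0,0)$ by irrationality of $r$, and then $k=0$ since $a\neq 0$) shows $G$ is discrete. The control morphism $\Pi$ associated to the fixed point $e_1$ sends $A$ to the translation $\omega\mapsto\omega+1$ and $B$ to $\omega\mapsto\omega+r$, so $\Pi(G)=\Z+\Z r$ is non-trivial and dense in $\R$, while $Ker(\Pi|_G)=\langle[A,B]\rangle\cong\Z$ is infinite. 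Thus Lemma~\ref{l:omega} applies and the problem reduces to computing $\P^2\setminus\Omega(G)=\overline{\bigcup_{\ell\in\mathcal L(G)}\ell}$.

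By Lemma~\ref{l:0}, every $\ell\in\mathcal L(G)$ is the kernel of a limit $P\in SP(3,\C)$ of a sequence $g_i=A^{n_i}B^{m_i}[A,B]^{k_i}=M(x_i,y_i,z_i)$ with $\Pi(g_i)\to\mathrm{Id}$. A direct computation using $n_i=z_i-m_ir$ gives
\[
z_i=n_i+m_ir,\qquad x_i=m_ia+z_i,\qquad y_i=\tfrac{1}{2}m_i^2ar-k_ia+Cm_i+\tfrac{1}{2}(z_i^2-z_i),
\]
where $C=b+r/2-(a+r)r/2$. The condition $z_i\to 0$ together with irrationality of $r$ forces either $(n_i,m_i)=(0,0)$ eventually---in which case the normalized limit $P$ is a scalar multiple of $M(0,-a,0)$ and $Ker(P)=\overleftrightarrow{e_1,e_2}$---or $|m_i|\to\infty$. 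In the second case $x_i\sim m_ia\to\infty$ and the normalized limit $P$ is upper-triangular with only the first row nonzero, so $Ker(P)=\overleftrightarrow{e_1,[0:-L:1]}$ where $L=\lim y_i/x_i$ (and $Ker(P)=\overleftrightarrow{e_1,e_2}$ when $|y_i/x_i|\to\infty$).

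The heart of the argument---and the main obstacle---is to identify the set of slopes $L$ that arise. From the formula above, $y_i/x_i=\tfrac{1}{2}m_ir-k_i/m_i+C/a+O(1/m_i)$, so $y_i/x_i\to L$ iff $k_i=\tfrac{1}{2}m_i^2r-m_i(L-C/a)+o(m_i)$. Since $k_i\in\Z$ and $r,m_i\in\R$, taking imaginary parts forces $m_i\,Im(L-C/a)\to 0$; as $|m_i|\to\infty$ this gives $L-C/a\in\R$. Conversely, for any $L_0\in\R$, choose $m_i$ among the denominators of the continued-fraction convergents of $r$ (so $|z_i|<1/m_i$ and $z_i\to 0$) and let $k_i$ be the nearest integer to $\tfrac{1}{2}m_i^2r-m_iL_0$; the resulting error of size $\le 1/2$ divided by $m_i$ yields $y_i/x_i\to L_0+C/a$. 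Therefore the set of attained slopes is exactly $(\R+C/a)\cup\{\infty\}$, which, viewed as a subset of $\overleftrightarrow{e_2,e_3}\cong\P^1_\C$, is a real projective line---topologically a circle. Consequently $\P^2\setminus\Omega(G)=\bigcup_{p\in S}\overleftrightarrow{e_1,p}$ with $S$ a circle, i.e., a cone of lines over a circle.
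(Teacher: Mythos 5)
Your argument is correct in substance and, for Part (2), takes a genuinely more self-contained route than the paper's. The paper splits the work into two claims: it first exhibits a single limit line different from $\overleftrightarrow{e_1,e_2}$ (by an explicit parity-dependent choice of the central exponent $c_n$), then shows every limit slope lies on one fixed real line, and finally invokes invariance of $\P^2\setminus\Omega(G)$ under the dense control group $\Pi(G)=\Z+\Z r\subset\R$ to upgrade ``one extra line'' to the full circle. You instead compute the set of attained slopes exactly: your necessity half (taking imaginary parts of $k_i=\tfrac12 m_i^2r-m_i(L-C/a)+o(m_i)$ to force $L-C/a\in\R$) plays the role of the paper's containment claim, and your sufficiency half (continued-fraction denominators for $m_i$ plus a nearest-integer choice of $k_i$) replaces the ``one extra line plus density'' step. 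Both routes are valid; yours avoids the invariance argument at the cost of a quantitative construction, and your bookkeeping is in fact more careful than the paper's --- the paper's displayed normal form drops the $nb$ contribution to the $(1,3)$-entry, and its final formula $y=sa+1-r$ does not follow from its own computation (the correct offset agrees with your $C/a$ modulo $\R$), though neither slip affects the qualitative conclusion.

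One step as written is wrong, though easily repaired and inessential to the main line: in your parenthetical discreteness argument you claim that $z=n+mr\to 0$ forces $(n,m)=(0,0)$ ``by irrationality of $r$''. That implication is false --- continued-fraction convergents give $n+mr\to 0$ with $|m|\to\infty$, which is precisely the phenomenon your sufficiency argument exploits two paragraphs later. The correct argument uses both off-diagonal entries: if $g_i\to\mathrm{Id}$ then the $(1,2)$-entry $n_i+m_i(a+r)$ and the $(2,3)$-entry $n_i+m_ir$ both tend to $0$, so their difference $m_ia\to 0$, whence $m_i=0$ eventually, then $n_i=0$, and then $k_i=0$ since $a\neq 0$. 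A smaller imprecision: in the bounded case the limit in ${\rm SP}(3,\C)$ of $[A,B]^{k_i}$ with $|k_i|\to\infty$ is the class of the rank-one matrix whose only nonzero entry is in position $(1,3)$, not ``a scalar multiple of $M(0,-a,0)$''; the kernel $\overleftrightarrow{e_1,e_2}$ you record is nevertheless correct.
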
  

\begin{proof}
	The proof of Part (\ref{i:12gind}) is straightforward.  To prove Part (\ref{i:122gind}) notice  that Theorem  \ref{t:desc} implies:
	\[
	G= \begin{small}
	\left 
	\{
	g_{mnk}=
	\begin{bmatrix}
	1 & (m+nr)+na & a k+
	\begin{pmatrix}
	m\\
	2
	\end{pmatrix}
	+m n r+
	\begin{pmatrix}
	n\\
	2
	\end{pmatrix}
	r(r+a ) \\
	0 & 1 & m+n r \\
	0 & 0 & 1 \\
	\end{bmatrix}:k,m,n\in\Z
	\right
	\}\end{small}
	\]
	By elementary algebra we have:

	\begin{small} 
		\begin{equation} \label{e:limset}
		\frac{a (2k+n^2 r)+rn (-a-r+1)+ (m+nr)^2-(m+nr)}{2}=a k+
		\begin{pmatrix}
		m\\
		2
		\end{pmatrix}
		+m n r+
		\begin{pmatrix}
		n\\
		2
		\end{pmatrix}
		r(r+a ).
		\end{equation}
	\end{small}

		\noindent
	Claim 1. $\P^2-\Omega(G)$ contains more than one line. To prove this claim it is enough to show that $\P^2-\Omega(G)$ contains a line different from $\overleftrightarrow{e_1,e_2}$.  Let $(a_n),(b_n)\in \Z$  be sequences such that $a_n+b_n r  \xymatrix{		\ar[r]_{n \rightarrow \infty}&}  0$; let us assume that  all the elements in the sequence $(a_n)$  are either odd or even. Let $k_0\in \Bbb{N}$ be an even number   such that  $$k_0\vert a\vert>\vert  r(-a-r+1)\vert,  $$
	and define the following sequence:
	
	\[
	c_n=
	\left \{
	\begin{matrix}
	2^{-1}b_n(a_n+k_0+1) & \textrm{ if } a_n  \textrm{ is odd} \,,\\
	2^{-1}b_n(a_n+k_0) & \textrm{ if } a_n  \textrm{ is even}\,.\\
	\end{matrix}
	\right. 
	\]
	Clearly $(c_n)\subset \Z $ and 
	\[
	g_{a_n,b_n,c_n}
	\xymatrix{	\ar[r]_{n \rightarrow \infty}&} 
	g=
	\begin{bmatrix}
	0 & 2a & w_0+r(-a-r+1)\\
	0 & 0 & 0\\
	0 & 0 &0\\
	\end{bmatrix}\,,
	\]
	where $w_0$ is either $k_0$ or $k_0+1$. Hence $Ker(g)$ is a complex line distinct from $\overleftrightarrow {e_1,e_2}$.\\
	
	\noindent
	Claim 2. $\P^2-\Omega(G)$ is contained in a pencil of lines over  an Euclidean circle.  Let $(a_n),(b_n),(c_n)\in \Z$  be sequences such that $a_n+b_n r  \xymatrix{		\ar[r]_{n \rightarrow \infty}&}  0$. Assume that
	\[
	g_{a_n,b_n,c_n}
	\xymatrix{	\ar[r]_{n \rightarrow \infty}&} 
	g=
	\begin{bmatrix}
	0 & x & y\\
	0 & 0 & 0\\
	0 & 0 &0\\
	\end{bmatrix}.
	\] 
	If $x\neq 0$  we get:
	\[
	\begin{array}{ll}
	x& =\lim_{n \rightarrow \infty} 2(a_n+b_nr+b_na)b_n^{-1}=2a,\\
	y&=\lim_{n \rightarrow \infty} (a (2c_n+b_n^2 r)+rb_n (-a-r+1)+ (a_n+b_nr)^2-(a_n+b_nr))b_n^{-1}\\
	&=r (-a-r+1)+a\lim_{n \rightarrow \infty}  (2c_nb_n^{-1}+b_n r).
	\end{array}
	\]
	Thus $y=sa+1-r$ for some $s\in \R$. Therefore:
	\[
	\P^2-\Omega(G)\subset  \overleftrightarrow{e_1,e_2}\cup  \bigcup_{s\in \R} \overleftrightarrow{e_1,[0:sa+1-r:-2a]}\,.
	\]

	Finally, since $\Pi(G)$ is conjugate to a dense subgroup of $\R$ and $\P^2-\Omega(G)$ has  more than two lines  we deduce  $\P^2-\Omega(G)$  contains a pencil of lines over an Euclidean circle. 
\end{proof}


\begin{lemma} \label{p:controlnodiscreto}
	Let $G\subset {\rm Heis}(3,\Bbb{C})$ be a non-Abelian  Kleinian group such that  $Ker(\Pi\vert_G) $ is infinite and    $\Pi(G)$ is non-discrete, then:
	\begin{enumerate}
		\item \label{i:ci1} The set  $\Lambda_{Kul}(Ker(\Pi\vert_G) )$ is a complex line;
		
		\item \label{i:ci2} The set $\Bbb{P}^2_{\Bbb{C}}-\Omega(G)$ contains more than one line;
		\item \label{i:ci3} The group $\Pi(G)$ is conjugate to a subgroup of $\R$;

		\item \label{i:ci4} The rank of the group $\Pi(G)$ is equal to two.
	\end{enumerate}
	
\end{lemma}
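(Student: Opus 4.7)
The four assertions will be proved in the order (1), (2), (3), (4), each relying on those preceding it.

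For (1), I argue by contradiction. If $\Lambda_{\Kul}(Ker(\Pi|_G))$ were a pencil of lines, Lemma \ref{core}(\ref{l:ic5})--(\ref{l:ic6}) would force $\Pi^*(Ker(\Pi|_G))$ to be nontrivial, yielding some $g \in Ker(\Pi|_G)$ of the form
\[
g = \begin{bmatrix} 1 & x & y \\ 0 & 1 & 0 \\ 0 & 0 & 1 \end{bmatrix} \qquad \text{with } x \neq 0 \,.
\]
Using non-discreteness of $\Pi(G)$, I extract a sequence $(h_n) \subset G$ of distinct elements with $\Pi(h_n) \to Id$; writing $h_n$ with entries $(a_n, b_n, c_n)$ so that $c_n \to 0$ distinctly, a direct computation gives
\[
h_n g h_n^{-1} = \begin{bmatrix} 1 & x & y - x c_n \\ 0 & 1 & 0 \\ 0 & 0 & 1 \end{bmatrix} \;\longrightarrow\; g
\]
in $\PSL(3,\C)$ as $n \to \infty$. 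This is a sequence of distinct elements of $G$ converging in $\PSL(3,\C)$, contradicting discreteness; hence $\Lambda_{\Kul}(Ker(\Pi|_G))$ must be a single complex line.

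For (2), (1) together with Lemma \ref{core}(\ref{l:ic4}) lets me conjugate so that $Ker(\Pi|_G)$ consists of matrices with only the $(1,3)$-entry free; in particular it lies in the center of $\Heis(3,\C)$, and hence in the center of $G$. Any non-commuting pair $g_1, g_2 \in G$ (provided by non-Abelianness) therefore lies in $G \setminus Ker(\Pi|_G)$, with $c_1, c_2 \neq 0$ and $x_1 c_2 \neq x_2 c_1$. A centralizer computation shows that no element of $G \setminus Ker(\Pi|_G)$ commutes with both $g_1$ and $g_2$ (else $x_1/c_1 = x_2/c_2$, contradicting non-commutativity). Combining this with the non-discreteness and finite generation of $\Pi(G)$ (Corollary \ref{c:trian}), after possibly replacing $g_2$ by an auxiliary generator of $G$, I arrange $c_2/c_1 \in \R \setminus \mathbb{Q}$. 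A conjugation in $\PSL(3,\C)$ by a suitable diagonal-and-upper-triangular matrix then normalizes $g_1$ and $g_2$ to the form required by Lemma \ref{l:2genind}, with $r = c_2/c_1 \in \R \setminus \mathbb{Q}$ and $a \neq 0$ (equivalent to non-commutativity). Lemma \ref{l:2genind}(\ref{i:122gind}) yields that $\P^2 \setminus \Omega(\langle g_1, g_2 \rangle)$ is a cone of lines over a circle, and in particular contains more than one line; since $\langle g_1, g_2 \rangle \subset G$ implies $\Omega(G) \subset \Omega(\langle g_1, g_2 \rangle)$, the conclusion for $\P^2 \setminus \Omega(G)$ follows.

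Part (3) is immediate: (2) combined with Lemma \ref{l:controlreal} yields $\overline{\Pi(G)} \cong \R$, so $\Pi(G)$ is conjugate to a subgroup of $\R$. For (4), $rank(\Pi(G)) \geq 2$ because a finitely generated non-discrete subgroup of $\C$ cannot be cyclic; Theorem \ref{t:desc}(\ref{p:3r}) combined with $Ker(\Pi|_G)$ infinite gives $rank(\Pi(G)) \leq 3$. To rule out rank $3$: by (3) the $c$-entries $c_1, c_2, c_3$ of three generators lie in $\R$ and are $\mathbb{Q}$-linearly independent; the algebraic identity
\[
c_3(x_1 c_2 - x_2 c_1) - c_2(x_1 c_3 - x_3 c_1) + c_1(x_2 c_3 - x_3 c_2) = 0 \;,
\]
combined with $x_i c_j - x_j c_i \in W$ for all pairs and the $\R$-linear structure of the discrete lattice $W$, forces all three commutators to vanish, contradicting non-Abelianness. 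Hence $rank(\Pi(G)) = 2$.

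The principal technical hurdle lies in step (2): ensuring that a non-commuting pair with $c_2/c_1 \in \R \setminus \mathbb{Q}$ can always be extracted. When the initial natural pair has $c_2/c_1 \in \mathbb{Q}$ (yielding a cyclic and hence discrete image in $\Pi(G)$) or $c_2/c_1 \notin \R$ (yielding a discrete complex lattice), one must iterate the centralizer argument with auxiliary generators drawn from the non-discreteness of $\Pi(G)$ until a pair with the desired real-irrational ratio is obtained. This case analysis, though elementary, is bookkeeping-heavy and constitutes the main delicate step of the proof.
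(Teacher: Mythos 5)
Your overall route coincides with the paper's: part (1) is exactly the conjugation computation $h_n g h_n^{-1}\to g$ that the paper isolates as (the first half of) Lemma \ref{l:cif}; part (2) reduces to Lemma \ref{l:2genind} applied to a normalized non-commuting pair and then passes from $\Omega(\langle g_1,g_2\rangle)$ to $\Omega(G)$; part (3) is Lemma \ref{l:controlreal}; and your part (4) replaces the paper's appeal to Kronecker's theorem by the identity $c_3w_{12}-c_2w_{13}+c_1w_{23}=0$ combined with the fact that the commutator values $w_{ij}=x_ic_j-x_jc_i$ lie in the rank-$\le 2$ discrete group $W$ of central translations, while the $c_i$ are real and $\Q$-independent. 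That last variant is correct and arguably cleaner than the paper's.

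The gap is in part (2), at precisely the point you flag as the ``main delicate step'' and then do not carry out: the existence of a non-commuting pair $g_1,g_2\in G$ with $\Pi(g_2)/\Pi(g_1)\in\R\setminus\Q$. Your proposed remedy --- iterating the centralizer argument with ``auxiliary generators drawn from the non-discreteness of $\Pi(G)$'' --- cannot succeed as described, because a finitely generated non-discrete subgroup of $(\C,+)$ need not contain \emph{any} two elements with real irrational ratio: for instance $\Z+\Z i+\Z(\sqrt{2}+i\sqrt{3})$ is dense in $\C$, yet any two of its elements that are $\R$-linearly dependent are already $\Q$-linearly dependent. Control groups of this kind genuinely occur for discrete non-Abelian subgroups of $\Heis(3,\C)$ with infinite central kernel (they are the non-Kleinian $H$- and $K$-groups of the paper, cf. Lemma \ref{l:m4} and Proposition \ref{p:iird3c}), so ruling them out requires using the Kleinian hypothesis, or the constraint that all commutator values land in the rank-$\le 2$ lattice $W$, in an essential way --- not merely the non-discreteness of $\Pi(G)$. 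To be fair, the paper's own proof of (2) also asserts the existence of such a pair without justification; but as a self-contained argument your proposal is incomplete at this step, and the completion is not the routine bookkeeping you describe.
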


\begin{proof}
	Part (\ref{i:ci1}) follows from Lemma  	\ref{l:cif}. Let us prove 
	(\ref{i:ci2}). Since $G$ is non-commutative, there are $x,y,z,a,b,c\in \C$ such that $\{z,c\}$ is  $\R$-linearly dependent  but it is a $\Z$-linearly independent set and also:   $xc-az\neq 0$ and 
	\[
	g=
	\begin{bmatrix}
	1 & x & y\\
	0 & 1 & z\\
	0 & 0 & 1\\
	\end{bmatrix}, h=
	\begin{bmatrix}
	1 & a & b\\
	0 & 1 & c\\
	0 & 0 & 1\\
	\end{bmatrix}
	\in G \,.
	\]
	Since $[g,h]\neq Id$ we can assume $x\neq 0$.  A simple computation shows:
	\[
	g_1=
	\begin{bmatrix}
	\frac{1}{x} & 0 & 0 \\
	0 & 1 & \frac{y}{x} \\
	0 & 0 & z \\
	\end{bmatrix}
	\begin{bmatrix}
	1 & x & y\\
	0 & 1 & z\\
	0 & 0 & 1\\
	\end{bmatrix}
	\begin{bmatrix}
	\frac{1}{x} & 0 & 0 \\
	0 & 1 & \frac{y}{x} \\
	0 & 0 & z \\
	\end{bmatrix}^{-1}
	=
	\begin{bmatrix}
	1 & 1 & 0\\
	0 & 1 & 1\\
	0 & 0 & 1\\
	\end{bmatrix} \,,
	\]
	\[
	h_1=
	\begin{bmatrix}
	\frac{1}{x} & 0 & 0 \\
	0 & 1 & \frac{y}{x} \\
	0 & 0 & z \\
	\end{bmatrix}
	\begin{bmatrix}
	1 & a & b\\
	0 & 1 & c\\
	0 & 0 & 1\\
	\end{bmatrix}
	\begin{bmatrix}
	\frac{1}{x} & 0 & 0 \\
	0 & 1 & \frac{y}{x} \\
	0 & 0 & z \\
	\end{bmatrix}^{-1}
	=
	\begin{bmatrix}
	1 & \frac{a}{x} & \frac{b}{x z}-\frac{a y}{x^2 z} \\
	0 & 1 & \frac{c}{z} \\
	0 & 0 & 1 \\
	\end{bmatrix}\,.
	\]
	Then Part (\ref{i:ci2}) follows  by applying  Lemma \ref{l:2genind}  to the group $\langle g_1,h_1\rangle$.\\

	The proof of (\ref{i:ci3}) is immediate  from Lemma   	\ref{l:controlreal}, so let us 
	prove Part  (\ref{i:ci4}). Assume that $G$ has a non-commutative subgroup  $H$ of type ${\mathcal{K}_4}$ such that $\Pi(H)$ is non-discrete and has rank    3.  Since $H$ is not commutative,  by  Theorem \ref{t:desc}, the previous parts of this lemma and after conjugation, if necessary,  we can find an additive discrete subgroup $W\subset \C$,  $a,b,c,r,s,t\in \C$ such that   $a\neq 0 $,  $\{1,t,c\}$ is  $\R$-linearly dependent   but  $\Z$-linearly independent     and:
	\begin{small}
		\[
		H=
		\left \{
		\begin{bmatrix}
		1 &0  &w\\
		0 & 1 & 0 \\
		0 & 0 & 1 \\
		\end{bmatrix}
		\begin{bmatrix}
		1 &1 &0\\
		0 & 1 & 1 \\
		0 & 0 & 1 \\
		\end{bmatrix}^n
		\begin{bmatrix}
		1 &a+c &b\\
		0 & 1 & c \\
		0 & 0 & 1 \\
		\end{bmatrix}^m
		\begin{bmatrix}
		1 &r+t &s\\
		0 & 1 & t \\
		0 & 0 & 1 \\
		\end{bmatrix}^k
		:k,m,n\in \Z, w\in W
		\right \}.
		\]
	\end{small}
	Since   $H$  is  a group,  this means   $a,r, rc - a t\in W$.  By Kronecker Theorem  \cite[Theorem 4.1]{wal},  $W$ is non-discrete, which is a contradiction. 
\end{proof}	

The  proof of  the following proposition is left to the reader.

\begin{proposition} \label{p:iir}
	Let $G\subset {\rm Heis}(3,\Bbb{C})$ be a non-Abelian complex  Kleinian group such that  $Ker(\Pi\vert_G) $ is infinite and    $\Pi(G)$ is non-discrete. Then
	there are  a rank one  additive discrete subgroup $W\subset \C$,  $a\in W$, and $b,c\in \C$,   such that $\{1,c\}$ is  $\R$-linearly dependent  but   $\Z$-linearly independent     and up to conjugation we have: 
	\[
	G=
	\left \{
	\begin{bmatrix}
	1 &0  &w\\
	0 & 1 & 0 \\
	0 & 0 & 1 \\
	\end{bmatrix}
	\begin{bmatrix}
	1 &1 &0\\
	0 & 1 & 1 \\
	0 & 0 & 1 \\
	\end{bmatrix}^n
	\begin{bmatrix}
	1 &a+c &b\\
	0 & 1 & c \\
	0 & 0 & 1 \\
	\end{bmatrix}^m
	:m,n\in \Z, w\in W
	\right \}\,.
	\]
\end{proposition}

\vskip.2cm

	Now we prove 
	\begin{theorem} \label{Lie-Kolchin}
		Let $G$ be a purely parabolic discrete group in ${\rm PSL} (3,\C)$. Then $G$ is    virtually  finitely presented, torsion free and solvable. Also,   $G$    is  virtually   either      unipotent   and conjugate  to the projectivization of a subgroup of ${\rm Heis}(3,\C)$,  or else it is  
		an  Abelian group of  rank at most two, with an  irrational ellipto-parabolic element, and it is of the form:
		\[
		{Ell}(W,\mu)=
		\left \{
		\left[
		\begin{array}{lll}
			\mu(w) & \mu(w)w&0\\
			0& \mu(w)&0\\ 
			0&0& \mu(w)^{-2}\\ 
		\end{array}
		\right ]
		:w\in W
		\right \},
		\] 
		\noindent where   $W$  is a  discrete additive subgroup of $\C$ and  
		$ \mu:W\rightarrow \Bbb{S}^1$ is  a group morphism.
		
	\end{theorem}
	\begin{proof}
	Let $G$ be a discrete group in $\PSL(3,\C)$ with no loxodromic elements. By 
	Theorem \ref{t:wal} we have that  $G$ contains a finite index subgroup which is conjugate to a group  $G_0$ which is the projectivization of an upper triangular group of matrices. Then 
	Lemma \ref{l:nd} grants the existence of a finite index, torsion free  subgroup  $G_1$ of $G_0$, for which the following groups are all torsion free: $\Pi(G_1)$, $\Pi^*(G_1)$, $\lambda_{12}(G_1)$, $\lambda_{13}(G_1)$ and  $\lambda_{23}(G_1)$.   Now use Theorems  \ref{t:wal} and \ref{t:gcsp}  applied to  $G_1$  and deduce that either  $G_1$ is a  unipotent subgroup of $\Heis(3,\C)$ or  else it is   Abelian  of rank at most 2 of the form stated  in Theorem \ref{Lie-Kolchin}.	
	Now, if 
	$G_2$ is a discrete subgroup of  $\Heis(3,\C)$ then by  Corollary \ref{c:poly} we have that  $G_2$ is solvable and finitely presented. 
\end{proof}

Finally we have:\\

\noindent{\bf Proof of Theorem \ref{t:mainck}:} Let $G_1$ be a subgroup of $G$ which is triangularizable. 	Let  $G_0\subset G_1$ be a subgroup of finite index, such that $G_0,\lambda_{12}(G_0), \lambda_{23}(G_0),\Pi(G_0)$,
$Ker(\Pi\vert_{G_0}) $ are torsion free, see Lemma  \ref{l:nd}. If $G_0$ contains a parabolic element $g$ satisfying  $$
max\{o(\lambda_{12}(g)),o( \lambda_{23}(g))\}=\infty,
$$
then the result follows from Theorem \ref{t:gcsp}. Thus   we can assume  that $G\subset \Heis(3,\C)$.
If $ Ker(\Pi\vert_{G_0})$	is trivial we deduce that $G_0$ is commutative. 	
The proof in this case follows from  Theorem   \ref{l:lt4} and  Lemmas  \ref{l:pfd0},  \ref{l:pfd2}. 	If $Ker(\Pi\vert_{ G_0})$	is non-trivial the result follows from Lemma  \ref{core}  and Propositions  \ref{t:pifa}, \ref{t:pifa1},  \ref{l:abcor2cind} and \ref{p:iir}.
\qed

\subsection{Discrete groups of $ \Heis(3,\C )$ which are not complex  Kleinian}\label{s:dis}

\begin{proposition} \label{p:pifa1d}
	Let  $G\subset {\rm Heis}(3,\C)$, then $G$ is a   discrete non-commutative group  such that     $\Pi(G)$ is discrete and non-trivial  if and only if
	$G$ is conjugate to either 
	$$
	\mathcal{K}=	
	\left \{
	\begin{bmatrix}
	1 &u  &v\\
	0 & 1 & 0 \\
	0 & 0 & 1 \\
	\end{bmatrix}
	\begin{bmatrix}
	1 &1 &x\\
	0 & 1 & 1 \\
	0 & 0 & 1 \\
	\end{bmatrix}^n
	\begin{bmatrix}
	1 &a+c &b\\
	0 & 1 & c \\
	0 & 0 & 1 \\
	\end{bmatrix}^m:m,n\in \Z, (u,v)\in \mathcal{L}
	\right \},
	$$
	or
	$$
	W_{ x,a,b}=	
	\left \{
	\begin{bmatrix}
	1 &u  &v\\
	0 & 1 & 0 \\
	0 & 0 & 1 \\
	\end{bmatrix}
	\begin{bmatrix}
	1 &1 &x\\
	0 & 1 & 1 \\
	0 & 0 & 1 \\
	\end{bmatrix}^n
	:n\in \Z, (u,v)\in W
	\right \}\,,
	$$
	where  $a,b,c,x\in \C$, $c\notin \R$ and $\mathcal{L}\subset \C^2$ is an additive discrete subgroup satisfying that  
	$Span_\Z\{(0,a), (0,\pi_1(\mathcal{L})), (0,c\pi_1(\mathcal{L}))\}\subset  \mathcal{L}$ and $rank(\mathcal{L})\geq 3$.
\end{proposition}
\begin{proof}
	Let us assume that	$G$  is a   discrete non-commutative group  such that     $\Pi(G)$ is discrete  and non-trivial. Without loss of generality let us assume that $\Pi(G)$ has rank two. Then by Theorem \ref{t:desc} 	there are   $a,b,c,x\in \C$, $c\notin \R$ and $\mathcal{L}\subset \C^2$ is an additive discrete subgroup such that $G$ is conjugate to the group:
	$$
	\left \{
	\begin{bmatrix}
	1 &u  &v\\
	0 & 1 & 0 \\
	0 & 0 & 1 \\
	\end{bmatrix}
	\begin{bmatrix}
	1 &1 &x\\
	0 & 1 & 1 \\
	0 & 0 & 1 \\
	\end{bmatrix}^n
	\begin{bmatrix}
	1 &a+c &b\\
	0 & 1 & c \\
	0 & 0 & 1 \\
	\end{bmatrix}^m:m,n\in \Z, (u,v)\in \mathcal{L}
	\right \}\,.
	$$
	It is clear that $G$ acts properly  discontinuously on $\Omega(Ker(\Pi\vert_G))$.  Since $G$ is not complex Kleinian we deduce $rank(\mathcal{L})\geq 3$.  For $w=(u,v)\in \mathcal{L}$ and $k,l,m\in \Z$, define: 
	$$
	g( w,k,l,m)=	
	\begin{bmatrix}
	1 &u  &v\\
	0 & 1 & 0 \\
	0 & 0 & 1 \\
	\end{bmatrix}
	\begin{bmatrix}
	1 &1 &x\\
	0 & 1 & 1 \\
	0 & 0 & 1 \\
	\end{bmatrix}^k
	\begin{bmatrix}
	1 &a+c &b\\
	0 & 1 & c \\
	0 & 0 & 1 \\
	\end{bmatrix}^l \;.
	$$
	Let $w_i=(u_i,v_i)\in  \mathcal{L}$ $(i=1,2)$ and $k,l,m,n\in \Z$; a straightforward computation shows:
	\[
	g( w_1, k,l,)	g( w_2,m,n)^{-1}=g(w_1-w_2+w,k-m,l-n)\,
	\]
	where  $w=u_2 (c l-c n+k-m)-m a (l-n)$. Thus $$Span_\Z\{(0,a),(0,\pi_1(\mathcal{L})),(0,c\pi_1(\mathcal{L}))\} \subset  \mathcal{L}.$$
\end{proof}

The proof of  the following lemma is a slight modification of the proof  of  Part (\ref{l:ica2}) in Lemma   \ref{l:cif},  so we omit it.

\begin{lemma}
	Let $G\subset {\rm Heis}(3,\C)$ be a discrete  non-commutative group such that $\Pi(G)$ is non-discrete. Then $\Lambda_{Kul}(Ker(\Pi\vert_G))$ is a single complex projective line.
\end{lemma}

The next result  is a direct  consequence of Proposition \ref{p:iir} and we include it without proof:
\begin{proposition}  \label{p:rank2}
	Let $G\subset {\rm Heis}(3,\Bbb{C})$ be a non-Abelian discrete but not Kleinian group such that  $Ker(\Pi\vert_G) $ is infinite and    $\Pi(G)$  is a rank two non-discrete group. Then
	we can find a rank two  additive discrete subgroup $W\subset \C$,  $a\in W$, $b,c\in \C$   such that $\{1,c\}$ is  $\R$-linearly dependent  but   $\Z$-linearly independent     and up to conjugation we have: 
	\[
	G=
	\left \{
	\begin{bmatrix}
	1 &0  &w\\
	0 & 1 & 0 \\
	0 & 0 & 1 \\
	\end{bmatrix}
	\begin{bmatrix}
	1 &1 &0\\
	0 & 1 & 1 \\
	0 & 0 & 1 \\
	\end{bmatrix}^n
	\begin{bmatrix}
	1 &a+c &b\\
	0 & 1 & c \\
	0 & 0 & 1 \\
	\end{bmatrix}^m
	:m,n\in \Z, w\in W
	\right \} \,.
	\]
\end{proposition}

\begin{lemma} \label{l:m4}
	Let $G\subset {\rm Heis}(3,\C)$ be a discrete  non-commutative group such that $\Pi(G)$   has rank at least   3, then:  
	\begin{enumerate}
		\item \label{p:2} For every 1-dimensional real subspace $\ell \subset \C$ we have $rank( \ell \cap \Pi(G))\leq 2  $.  
		\item  \label{p:3}  We have $rank(Ker(\Pi\vert_G ))=2$ and $3\leq rank(\Pi(G))\leq 4$.
	\end{enumerate}
\end{lemma}
\begin{proof}
	Let us prove Part (\ref{p:2}). Assume there is a   real line  $\ell \subset \C$ for which  $rank( \ell \cap \Pi(G))\geq 3  $. Then there are  $x\in \C^* $  and $r,s\in \Bbb{R}^*$ such that  $Span_\Z\{1,r,s\}$ is a rank three group  and  $Span_\Z\{x,rx,sx\}\subset \Pi(G)$. Let $d,e,f,g,h,j\in \C$ be such that: 
	\[
	g_1=
	\begin{bmatrix}
	1 & d &e\\
	0 & 1& x\\
	0 & 0& 1\\
	\end{bmatrix},\,
	g_2=
	\begin{bmatrix}
	1 & f &g\\
	0 & 1& rx\\
	0 & 0& 1\\
	\end{bmatrix},\,
	g_3=
	\begin{bmatrix}
	1 & u &v\\
	0 & 1& sx\\
	0 & 0& 1\\
	\end{bmatrix}
	\in  G	\,.
	\]
	A straightforward computation shows that for every $k,l,m,n,o,p\in \Z$:
	\[
	g_1^kg_2^lg_3^{m-p} g_2^{-o} g_1^{-n}g_3^{p-m}g_2^{o-l}g_1^{n-k}=
	\begin{bmatrix}
	1 & 0 &w\\
	0 & 1& 0\\
	0 & 0& 1\\
	\end{bmatrix} \;,
	\]
	where $w=x (d n ((l - o ) r + s (m - p )) - u (m - p) (o r + n ) + 
	f (os (m - p ) + n (-l  + o) ))$. Thus 
	\begin{small}
		\[
		h_1=
		\begin{pmatrix}
		1 & 0 & x (d  s - h  )\\
		0 & 1 & 0\\
		0 & 0 & 1\\
		\end{pmatrix},\, 
		h_2=
		\begin{pmatrix}
		1 & 0 & x (d  r - f )\\
		0 & 1 & 0\\
		0 & 0 & 1\\
		\end{pmatrix},\, 
		h_3=
		\begin{pmatrix}
		1 & 0 & x ( f s-u  r )\\
		0 & 1 & 0\\
		0 & 0 & 1\\
		\end{pmatrix}\in G\,.
		\]
	\end{small}
	Since $f s-u   r= -s(d  r - f )+r(d  s - u )$, we conclude that $Span_\Z\{d  s - u  ,d  r - f  ,f s-u   r \}$ is non-discrete. Thus $G$ is non-discrete, which is a contradiction.\\
	
	Now we prove Part (\ref{p:3}). From  Lemma \ref{l:r2dnkr3d}  and  Proposition   \ref{p:iird3c}  we get  that $rank(Ker(\Pi\vert_G ))=2$; on the other hand, by Theorem \ref{t:desc}  we have $3\leq rank(\Pi(G))\leq 4$.
\end{proof}

\begin{proposition} \label{p:iird3c} Let
	$G\subset {\rm Heis}(3,\Bbb{C})$ be a non-Abelian discrete group. Then      $\Pi(G)$ has rank 3 and is dense in $\C$ if and only if
	we can find a rank two additive discrete subgroup $W\subset \C$,  and $x,a,b,c,d,e,f \in \C$   such that $G$ is conjugate to:
	\begin{small}
		\[
		H=
		\left \{
		\begin{bmatrix}
		1 &0  &w\\
		0 & 1 & 0 \\
		0 & 0 & 1 \\
		\end{bmatrix}
		\begin{bmatrix}
		1 &1 &x\\
		0 & 1 & 1 \\
		0 & 0 & 1 \\
		\end{bmatrix}^k
		\begin{bmatrix}
		1 &a+c &b\\
		0 & 1 & c \\
		0 & 0 & 1 \\
		\end{bmatrix}^l
		\begin{bmatrix}
		1 &d+f&e\\
		0 & 1 & f \\
		0 & 0 & 1 \\
		\end{bmatrix}^m
		:k,m,n\in \Z, w\in W
		\right \},
		\]
	\end{small}
	where $a,b,c,d,e,f$ are subject to the conditions:
	\begin{enumerate}
		\item $\{a,d, af-dc\}\subset W$\,;
		\item $\vert a \vert + \vert d\vert \neq 0 $\,;
		\item for every  real line $\ell\subset \C$ we have $\ell\cap Span_{\Z}\{1,c,f\}$ has rank at most two.
	\end{enumerate}
\end{proposition}
\begin{proof} 
	For $w\in W$ and $k,l,m\in \Z$, define: 
	$$
	g( w,k,l,m)=	
	\begin{bmatrix}
	1 &0  &w\\
	0 & 1 & 0 \\
	0 & 0 & 1 \\
	\end{bmatrix}
	\begin{bmatrix}
	1 &1 &x\\
	0 & 1 & 1 \\
	0 & 0 & 1 \\
	\end{bmatrix}^k
	\begin{bmatrix}
	1 &a+c &b\\
	0 & 1 & c \\
	0 & 0 & 1 \\
	\end{bmatrix}^l
	\begin{bmatrix}
	1 &d+f&e\\
	0 & 1 & d \\
	0 & 0 & 1 \\
	\end{bmatrix}^m \;.
	$$
	Let $u,v\in  W$ and $k,l,m,o,p,q\in \Z$; a straightforward computation shows:
	\[
	g( u,k,l,m)	g( v,o,p,q)^{-1}=g(u-v+w,k-o,l-p,m-q)\,,
	\]
	where $w=-a l n + a o (n + f (m - p)) - d (n + c o) (m - p)$.  Thus $H$ is a group if and only if $a,d, af-dc\in W$. We notice that  if  $H$ is a group, then $H$ is non commutative if and only if $\vert a\vert +\vert d\vert \neq 0$. Clearly  $\Pi(H)$ has rank three and it is a dense group in $\C$ whenever $Span_\Z(\{1,c,d\})$ is dense in $\C$. Observe that if  $H$ is  non-commutative and $Span_\Z\{1,c,f\}$ is dense in $\C$, then $H$   is discrete if and only if  there are sequences $(w_n)\subset W$ and $(k_n),(l_n),(m_n)\subset \Z$ such that $(g_n=g(w_n,k_n,l_n,m_n))$  is a sequence of  distinct elements satisfying $g_n \xymatrix{\ar[r]_{n \rightarrow  \infty}&} Id$. By Lemma \ref{l:r2dnkr3d}, $H$ is non-discrete if and only if $W$ is non-discrete or there are sequences $(w_n)\subset W$ and $(k_n),(l_n),(m_n)\subset \Z$ such that:
	\begin{enumerate}
		\item $(k_n+l_n c + m_n f)$ is a sequence of  distinct elements converging to $0$\,;
		\item $ (l_n a+m_n d)$ converges to $0$\,.
	\end{enumerate} 
	Now observe that 	Lemma \ref{l:addis}  and the previous facts are  equivalent to the non-discreteness of   $Span_\Z(a,d)$.
\end{proof}

Similar arguments show:

\begin{proposition} \label{p:iirnd3c}
	Let 	$G\subset {\rm Heis}(3,\Bbb{C})$ be a non-Abelian discrete group such that     $\Pi(G)$ has rank four.   Then 
	there exist a rank two additive discrete subgroup $W\subset \C$ and  $x,a,b,c,d,e,f ,g,j\in \C$    such that $G$ is conjugate to:
	\[
	H=
	\left \{ g_ug_1^ kg_2^lg_3^mg_4^^n
	:k,l,m,n\in \Z, w\in W
	\right \} \,,
	\]
	where 
	\[
	g_u=\begin{bmatrix}
	1 & 0 & u\\ 
	0& 1& 0\\
	0 & 0 & 1
	\end{bmatrix};\,
	g_1= 
	\begin{bmatrix}
	1& 1& x\\
	0 & 1& 1\\
	0& 0 & 1
	\end{bmatrix};\,
	g_2=
	\begin{bmatrix}
	1 & a + c& b\\
	0 & 1& c\\
	0 & 0& 1
	\end{bmatrix};
	\]
	\[
	g_3=
	\begin{bmatrix}
	1 & d + f& e\\
	0 & 1& f\\
	0 & 0& 1\\
	\end{bmatrix};\,
	g_4=
	\begin{bmatrix}
	1 & g + j& h\\
	0 & 1& j\\
	0 & 0& 1\\
	\end{bmatrix} \,,
	\]
	and	$x,a,b,c,d,e,f,g,h,j$ are subject to the conditions:
	
	\begin{enumerate}
		\item $\{ a,d, g, dj- gf, af-cd, aj - cg \}\subset W$ \,;
		\item  $\vert a\vert+\vert d\vert+\vert g\vert \neq 0$ \,;
		\item for every real line $\ell\subset \C$ we have $\ell\cap Span_\Z(\{1,c,f,j\})$ has rank at most two.
	\end{enumerate}
\end{proposition}

\subsection{Proof of  Theorem \ref{t:maind}}\label{s:maind}

Let $G$ be a discrete group without loxodromic elements which is not (complex) Kleinian. By  Corollary  \ref{c:trian}  and   Lemma \ref{l:nd}, $G$ contains  a torsion free subgroup $G_0$ of finite index   which is triangularizable, it  is not  Kleinian and the following groups are torsion free: $G_0,\lambda_{12}(G_0), \lambda_{23}(G_0),\Pi(G_0)$,
$Ker(\Pi\vert_{G_0}) $.  Theorem \ref{t:gcsp}   implies that     $G_0$  does not contain an irrational ellipto-parabolic element,  for otherwise   the group $G_0$ would be   Kleinian. So    we can assume  that $G_0\subset \Heis(3,\C)$.  

If  $G_0$	is commutative, then Theorem \ref{l:lt4} and Lemma \ref{l:omega} imply that $\Pi(G_0)$ is trivial, and the result follows from  Lemma \ref{l:pfi2a}. 

If $G_0$ is non-commutative, then $\Pi(G_0)$ may or may not be discrete. If 
$\Pi(G_0)$ is discrete, then  the result  follows by  Proposition \ref{p:pifa1d}. 
If  $\Pi(G_0)$ is non-discrete,  then by  Lemma
\ref{l:m4} we have 	that $\Pi(G_0)$ has rank 2, 3 or 4. Let us look at each of these cases:

If the group $\Pi(G)$ has rank two, then   the result follows from   Proposition \ref{p:rank2}.

If the group $\Pi(G)$ has rank three, then  the result  follows  from  Lemma  \ref{l:rc3} and Proposition \ref{p:iird3c}.

If the   group  $\Pi(G)$ has rank four, then  the result  follows from  Proposition  \ref{p:iirnd3c} and Lemma \ref{cent4}.
\qed\\

{\bf  Proof of Theorem  \ref{t:main1}:}
This  uses  Theorems \ref{t:mainck}, \ref{t:maind}, 3,2  and section \ref{s:examples} where the families of purely parabolic groups are described.

Proof of part  (1).   If  $G$ is a complex  Kleinian group then by Theorem \ref{t:mainck}, $G$ is virtually conjugate  to  either an  elliptic group  or to a discrete subgroup  of  ${\rm Heis} (3,\C)$. If  $G$ is discrete but non-Kleinian, then by  Theorem \ref{t:maind} we get  that $G$ is virtually conjugate to a discrete subgroup of   ${\rm Heis} (3,\C)$.

Proof of part (2) items (a)  and  (b).  If  $G$ is complex  Kleinian then, by Theorem \ref{t:mainck},   $G$ is virtually conjugate to one and only one of the following groups: 
  Elliptic, Torus, dual torus group  type I  and  type II,    
 Kleinian Inoue ,  ${\mathcal{K}_0}$, ${\mathcal{K}_1}$	 and ${\mathcal{K}_2}$.	
	By Lemmas \ref{2.1}, \ref{torus groups}, \ref{ladd}, \ref{l:dualtd}, \ref{2.12},  \ref{l. non-Abelian Kodaira}, \ref{217},  \ref{2.18}  and Definition \ref{2.4}  the only groups groups whose limit set is exactly one line are: 
	   Elliptic groups,
		  Torus groups,
		dual Torus type I groups,     
		 ${\mathcal{K}_0}$ groups  and 
		${\mathcal{K}_1}$ groups.	

And by Lemmas \ref{ladd}, \ref{l. non-Abelian Kodaira} , \ref{217}, \ref{2.18},   Theorem \ref{2.7}	 and definition \ref{2.4} those groups whose  limit set  is a cone of lines over a circle are:
	dual torus  type II groups,    
	Kleinian Inoue groups, 
and 	${\mathcal{K}_2}$ groups.	

The proof of  Part (2)  Item (C)  follows directly from Theorem \ref{t:maind}  and the corresponding list  is the following
	 dual torus  group  type III,  
 non  Kleinian Inoue groups,  Extended Inoue groups 
and ${\mathcal{K}_3}$,	
 ${\mathcal{K}_4}$	 and 
	 ${\mathcal{K}_5}$ groups.	
$\square$

	\section{Appendix: Abelian subgroups  of $U^+$}  \label{a:Abeliano} \mbox { }
	
 Since we have not found Theorem \ref{l:lt4} in the literature, we now prove it for completeness. The claim is that if $U_+$ is the group  in Definition  \ref{def $U_+$} and
	 $G\subset U_+$  is a commutative subgroup, then $G$ is conjugate to a subgroup $\widetilde{ G}$ of one of the Abelian Lie groups  $C_j$ in Definition \ref{list of Abelian groups},  for some $j=1,2,3,4,5$.  Notice that 
since  $G$ is commutative, we have that $\Pi^*(G)$ and $\Pi(G)$ are Abelian. Now consider the following cases:\\
		
		\noindent 
		Case 1. The groups  $\Pi^*(G)$ and $\Pi(G)$ contain a parabolic element. In this case, Since $\Pi^*(G),\Pi(G)\subset Mob(\C)$	 are Abelian, we deduce that 
		$\Pi^*(G)$ and $\Pi(G)$ are purely parabolic, {\it i.e}, $G\subset Ker(\lambda_{12})\cap Ker(\lambda_{13})$. \\ 
		
		\noindent
		Claim 1. There is  an element $h\in G$ such that $\Pi(h)$ and 	 $\Pi^*(h)$ are parabolic. Let $g_1,g_2\in G $ be such that $\Pi(g_1)$ and $\Pi(g_2)$ are parabolic,  then, taking a power of $g_2$ if necessary, we can assume that $\Pi(g_1g_2)$ and $\Pi^*(g_1g_2)$ are not the identity. Since $G\subset Ker(\lambda_{12})\cap Ker(\lambda_{13})$ we deduce that $\Pi(g_1g_2)$ and $\Pi^*(g_1g_2)$ are both parabolic.\\
		
		Let $h\in G$ be the element given by the previous claim, then
		
		$$
		h=\begin{bmatrix}
			1 & a &b \\
			0 &1 & c\\
			0 & 0 &1
		\end{bmatrix}
		$$
		where $ac\neq 0$. Let us define  $h_0\in \PSL(3,\C)$ by
		
		$$
		h_0=\begin{bmatrix}
			a^{-1} & 0 &0 \\
			0 &1 & 0\\
			0 & 0 &c
		\end{bmatrix}.
		$$ 
		Then a straightforward computation shows that  for every $g=[g_{ij}]\in h_0Gh_0^{-1}$ we have:
		$$
		[h_0hh_0^{-1},g]
		= 
		\begin{bmatrix}
			1& 0 &-g_{12}+g_{23} \\
			0 &1 & 0\\
			0 & 0 &1
		\end{bmatrix}		 \;.
		$$
		Since $G$ is Abelian we deduce $g_{12}=g_{23}$. 
		
		\vskip.2cm
				\noindent
		Case 2. The group  $\Pi^*(G)$ contains a parabolic element  but  $\Pi(G)$ does not. Under this assumption, we deduce  $\Pi^*(G)$ is purely parabolic and there exists  $w\in \C$ such that $\Pi(G) w=w$, hence  $G\subset Ker(\lambda_{12})$.   We define 
		\[
		h=
		\begin{bmatrix}
			1 & 0 & 0\\
			0 & 1 & w\\
			0 & 0 & 1\\
		\end{bmatrix}.
		\]
		By 	a straightforward computation we show that for  every  $g\in G$, there exists  $c_g\in \C$ such that: 
		\[
		h gh^{-1}=
		\begin{bmatrix}
			g_{11} & g_{12}  & c_g\\
			0           & g_{11}   & 0\\
			0           & 0  & g_{11}^{-2}\\
		\end{bmatrix} \;.
		\]
		We notice  that $G_1=h Gh^{-1}$ leaves invariant the line $\overleftrightarrow{e_1,e_3}$, so  $\Pi_1:G_1\rightarrow Mob(\C) $, given by  $\Pi_1([g_{ij}])=g_{11}g_{33}^{-1}z+g_{13}g_{33}^{-1}$, is a well 
		defined group morphism. Now we only need to consider the following sub-cases:\\

				\noindent
		Sub case 1. The group $\Pi_1(G_1)$ contains a parabolic element. Then  $\Pi_1(G_1)$ is purely parabolic, which shows that $G_1\subset Ker(\lambda_{13})$.\\
		
				\noindent
		Sub case 2. The group $\Pi_1(G_1)$ does not contain a parabolic element. Then there exists $p\in \C$ such that $\Pi_1(G_1) p=p$. We define 
		\[
		h_1=
		\begin{bmatrix}
			1 & 0 & p\\
			0 & 1 & 0\\
			0 & 0 & 1\\
		\end{bmatrix}.
		\]
		It is clear  that for every  $g\in G_1$ we have 
		\[
		h_1h gh^{-1}h_1^{-1}=
		\begin{bmatrix}
			g_{11} & g_{12} &0 \\
			0           & g_{11}   & 0\\
			0           & 0  & g_{11}^{-2}\\
		\end{bmatrix}.
		\]
	It follows  that in this case  the group is conjugate to a subgroup in $C_1$.\\
		
				\noindent
		Case 3.	The group  $\Pi^*(G)$ does not contain a parabolic element  but  $\Pi(G)$ does.  We deduce that $\Pi(G)$ is purely parabolic and there exists  $z\in \C$ such that $\Pi^*(G) z=z$. Clearly  $G\subset Ker(\lambda_{23})$; we    define 
		\[
		h=
		\begin{bmatrix}
			1 & z & 0\\
			0 & 1 & 0\\
			0 & 0 & 1\\
		\end{bmatrix}.
		\]
		Then  for every  $g\in G$ there exists  $c_g$ such that: 
		\[
		h gh^{-1}=
		\begin{bmatrix}
			g_{11}^{-2} & 0 &c_g \\
			0           & g_{11}   & g_{13}\\
			0           & 0  & g_{11}\\
		\end{bmatrix}.
		\]
		Now we can consider $\Pi_2=\Pi_{e_2,\overleftrightarrow{e_1,e_3}} $ and we have that  $\Pi_2(G)\subset Mob(\C)$ is an Abelian group. So we must consider the following sub cases:\\
		
		\noindent
		Sub-case 1. The group $\Pi_2(G)$ contains a parabolic element. We  get   that $\Pi_2(G)$ is purely  parabolic, which shows that $G\subset Ker(\lambda_{13})$.\\
		
		\noindent
		Sub-case 2. The group $\Pi_2(G)$ does not contain a parabolic element. Again there exists  $p\in \C$ such that $\Pi_2(G) p=p$. Define 
		\[
		h_1=
		\begin{bmatrix}
			1 & 0 & p\\
			0 & 1 & 0\\
			0 & 0 & 1\\
		\end{bmatrix}\,.
		\]
		One can show  that for every  $g\in G$:
		\[
		h_1h gh^{-1}h_1^{-1}=
		\begin{bmatrix}
			g_{11}^{-2} & 0 &0 \\
			0           & g_{11}   & g_{13}\\
			0           & 0  & g_{11}\\
		\end{bmatrix}\,.
		\]
		
		\noindent
		Case 4. The groups  $\Pi^*(G)$ and $\Pi(G)$ do not  contain  parabolic elements.  In this setting  there are $z,w\in \Bbb{C}$ such that $\Pi^*(G)z=z$  and $\Pi(G)w=w$. Define 
		\[
		h=
		\begin{bmatrix}
			1 & z & 0\\
			0 & 1 & w\\
			0 & 0 & 1\\
		\end{bmatrix} \,.
		\]
		Then for every  $g=[g_{ij}]\in G$ there exists  $c_g\in \C$ such that: 
		\[
		h gh^{-1}=
		\begin{bmatrix}
			g_{11} & 0  & c_g \\
			0           & g_{22}   & 0\\
			0           & 0  & g_{33}\\
		\end{bmatrix} \,.
		\]
		Consider the following sub-cases:\\
		
		\noindent
		Sub case 1. The group $\Pi_2(G)$ contains a parabolic element. Then  $\Pi_2(G)$ is purely parabolic, which shows that $G\subset Ker(\lambda_{13})$.\\
		
		\noindent
		Sub case 2. The group $\Pi_2(G)$ does not contain a parabolic element.  We know there exists  $p\in \C$ such that $\Pi_2(G) p=p$, let 
		\[
		h_1=
		\begin{bmatrix}
			1 & 0 & p\\
			0 & 1 & 0\\
			0 & 0 & 1\\
		\end{bmatrix}\,.
		\]
		Then the subgroup    $h_1h Gh^{-1}h_1^{-1}$  contains only diagonal elements.
\qed

	\section{Appendix: Technical Lemmas on additive subgroups of $\C^2$ }
	Now we state and prove some technical lemmas used along this work.
	
	\begin{lemma}\label{l:ind}
		Let $\vartheta \in \Bbb{S}^1\setminus\{\pm 1\}$, then $$\beta_1=\{(0,1),(\vartheta,\vartheta),(2\vartheta^2,\vartheta^2), (3\vartheta^3,\vartheta^3)\}\subset \C^2 \,,$$ is an  $\R$-linearly independent set. 
	\end{lemma}
	\begin{proof}
		If $\vartheta =\cos (\theta)+i\sin (\theta) $, then  $\beta_1$ is  $\R$-linearly independent  because  the determinant
		
		\[
		\begin{vmatrix}
			0 & 0 & 1 &0\\
			\cos (\theta) & \sin (\theta) & \cos (\theta) & \sin (\theta)\\
			2\cos (2\theta) & 2\sin (2\theta) & \cos (2\theta) & \sin (2\theta)\\
			3\cos (3\theta) & 3\sin (3\theta) & \cos (3\theta) & \sin (3\theta)\\
		\end{vmatrix}=4(\sin \theta)^4
		\]
		is equal to $0$ if and only if $\vartheta =\pm 1 $.
	\end{proof}

	\begin{lemma} \label{l:latfund}
		Let $\vartheta\in \Bbb{S}^1\setminus\{\pm 1\}$  be a complex number satisfying:
		\begin{enumerate}
			\item The set $$\beta_2=\{(0,1),\vartheta(1,1),\vartheta^2(2,1), \vartheta^3(3,1), \vartheta^4(4,1)\}\subset \C^2,$$
			is  $\Bbb{Q}$-linearly dependent;
			\item  The number $Re(\vartheta)$ is  not a root of the polynomial:
			$$192x^7-64x^6+496x^5+288x^4+510x^3+209x+8.$$
		\end{enumerate}
		Then  there exists    $\alpha\in \C^*$ such that: $$(\alpha,0)\in Span_{\Z}\{\vartheta^{j}(j,1):j\in \{0,\ldots,{5} \}\}.$$
		
	\end{lemma} 
	\begin{proof} 
		Since $\beta_2$ is a $\Bbb{Q}$-linearly dependent set,  there are $m_0,m_1,m_2,m_3,m_4\in \Z$ such that: 
		\[
		m_4\vartheta^4(4,1)=m_3\vartheta^3(3,1)+m_2\vartheta^2(2,1)+m_1\vartheta(1,1)
		+m_0(0,1) \,,
		\]
		and $m_4\neq 0$. Thus we get the following equations:
		\[
		4m_4\vartheta^3=3m_3\vartheta^2+2m_2\vartheta+m_1\,, 
		\] 
		\[
		m_4\vartheta^4=m_3\vartheta^3+m_2\vartheta^2+m_1\vartheta+m_0 \,.
		\] 
		Let us consider $p_1(x), p_2(x)\in \Z[x]$ given by:
		\[
		\begin{array}{l}
			p_1(x)=-4m_4x^3+3m_3x^2+2m_2 x+m_1 \,,\\
			p_2(x)=-m_4x^4+m_3x^3+m_2x^2+m_1x+m_0 \,.
		\end{array}
		\] 
		Clearly $p_1(\vartheta)=p_2(\vartheta)=0$, so there are  $r_1,r_2,r_3\in \R$ such that: 
		\[
		\begin{array}{ll}
			p_1(x)&=-4m_4(x-\vartheta)(x-\vartheta^{-1})(x-r_1)\\
			&=-4m_4x^3+4m_4(2Re(\vartheta)+r_1)x^2-4m_4(1+2r_1Re(\vartheta))x+4m_4r_1 \,,\\
			p_2(x)&=-4m_4(x-\vartheta)(x-\vartheta^{-1})(x^2+r_2x+r_3)\\
			&=-4m_4x^4-4m_4(-2Re(\vartheta)+r_2)x^3-4m_4(1-2r_2Re(\vartheta)+r_3)x^2\\
			&-4m_4(r_2-2Re(\vartheta)r_3)x-4m_4r_3 \,.
		\end{array}
		\] 
		
		By comparing the coefficients of $p_1$ and $p_2$ with the previous equations we get: 
		\begin{equation}\label{e:walde}
			\begin{array}{l}
				m_1=4m_4r_1 \,,\\
				2m_2=-4m_4(1+2r_1Re(\vartheta)) \,,\\
				3m_3=4m_4(2Re(\vartheta)+r_1) \,,\\
				m_0=-4m_4r_3 \,, \\
				m_1=-4m_4(r_2-2r_3Re(\vartheta)) \,, \\
				m_2=-4m_4(1-2r_2Re(\vartheta)+r_3) \,,\\
				m_3=-4m_4(r_2-2Re(\vartheta)) \,.\\
			\end{array}
		\end{equation}
		
		This yields the following linear system:
		\[
		\begin{array}{l}
			r_1+r_2-2r_3Re(\vartheta)=0 \,,\\
			2r_1Re(\vartheta)+4r_2Re(\vartheta)-2r_3=1 \,,\\
			r_1+3r_3=4Re(\vartheta) \,.
		\end{array}
		\]
		Solving the system by Cramer's rule we get:
		\begin{equation}\label{e:sol}
			r_1=\frac{Re(\vartheta)(16Re^2(\vartheta)-7)}{2(1-Re^2(\vartheta))}\,;\quad
			r_2=\frac{-Re(\vartheta)(8Re^2(\vartheta)+3)}{2(1-Re^2(\vartheta))}\,;\quad
			r_3=\frac{4Re^2(\vartheta)-1}{2(1-Re^2(\vartheta))}.
		\end{equation}
		On the other hand,	from the first 3 equations in the System (\ref{e:walde}) we deduce that  $Re(\vartheta)=pq^{-1}$, where $p,q\in \Z$ are co-primes.  Let us define:
		\[
		\begin{array}{l}
			n_0=(4Re^2(\vartheta)-1)q^4 \,,\\
			n_1=(-Re(\vartheta)-16Re^3(\vartheta)))q^4 \,,\\
			n_2=(1+8Re^2(\vartheta)+16Re^4(\vartheta))q^4 \, ,\\
			n_3=(-7Re(\vartheta)-4Re^3(\vartheta))q^4 \,, \\
			n_4=(2-2Re^2(\vartheta))q^4\,.\\
		\end{array}
		\] 
		which are integers.  From Equation (\ref{e:sol})  we deduce: 
		\[
		n_4\vartheta^4(4,1)=n_3\vartheta^3(3,1)+n_2\vartheta^2(2,1)+n_1\vartheta(1,1)
		+n_0(0,1) \;.
		\]
		This implies: 
		\[
		\vartheta^5=n_4^{-2}(n_0n_3+(n_0n_4+n_1n_3)\vartheta+(n_1n_4+n_2n_3)\vartheta^2+(n_2n_4+n_3^2)\vartheta^3) \;.
		\]
		Thus: 
		\[
		\begin{array}{ll}
			n_4^2\vartheta^5(5,1)&=n_0n_3(0,1)+(n_0n_4+n_1n_3)\vartheta(1,1)+(n_1n_4+n_2n_3)\vartheta^2(2,1)\\
			&+(n_2n_4+n_3^2)\vartheta^3(3,1)+(5n_0n_3+4(n_0n_4+n_1n_3)\vartheta+3(n_1n_4+n_2n_3)\vartheta^2\\
			&+2(n_2n_4+n_3^2)\vartheta^3)(1,0) \,.
		\end{array}
		\]
		Finally let us  show that 
		\[
		5n_0n_3+4(n_0n_4+n_1n_3)\vartheta+3(n_1n_4+n_2n_3)\vartheta^2
		+2(n_2n_4+n_3^2)\vartheta^3\neq 0 \;.
		\]
We		define  $p_3(x)=5n_0n_3+4(n_0n_4+n_1n_3)x+3(n_1n_4+n_2n_3)x^2
		+2(n_2n_4+n_3^2)x^3.$ We need to show  $p_3(\vartheta )\neq 0$. Assume, on the contrary, that   $p_3(\vartheta)=0$.
		
		Now we notice: 
		\[
		n_2n_4+n_3^2=q^8(2+63Re^2(\vartheta)+72Re^4(\vartheta)-16Re^6(\vartheta))\neq 0 .
		\]
		Hence  $p_3(x)$    is a cubic polynomial with coefficients in $\Z$.
		Finally, since  $\vartheta$ is a root of $p_3(x)$,  there exists  $r_0\in \Bbb{R}$ such that 
		\[
		\begin{array}{ll}
			p_3(x)&=2(n_2n_4+n_3^2)(x-\vartheta)(x-\vartheta^{-1})(x-r_0)\\
			&=2(n_2n_4+n_3^2) (x^3-(r_0+2Re(\vartheta))x^2+(1+2r_0Re(\vartheta))x- r_0) \,.
		\end{array}
		\]
		By comparing the quadratic coefficients of $p_3$ we obtain:
		\[
		-2(n_2n_4+n_3^2)(2Re(\vartheta)+r_0)=3(n_1n_4+n_2n_3) \,.
		\] 
		Substituting the values of the $n_i$'s we get the following equivalent equation:  
		\[
		192Re(\vartheta)^7-64Re(\vartheta)^6+496Re(\vartheta)^5+288Re (\vartheta)^4+510Re (\vartheta)^3+209 Re(\vartheta)+8=0 \,,
		\]
		which contradicts our initial hypothesis.
	\end{proof}
	
	\begin{lemma}\label{l:rad}
		Let $\mathcal{L}\subset \Bbb{C}^2$ be an additive subgroup such that for each $x,y\in \mathcal{L}$ we have $\pi_1(x)\pi_2(y)=\pi_1(y)\pi_2(x)$, then:
		
		\begin{enumerate} 
			
			\item  \label{i:tt1}  If $Ker(\pi_1)\cap \mathcal{L}$ and $Ker(\pi_2)\cap \mathcal{L}$ are trivial, then there is $\mu\in \C^*$ and $W$ an additive group of $\C$ such that $\mathcal{L}=\{r(1,\mu):r\in W\}$.

			\item \label{i:tt2} If $Ker(\pi_1)\cap \mathcal{L}$ in non trivial, then there  is   an additive group  $W$ of $\C$ such that $\mathcal{L}=\{(r,0):r\in W\}$.
			
			\item \label{i:tt3}If $Ker(\pi_2)\cap \mathcal{L}$ is non-trivial, then there is an additive group   $W$  of $\C$ such that $\mathcal{L}=\{(0,r):r\in W\}$.		
			
		\end{enumerate}
	\end{lemma}
	\begin{proof}
		Let us show (\ref{i:tt1}). Clearly $\mathcal{L}=\{(\pi_1(x),\pi_2(x)): x\in \mathcal{L} \}$. Let us define  $\mu_x=\pi_2(x)/\pi_1(x)$;  by  hypothesis  $\mu_x$ does not depend on $x$, then $$\mathcal{L}=\{(\pi_1(x),\pi_1(x)\mu_x):x\in \mathcal{L}\}.$$

		In order to prove (\ref{i:tt2}) it is  enough to show that $\pi_2(\mathcal{L})$ is trivial.  Assume  on the contrary that there exists  $y\in   \mathcal{L}$ such that $\pi_2(y)\neq 0$. Consider an element  $x\in Ker(\pi_2)\cap \mathcal{L}-\{{\bf 0}\}$, thus $0\neq \pi_1(x)\pi_2(y)=\pi_1(y)\pi_2(x)= 0$, which is a contradiction. The proof of Part (\ref{i:tt3}) is similar.
	\end{proof}

	\begin{lemma} \label{l:ratlat}
		Let $\mathcal{L}=\{(1,0),(c,d)\}\subset \C^2$ be an $\R$-linearly independent set. Then     $(0,1),(0,c)\in Span_\Z(\mathcal{L})$ if and only if  there are  $p,q,r\in \Bbb{N}$ such that $p,q$ are  co-primes, $q^2$ divides $r$,   $c= pq^{-1},$ and  $d=r^{-1}$. 
	\end{lemma}
	\begin{proof}

		Since $(0,1),(0,c)\in Span_\Z(\mathcal{L})$ we deduce that there are $k_1,k_2,k_3,k_4\in \Z$ such that 
		\begin{eqnarray*}
			k_1 +k_2c=0 \,,\\
			k_2d=1\,,\\
			k_3 +k_4c=0\,,\\
			k_4d=c\,.
		\end{eqnarray*}
		From the first two equations 
		we deduce $d=k_2^{-1}$, $c=-k_1k_2^{-1}$. Let $p,q\in \Bbb{N}$ be co-primes such that $c=pq^{-1}$; substituting  in the last two equations we get: 
		\begin{eqnarray*}
			k_3 q+k_4p=0 \,,\\
			k_4q=pk_2 \,.
		\end{eqnarray*}
		From the first equation we see that  $q$ divides $ k_4$, thus there exists  $m\in \Z$ such that $k_4=q m$; substituting  in the last equation we get:
		\[
		mq^2=pk_2.
		\]
		It follows that   $q^2$ divides $k_2$.
		Conversely,  let us assume that $p,q$ are co-primes  such that $c=pq^{-1}$ and $r=q^2 n$, then :
		\begin{eqnarray*}
			-p^2n(1,0)+qpn( pq^{-1},(q^2 n)^{-1})=(0, pq^{-1})\,,\\
			-pqn(1,0)+qqn( pq^{-1},(q^2n)^{-1})=(0,1) \,.
		\end{eqnarray*}
	\end{proof}

	\begin{lemma}\label{l:r2dnkr3d}
		Let  $a,c,d,f\in\C $ be such that 	$\vert a \vert + \vert d\vert \neq 0 $,  	 $Span_{\Z}\{1,c,f\}$ is a rank three group and for every real subspace $\ell\subset \C$  we have  that $Span_{\Z}\{1,c,f\}\cap \ell$ has rank at most two.  Then 
		$Rank(Span_\Z\{a,d, af-dc\})\geq 2$.
	\end{lemma}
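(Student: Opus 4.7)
My plan is to argue by contradiction: assume that $\mathrm{rank}(\mathrm{Span}_\Z\{a,d,af-dc\})\le 1$. Since $|a|+|d|\neq 0$, the group $\mathrm{Span}_\Z\{a,d,af-dc\}$ is not trivial, so its rank must be exactly $1$. Being a finitely generated torsion-free abelian subgroup of $\C$, it is therefore free of rank one, i.e.\ of the form $\Z\beta$ for some $\beta\in\C\setminus\{0\}$. Hence there exist integers $n_1,n_2,n_3$ with $a=n_1\beta$, $d=n_2\beta$, and $af-dc=n_3\beta$.

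The main case is $n_1\neq 0$ (equivalently $a\neq 0$). In this case I set $q=n_2/n_1$, $r=n_3/n_1\in\Q$, so that $d=qa$ and $af-dc=ra$. Substituting $d=qa$ into the last equation and dividing by $a$ produces
\[
f - qc - r = 0,
\]
which is a $\Q$-linear relation among $\{1,c,f\}$ whose coefficient of $f$ is $1\neq 0$, hence nontrivial. However, the rank-three hypothesis on $\mathrm{Span}_\Z\{1,c,f\}$ forces $\{1,c,f\}$ to be $\Z$-linearly independent (any three generators of a rank-three free abelian group form a basis), and clearing denominators shows that $\Z$-linear independence upgrades to $\Q$-linear independence. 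This is the desired contradiction.

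The remaining degenerate case is $n_1=0$, i.e.\ $a=0$. Then $d\neq 0$, so $n_2\neq 0$ and $\beta=d/n_2$. From $af-dc=-dc=n_3\beta=(n_3/n_2)d$ we obtain $c=-n_3/n_2\in\Q$. But then $\mathrm{Span}_\Z\{1,c\}$ has rank $1$, so $\mathrm{Span}_\Z\{1,c,f\}$ has rank at most $2$, again contradicting the rank-three hypothesis. The case $d=0$ is symmetric (with the roles of $c$ and $f$ interchanged and using that $f\notin\Q$ for the same reason).

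I expect the only point of real care to be the step where rank one of $\mathrm{Span}_\Z\{a,d,af-dc\}$ is translated into the existence of a common generator $\beta$; beyond that, the argument is a short algebraic manipulation combined with the standard fact that $\Z$-linear independence of $\{1,c,f\}$ implies $\Q$-linear independence. I note that the second hypothesis (about real subspaces $\ell\subset\C$) is not needed to draw the present conclusion; it plays its role elsewhere in the paper, e.g.\ in the definition of the $H$-groups in Example \ref{H group}.
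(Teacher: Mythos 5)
Your proof is correct, but it takes a genuinely different (and leaner) route than the paper's. You argue directly from the assumption $\mathrm{rank}(Span_\Z\{a,d,af-dc\})\le 1$, extract a common generator $\beta$, and in the main case $a\neq 0$ derive the rational relation $f=qc+r$, which contradicts the $\Q$-linear independence of $\{1,c,f\}$ (equivalently, the rank-three hypothesis); the degenerate case $a=0$ gives $c\in\Q$ and the same contradiction. This is airtight, and your closing observation is right: the hypothesis on real subspaces $\ell\subset\C$ is not needed for the stated conclusion. The paper instead first disposes of the cases $a=0$, $d=0$, $ad^{-1}\notin\Q$ as immediate, reduces to $0\neq a=rd$ with $r\in\Q$, writes $af-dc=(rf-c)d$, and then proves the \emph{stronger} fact that $rf-c\notin\R$ (not merely $rf-c\notin\Q$) by splitting according to the closure of $Span_\Z\{1,c,f\}$ — dense in $\C$ versus of the form $\alpha\R\oplus\beta\Z$ — and it is precisely in this step that the real-subspace hypothesis is invoked. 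So the trade-off is: your argument is shorter, uses only the $\Q$-linear-independence content of the rank-three assumption, and shows the lemma holds under weaker hypotheses; the paper's argument, at the cost of the extra hypothesis and a case analysis (whose write-up, incidentally, contains typos such as ``$f=s+tc$ where $r,s\in\R$''), extracts the non-reality of $rf-c$, which is more information than the rank bound requires but is in the spirit of the discreteness arguments used elsewhere in Section \ref{s:dis}. One tiny redundancy in your write-up: the case $d=0$ with $a\neq 0$ is already covered by your main case ($q=0$ forces $f=r\in\Q$), so the final ``symmetric'' remark is unnecessary, though harmless.
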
 
	\begin{proof}
		The result is trivial if $a=0$ or $d=0$ or $ad^{-1}\notin \Bbb{Q}$,  so we  assume that   $0 \ne a=rd $ for some $r\in \Bbb{Q}$.  We  consider  the following cases:
		
		\vskip.2cm
		\noindent
		Case 1.  $Span_{\Z}\{1,c,f\}$ is dense in $\C$. Then we must have $f=s+tc$ where $s,t\in \R$ and $\{1,s,t\}$ is $\Bbb{Q}$-linearly independent. Thus,  $af-dc=(rs+(rt-1)c)d$;  to conclude we observe that  $rs+(rt-1)c\notin \R$.
		
		\vskip.2cm
		\noindent
		Case 2.   $c\in \R-\Bbb{Q}$ and $f\notin  \R$. Then we have $af-dc=(rf-c)d$;   to finish we observe that   $rf-c\notin \R$.
	\end{proof}

	\begin{lemma}\label{l:addis}
		Let  $a,c,d,f\in\C $ be such that 	$\vert a \vert + \vert d\vert \neq 0 $,   $Span_{\Z}\{1,c,f\}$ is dense in $\C$  and  	  there are sequences $(k_n),(l_n),(m_n)\subset \Z$ such that:
		\begin{enumerate}
			\item  $(k_n+l_nc+m_nf) $  is  a sequence  of distinct	  elements converging to $0$;
			\item  $(l_na+m_n d)$ converges to 0.
		\end{enumerate}
		Then  $Span_\Z\{a,d\}$ is non-discrete.
	\end{lemma}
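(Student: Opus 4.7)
The plan is to argue by contradiction: assume $Span_\Z\{a,d\}$ is discrete and derive a contradiction with the density of $Span_\Z\{1,c,f\}$ in $\C$. Since the elements $k_n+l_nc+m_nf$ are distinct, after passing to a subsequence we may assume the triples $(k_n,l_n,m_n)\in\Z^3$ are pairwise distinct. Under the contradiction hypothesis the convergence $l_na+m_nd\to 0$ in the discrete group $Span_\Z\{a,d\}$ forces $l_na+m_nd=0$ for all $n$ large, and we pass to that subsequence.

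Next I would analyse the kernel of the $\Z$-linear map $\phi:\Z^2\to\C$, $(l,m)\mapsto la+md$. Since $|a|+|d|\ne0$ and $Span_\Z\{a,d\}$ is assumed discrete, one of three mutually exclusive possibilities holds: (i) $\{a,d\}$ is $\R$-linearly independent, so $\ker\phi=\{0\}$; (ii) exactly one of $a,d$ vanishes, so $\ker\phi$ is a coordinate $\Z$-axis; or (iii) $a,d\in\C^*$ with $a/d\in\Q$, in which case $\ker\phi=\Z(l_0,m_0)$ for some primitive vector with $l_0m_0\ne0$. In case (i) the identity $l_na+m_nd=0$ forces $l_n=m_n=0$ for every $n$, so $k_n\to0$ is a sequence of distinct integers, which is impossible. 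In case (ii), say $a=0$, we get $m_n=0$, and the problem drops to distinct pairs $(k_n,l_n)\in\Z^2$ with $k_n+l_nc\to0$; this reduces to the imaginary-part argument below applied to the pair $\{1,c\}$. The genuinely interesting case is (iii).

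In case (iii) write $(l_n,m_n)=t_n(l_0,m_0)$ for some $t_n\in\Z$, so that $k_n+t_n(l_0c+m_0f)\to0$. If the $t_n$ were bounded the triples $(k_n,l_n,m_n)$ would take only finitely many values, contradicting distinctness, so after a further subsequence $|t_n|\to\infty$. Taking imaginary parts, $t_n\cdot\mathrm{Im}(l_0c+m_0f)\to0$ together with $|t_n|\to\infty$ forces $l_0\mathrm{Im}(c)+m_0\mathrm{Im}(f)=0$. Since $(l_0,m_0)\ne(0,0)$, this shows that $\mathrm{Im}(c)$ and $\mathrm{Im}(f)$ are $\Q$-linearly dependent, so the additive subgroup $\Z\mathrm{Im}(c)+\Z\mathrm{Im}(f)\subset\R$ is cyclic, hence discrete. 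But this subgroup is exactly the image of $Span_\Z\{1,c,f\}$ under the continuous projection $\mathrm{Im}:\C\to\R$, and the density of $Span_\Z\{1,c,f\}$ in $\C$ would force that image to be dense in $\R$. This is the desired contradiction, and the argument in case (ii) is identical after dropping the $f$ (or $c$) coordinate.

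The main obstacle is spotting the right way to exploit the density hypothesis: density of $Span_\Z\{1,c,f\}$ is a two-dimensional statement, but the rational relation $l_0a+m_0d=0$ coming from the discreteness assumption is one-dimensional. Reconciling them requires projecting the density hypothesis onto a well-chosen one-dimensional quotient, namely the imaginary axis, and checking that the relation forces the projected subgroup to be cyclic.
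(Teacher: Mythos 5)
Your proof is correct and follows essentially the same route as the paper's: assume $Span_\Z\{a,d\}$ is discrete, use hypothesis (2) to force $l_na+m_nd=0$ for $n$ large, extract the rational relation between $a$ and $d$, and then use hypothesis (1) together with the distinctness of the values $k_n+l_nc+m_nf$ to conclude that $qc-pf\in\R$, contradicting density. You are in fact more thorough than the published argument, which asserts $ad\neq 0$ without justification and thus silently skips your cases (i) and (ii); moreover your finishing move (the image of $Span_\Z\{1,c,f\}$ under $\mathrm{Im}$ is cyclic, hence discrete, hence not dense in $\R$) is cleaner and more self-contained than the paper's appeal to the coordinate form $f=r+sc$ with $\{1,r,s\}$ linearly independent over $\mathbb{Q}$, which itself requires a small argument to deduce from the density hypothesis.
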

	\begin{proof}
		Assume, on the contrary,     that $Span_\Z\{a,d\}$ is discrete. Then  $ad\neq 0 $ and $qa=pd$ where $p,q$ are  are non-zero  integers. On the other hand, since $Span_\Z\{a,d\}$ is discrete,   by Assumption (2) we have $l_na+m_n d=0$ for $n$ large, so we can assume that $l_n p+m_nq=0$ for $n$ large. Hence:
		$$
		\frac{k_n}{m_n} \xymatrix{\ar[r]_{n \rightarrow  \infty}&} \frac{-fp +cq}{p}\,.
		$$
		That is $-fp +cq\in \R$. On the other hand, since  $f=r+sc$ where $r,s\in\R$ satisfies that $\{1,r,s\}$ is $\Bbb{Q}$-linearly independent, we deduce  $-fp +cq=-(r+sc)p+cq\in \R$. Thus $c\in \R$, which is a contradiction. 
	\end{proof}

	In the next lemma we consider a condition as in Example \ref{H group}:
	\begin{lemma} \label{l:rc3}	Let $a,c,d,f\in \C$ be such that $ a \neq 0$  and for every  real line $\ell\subset \C$ we have that $\ell\cap Span_\Z\{1,c,f\}$ has rank at most two.
		Then 	$U=Span_\Z\{a,d, af-dc\}$ is discrete if and only if  $Span_\Z\{a,d\}$ is discrete and one of the following  statements is true:
		\begin{enumerate}
			\item $d=0$ and $f\notin \R$;
			\item $a=rd$ for some  $r\in \Bbb{Q}$;
			\item $ad^{-1}\notin \R$,
			and    there are $r_1,r_2\in\Bbb{Q}$ such that 
			\[
			c=\frac{a(f-r_1)}{d}-r_2 \,.
			\]
		\end{enumerate}
		
	\end{lemma}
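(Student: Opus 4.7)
The plan is to reduce discreteness of $U=Span_{\Z}\{a,d,af-dc\}$ to a question about a rank-at-most-two subgroup of $\C$, by splitting into three structural cases determined by the scalar $a/d$. Since $Span_{\Z}\{a,d\}\subset U$, its discreteness is automatic from that of $U$; and when $d\neq 0$, discreteness of $Span_{\Z}\{a,d\}$ forces $a/d\notin\R\setminus\Bbb{Q}$, so $a/d$ is either rational or non-real. Together with the case $d=0$, these are exactly the three scenarios (1), (2), (3) of the lemma.

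For each case I would factor out a common scalar and describe $U$ as a translate of a familiar subgroup of $\C$. In Case (3) ($d\neq 0$, $a/d\notin\R$), the pair $\{a,d\}$ is an $\R$-basis of $\C$ and $Span_{\Z}\{a,d\}$ is a rank-two lattice; then $U$ remains discrete iff $af-dc$ has rational coordinates in this basis, i.e.\ $af-dc=r_1 a-r_2 d$ with $r_1,r_2\in\Bbb{Q}$. Solving for $c$ gives $c=(a/d)(f-r_1)-r_2$, which is precisely the stated condition. In Case (1) ($d=0$), $U=a\cdot Span_{\Z}\{1,f\}$, a discrete subgroup of $\C$ exactly when $\{1,f\}$ is $\R$-linearly independent, so $f\notin\R$. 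In Case (2) ($a=rd$ with $r=p/q\in\Bbb{Q}$ in lowest terms, $d\neq 0$), one computes $af-dc=d(rf-c)$ and $Span_{\Z}\{a,d\}=(d/q)\Z$, so $U=(d/q)\bigl(\Z+q\Z(rf-c)\bigr)$; discreteness thus reduces to that of $\Z+q\Z(rf-c)\subset\C$, which is a rank-two lattice when $rf-c\notin\R$ and a discrete cyclic subgroup of $\R$ when $rf-c\in\Bbb{Q}$.

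The converse direction is a routine verification in each case: assuming the stated form of the parameters, one writes $af-dc$ as a $\Bbb{Q}$-combination of $a,d$ (Case 3) or as an element of a rescaled rank-one lattice, and concludes that $U$ sits inside a rank-at-most-two discrete subgroup of $\C$. The main obstacle I expect is the careful use of the genericity hypothesis $\mathrm{rank}(\ell\cap Span_{\Z}\{1,c,f\})\leq 2$ for every real line $\ell$ in Case (2), which must be invoked---in the spirit of the Kronecker-type arguments already used in Lemma \ref{l:r2dnkr3d} and Lemma \ref{l:addis}---to exclude the pathological subcase $rf-c\in\R\setminus\Bbb{Q}$ and to harmonize the conclusion with the preceding lemmas. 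Once this subtlety is resolved, the three case analyses glue together to yield the stated ``iff''.
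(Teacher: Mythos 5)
Your reduction is the same as the paper's: split on whether $d=0$, $a/d\in\Bbb{Q}$, or $a/d\notin\R$ (the remaining possibility $a/d\in\R\setminus\Bbb{Q}$ being killed by discreteness of $Span_\Z\{a,d\}$), and in case (3) observe that $U=\Z a+\Z d+\Z(af-dc)$ is discrete iff $af-dc\in\Bbb{Q}a+\Bbb{Q}d$, which unwinds to the stated formula for $c$. Cases (1) and (3) are therefore complete and match the paper essentially verbatim. The problem is case (2). There the lemma carries no side condition at all, so the assertion is that $U$ is \emph{always} discrete when $a=rd$, $r\in\Bbb{Q}$; as you correctly compute, this amounts to showing $rf-c\notin\R\setminus\Bbb{Q}$, and you explicitly leave that step as an ``obstacle to be resolved.'' That is precisely the only nontrivial content of case (2), so as written the proposal has a genuine gap at the point you flagged.

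For the record, the paper closes it as follows: it writes $f=s_1+s_2c$ with $s_1,s_2\in\R$ and $\{1,s_1,s_2\}$ $\Bbb{Q}$-linearly independent, so that $rf-c=rs_1+(rs_2-1)c$ can be real only if $rs_2=1$, contradicting $s_2\notin\Bbb{Q}$; hence $rf-c\notin\R$ and $U=d\,Span_\Z\{r,1,rf-c\}$ is a lattice. Be warned, however, that this normal form does \emph{not} follow from the hypothesis as literally stated in the lemma; it is the ``dense control group'' normal form from Case 1 of the proof of Lemma \ref{l:r2dnkr3d} and the Remark following Lemma \ref{l:rc3}. Indeed, $c=i$, $f=\sqrt{2}+i$, $a=d=1$ satisfies the stated line condition (every real line through the origin meets $\Z+\sqrt{2}\Z+i\Z$ in a subgroup of rank at most $2$, since $1$ and $i$ are not collinear), falls under case (2), and yet gives $af-dc=\sqrt{2}$ and $U=\Z+\sqrt{2}\Z$, which is not discrete. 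So the Kronecker-type argument you hoped to import from Lemmas \ref{l:r2dnkr3d} and \ref{l:addis} cannot be run from the stated hypothesis alone: to make case (2) (and hence the ``if'' direction) correct one must additionally assume that $Span_\Z\{1,c,f\}$ is dense in $\C$ (equivalently, impose the normal form $f=s_1+s_2c$ with $\{1,s_1,s_2\}$ $\Bbb{Q}$-independent), which is the situation in which the lemma is actually applied.
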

	\begin{proof} It is clear that if  $d=0$  and $f\notin \R$, then $U$ is discrete. So we  assume that 
		if $a=rd$ with $r\in\Bbb{Q}$, then $U=dSpan_\Z\{r,1, rf-c\}$. Since $f=s_1+s_2c$ where $s_1,s_2\in\R$ satisfy that $\{1,s_1,s_2\}$ is $\Bbb{Q}$-linearly independent, then   
		$ rf-c=rs_1+(s_2-1)c\in \R $ if and only if $s_2=1$, which is not possible. Thus $U$ is discrete.  
		Finally 
		observe that if  $ad^{-1}\notin\R$, then the   equation
		\[
		\frac{a}{d}=\frac{c+r_2}{f-r_1}
		\]
		is equivalent to the   discreteness of $U$. 
	\end{proof}
	
	\begin{remark}
		
		We need that the closure of every rank 3 subgroup in  $W=Span_\Z\{1,c,f\}$ be either dense in $\C$ or isomorphic as a Lie group to $\R\oplus \Z$. This comes from the fact that  $W$ is going to play the role of a control group, and we know that  control groups of rank 3 satisfy this. In particular,  there are no  control groups of rank 3 which are dense subgroups and isomorphic to  $\R$ (cf.  \cite{roy}).
		
	\end{remark}
	
	\begin{lemma}\label{cent4}
		Let $a,c,d,g,f, h\in \C$ be such that:
		\begin{enumerate}
			\item   $\vert a\vert+\vert d\vert+\vert g\vert \neq 0$.
			\item   $W=Span_\Z(\{1,c,f,h\})$ is a rank four group. 
			\item 	 For every 1-dimensional real subspace $\ell \subset \C$ we have $rank( \ell \cap W)\leq 2  $.  
			\item 	$\overline {Span_\Z\{1,c,f\}}=\alpha \R\oplus \beta \Z$ where  $\alpha,\beta\in \C^*$ and $\alpha\beta^{-1}\notin \R$.
		\end{enumerate}
		Let $U=Span_{\Z}\{ a,d, g, dh- gf, af-cd, ah - cg \}$, then $U$ is discrete if and only if  $Span_{\Z}\{a, d, g\}$ is discrete, $(\vert a\vert + \vert d\vert)(\vert a\vert + \vert c\vert )(\vert c\vert + \vert d\vert )\neq 0$ and one of the following occurs:
		\begin{enumerate}
			\item  $a=0$
			$($resp. $d=0$, $g=0)$  and there are $r_0,r_1,r_2,r_3\in\Bbb{Q}$ such that $r_1\neq 0$ and
			$$
			(r_2-r_0)^2+4r_1r_3<0;
			$$
			$$
			x_1=\frac{r_2+r_0\pm \sqrt{(r_2-r_0)^2+4r_1r_3}}{2};
			$$
			\[
			x_2=x_3\left (\frac{r_2-r_0\pm \sqrt{(r_2-r_0)^2+4r_1r_3}}{2r_1}\right );\
			\]
			$$
			x_4= (x_5-r_4)\left (\frac{r_2-r_0\pm \sqrt{(r_2-r_0)^2+4r_1r_3}}{2r_1}\right )-r_5
			$$
			where $x_1=c $ $($resp. $f$, $h)$, $x_2=d$  $($resp. $a$, $a)$, $x_3=g$   $($resp. $g$, $d)$, $x_4=f$   $($resp. $c$, $c)$, and $x_5=h$   $($resp. $h$, $f)$.\\

			\item $ad^{-1}\notin \R$ and there are $r_1,r_2,s_1,t_1,s_2, t_2,s_3,t_3 \in \R$ such that: 
			\[
			g=r_1 a+r_2 d \quad ;\quad
			r_2 t_2\neq t_3 \,,
			\]
			\[
			f=\frac{A_2\pm\left(c+ t_2\right)\sqrt{A_1} }{2 \left(r_2 t_2-t_3\right)}\quad ;\quad
			j=\frac{A_3\pm\left(c r_2+t_3\right) \sqrt{A_1} }{2 \left(r_2 t_2-t_3\right)} \;,
			\]
			where:
			\begin{small}
				$$
				\begin{array}{l}
					A_1=\left(-r_2 s_2+r_1 t_2-s_3-t_1\right){}^2-4 \left(r_2 s_1 t_2-r_1 s_2 t_3+r_2 s_2 s_3-r_1 t_1 t_2+s_3 t_1-s_1 t_3\right)\,,\\
					A_2=-c r_2 s_2-c r_1 t_2+c s_3-c t_1+r_2 s_2 t_2-r_1 t_2^2+s_3 t_2-2 s_2 t_3-t_1 t_2\,,\\
					A_3=r_2 \left(c r_1 t_2+s_3 \left(c+2 t_2\right)-c t_1-s_2 t_3\right)+t_3 \left(-r_1 \left(2 c+t_2\right)-s_3-t_1\right)-c r_2^2 s_2\,.\\
				\end{array}
				$$
				
				\item 
				$ad^{-1}\notin \Bbb{Q}$ and  $gd^{-1}\notin \Bbb{Q}$ and  there are 
				$r_2,s_1,s_2,s_3,t_1,t_2,t_3\in \Bbb{Q}$  such that $r_2t_2\neq 0$,
				$
				a=r_2 d;\,\, 
				$ and 
				\[
				c= \frac{1}{2} \left(A_3\mp\sqrt{A_1}\right)\quad ;\quad
				j= \frac{A_2\pm\sqrt{A_1} \left(f+t_1\right)}{2 t_2}\,,
				\]
				where 
				\begin{small}
					$$
					\begin{array}{l}
						A_1=2 r_2 s_2 t_1-4 r_2 s_1 t_2+r_2^2 t_1^2-2 r_2 t_1 t_3-2 s_2 t_3+4 s_3 t_2+s_2^2+t_3^2\,,\\
						A_2=-f r_2 t_1-f s_2+f t_3-r_2 t_1^2-s_2 t_1+2 s_1 t_2+t_3 t_1 \,,\\
						A_3=2 f r_2+r_2 t_1-s_2-t_3\,.\\
					\end{array}
					$$ \end{small}
			\end{small}	
		\end{enumerate}
	\end{lemma}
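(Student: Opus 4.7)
The strategy parallels and extends the rank~$3$ analysis of Lemma~\ref{l:rc3}. It is useful to view the six generators of $U$ as the second-coordinate entries $a,d,g$ together with the $2\times 2$ minors of the matrix
\[
\begin{pmatrix} 1 & c & f & h\\ 0 & a & d & g \end{pmatrix} \;.
\]
Writing $F(k,l,m,n)=k+lc+mf+nh$ and $G(l,m,n)=la+md+ng$, the group $U$ contains $G(l,m,n)$ for each $(k,l,m,n)$ with $F(k,l,m,n)=0$; combined with hypothesis~(2), this couples the lattice structure of the second row to the approximation properties of $W$.

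\textbf{Sufficiency.} In each of the three listed cases I would substitute the explicit closed-form expressions (for $c$, $f$, or $h$) into the three minors $af-cd$, $ah-cg$, $dh-gf$ and verify by direct algebra that each lies in $Span_\Z\{a,d,g\}$. Together with the assumed discreteness of $Span_\Z\{a,d,g\}$, this shows $U=Span_\Z\{a,d,g\}$ has rank $\leq 2$, hence is discrete. The appearance of the quadratic surds $\pm\sqrt{A_1}$ is exactly what is needed so that these substitutions reduce the two independent minor relations simultaneously.

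\textbf{Necessity.} Assume $U$ is discrete. First, $Span_\Z\{a,d,g\}\subset U$ is automatically discrete. The condition $(|a|+|d|)(|a|+|g|)(|d|+|g|)\neq 0$ (at most one of $a,d,g$ vanishes) follows from hypotheses~(2)--(3): if e.g.\ $a=d=0$, then $af-cd=0$ and $ah-cg=-cg$, $dh-gf=-gf$ lie in $\Z g$, so $U=\Z g$ is rank~$1$, forcing $\{c,f\}\subset \Q g\cdot g^{-1}\cdot\Z$-combinations, which together with hypothesis~(3) contradicts rank $W=4$.

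I then split into three cases according to the $\R$-structure of $\{a,d,g\}$.

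\emph{Case~1} (one of $a,d,g$ vanishes, say $a=0$): The minors reduce to $-cd$, $-cg$, $dh-gf$, all of which must lie in the discrete group $Span_\Z\{d,g\}$. Using hypothesis~(4) to approximate elements of $\overline{Span_\Z\{1,c,f\}}=\alpha\R\oplus\beta\Z$ by integer triples $(k_n,l_n,m_n)$ and invoking Lemma~\ref{l:addis} together with the rank-$2$ intersection hypothesis~(3), the two independent relations force $c$ to satisfy a quadratic with rational coefficients whose discriminant is negative; solving this quadratic produces the displayed formulas.

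\emph{Case~2} ($ad^{-1}\notin\R$): Then $\{a,d\}$ is $\R$-linearly independent, so discreteness of $Span_\Z\{a,d,g\}$ forces $g=r_1a+r_2 d$ with $r_1,r_2\in\R$, and in fact in $\R$ (but note the formulas only require real parameters). The constraints $af-cd,\ ah-cg\in Span_\Z\{a,d\}$ become, after expanding along $\{a,d\}$, a simultaneous pair of equations in $f$ and $h$ whose solution is exactly the formulas with discriminant $A_1$.

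\emph{Case~3} ($ad^{-1}\in\Q$ and $gd^{-1}\notin\Q$, up to permutation): Write $a=r_2 d$. Now $\{d,g\}$ is $\R$-independent while $a,d$ are $\R$-proportional, so the role of the closure structure in hypothesis~(4) becomes essential; the $\R\oplus\Z$ (rather than $\C$) closure of $Span_\Z\{1,c,f\}$ controls which integer triples $(k,l,m)$ keep $F$ small, and this constrains $c$ and $h$ through a quadratic analogous to Case~2 but with $A_1$ replaced by the formula given.

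The main obstacle is Case~3: hypothesis~(4) constrains only $\overline{Span_\Z\{1,c,f\}}$ and not $\overline W$, so one cannot freely approximate targets in the first coordinate. The argument must carefully separate the contributions of $h$ (which supplies the fourth, transverse, direction in $W$) from those of $c,f$, and then extract the quadratic identity from the simultaneous requirement that two independent minor combinations lie in the same rank-$2$ lattice. This amounts to solving a $2\times 2$ nonlinear system over $\Q$ whose resultant is $A_1$, giving the stated $\pm\sqrt{A_1}$ branches.
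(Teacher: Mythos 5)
Your overall architecture matches the paper's: interpret discreteness of $U$ as the requirement that the three $2\times 2$ minors $af-cd$, $ah-cg$, $dh-gf$ lie in the discrete group $Span_\Z\{a,d,g\}$, split into cases according to the real/rational dependence structure of $a,d,g$, and solve the resulting quadratic systems to obtain the $\pm\sqrt{A_1}$ branches. There are, however, two concrete gaps in your necessity direction. First, your case split is not exhaustive: if $a,d,g$ are all nonzero and pairwise rationally proportional, say $a=r_0d$ and $g=r_1d$ with $r_0,r_1\in\Bbb{Q}$, then $Span_\Z\{a,d,g\}$ \emph{is} discrete and none of your Cases 1--3 applies, yet this configuration must still be shown incompatible with discreteness of $U$. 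The paper devotes its Claim 2 to exactly this: one writes $U=d\,Span_\Z\{r_0,1,r_1,\,h-r_1f,\,r_0f-c,\,r_0h-cr_1\}$, checks $h-r_1f\notin\R$, and shows that discreteness would force a $\Bbb{Q}$-linear relation among $1,c,f,h$, contradicting the rank-four hypothesis on $W$.

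Second, your argument that at most one of $a,d,g$ vanishes does not go through as written: if $a=d=0$ then $ah-cg=-cg$ and $dh-gf=-gf$ do \emph{not} lie in $\Z g$ in general, so the conclusion ``$U=\Z g$ is rank $1$'' is false. What is true is $U=g\,Span_\Z\{1,c,f\}$, and the contradiction is that this group is non-discrete because, by hypothesis (4), its closure is $g(\alpha\R\oplus\beta\Z)$; this is the paper's Claim 1, and the repair is immediate, but the route through rank one is not available. A smaller point: in your Case 2, discreteness of $Span_\Z\{a,d,g\}$ with $ad^{-1}\notin\R$ forces $g\in\Bbb{Q}a+\Bbb{Q}d$, not merely $g\in\R a+\R d$; the rationality of $r_1,r_2$ is what makes the subsequent constraints on $f$ and $h$ a polynomial system over $\Bbb{Q}$ whose resultant produces $A_1$ (the paper's own wavering between $\R$ and $\Bbb{Q}$ here notwithstanding). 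With these repairs, and the deferred computations actually carried out, your plan would coincide with the paper's proof.
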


	\begin{proof} Let us assume $U$ is discrete.  Clearly,  $Span_\Z\{a,d,g\}$ is discrete;  we claim:
		
		\vskip.2cm
		\noindent
		Claim 1. $ \vert d\vert+ \vert g\vert\neq 0$: just notice  that $g=d=0$ implies that $U=aSpan_{\Z}\{ 1, f, h\}$ is not discrete. Similarly one has that 
		$ \vert d\vert+ \vert a\vert\neq 0$  and $ \vert a\vert+ \vert g\vert\neq 0$.
		
		\vskip.2cm
		\noindent
	 Claim 2.  There are not  $r_0,r_1\in \Bbb{Q}$ such that $a=r_0d$, $g=r_1d$.  Assume on the contrary that there are such $r_0,r_1\in \Bbb{Q}$. Set: $$U=d Span_\Z\{ r_0,1, r_1, h- r_1f, r_0f-c, r_0h - cr_1 \}.$$ Let us consider  $$U_2=Span_\Z \{ 1,  h- r_1f, r_0f-c, r_0h - cr_1 \}.$$ Observe  that  $h- r_1f\notin \R$,  for otherwise $h- r_1f\in \Bbb{Q}$ and therefore $\{h,f\}$ are $\Bbb{Q}$-linearly dependent; since $U$ is discrete we conclude that there are $r_1,r_2\in \Bbb{Q}$ such  that $r_0f- c=r_1+ r_2(h- r_1f)$, thus $\{1,c,f,h\}$ is $\Bbb{Q}$-linearly dependent, which is a contradiction.\\
		
		From the previous claims we deduce $adg=0$.  Now let us study the case $a=0$, the cases $d=0$ or $g=0$ are similar and we leave them for the reader.

		\vskip.2cm
		\noindent
		Claim  3.  It is not possible that $adg\neq 0$ and $ad^{-1}\notin \R$.  Assume, on the contrary,    that there are $r_1,r_2,s_1,s_2,s_3,t_1,t_2,t_3\in \Bbb{Q}$ such that: 
		\begin{eqnarray*}
			g=r_1 a+r_2 d,\\
			dh-gf=s_1 a+t_1d,\\
			af-cd=s_2 a+t_2d,\\
			ah-cg=s_3\,.
		\end{eqnarray*}
		Substituting the value of $g$ given in the first equation  in the other  equations we get:
		\[
		\frac{a}{d}=
		\frac{h-fr_2-t_1}{s_1+fr_1}=
		\frac{t_2+c}{f-s_2}=
		\frac{cr_2+t_3}{h-cr_1-s_3}.
		\]
		Hence, we obtain the following system of polynomial equations:
		\begin{eqnarray*}
			(h-fr_2-t_1)(f-s_2)=(s_1+fr_1)(t_2+c),\\
			(h-fr_2-t_1)(h-cr_1-s_3)=(s_1+fr_1)(cr_2+t_3),\\
			(t_2+c)(h-cr_1-s_3)=(f-s_2) (cr_2+t_3).
		\end{eqnarray*}
		A straightforward computation  shows that  this system   has  non-trivial solutions if and only if $r_2 t_2\neq t_3$, and in that case the solutions are:
		\[
		f_\pm=\frac{A_2\pm\left(c+ t_2\right)\sqrt{A_1} }{2 \left(r_2 t_2-t_3\right)}\quad , \quad
		h_\pm=\frac{A_3\pm\left(c r_2+t_3\right) \sqrt{A_1} }{2 \left(r_2 t_2-t_3\right)}\;,
		\]
		where:	
		\begin{small}
			$$
			\begin{array}{l}
				A_1=\left(-r_2 s_2+r_1 t_2-s_3-t_1\right){}^2-4 \left(r_2 s_1 t_2-r_1 s_2 t_3+r_2 s_2 s_3-r_1 t_1 t_2+s_3 t_1-s_1 t_3\right),\\
				A_2=-c r_2 s_2-c r_1 t_2+c s_3-c t_1+r_2 s_2 t_2-r_1 t_2^2+s_3 t_2-2 s_2 t_3-t_1 t_2,\\
				A_3=r_2 \left(c r_1 t_2+s_3 \left(c+2 t_2\right)-c t_1-s_2 t_3\right)+t_3 \left(-r_1 \left(2 c+t_2\right)-s_3-t_1\right)-c r_2^2 s_2\,,\\
			\end{array}
			$$
		\end{small}
		and $\sqrt{A_1}\notin\Bbb{Q}$.
		
		Similarly one finds that it  is not possible to have $adg\neq 0$, $ad^{-1}\in \Bbb{Q}$ and $gd^{-1}\notin \R $.

		\vskip.2cm
		\noindent
		Claim 4. If $a\neq 0$, then  $w=dg^{-1}\notin \R$.  Assume that there is $R\in \Bbb{Q}-\{0\}$ such that $d=Rg$. In this case $U=gSpan_\Z\{ R, 1, Rh- f, cR, c \}$. Let us consider 
		$U_2=Span_\Z\{  1, Rh- f, c \}$, since $U$ is discrete we conclude that there are $R_1,R_2\in \Bbb{Q}$ such  that $Rh- f=R_1+ R_2C$, thus $\{1,c,f,h\}$ is $\Bbb{Q}$-linearly dependent, which is a contradiction.		
		Thus $W=\{d,g,dh-gf,cd,cg\}$. On the other hand there are $r_0,r_1,r_2,r_3\in \R$ such that $c=r_0+r_1w=r_2+r_3w^{-1}$. Therefore 
		\[
		\begin{array}{l}
			cg=r_0g+d r_1,\\
			cd=r_2d+r_3g\,.
		\end{array}
		\]
		Hence $r_0,r_1,r_2,r_3\in \Bbb{Q}$. Since $r_0+r_1w=r_2+r_3w^{-1}$ we conclude that  $w$ is a solution of the polynomial $r_1w^2+(r_0-r_2)w-r_3=0$, that is 
		\[
		w=\frac{r_2-r_0\pm \sqrt{(r_2-r_0)^2+4r_1r_3}}{2r_1}.
		\]
		Finally, since $U$ is discrete we deduce that there are $r_4,r_5\in \Bbb{Q}$ such that $dh-gf=r_4d+r_5g$ which is equivalent to:
		\[
		w=\frac{f+r_5}{h-r_4}.
		\]
	\end{proof}

\section*{Funding}
The research of W. Barrera has been partially supported by Conacyt Proyecto Ciencia de Frontera 2019-21100 via Faculty of Mathematics, UADY and by Conacyt-SNI 45382. The research of  A. Cano  was supported by Conacyt-SNI  104023 and  PAPIIT UNAM  IN110219.
The research of J.P. Navarrete has been partially supported by Conacyt Proyecto Ciencia de Frontera 2019-21100 via Faculty of Mathematics, UADY and by Conacyt-SNI 35874.  The research of  J. Seade  was partially supported by CONACYT-SNI  1170,  PAPIIT IN110517 and  CONACYT project 282937.

\section*{Declarations}
All authors have contributed equally to the paper and they have no conflict of interest.

\section*{Acknowledgments}
The authors would like to thank to the  Cuernavaca Unit of UNAM's Instituto de Matem\'aticas,  to  UADY's Facultad de Matem\'aticas  and
their people, for the hospitality and kindness while working on
this article. We are also grateful to J. F. Estrada,  R. I. García,  N. Gusevskii  and A. Verjovsky  
for fruitful conversations.

\bibliographystyle{amsplain}

\end{document}